\theoremstyle{plain}
\newtheorem{theo}{Theorem}
\theoremstyle{definition}
\newtheoremstyle{pl}
{3pt}
{3pt}
{\itshape}
{}
{\scshape}
{.}
{.5em}
{}
\newtheoremstyle{df}
{3pt}
{3pt}
{\normalfont}
{}
{\scshape}
{.}
{.5em}
{}
\newtheoremstyle{rm}
{3pt}
{3pt}
{\normalfont}
{}
{\scshape}
{.}
{.5em}
{}
\theoremstyle{pl}
\newtheorem{thm}{Theorem}[section]			
\newtheorem{lem}[thm]{Lemma}
\newtheorem{cor}[thm]{Corollary}
\newtheorem{pro}[thm]{Proposition}
\newtheorem*{thm*}{Theorem}			
\newtheorem*{lem*}{Lemma}
\newtheorem*{cor*}{Corollary}
\newtheorem*{pro*}{Proposition}
\theoremstyle{df}
\newtheorem*{dfn}{Definition}
\theoremstyle{rm}
\newtheorem*{ex*}{Example}
\newcommand{\ep}{
\epsilon
}
\newcommand{\ol}[1]{
\overline{#1}
} 
\newcommand{\mc}[1]{
\mathcal{#1}
}
\newcommand{\mb}[1]{
\mathbb{#1}
}
\newcommand{\T}{
\mc{T}
}
\begin{document}

\title{Small eigenvalues of random 3-manifolds}
\author{Ursula Hamenst\"adt and Gabriele Viaggi}
\thanks{AMS subject classification: 58C40, 30F60, 20P05\\ Both authors were partially supported by ERC grant "Moduli"}
\date{January 18, 2021}

\begin{abstract}
We show that for every $g\geq 2$ there exists a number $c=c(g)>0$ such that the smallest positive eigenvalue of a random closed 3-manifold $M$ of Heegaard genus $g$ is at most $c(g)/{\rm vol}(M)^2$. 
\end{abstract}

\maketitle

\section{Introduction}

By celebrated work of Perelman, any closed orientable aspherical atoroidal 3-manifold admits a hyperbolic metric, and such a metric is unique by Mostow rigidity. In recent years, there was considerable progress in the understanding of the relation between geometric and topological invariants of such a manifold. The program to construct an explicit combinatorial model which describes the geometry up to uniform quasi-isometry turned out to be particularly fruitful \cite{M10}, \cite{BCM12}, \cite{BMNS16}, but it is far from completed.

The main purpose of this article is obtain an understanding of  geometric and topological invariants for {\em random} hyperbolic Heegaard splittings of genus $g\ge 2$ in the sense of Dunfield and Thurston \cite{DT06}. A Heegaard splitting of genus $g\ge 2$ is a 3-manifold diffeomorphic to one of the following form: Consider two copies of a handlebody $H_g$ of genus $g$ and glue them along the boundary $\Sigma:=\partial H_g$ with an orientation reversing diffeomorphism $f:\Sigma\rightarrow\Sigma$. The resulting 3-manifold $M_f:=H_g\cup_fH_g$ only depends on the isotopy class of $f$, thus it is well defined for the representative of $f$ in the {\em mapping class group} ${\rm Mod}(\Sigma)$. Furthermore, if $f$ is sufficiently complicated in an appropriate topological sense (see Hempel \cite{He01}), then $M_f$ is aspherical and atoroidal and, hence, hyperbolic. 

Notice that, by standard 3-manifold topology, every orientable 3-manifold $M$ admits a Heegaard splitting description, that is, $M$ is diffeomorphic to a 3-manifold of the form $M_f$ obtained from the previous procedure for some $g\ge 2$ and $f\in{\rm Mod}(\Sigma)$. Thus, we have a correspondence between 3-manifolds $M$ of {\em Heegaard genus} at most $g$ and elements in ${\rm Mod}(\Sigma)$ (in fact, every such 3-manifold $M$ corresponds to a double coset in this group, see \cite{DT06}).

Now let us choose a {\em symmetric} probability measure on ${\rm Mod}(\Sigma)$ whose finite support generates the group. This measure generates a {\em random walk} on ${\rm Mod}(\Sigma)$, and hence it induces a notion of a {\em random Heegaard splitting}, or {\em random 3-manifold}, glued from two handlebodies with a {\rm random gluing map}. A random 3-manifold is hyperbolic \cite{Ma10} and hence we can study the behavior of geometric invariants of such random hyperbolic 3-manifolds $M_f$.

Our main technical result (Theorem \ref{gluing}) constructs for a suitable class of gluings $M_f$ a Riemannian metric of sectional curvature close to $-1$ everywhere and different from $-1$ only on two regions whose geometry (injectivity radius and intrinsic diameter) is uniformly controlled. The constraints that $f$ has to fulfill for the construction of such a metric are satisfied for random gluing maps.

We use this construction to obtain information on the {\em spectrum of the Laplacian} of a random hyperbolic 3-manifold. 

For every closed hyperbolic 3-manifold $M$, list the positive eigenvalues as $0<\lambda_1(M)\leq\lambda_2(M)\leq\cdots$, with each eigenvalue repeated according to its multiplicity. By \cite{S82} and \cite{H18}, there exists a universal constant $\chi>0$ such that
\[
\lambda_1(M)\geq \frac{\chi}{{\rm vol}(M)^2}\quad \text{ and }\lambda_{{\rm vol}(M)/\chi}(M)\geq \chi
\]
for every closed hyperbolic 3-manifold $M$. Manifolds which fibre over the circle provide examples for which these estimates are essentially sharp. We refer to the introduction of \cite{BGH16} for a more comprehensive discussion.

On the other hand, it follows from the work of Buser \cite{Bu82} and Lackenby \cite{L06} that there exists a number $b(g)>0$ such that for a hyperbolic 3-manifold $M$ of Heegaard genus $g$, there is a  bound
\[
\lambda_1(M)\leq\frac{b(g)}{{\rm vol}(M)}.
\]
Hyperbolic 3-manifolds constructed from expander graphs have arbitrarily large volume, yet their smallest positive eigenvalue is 
bounded from below by a universal constant. Hence in this estimate, the dependence of the constant $b(g)$ on the Heegaard genus $g$ can not be avoided. 

Under geometric constraints, one obtains better estimates. White \cite{Wh13} showed that for every $\ep>0$ there is a number $a(g,\ep)>0$ such that $\lambda_1(M)\leq a(g,\ep)/{\rm vol}(M)^2$ if $M$ has Heegaard genus $g$ and the injectivity radius of $M$ is bounded from below by $\ep$. 

A similar behaviour holds true for random hyperbolic 3-manifolds fibering over the circle, with fibre genus $g$ \cite{BGH16}. Notice that for these manifolds (and for random Heegaard splittings as well) there is no uniform lower bound on the injectivity radius. Using the model metric for random Heegaard splittings as our main tool we show:

\begin{theo}
\label{main}
For every $g\geq 2$ there exists a number $c(g)>1$ such that
\[
\lambda_1(M_f)\leq \frac{c(g)}{{\rm vol}(M_f)^2}\quad \text{ and }\lambda_{{\rm vol}(M_f)/c(g)}(M_f)\leq c(g)
\]
for a random Heegaard splitting $M_f$ of genus $g$.
\end{theo}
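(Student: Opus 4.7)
The strategy is to combine Theorem \ref{gluing} with the variational (min-max) characterization of the Laplace eigenvalues, using bump functions adapted to the long, cylinder-like shape of a random Heegaard splitting.

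First, I would invoke Theorem \ref{gluing} to equip $M_f$ with a Riemannian metric $g_0$ whose sectional curvature is uniformly close to $-1$ and whose deviation from constant curvature is concentrated on two handlebody-like regions $U_\pm$ of uniformly bounded intrinsic diameter and injectivity radius. One then argues that $(M_f,g_0)$ is uniformly bilipschitz to the hyperbolic metric $g_{\rm hyp}$ on $M_f$, with bilipschitz constant depending only on the Heegaard genus $g$; since Rayleigh quotients of the Laplacian change only by a bounded multiplicative factor under a bilipschitz change of metric, it suffices to bound the eigenvalues of $(M_f,g_0)$.

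Second, I would exploit the long product-like portion of $g_0$. By construction, the gluing region coming from a random word of length $n$ in ${\rm Mod}(\Sigma)$ contains a near-cylinder of the form $\Sigma\times[0,T]$ whose length $T$ is comparable to ${\rm vol}(M_f)$ up to a constant depending only on $g$ (since the handlebody regions contribute bounded volume and the cross-sections have bounded area). Fix a $1$-Lipschitz height function $\tau:M_f\to[0,T]$ extending the cylinder projection and constant on each $U_\pm$. For $N\in\{1,\dots,\lfloor T/c_0(g)\rfloor\}$, partition $[0,T]$ into $N$ subintervals $I_1,\dots,I_N$ of length $T/N$, let $\psi_k$ be a sinusoidal bump supported on $I_k$, and set $\Phi_k:=\psi_k\circ\tau$. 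The $\Phi_k$ have pairwise disjoint supports, and bounded cross-sectional geometry yields
\[
\frac{\int_{M_f}|\nabla\Phi_k|^2\,d{\rm vol}_{g_0}}{\int_{M_f}\Phi_k^2\,d{\rm vol}_{g_0}}\;\leq\;c_1(g)\,\frac{N^2}{T^2}\;\leq\;\frac{c_2(g)\,N^2}{{\rm vol}(M_f)^2}.
\]

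Third, since the $\Phi_1,\dots,\Phi_N$ have disjoint supports, they span an $N$-dimensional subspace on which the Rayleigh quotient is at most $c_2(g)N^2/{\rm vol}(M_f)^2$. The min-max principle therefore gives $\lambda_N(M_f)\leq c_2(g)N^2/{\rm vol}(M_f)^2$. Taking $N=1$ yields the first bound of the theorem, and choosing $N=\lfloor{\rm vol}(M_f)/c(g)\rfloor$ with $c(g)$ large enough so that $c_2(g)/c(g)^2\leq c(g)$ (e.g.\ $c(g)\geq c_2(g)^{1/3}$) yields the second.

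In my view the main obstacle is the first step: a uniform bilipschitz comparison between $g_0$ and $g_{\rm hyp}$. Because $M_f$ need not have uniformly bounded injectivity radius, a soft geometric convergence argument does not apply directly; the comparison will presumably require either a pinching-type result for almost hyperbolic 3-manifolds or the Minsky-style model manifold machinery alluded to in the introduction. A secondary technical point is verifying that random gluings almost surely fall in the class allowed by Theorem \ref{gluing} and that $T$ is genuinely proportional to ${\rm vol}(M_f)$ with a constant depending only on $g$; both should be part of what the construction of the model provides.
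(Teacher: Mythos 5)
Your strategy — model metric from Theorem \ref{gluing}, then the min-max principle with bump functions along the long quasi-fuchsian piece — is the right overall shape, and your Steps 2--3 correspond closely to the paper's Proposition \ref{finite}. There are, however, two issues, one cosmetic and one substantive.

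The cosmetic one: the middle block $Q_0$ is not uniformly a thick cylinder $\Sigma\times[0,T]$. It can contain Margulis tubes around arbitrarily short geodesics, so one cannot literally fix a height function whose slices have uniformly bounded area and injectivity radius. What the random-walk input (Propositions \ref{random relative bounded} and \ref{prop610}) actually delivers is a collection of \emph{linearly aligned} $(b,\delta)$-product regions, pairwise disjoint, whose number is proportional to the step length and hence to ${\rm vol}(M_f)$, together with thin complementary pieces about which nothing is controlled. The paper's Proposition \ref{finite} therefore builds a piecewise function $f:M\to[0,n]$ that increments by $1$ across each product region and is \emph{constant} on each complementary piece. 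Sinusoidal bumps composed with this $f$ then give the $\lambda_1$ estimate, and bumps supported on individual product regions give the $\lambda_n\leq 1/C$ estimate. Your continuous $\tau$-coordinate would not have uniformly bounded Rayleigh quotients where it crosses a thin part. (Also note the off-by-one in your use of min-max: $N+1$ disjointly supported test functions are needed to bound $\lambda_N$, so $N=1$ gives nothing; you need two bumps for $\lambda_1$, as in the paper.)

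The substantive gap is exactly the one you flag as ``the main obstacle,'' namely passing from the model metric $\rho$ to the hyperbolic metric $\rho_0$. You propose a uniform bilipschitz comparison, but no such statement is proved in the paper and none is readily available: the relevant pinching theorem of Tian is explicitly cited as unpublished, and without a lower injectivity radius bound a soft compactness argument does not produce a uniform bilipschitz constant. The paper sidesteps this entirely. Instead of comparing metrics globally, it proves Theorem \ref{bcg}: using the Besson--Courtois--Gallot natural map $F_c:(M,\rho)\to(M,\rho_0)$, the fact that $F_c$ has degree one with Jacobian $\leq (c/2)^3$, and the near-equality of volumes from Theorem \ref{bcg2}, one shows that on most of the $(b,\delta)$-product regions $F_c$ is an $L$-Lipschitz coarse isometry. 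Geometric limits (plus the rigidity statement of Appendix A, that a volume-preserving $1$-Lipschitz map between hyperbolic domains is an isometry) then show $F_c$ carries a definite fraction of the model's product regions to slightly smaller $(b-1,\delta/2)$-product regions in $(M,\rho_0)$, still pairwise disjoint and linearly aligned. This is weaker than a bilipschitz comparison of the two metrics, but it is precisely what is needed to feed into Proposition \ref{finite} applied to the hyperbolic metric. So your outline is missing the central BCG transfer argument; replacing it with a hoped-for bilipschitz comparison is where the proposal breaks down.
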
  

Here the upper bound for $\lambda_{{\rm vol}(M_f)/c(g)}(M_f)$ is a straightforward consequence of domain monotonicity with Dirichlet boundary conditions. For the upper bound for $\lambda_1(M_f)$, we expect that the dependence of the constant $c(g)$ on $g$ can not be avoided. 

\subsection*{Strategy of the proof}
As mentioned above, our main technical result is Theorem \ref{gluing} which provides an explicit Riemannian metric of curvature close to $-1$ on $M_f$ with some constraints on the gluing map $f$. Constructions of geometrically controlled model metrics appear frequently in the literature, for example as a main tool in \cite{NS09} and in \cite{Na05}. For hyperbolic 3-manifolds diffeomorphic to $\Sigma\times\mb{R}$, there is a completely explicit combinatorial model for the geometry \cite{M10}, \cite{BCM12}. More recently, these results were used to describe explicitly the geometry of hyperbolic 3-manifolds with a lower bound on the injectivity radius and some topological constraints \cite{BMNS16}. 

We can not apply the constructions in \cite{BMNS16} as there are no lower bounds for the injectivity radius of a random hyperbolic 3-manifold $M_f$. Instead we use properties of the random walk to locate regions in a random 3-manifold which are diffeomorphic to a trivial I-bundle over a closed surface and such that a combinatorial model would predict a uniform lower bound on the injectivity radius in those regions. This is the constraint on the gluing map required in Theorem \ref{gluing}. 
The model metric is then constructed by cutting $M_f$ open at two such regions and by using information on suitable model metrics for the pieces. 

For random hyperbolic 3-manifolds $M_f$, we find that the spectrum of the model metric fulfills the properties stated in Theorem \ref{main}.

The last step consists in comparing the model metric on $M$ and the hyperbolic metric. A result of Tian \cite{Ti90} implies that, in our setting, the model metric is $\mc{C}^2$-close to a hyperbolic metric on $M$. As this work is neither published nor available in electronic form, we prove a weak substitute which is sufficient for the proof of Theorem \ref{main}. Our argument is based on the methods introduced in \cite{BCG95}. 

\subsection*{Organization of the article} 
In Section \ref{hyperbolicstructures} we review some deformation theory of convex cocompact hyperbolic handlebodies and I-bundles. The building blocks for the model metric will be pieces of these manifolds. We also collect some properties of the pointed geometric topology for hyperbolic 3-manifolds, this is one of the main tools used in the proof of Theorem \ref{gluing}.

In Section \ref{productregions} we describe a cut and glue construction that we use for building the model manifold.

In Sections \ref{relativebounded} and \ref{almostisometric} we discuss the applicability of such construction and prove Theorem \ref{gluing} on the existence and structure of the model metric. The conditions that we will have to check reduce to a relative version of bounded combinatorics for the gluing. Most importantly, such conditions imply a good control on the collar geometry of convex-cocompact hyperbolic handlebodies as described in Proposition \ref{large-thick collar}.

In Section \ref{randomheegaard} we show that random hyperbolic 3-manifolds have the properties required by Theorem \ref{gluing}, and in Section \ref{geometric} we relate the model metric to the underlying hyperbolic one using tools from \cite{BCG95}. The information on the hyperbolic metric we obtain then leads to Theorem \ref{main}.

\noindent
{\bf Acknowledgement:} We thank the referee for careful reading and for many helpful suggestions for clarification of the arguments and 
improvement of the article.

\section{Convex-cocompact handlebodies and quasi-fuchsian manifolds}\label{hyperbolicstructures}
Similar to \cite{Na05}, \cite{NS09}, \cite{BMNS16} we will build a concrete Riemannian metric on the Heegaard splitting $M_f=H_g\cup_fH_g$ by gluing together elementary building blocks. Such a metric will be {\em almost} hyperbolic, in the sense that it will have constant sectional curvature $-1$ at every point except on two regions which have small size. On those regions the curvature is contained in the interval $(-1-\ep,-1+\ep)$ where $\ep<1$ is a small constant.   

The building blocks that we are going to use are pieces of {\em convex-cocompact handlebodies} and {\em quasi-fuchsian manifolds} which are classes of hyperbolic structures on $H_g$ and $\Sigma\times[0,1]$ respectively. The goal of this section is to introduce these objects and recall some results from their deformation theory. In the next section we will explain how to cut from them the {\em gluing blocks} that we need, and how we plan to glue them together.

The applicability of the cut and glue construction is then discussed in Sections \ref{relativebounded} and \ref{almostisometric}. As a preparation, at the end of this section we recall some basic general compactness properties of the geometric topology which is one of the main tools that we will use.

\subsection{Kleinian groups}
We start by recalling some general terminology about Kleinian groups, that is, discrete subgroups $\Gamma<{\rm Isom}^+(\mb{H}^3)$. We always assume that $\Gamma$ is torsion free. Each such group has an associated {\em limit set} $\Lambda_\Gamma\subset\partial\mb{H}^3$, which consists of the points at infinity of a $\Gamma$-orbit closure, and a {\em domain of discontinuity} $\Omega_\Gamma:=\partial\mb{H}^3-\Lambda_\Gamma$. The group $\Gamma$ acts freely and properly discontinuously $\mb{H}^3\cup\Omega_\Gamma$ and the quotient $\mb{H}^3\cup\Omega_\Gamma/\Gamma$ is a 3-manifold with boundary $\Omega_\Gamma/\Gamma$. If $\Gamma$ is not abelian, then $\Omega_\Gamma$ has a natural complete hyperbolic metric, called the {\em Poincar\'e metric}, which is preserved by $\Gamma$. Thus, the boundary surface $\Omega_\Gamma/\Gamma$ inherhits a natural hyperbolic structure. By Ahlfors' Finiteness Theorem \cite{A64}, if $\Gamma$ is torsion free, non-abelian and finitely generated, then $\Omega_\Gamma/\Gamma$ has finite area.
 
\subsection{Teichm\"uller space and mapping class group} 
The Kleinian groups that we are going to consider are naturally parametrized by points in the Teichm\"uller space $\T$ of marked hyperbolic structures on a closed orientable surface of genus $g\ge 2$. 

For sufficiently small $\delta>0$ we denote by $\T_\delta\subset\T$ the subset of Teichm\"uller space consisting of those (marked) hyperbolic metrics on $\Sigma$ with injectivity radius at least $\delta$. We will extensively use a classical theorem of Mumford (see Theorem 12.6 of \cite{FM}) that the mapping class group ${\rm Mod}(\Sigma)$ acts cocompactly on each $\T_\delta$.

\subsection{Convex-cocompact handlebodies}
We now describe a class of hyperbolic 3-manifolds homeomorphic to handlebodies.

\begin{center}
\label{notation1}
\begin{minipage}{.8\linewidth}
{\bf Standing assumptions}. Fix once and for all a genus $g\ge 2$. Let $H_g$ be a handlebody of genus $g$ with boundary surface $\Sigma:=\partial H_g$. We fix on $H_g$ an orientation, and we coherently orient $\Sigma$ as the boundary of $H_g$.
\end{minipage}
\end{center}

For the material in this section we mainly refer to Chapter 7 of \cite{CM}. 

\begin{dfn}[Convex-Cocompact Handlebody]
A {\em convex cocompact marked} hyperbolic structure on the handlebody $H_g$ is a quotient $N=\mb{H}^3/\Gamma$ of the hyperbolic 3-space by a discrete free subgroup $\Gamma\simeq\mb{F}_g$ together with an orientation preserving homeomorphism $\phi:H_g\rightarrow{\hat N}:=\mb{H}^3\cup\Omega_\Gamma/\Gamma$ (the {\em marking}). We say that the marked structures $\phi:H_g\rightarrow{\hat N}$ and $\phi':H_g\rightarrow{\hat N}'$ are {\em equivalent} if there exists an orientation preserving homeomorphism $f:{\hat N}\rightarrow{\hat N}'$ that restricts to an isometry $f:N\rightarrow N'$ and such that $f\phi$ is {\em isotopic} to $\phi'$.
\end{dfn}

Notice that the boundary $\partial{\hat N}:=\Omega_\Gamma/\Gamma$ comes equipped with a marking $\phi:\Sigma=\partial H_g\rightarrow\partial{\hat N}$ and a hyperbolic metric. This determines a point in the Teichm\"uller space $\T$ of the boundary $\Sigma=\partial H_g$ which is called the {\em conformal boundary} of $N$.

By classical results due to Bers \cite{Bers70}, Kra \cite{Kra72}, Maskit \cite{Maskit71}, equivalence classes of convex-cocompact marked handlebodies are parametrized by the Teichm\"uller space $\T$ via the map that associates to the structure its conformal boundary (see Chapter 7 of \cite{CM}, in particular Theorem 7.2.9). Given $X\in\T$ we denote by $H(X)$ the convex cocompact handlebody with conformal boundary $X$.

\subsection{Quasi-fuchsian manifolds} 
We consider now a class hyperbolic structures on the topological model $\Sigma\times[0,1]$. Again, we mainly refer to Chapter 7 of \cite{CM} for the material presented here. 

\begin{dfn}[Quasi-Fuchsian Manifolds]
A {\em convex cocompact marked} hyperbolic structure on $\Sigma\times[0,1]$, also called {\em quasi-fuchsian} manifold, is a quotient 
$Q=\mb{H}^3/\Gamma$ of the hyperbolic 3-space $\mb{H}^3$ by a discrete surface subgroup $\Gamma\simeq\pi_1(\Sigma)$ together with an orientation preserving homeomorphism $\phi:\Sigma\times[0,1]\rightarrow{\hat Q}:=\mb{H}^3\cup\Omega_\Gamma/\Gamma$ (the {\em marking}). As before, two quasi-fuchsian manifolds $Q,Q'$ are equivalent if they differ by an orientation preserving homeomorphism $f:{\hat Q}\rightarrow{\hat Q}'$ which resctricts to an isometry $Q\rightarrow Q'$ and is isotopic to the identity (with respect to the markings). 
\end{dfn}

The conformal boundary $\partial{\hat Q}=\Omega_\Gamma/\Gamma$ has now two connected components both homeomorphic to $\Sigma$, but with opposite orientations. The restrictions of the marking $\phi$ to $\Sigma\times\{0\}\cup\Sigma\times\{1\}$ together with the intrinsic hyperbolic structure on $\Omega_\Gamma/\Gamma$ determine a pair of points in Teichm\"uller space.

By Bers' Simultaneous Uniformization \cite{Bers60}, equivalence classes of quasi-fuchsian manifolds $Q$ are parametrized by $\T\times\T$ via the map that associates to $Q$ the conformal boundary $\partial{\hat Q}$. Given a pair $(Y,X)\in\T\times\T$ we denote by $Q(Y,X)$ the unique quasi-fuchsian manifold that realizes those boundary data $Y$ and $X$ on $\Sigma\times\{0\}$ and $\Sigma\times\{1\}$ respectively.

The mapping class group ${\rm Mod}(\Sigma)$ acts on the space of quasi-fuchsian manifolds by precomposition of marking, that is $\phi\in{\rm Mod}(\Sigma)$ acts as $\phi^{-1}\times\mb{I}$ on $\Sigma\times[0,1]$. On the the conformal boundary, the action coincides with the diagonal action ${\rm Mod}(\Sigma)\curvearrowright\T\times\T$.

\subsection{Geometry and topology of the convex core}
Convex-cocompact hyperbolic structures $N$ and $Q$ on $H_g$ and $\Sigma\times[0,1]$ are infinite volume Riemannian manifolds, but we will only use their {\em convex cores} which are {\em compact submanifolds} that are {\em convex} in the sense that they contain all the geodesics joining two of their points. We now describe some of the topological and geometric features of these cores.

In general, for every torsion free Kleinian group $\Gamma$ we can always construct the convex hull of the limit set $\mc{CH}(\Lambda_\Gamma)$. This is a convex subset of $\mb{H}^3$ invariant under $\Gamma$. The quotient $\mc{CH}(\Lambda_\Gamma)/\Gamma$ is called the {\em convex core} of the hyperbolic 3-manifold $M:=\mb{H}^3/\Gamma$ and is denoted by $\mc{CC}(M)$.
      
It is a standard fact that, for convex cocompact hyperbolic structures $N$ and $Q$ on $H_g$ and $\Sigma\times[0,1]$, the convex cores $\mc{CC}(N)$ and $\mc{CC}(Q)$ are compact topological codimension 0 (except in the {\em fuchsian} case which we ignore) submanifolds whose boundary is {\em parallel} to the boundary of ${\hat N}$ and ${\hat Q}$. This property provides homeomorphisms $H_g\simeq\mc{CC}(N)$ and $\Sigma\times[0,1]\simeq\mc{CC}(Q)$ isotopic to the markings $H_g\simeq{\hat N}$ and $\Sigma\times[0,1]\simeq{\hat Q}$.

From a geometric point of view, the boundary of the convex core $\partial\mc{CC}(M)$ always has the structure of an embedded convex {\em pleated surface} (see Chapter I.5 of \cite{CEG06}). In particular, it always has an intrinsic hyperbolic metric. For convex cocompact hyperbolic structures on $H_g$ and $\Sigma\times[0,1]$, the boundary of the convex core comes naturally equipped with a marking, that is, an identification with $\Sigma$. Using it we can describe each component of the boundary of the convex core as a point in Teichm\"uller space $\T$. 

By work of Sullivan (see Chapter II.2 of \cite{CEG06}) and Bridgeman-Canary \cite{BC03}, the geometry of the conformal boundary and the geometry of the convex core are strictly tied: There is a natural {\em nearest pont retraction} $\partial{\hat M}\rightarrow\partial\mc{CC}(M)$ which has the following properties:

\begin{thm}[Bridgeman-Canary, \cite{BC03}]
\label{bridgeman-canary}
There are maps  $J,G:(0,\infty)\to(1,\infty)$ such that the following holds: Let $\Gamma<{\rm Isom}^+(\mb{H}^3)$ be a finitely generated, non-abelian, torsion free Kleinian group. Suppose that the length, measured with respect to the Poincar\'e metric, of every curve in the conformal boundary $\Omega_\Gamma/\Gamma$ which is compressible in the 3-manifold $\mb{H}^3\cup\Omega_\Gamma/\Gamma$ is bounded from below by $\delta>0$. Then, the natural nearest point retraction from the conformal boundary to the boundary of the convex core is $J(\delta)$-Lipschitz and admits a $G(\delta)$-Lipschitz homotopy inverse.  
\end{thm}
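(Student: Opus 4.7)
The Lipschitz bound for $r\colon\Omega_\Gamma/\Gamma\to\partial\mc{CC}(M)$ goes back to Sullivan and is in fact independent of $\delta$. The retraction is defined by letting horoballs based at points of the domain of discontinuity expand until they first meet the convex hull of $\Lambda_\Gamma$. A hyperbolic-trigonometric comparison between the Poincar\'e metric on $\Omega_\Gamma/\Gamma$ and the induced hyperbolic metric on the pleated surface $\partial\mc{CC}(M)$ yields a universal Lipschitz constant $K_0$: the estimate is purely local and therefore insensitive to the global topology. Thus one may take $J(\delta)=K_0$ for all $\delta$.

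The substance of the argument is producing a Lipschitz homotopy inverse $s\colon\partial\mc{CC}(M)\to\Omega_\Gamma/\Gamma$. My plan is to decompose $\Omega_\Gamma/\Gamma$ into its $\ep$-thick and $\ep$-thin parts for a universal Margulis-type constant $\ep$. On the thick part, compactness of the space of pointed pleated surfaces with injectivity radius bounded below (a Gromov-Mumford compactness argument) gives a locally defined inverse of $r$ whose Lipschitz constant depends only on $\ep$, and a partition of unity subordinate to balls of radius comparable to $\ep$ patches these local inverses together with uniformly controlled distortion.

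The thin part is a union of collars around short geodesics on $\Omega_\Gamma/\Gamma$, and here the hypothesis on compressible curves enters. For an incompressible short curve $\alpha$, the preimage under $r$ of its collar lies in a Margulis tube of $M$, whose geometry admits an explicit bi-Lipschitz model from standard tube coordinates; one inverts this model to define $s$ on the collar. For a compressible short curve $\alpha$, the length hypothesis $\ell(\alpha)\geq\delta$ bounds the modulus of the collar by a function of $\delta$ and, more importantly, prevents $\partial\mc{CC}(M)$ from degenerating across a compressing disk as one flows inward from $\Omega_\Gamma/\Gamma$. Given this bound, one writes $s$ explicitly on each compressible collar and the resulting Lipschitz constant depends only on $\delta$, giving $G(\delta)$.

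The main obstacle is precisely the compressible-thin case: a priori a short compressible curve could force $\partial\mc{CC}(M)$ to come arbitrarily close to $\Omega_\Gamma/\Gamma$ through a narrow neck corresponding to a compressing disk, producing unbounded derivatives for any homotopy inverse. Ruling this out requires a quantitative statement that $\partial\mc{CC}(M)$ cannot lie too close to the boundary of a compressing disk of controlled length, together with a uniform geometric model for compressible collars. This quantitative control, which is what Bridgeman-Canary add to Sullivan's classical bound, is exactly why the lower length bound needs to be imposed only on compressible curves.
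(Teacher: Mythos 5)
This statement is cited in the paper as an external result of Bridgeman and Canary; the paper provides no proof of its own, so there is nothing internal to compare your sketch against. Evaluating the sketch on its merits:

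Your high-level architecture is reasonable and you identify the right division of labour. That the nearest point retraction itself is uniformly Lipschitz is indeed correct and independent of $\delta$ (it is essentially Sullivan's observation, sharpened by Epstein--Marden and Canary; Bridgeman--Canary derive a universal constant by comparing the Thurston projective metric on $\Omega_\Gamma$ with the Poincar\'e metric and using convexity of $\mc{CH}(\Lambda_\Gamma)$ in $\mathbb{H}^3$). You are also right that the whole content is the homotopy inverse, and that the hypothesis on compressible curves is there to control what happens when the pleated boundary $\partial\mc{CC}(M)$ folds over a compressing disk.

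However, the route you sketch diverges substantially from the Bridgeman--Canary argument, and several of your steps have genuine gaps. First, the thick/thin decomposition you set up is on the conformal boundary, but the compactness you want to invoke to build a local inverse is compactness of pointed hyperbolic $3$-manifolds (or of the pair conformal boundary plus convex-hull boundary sitting inside it) with a lower injectivity radius bound on the $3$-manifold near $\partial\mc{CC}(M)$; a lower injectivity radius bound for the surface $\Omega_\Gamma/\Gamma$ does not by itself give that, and establishing it is precisely a quantitative input (cf.\ Lemma~\ref{thick near boundary} in this paper, which is deduced \emph{from} Theorem~\ref{bridgeman-canary}, so cannot be used in its proof). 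Second, ``patching local inverses by a partition of unity'' is not well-defined when the target is a hyperbolic surface rather than a vector space; one either needs a center-of-mass construction, or one constructs the inverse globally via an explicit geometric recipe, which is what Bridgeman--Canary do. Third, you correctly flag the compressible-thin case as the main obstacle and correctly identify that the needed input is a quantitative bound preventing the pleated boundary from collapsing across a compressing disk of controlled modulus, but you do not indicate how to obtain it; this is the actual content of the theorem. Bridgeman--Canary obtain it not by a thick/thin case analysis of the surface but by estimating the bending measure of $\partial\mc{CC}(M)$ along $r(\gamma)$ in terms of the modulus of an embedded annulus around $\gamma$ in $\Omega_\Gamma/\Gamma$, and then using the lower bound on compressible lengths to control that modulus. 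So while your outline is plausible, the crucial lemma is asserted rather than proved, and the machinery you propose to supply it would need to be replaced by the bending/modulus estimates that are the heart of the cited paper.
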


We notice that, if a component $X$ of $\partial{\hat M}$ is contained in $\T_\delta$, then, by Theorem \ref{bridgeman-canary}, the corresponding component $\partial_X\mc{CC}(M)$ of $\partial\mc{CC}(M)$ lies in $\T_{2\delta/G(\delta)}$. 

Theorem \ref{bridgeman-canary} has the following immediate consequence:

\begin{lem}
\label{thick near boundary}
For every $\delta>0$ and $g\ge 2$ there exists $\eta=\eta(\delta,g)>0$ such that the following holds: Let $M$ be either a marked convex cocompact hyperbolic structure on $H_g$ or on $\Sigma\times[0,1]$. If each component of the conformal boundary $\partial{\hat M}$ is contained in $\T_\delta$, then $\inf_{x\in\partial\mc{CC}(M)}\{{\rm inj}_x(M)\}\ge\eta$.
\end{lem}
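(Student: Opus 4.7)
By Theorem~\ref{bridgeman-canary}, the hypothesis immediately yields, as the authors observe just above the lemma, that each component of $\partial\mc{CC}(M)$ with its intrinsic pleated hyperbolic metric lies in $\T_{\eta_0}$ for $\eta_0:=2\delta/G(\delta)$. In particular the \emph{intrinsic} injectivity radius of the pleated boundary surface is everywhere at least $\eta_0$. The content of the lemma is to upgrade this intrinsic bound into a uniform lower bound on the \emph{extrinsic} injectivity radius $\mathrm{inj}_x(M)$ of the ambient hyperbolic $3$-manifold at points $x\in\partial\mc{CC}(M)$.

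I would prove this by a compactness and contradiction argument in the pointed geometric topology. Suppose there exist marked convex-cocompact structures $M_n$ (each on $H_g$ or on $\Sigma\times[0,1]$) with conformal boundary in $\T_\delta$ and points $x_n\in\partial\mc{CC}(M_n)$ with $\mathrm{inj}_{x_n}(M_n)\to 0$. By the Margulis lemma, $x_n$ lies in a Margulis tube of $M_n$ around a closed geodesic of length $\ell_n\to 0$ whose tube radius tends to infinity. By Mumford compactness applied to the pleated boundaries $(\partial\mc{CC}(M_n),x_n)\in\T_{\eta_0}$, after changing markings and passing to a subsequence, the pointed hyperbolic surfaces converge intrinsically to a pointed hyperbolic surface $(S_\infty,x_\infty)\in\T_{\eta_0}$ of genus $g$. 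The general compactness of the pointed geometric topology recalled later in the section then allows simultaneous extraction of a pointed geometric limit $(M_n,x_n)\to(M_\infty,x_\infty)$ of $3$-manifolds and a limit pleated map $f_\infty\colon S_\infty\to M_\infty$, in which the collapsing geodesics of $M_n$ give rise to a rank-one cusp of $M_\infty$ with $x_\infty$ in its thin part.

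The contradiction is that a closed pleated surface of uniformly thick intrinsic geometry cannot pass through an arbitrarily thin rank-one Margulis cusp. Since such a cusp has cyclic fundamental group, every component of $f_\infty^{-1}$ of a sufficiently thin horospherical neighborhood of $x_\infty$ is either a topological disk or an annulus whose core is homotopic in $M_\infty$ to the cusp loop. The thickness $S_\infty\in\T_{\eta_0}$ forces every such annular core to have intrinsic length at least $2\eta_0$, while descending sufficiently deep into the cusp the shrinking horospherical cross-section exhibits a homotopy on $S_\infty$ to a loop of arbitrarily small intrinsic length, contradicting the intrinsic systole bound $S_\infty\in\T_{\eta_0}$.

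The main obstacle is this last limit step: rigorously extracting simultaneous geometric limits of the (possibly degenerating) $3$-manifolds and their thick pleated boundaries, and then quantifying the interaction between the intrinsic thickness of the limit pleated surface and the geometry of a rank-one Margulis cusp. This is a standard application of the Margulis lemma for pleated surfaces, but some care is needed in the handlebody case, where the map $\pi_1(\partial\mc{CC}(M))\to\pi_1(M)$ is only surjective rather than injective, so one must track preimages of the cusp loop and use the intrinsic systole bound on $\partial\mc{CC}(M)$ to control their intrinsic length.
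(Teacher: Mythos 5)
Your first step (Bridgeman--Canary puts each component of $\partial\mc{CC}(M)$ in a fixed thick part of $\T$) matches the paper, but the rest of your argument has two genuine gaps. First, the limit extraction fails: you want a pointed geometric limit $(M_n,x_n)\to(M_\infty,x_\infty)$ with $x_\infty$ in the thin part of a cusp, but the compactness theorem for the pointed geometric topology (Theorem \ref{compactness}) requires a uniform positive lower bound on ${\rm inj}_{x_n}(M_n)$ at the basepoints --- which is exactly the quantity you have arranged to tend to $0$. No pointed hyperbolic limit with ${\rm inj}_{x_\infty}(M_\infty)=0$ exists; you would have to rebase at points of definite injectivity radius, and then the claim that the limit basepoint sits deep in a rank-one cusp is lost. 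Second, your closing contradiction conflates short loops in the ambient $3$-manifold with short loops on the pleated surface. A pleated map is $1$-Lipschitz, so loops on $S_\infty$ get no longer in $M_\infty$, not the reverse: a point of $S_\infty$ mapping deep into a thin part of $M_\infty$ does not produce a short essential loop \emph{on} $S_\infty$, and in the handlebody case the map is compressible, so one cannot pull thin parts back. Your claim that preimage components of a thin horoball neighborhood are disks or annuli is likewise unjustified for a compressible surface.

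The fix --- and the paper's actual proof --- is shorter and needs no limits. Since each component $C$ of $\partial\mc{CC}(M)$ lies in $\T_{2\delta/G(\delta)}$ and ${\rm Mod}(\Sigma)$ acts cocompactly there (Mumford), $C$ has intrinsic diameter at most a uniform $L=L(g,\delta)$. If ${\rm inj}_x(M)<\eta<\eta_0$ for some $x\in C$, then $x$ lies at depth coarsely $\log(\eta_0/\eta)$ inside an $\eta_0$-Margulis tube $\mb{T}$. Once $\log(\eta_0/\eta)\gg L$, the bounded-diameter surface $C$ is entirely contained in $\mb{T}$, so the surjection $\pi_1(C,x)\to\pi_1(M,x)$ factors through $\pi_1(\mb{T})\cong\mb{Z}$, which is absurd since $\pi_1(M)$ is a nonabelian free or surface group. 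Note that the key quantitative input is the \emph{diameter} bound on $C$ together with $\pi_1$-surjectivity (which you only mention in passing), not the intrinsic systole bound.
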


\begin{proof}
Notice that, by Theorem \ref{bridgeman-canary}, each component $C$ of $\partial\mc{CC}(M)$ lies in $\T_{2\delta/G(\delta)}$. Therefore, as ${\rm Mod}(\Sigma)$ acts cocompactly on $\T_{2\delta/G(\delta)}$ (see Theorem 12.6 of \cite{FM}), there is a uniform upper bound $L=L(g,\delta)>0$ on the intrinsic diameter of each component $C$ of $\partial\mc{CC}(M)$. 

Let $\eta_0>0$ be a Margulis constant for hyperbolic 3-manifolds (see Chapter D of \cite{BP92}). Pick $x\in C$. If ${\rm inj}_x(M)<\eta<\eta_0$, then $x$ lies inside a $\eta_0$-Margulis tube $x\in\mb{T}$ at a distance from the boundary $\partial\mb{T}$ of coarsely $\log(\eta_0/\eta)$ (see \cite{BM82}). If $\eta$ is very small, then $C$, having uniformly bounded diameter, would be contained in $\mb{T}$, but this is absurd as the inclusion of $\pi_1(C,x)$ in $\pi_1(M,x)$ is surjective.
\end{proof}

\subsection{Convergence of hyperbolic manifolds} 
We conclude this section with a discussion of geometric convergence of hyperbolic manifolds. This is one of the main tools in the proofs of our main results in Sections \ref{relativebounded} and \ref{almostisometric}. 

Let $(M,\rho_M)$ be a complete hyperbolic surface or 3-manifold where $\rho_M$ denotes the Riemannian metric. Define the following:

\begin{dfn}[$\mc{C}^2$-norm]
Let $U\subset M$ be an open subset. Denote by $\nabla$ the Levi-Civita connection on $(M,\rho_M)$. Denote by $|\bullet|_x$ the norm induced by the inner product $\rho_M(x)$ of $T_xU$ on tensors on $T_xU$. Let $\tau$ be a tensor field on $U$. The $\mc{C}^2$-norm of $\tau$ at $x$ is the quantity
\[
\left|\left|\tau\right|\right|_{\mc{C}^2,x}:=\left|\tau(x)\right|_x+\left|\nabla\tau(x)\right|_x+\left|\nabla^2\tau(x)\right|_x.
\]
Similarly, the $\mc{C}^2$-norm of $\tau$ on $U$ is given by
\[
\left|\left|\tau\right|\right|_{\mc{C}^2(U)}:=\sup_{x\in U}\left|\tau(x)\right|_x+\sup_{x\in U}\left|\nabla\tau(x)\right|_x+\sup_{x\in U}\left|\nabla^2\tau(x)\right|_x.
\]
\end{dfn}

Using this norm we define the following:

\begin{dfn}[Geometric Convergence]\label{geomconv}
A sequence $\{(M_n,m_n)\}_{n\in\mb{N}}$ of hyperbolic surfaces or 3-manifolds with basepoints is said to converge in the {\em pointed geometric topology} to a pointed hyperbolic surface or 3-manifold $(M,m)$ if the following conditions are satisfied: For every $R>0,\xi>0$ there are numbers $n(R,\xi)>0$, and for every $n\ge n(R,\xi)$ there exists a smooth embedding (the {\em approximating map}) $k_n:U_n\subset M\rightarrow M_n$ such that $k_n$ is defined on the ball $B_M(m,R)\subset U_n$ of radius $R$ centered at $m\in M$, it sends $k_n(m)=m_n$, and the restriction of $k_n$ to $B_M(m,R)$ satisfies $||\rho_M-k_n^*\rho_{M_n}||_{\mc{C}^2(B_M(m,R))}<\xi$. In this case we say that the restriction of $k_n$ to $B(m,R)$ is $\xi$-{\em almost isometric}.
\end{dfn}

For more on geometric convergence we refer to Chapter E of \cite{BP92}. We will mainly exploit the following compactness result for the pointed geometric topology (see Theorem E.1.10 of \cite{BP92}).

\begin{thm}
\label{compactness}
Let $\{(M_n,m_n)\}_{n\in\mb{N}}$ be either a sequence of pointed hyperbolic surfaces or 3-manifolds. Suppose that ${\rm inj}_{m_n}M_n\ge\eta$ for all $n\in\mb{N}$ for some positive $\eta>0$. Then there exists a subsequence that converges in the pointed geometric topology to a pointed hyperbolic surface or 3-manifold $(M,m)$.
\end{thm}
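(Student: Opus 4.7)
The plan is to realize each $(M_n, m_n)$ as a quotient $\mb{H}^d / \Gamma_n$ (with $d \in \{2, 3\}$) normalized so that all $m_n$ sit over a common basepoint $x_0 \in \mb{H}^d$, and then to extract a Chabauty-topology limit of the subgroups $\Gamma_n$. Explicitly: fix an orthonormal frame $e_0 \in T_{x_0}\mb{H}^d$ and, for each $n$, a lift $\tilde m_n$ of $m_n$ to the orthonormal frame bundle of the universal cover $\tilde M_n$; pick the unique orientation-preserving isometric identification $\mb{H}^d \to \tilde M_n$ sending $(x_0, e_0)$ to $\tilde m_n$. This realizes $M_n$ as $\mb{H}^d / \Gamma_n$ for a torsion-free discrete subgroup $\Gamma_n < {\rm Isom}^+(\mb{H}^d)$ with $m_n$ represented by $x_0$. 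The hypothesis ${\rm inj}_{m_n} M_n \ge \eta$ becomes the uniform displacement bound $d_{\mb{H}^d}(x_0, \gamma x_0) \ge 2\eta$ for every nontrivial $\gamma \in \Gamma_n$, and for every $R > 0$ this confines the set $\{\gamma \in \Gamma_n : \gamma B(x_0, R) \cap B(x_0, R) \ne \emptyset\}$ to a fixed compact subset of ${\rm Isom}^+(\mb{H}^d)$. A diagonal extraction then yields a subsequence and a closed subgroup $\Gamma_\infty$ with $\Gamma_n \to \Gamma_\infty$ in the Chabauty topology, and the displacement condition passes to the limit, forcing $\Gamma_\infty$ to be discrete and torsion-free. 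Set $(M, m) := (\mb{H}^d / \Gamma_\infty, [x_0])$.

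Next I would build the approximating maps. Given $R > 0$, let $S \subset \Gamma_\infty$ be the finite set of $\gamma$ with $\gamma B(x_0, 2R) \cap B(x_0, 2R) \ne \emptyset$. Chabauty convergence gives, for $n$ large, an injection $\gamma \mapsto \gamma_n$ of $S$ into $\Gamma_n$ with $\gamma_n \to \gamma$ smoothly in ${\rm Isom}^+(\mb{H}^d)$, and no extraneous elements of $\Gamma_n$ identifying points of $B(x_0, 2R)$. Working with a Dirichlet polyhedron $F$ for $\Gamma_\infty$ based at $x_0$, define $k_n$ to be the identity on $F \cap B(x_0, R)$ and extend across the face identifications by smoothly interpolating, on a collar of each face, between the gluing isometry $\gamma$ and its perturbation $\gamma_n$. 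This descends to a smooth embedding $k_n : B_M(m, R) \to M_n$ sending $m$ to $m_n$, with $\| \rho_M - k_n^* \rho_{M_n} \|_{\mc{C}^2(B_M(m, R))} \to 0$ as $n \to \infty$. A standard diagonal argument in $R$ and $\xi$ then upgrades this to the uniform statement of Definition \ref{geomconv}.

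The delicate step is the gluing construction in the last paragraph: producing a genuine smooth embedding with $\mc{C}^2$-small pullback error rather than merely a Gromov--Hausdorff approximation. Two features keep it manageable. Since the ambient curvature is always $-1$, both metrics are locally isometric pullbacks of the hyperbolic metric on $\mb{H}^d$, so the $\mc{C}^2$-error arises purely from smoothing the transition between the $\gamma$- and $\gamma_n$-identifications. The uniform injectivity radius bound $\eta$ at the base point, combined with the Margulis lemma in dimension $3$ (respectively the collar lemma in dimension $2$) to control thin parts that may intrude into $B(x_0, 2R)$, gives a uniform positive scale on which the collars used in the interpolation live and thereby prevents the construction from pinching.
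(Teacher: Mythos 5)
The paper does not prove this compactness statement but refers directly to Theorem E.1.10 of Benedetti--Petronio \cite{BP92}, and your strategy---normalizing so that each $M_n = \mb{H}^d/\Gamma_n$ with $m_n$ over a fixed basepoint, extracting a Chabauty limit $\Gamma_\infty$ from the uniform displacement bound forced by ${\rm inj}_{m_n}M_n\ge\eta$, and constructing the approximating maps by interpolating Dirichlet-domain face identifications---is precisely the one used there. Your outline is correct and you identify the delicate points; the only place where more care is required than your sketch supplies is making the interpolations across different faces compatible along the lower-dimensional strata (edges and vertices) of the Dirichlet polyhedron, which is handled in the standard treatments by truncating the polyhedron away from its lower-dimensional faces or by passing to a partition-of-unity patching over an overlapping cover of the ball, rather than by face-by-face interpolation.
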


Consider the following setup: Let
\[
\{f_n:(X_n,x_n)\rightarrow (M_n,m_n)\}_{n\in\mb{N}}
\] 
be a sequence of basepoint preserving 1-Lipschitz maps $f_n$ from $\delta$-thick hyperbolic surfaces $X_n$ homeomorphic to $\Sigma$ to hyperbolic 3-manifolds $M_n$ with ${\rm inj}_{m_n}M_n\ge\eta$. Then, using Theorem \ref{compactness} and Ascoli-Arzel\`a, up to subsequences, we can assume that
\begin{itemize}
\item{$(M_n,m_n)$ converges in the pointed geometric topology to a pointed hyperbolic 3-manifold $(M,m)$.}
\item{$(X_n,x_n)$ converges in the pointed geometric topology to a pointed hyperbolic surface $(X,x)$. Since ${\rm inj}(X_n)\ge\delta$ we also have that ${\rm inj}(X)\ge\delta$ and $X$ is homeomorphic to $\Sigma$ (see Theorem 12.6 of \cite{FM}).}
\item{$f_n$ converges to a basepoint preserving 1-Lipschitz map $f:(X,x)\rightarrow(M,m)$.}
\item{The diagram 
 \[
 \xymatrix{
 (X_n,x_n)\ar[r]^{f_n} &(M_n,m_n)\\
 (X,x)\ar[r]_{f}\ar[u]^{\phi_n} &(M,m),\ar[u]_{k_n}
 }
 \]
where the vertical arrows are the approximating maps provided by the geometric convergence, commutes up homotopies that respect the basepoints and take place in small neighbourhoods of the images of $f_n\phi_n$ and $k_nf$.}
\end{itemize}

We now return to our specific setting and consider the case where $M_n$ are either (marked) convex cocompact structures on $H_g$ or on $\Sigma\times[0,1]$. For each such structure, we choose a basepoint on the boundary of the convex core $m_n\in\partial\mc{CC}(M_n)$ (which is on the component marked by $\Sigma\times\{1\}$ in the case of a quasi-fuchsian manifold, below we denote it by $\partial_1\mc{CC}(Q_n)$). Suppose that each component of the conformal boundary $\partial{\hat M}_n$ lies in $\T_\delta$. Then, by Theorem \ref{bridgeman-canary} and Lemma \ref{thick near boundary}, we have that each component of $\partial\mc{CC}(M_n)$ lies in $\T_{2\delta/G(\delta)}$ and ${\rm inj}_xM\ge\eta$ for every $x\in\partial\mc{CC}(M_n)$. This enables us to take geometric limits in the setup described above for sequences $\{\partial\mc{CC}(N_n)\subset N_n\}_{n\in\mb{N}}$ or $\{\partial_1\mc{CC}(Q_n)\subset Q_n\}_{n\in\mb{N}}$ where $N_n$ and $Q_n$ are convex-cocompact handlebodies and quasi-fuchsian manifolds.

\section{Cut and Glue construction}
\label{productregions}

We now describe a procedure to construct a Riemannian metric on a manifold diffeomorphic to the Heegaard splittings $M_f$ by gluing together the convex cores $\mc{CC}(N_1),\mc{CC}(N_2),\mc{CC}(Q)$ of two suitable convex-cocompact handlebodies $N_1,N_2$ and a quasi-fuchsian manifold $Q$ along suitable identifications $k_j:V_j\subset\mc{CC}(Q)\rightarrow U_j\subset\mc{CC}(N_j)$ of collars of their boundary components:
\[
X_f=\mc{CC}(N_1)\cup_{k_1:V_1\rightarrow U_1}\mc{CC}(Q)\cup_{k_2:V_2\rightarrow U_2}\mc{CC}(N_2).
\]

Depending on the amount of control that we want to have on the curvature of the resulting geometric Heegaard splitting $X_f\simeq M_f$, we will not glue directly the entire convex cores $\mc{CC}(N_1),\mc{CC}(Q)$ and $\mc{CC}(N_2)$, but rather, we will cut from them smaller submanifolds $N_0^1\subset\mc{CC}(N_1), Q_0\subset\mc{CC}(Q)$ and $N_0^2\subset\mc{CC}(N_2)$ for which we have a better control on the collar geometry.

We begin with the following definition:

\begin{dfn}[Product Regions]
A {\em product region in a quasi-fuchsian manifold} $Q$ is a codimension 0 submanifold $U\subset\mc{CC}(Q)$ which is  homeomorphic to $\Sigma\times[0,1]$ and whose inclusion $U\subset\mc{CC}(Q)$ is a homotopy equivalence.

A {\em product region in a convex cocompact handlebody} $N$ is a codimension 0 submanifold $U\subset\mc{CC}(N)$ contained in a topological collar of the boundary of the convex core $\partial\mc{CC}(N)$ and such that $U$ is homeomorphic to $\Sigma\times[0,1]$ and the inclusion of $U$ in the collar of $\partial\mc{CC}(N)$ is a homotopy equivalence. 
\end{dfn}

In both cases a product region comes naturally with a {\em marking} $j:\Sigma\rightarrow U$ obtained by isotopy from the one of the boundary of the convex core. This allows us to define the {\em homotopy class} of any orientation preserving diffeomorphism $k:V\rightarrow U$ between product regions: Let $j_U,j_V:\Sigma\rightarrow U,V$ be the markings of $U,V$. We have identifications
\[
\pi_1(\Sigma)\simeq_{j_U}\pi_1(U)\simeq_k\pi_1(V)\simeq_{j_V}\pi_1(\Sigma).
\]
The composition is a well defined element of ${\rm Out}^+(\pi_1(\Sigma))$ which identifies with ${\rm Mod}(\Sigma)$ by the Dehn-Nielsen-Baer Theorem (see Theorem 8.1 in \cite{FM}).

We also have the following topological useful features of product regions: By standard 3-manifold topology \cite{W68}, an embedded subsurface of $\Sigma\times[0,1]$ which is $\pi_1$-injective is parallel to $\Sigma\times\{1\}$. This implies that in both cases a product region is always parallel to the boundary of the convex core. 

Since product regions are always separating, it makes sense to say that a point $x\in N,Q$ lies above or below of $U\subset\mc{CC}(N),\mc{CC}(Q)$. Using the product structure, we can define a {\em top boundary} $\partial^+U$ and a {\em bottom boundary} $\partial^-U$. 

We will be interested in essentially two parameters of a product region $U\subset\mc{CC}(M)$ where $M$ is either a convex cocompact handlebody or a quasi-fuchsian manifold: 
\begin{itemize}
\item{The {\em diameter}, defined by $\text{\rm diam}(U):=\sup\{d_U(x,y)\left|x,y\in U\right.\}$.}
\item{The {\em width} $\text{\rm width}(U):=\inf\{d_M(x,y)\left|x\in\partial^+U,y\in\partial^-U\right.\}$.}
\end{itemize}

When the width is at least $D$ and the diameter is at most $2D$ we say that the product region has {\em size} $D$. Notice that the diameter is computed with respect to the intrinsic path metric of $U$, therefore the diameter of $U$ as a subset of $M$ is bounded from above by $\text{\rm diam}(U)$. We observe the following:

\begin{lem}
\label{inj prod region}
There exists $\ep=\ep(D)>0$ such that for each product region $U\subset\mc{CC}(M)$ of size $D$ we have $\text{\rm inj}(U):=\inf\{{\rm inj}_xM\left|x\in U\right.\}\ge\ep$.  
\end{lem}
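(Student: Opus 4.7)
The plan is to argue by contradiction using a $\pi_1$-obstruction combined with the structure of the thin part of $M$. Suppose the lemma fails: there is a sequence of convex cocompact handlebodies or quasi-fuchsian manifolds $M_n$ containing product regions $U_n\subset\mc{CC}(M_n)$ of size $D$, and points $x_n\in U_n$ with ${\rm inj}_{x_n}(M_n)\to 0$. Since the intrinsic diameter of $U_n$ is at most $2D$ and the intrinsic distance dominates the ambient one, $U_n\subset B_{M_n}(x_n,2D)$.

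Next, I localize $x_n$ inside a Margulis tube. Because $M_n$ is convex cocompact it has no cusps, so its $\eta_0$-thin part (for $\eta_0$ a Margulis constant of $\mb{H}^3$) is a disjoint union of solid tori, each an embedded $R$-neighborhood of a short closed geodesic. For $n$ large, $x_n$ lies in such a tube $\mb{T}_n$ around a geodesic $\gamma_n$ whose length tends to zero, and the explicit geometry of tubes (as used in the proof of Lemma \ref{thick near boundary}) gives that the distance from $x_n$ to $\partial\mb{T}_n$ is of order $\log(\eta_0/{\rm inj}_{x_n}(M_n))$, hence diverges. The triangle inequality then yields $B_{M_n}(x_n,2D)\subset\mb{T}_n$ for $n$ large, so $U_n\subset\mb{T}_n$.

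Finally, I derive a contradiction from $\pi_1$. In either case $U_n$ is homeomorphic to $\Sigma\times[0,1]$, so $\pi_1(U_n)\cong\pi_1(\Sigma)$ is non-cyclic (as $g\ge 2$), whereas $\pi_1(\mb{T}_n)\cong\mb{Z}$. In the quasi-fuchsian case the composition $U_n\hookrightarrow\mc{CC}(Q_n)\hookrightarrow Q_n$ is a homotopy equivalence, so $\pi_1(U_n)\to\pi_1(Q_n)$ is an isomorphism; but the inclusion $U_n\hookrightarrow Q_n$ factors through $\mb{T}_n$, forcing this isomorphism to factor through $\mb{Z}$, which is absurd. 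In the convex cocompact handlebody case the inclusion of $U_n$ into the collar of $\partial\mc{CC}(N_n)$ is a homotopy equivalence, so $\pi_1(U_n)\to\pi_1(N_n)=\mb{F}_g$ coincides with the standard surjection $\pi_1(\Sigma)\twoheadrightarrow\mb{F}_g$; if this factored through $\pi_1(\mb{T}_n)\cong\mb{Z}$ then $\mb{F}_g$ would be cyclic, again impossible for $g\ge 2$.

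The only technical point requiring care is the quantitative thin-part estimate in the second step, namely that ${\rm inj}_{x_n}(M_n)\to 0$ forces the ambient ball of fixed radius $2D$ around $x_n$ to be swallowed by a single Margulis tube. This is classical and of the same nature as the estimate already employed in the proof of Lemma \ref{thick near boundary}; everything else is a clean topological argument exploiting the non-cyclicity of $\pi_1(\Sigma)$ for $g\ge 2$.
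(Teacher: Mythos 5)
Your proposal is correct and follows essentially the same route as the paper: bounded intrinsic diameter puts the whole product region inside a single Margulis tube once the injectivity radius is small, and then the $\pi_1$-surjectivity of a product region onto $\pi_1(M)$ (an isomorphism in the quasi-fuchsian case, the standard surjection $\pi_1(\Sigma)\twoheadrightarrow\mb{F}_g$ in the handlebody case) cannot factor through $\pi_1(\mb{T})\cong\mb{Z}$. The paper states the argument directly rather than via a sequence of counterexamples, but the underlying ideas are identical.
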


\begin{proof}
A product region is always $\pi_1$-surjective. Having diameter bounded by $2D$, a product region cannot enter too deeply inside a Margulis tube $\mb{T}$ with large radius. Otherwise the surjective map $\pi_1(U)\rightarrow\pi_1(M)$ would factor through $\pi_1(U)\rightarrow\pi_1(\mb{T})$. 
\end{proof}

We think of product regions as the collars of the boundaries of submanifolds $N_0^1\subset\mc{CC}(N_1),N_0^2\subset\mc{CC}(N_2),Q_0\subset\mc{CC}(Q)$ where we will perform the gluing. We call the manifolds $N_0^1,N_0^2,Q_0$ gluing blocks: 

\begin{dfn}[Gluing Blocks]
Let $U\subset\mc{CC}(N)$ be a product region in a convex cocompact handlebody $N$. The {\em gluing block} associated to $U$ is the compact submanifold $N_0\subset N$ bounded by $\partial^+U$. Since $\partial N_0$ is parallel to $\partial\mc{CC}(N)$, we have $N_0\simeq H_g$.

Let $V_1,V_2\subset\mc{CC}(Q)$ be disjoint product regions in a quasi-fuchsian manifold $Q$ such that $V_2$ lies above $V_1$. The {\em gluing block} associated to $V_1,V_2$ is the compact submanifold $Q_0\subset Q$ bounded by $\partial^-V_1$ and $\partial^+V_2$. Since $\partial Q_0$ is parallel to $\partial\mc{CC}(Q)$, we have that $Q_0\simeq\Sigma\times[0,1]$. Notice that $V_2$ and $V_1$ are collars of the boundary of $Q_0$, we call them the {\em top} and {\em bottom collars} of $Q_0$. 
\end{dfn}

After discussing the elementary building blocks involved in the gluing procedure, we now describe the identifications $k_j:V_j\subset Q_0\rightarrow U_j\subset N_0^j$ of their collar product regions. 

\begin{dfn}[Almost Isometric]
Let $k:V\rightarrow U$ be an orientation preserving diffeomorphism between product regions $V\subset\mc{CC}(Q)$ and $U\subset\mc{CC}(N)$. For $\xi>0$ we say that $k$ is {\em $\xi$-almost isometric} if $\left|\left|\rho_V-k^*\rho_U\right|\right|_{\mc{C}^2(V)}<\xi$.
\end{dfn}

The identifications $k_1,k_2$ that we are going to use will be $\xi$-{\em almost isometric} orientation preserving diffeomorphisms between the collar product regions, and we will require that $k_1$ is in the homotopy class of the identity and $k_2$ is in the homotopy class of the mapping class $f\in{\rm Mod}(\Sigma)$. Using such identifications, we can form the 3-manifold
\[
X_f:=N_0^1\cup_{k_1:V_1\rightarrow U_1}Q_0\cup_{k_2:V_2\rightarrow U_2}N_0^2.
\] 

By the assumptions on the homotopy classes of $k_1$ and $k_2$, $X_f$ is diffeomorphic to $M_f$.

Now that we have a topological model $X_f$, in order to promote it to a Riemannian model, we only have to discuss how to endow it with a Riemannian metric. We do so by taking {\em convex combinations} $\theta_j\rho_Q+(1-\theta_j)k_j^*\rho_{N_j}$ on the gluing regions $V_j\subset Q_0$ with respect to some smooth {\em bump functions} $\theta_j:V_j\rightarrow[0,1]$. The main observation here is that on a product region with uniformly bounded size there is always a uniform bump function:

\begin{lem}
\label{bump function}
For all $D>0$ there exists $K>0$ such that the following holds: Let $U\simeq\Sigma\times[0,1]$ be a product region with ${\rm diam}(U)\le 2D$, ${\rm width}(U)\ge D$. Then there exists a smooth function $\theta:U\rightarrow[0,1]$ with the following properties:
\begin{itemize}
\item{Near the boundaries it is constant: $\theta|_{\partial_-U}\equiv 0$ and $\theta|_{\partial_+U}\equiv 1$.}
\item{Uniformly bounded $\mc{C}^2$-norm: $\left|\left|\theta\right|\right|_{\mc{C}^2(U)}\le K$.}
\end{itemize}
\end{lem}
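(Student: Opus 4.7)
The plan is to construct $\theta$ explicitly as a smooth mollification of a Lipschitz cutoff of the distance function to $\partial^-U$ in the ambient hyperbolic $3$-manifold $M$ containing $U$. Specifically, I would define the $1$-Lipschitz function $d(x):=d_M(x,\partial^-U)$ on $M$, which by the hypotheses on the size of $U$ satisfies $d\equiv 0$ on $\partial^-U$, $d\ge D$ on $\partial^+U$, and $d\le 2D$ throughout $U$. Fixing once and for all a smooth cutoff $\phi:\mb{R}\to[0,1]$ with $\phi\equiv 0$ on $(-\infty,D/4]$, $\phi\equiv 1$ on $[3D/4,\infty)$, and uniform $\mc{C}^2$-norm depending only on $D$, the composition $\psi:=\phi\circ d$ is a Lipschitz function on $M$ that is identically $0$ on the $D/4$-neighborhood of $\partial^-U$ and identically $1$ on the $D/4$-neighborhood of $\partial^+U$, but is not smooth.

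To smooth $\psi$, I would invoke the uniform lower bound ${\rm inj}_xM\ge\ep(D)>0$ at points of $U$ provided by Lemma \ref{inj prod region}. Choose $\delta:=\min\{\ep(D),D/4\}/100$ together with a smooth radial mollifier $\eta$ on $\mb{R}^3$ supported in the ball of radius $\delta$ and of unit integral, and set
\[
\theta(x):=\int_{T_xM}\psi(\exp_x(v))\,\eta(|v|)\,dv,
\]
where the integration takes place in the exponential chart, which is a diffeomorphism on $B_{T_xM}(0,\ep(D))$. Differentiation under the integral sign shows $\theta$ is smooth on $U$, and the fact that $\psi$ is constant on the $2\delta$-neighborhoods of $\partial^\pm U$ immediately gives $\theta|_{\partial^-U}\equiv 0$ and $\theta|_{\partial^+U}\equiv 1$ exactly.

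For the uniform $\mc{C}^2$-bound, the key point is that $M$ has constant sectional curvature $-1$, so the Jacobi equation gives explicit uniform control on the exponential map and its first two derivatives on balls of radius at most $\ep(D)$, depending only on $\ep(D)$, and hence only on $D$. Combined with the pre-fixed $\mc{C}^2$-bounds on $\phi$ and $\eta$, the standard chain-rule estimates for convolutions on manifolds of bounded geometry then give $\left|\left|\theta\right|\right|_{\mc{C}^2(U)}\le K(D)=:K$, which is the required bound.

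The main technical obstacle is precisely this last step of bounding the $\mc{C}^2$-norm of $\theta$ uniformly in $U$. Lemma \ref{inj prod region} enters essentially here: without a uniform lower bound on the injectivity radius at points of $U$, no mollification scale $\delta$ could be chosen uniformly in $U$, and the second derivatives of $\theta$ (which behave like $1/\delta^2$) would not be controlled purely in terms of $D$. Once the injectivity radius bound is in place, the constant curvature of $M$ makes the local geometry uniformly modeled on hyperbolic $3$-space and the construction goes through.
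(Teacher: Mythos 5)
Your proof is correct, and it takes a genuinely different route from the paper's. The paper argues by contradiction and compactness: it supposes a sequence of product regions $U_n\subset M_n$ of size $D$ with no uniform bump function, uses Lemma \ref{inj prod region} to extract a pointed geometric limit $(M_n,x_n)\to (M,x)$ via Theorem \ref{compactness}, shows that the boundary pieces converge in the Hausdorff sense to disjoint closed sets $A,B\subset B(x,3D)$, fixes a smooth cutoff on $B(x,3D)$ separating them, and pulls it back through the almost-isometric approximating maps $k_n$ to contradict the assumed nonexistence. Your approach is constructive: you build the bump function directly as a mollification of the Lipschitz cutoff $\phi\circ d_M(\cdot,\partial^-U)$, using the injectivity radius bound of Lemma \ref{inj prod region} to fix a uniform mollification scale and constant curvature $-1$ to get explicit, $U$-independent $\mc{C}^2$-control on the exponential-chart kernel. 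Both arguments ultimately lean on the same two facts (Lemma \ref{inj prod region} and the rigidity of constant-curvature local geometry), but the direct construction is more elementary, avoids the geometric-limit machinery, and gives an explicit bound $K(D)$, while the compactness argument is softer and shorter to state. One small point to tighten: rather than invoking ``differentiation under the integral sign'' directly, it is cleaner to change variables $y=\exp_x(v)$ and observe that the resulting kernel $K(x,y)=\eta(d(x,y))\,|\det D\exp_x^{-1}(y)|$ is smooth in $x$ with $\mc{C}^2$-norm depending only on $\delta$, and that the injectivity radius bound, which Lemma \ref{inj prod region} gives only on $U$, extends to a $\delta$-neighborhood of $U$ by Lipschitz continuity of the injectivity radius (uniform, since curvature is pinched); with these remarks in place the argument is complete.
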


\begin{proof}
We argue by contradiction. Let $U_n\subset\mc{CC}(M_n)$ be a sequence of product regions of size $D$ but without a uniform bump fuction. 

Pick a basepoint $x_n\in U_n$. By Lemma \ref{inj prod region}, we have ${\rm inj}_{x_n}(M_n)\ge\ep$ where $\ep=\ep(D)>0$ is a uniform constant. Thus, by Theorem \ref{compactness}, up to passing to subsequences, we can take a geometric limit $(M_n,x_n)\rightarrow(M,x)$. Since $U_n\subset B(x_n,2D)$, for every large enough $n$ we have $\xi$-almost isometric embeddings $k_n:U_n\rightarrow B(x,3D)$. 

Consider the closed sets $A_n=k_n(\partial^-U_n),B_n=k_n(\partial^+U_n)\subset B(x,3D)$. Since ${\rm width}(U_n)\ge D$, they have $d_M(A_n,B_n)\ge D/2$. Up to subsequences, they converge in the Hausdorff topology on closed subsets of $B(x,3D)$ to disjoint closed subsets $A,B$. 

Fix $r>0$ much smaller than $D/2$. Then there is a smooth function $\theta:B(x,3D)\rightarrow[0,1]$ which is 0,1 on the $r$-neighbourhoods $N_r(A),N_r(B)$. Considering $\theta_n=\theta k_n$ gives a smooth bump function on $U_n$ which is 0,1 on neighbourhoods of $\partial^-U_n,\partial^+U_n$ respectively and has uniformly bounded $\mc{C}^2$-norm as $k_n$ is $\xi$-almost isometric. This contradicts the initial assumptions and finishes the proof.
\end{proof}

We now summarize the cut and glue construction in the following lemma:

\begin{lem}
\label{cut-and-glue}
Let $\xi\in(0,1)$ be a small almost isometric parameter and $K>0$ be a $\mc{C}^2$-norm parameter for bump functions. Let $f$ be a mapping class. Let $N_1,N_2$ be convex cocompact handlebodies. Let $Q$ be a quasi-fuchsian manifold. Suppose that we have:
\begin{enumerate}
\item{A pair of handlebody gluing blocks $N_0^1\subset\mc{CC}(N_1),N_0^2\subset\mc{CC}(N_2)$ bounded by product regions $U_1$ and $U_2$.}
\item{An I-bundle gluing block $Q_0\subset\mc{CC}(Q)$ with bottom and top product region collars $V_1$ and $V_2$.}
\item{Orientation preserving diffeomorphisms $k_j:V_j\rightarrow U_j$ for $j=1,2$ with $k_1$ in the homotopy class of the identity and $k_2$ is in the homotopy class of $f$}
\item{Bump functions $\theta_j:V_j\rightarrow[0,1]$ for $j=1,2$ with $\theta_j\equiv 0$ in a small neighbourhood of $\partial^-V_j$ and $\theta_j\equiv 1$ in a small neighbourhood of $\partial^+V_j$.}
\end{enumerate}
Then we can form the 3-manifold
\[
X_f=N_0^1\cup_{k_1:V_1\rightarrow U_1}Q_0\cup\cup_{k_2:V_2\rightarrow U_2}N_0^2
\]  
and endow it with the Riemannian metric
\[
\rho:=\left\{
\begin{array}{l l}
\rho_{N_1} &\text{\rm on $N_0^1-U_1$},\\
\theta_1\rho_Q+(1-\theta_1)k_1^*\rho_{N_1} &\text{\rm on $V_1$},\\
\rho_Q &\text{\rm on $Q_0-(V_1\cup V_2)$},\\
(1-\theta_2)\rho_Q+\theta_2k_2^*\rho_{N_2} &\text{\rm on $V_2$},\\
\rho_{N_2} &\text{\rm on $N_0^2-U_2$}.\\
\end{array}
\right.
\]
Topologically, $X_f$ is diffeomorphic to the Heegaard splitting $M_f$ determined by $f$. Geometrically, we have the following: If $||\theta_j||_{\mc{C}^2(V_j)}<K$ for $j=1,2$ and $k_j$ is $\xi$-almost isometric with $\xi$ small enough compared to $K$, then $(X_f,\rho)$ has sectional curvature
\[
\left|1+\text{\rm sec}_{X_f}\right|<c_3K\xi
\]
where $c_3>0$ is a universal constant.
\end{lem}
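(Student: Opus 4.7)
The argument splits into a topological identification $X_f\simeq M_f$ and a pointwise curvature estimate; the two parts are independent.

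For the topological identification, each $N_0^j$ is a handlebody $H_g$ and $Q_0\simeq\Sigma\times[0,1]$ by the definition of gluing block. Gluing along the codimension zero collar regions via $k_1,k_2$ is equivalent, up to diffeomorphism, to gluing the three compact pieces along their boundary surfaces using the boundary restrictions of $k_1,k_2$. Under the markings $j_{U_j},j_{V_j}\colon\Sigma\to U_j,V_j$ these boundary restrictions represent the homotopy classes of $k_1$ and $k_2$ in ${\rm Mod}(\Sigma)$, which are $\mathrm{id}$ and $f$ by hypothesis. Since attaching $\Sigma\times[0,1]$ to $H_g$ along a boundary diffeomorphism homotopic to the identity recovers $H_g$, the first partial union $N_0^1\cup_{k_1}Q_0$ is again a handlebody of genus $g$; the second gluing then produces $H_g\cup_f H_g=M_f$.

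For the curvature estimate, I would first observe that on the three regions $N_0^1-U_1$, $Q_0-(V_1\cup V_2)$ and $N_0^2-U_2$ the metric $\rho$ equals one of $\rho_{N_1},\rho_Q,\rho_{N_2}$ and is hence hyperbolic, so $\mathrm{sec}\equiv-1$ there. The bump function conditions $\theta_j\equiv 0$ near $\partial^-V_j$ and $\theta_j\equiv 1$ near $\partial^+V_j$ ensure that the convex combination on $V_j$ matches smoothly with the adjacent hyperbolic metrics, so $\rho$ is a globally smooth Riemannian metric on $X_f$. It remains to bound the curvature on $V_j$. Set $g_0:=\rho_Q|_{V_j}$ and $g_1:=k_j^*\rho_{N_j}$, both hyperbolic, and $h:=g_1-g_0$, so $\|h\|_{\mc{C}^2(V_j)}<\xi$ by the almost-isometric hypothesis. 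One can write $\rho=g_0+s_j h$ with $s_1=1-\theta_1$ and $s_2=\theta_2$, each satisfying $\|s_j\|_{\mc{C}^2(V_j)}\le CK$ for a universal $C$. The Leibniz rule for the Levi-Civita connection of $g_0$ then yields the pointwise bound $\|\rho-g_0\|_{\mc{C}^2,x}\le C'K\xi$ with $C'$ universal.

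Finally, the Riemann tensor is a universal polynomial in the components of the metric, its inverse, and its partial derivatives up to second order, and at the hyperbolic $g_0$ the sectional curvature is identically $-1$. Provided $K\xi$ is small enough that $\rho$ stays in a $\mc{C}^2$-neighborhood of $g_0$ on which the curvature operator is Lipschitz with a universal constant, a Taylor expansion gives the pointwise estimate $|1+\mathrm{sec}_\rho(x)|\le c''\|\rho-g_0\|_{\mc{C}^2,x}$ with $c''$ universal. Combined with the previous step, this yields $|1+\mathrm{sec}_{X_f}|<c_3K\xi$ on each $V_j$, hence everywhere on $X_f$. The only subtlety is bookkeeping: verifying that both the Leibniz estimate for $\|s_j h\|_{\mc{C}^2}$ and the Taylor expansion of the curvature around a hyperbolic background can be carried out with constants independent of the specific convex-cocompact structures and of the chosen collars.
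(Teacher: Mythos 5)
The paper states this lemma without proof: it is treated as a routine construction and the reader is referred to the cited work of Namazi, Namazi--Souto and Brock--Minsky--Namazi--Souto for closely related arguments. Your proposal supplies the missing details, and the argument is correct and is the natural one. For the topology you use that gluing along codimension-zero collars is equivalent to gluing along the boundary surfaces and then invoke the hypotheses on the homotopy classes of $k_1,k_2$ to identify the result with $M_f$. For the curvature you write $\rho=g_0+s_jh$ on each collar with $g_0=\rho_Q$ hyperbolic, $h=k_j^*\rho_{N_j}-\rho_Q$ of $\mathcal{C}^2$-norm less than $\xi$, and $s_j\in\{1-\theta_1,\theta_2\}$ of $\mathcal{C}^2$-norm at most $1+K$; the Leibniz rule gives $\|\rho-g_0\|_{\mathcal{C}^2}\leq C'K\xi$, and since the sectional curvature is a rational expression in the $2$-jet of the metric and its inverse, it is Lipschitz in $\mathcal{C}^2$-norm near $g_0$ with universal constant, yielding the stated bound $|1+\mathrm{sec}_{X_f}|<c_3K\xi$.

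Two minor points, both harmless. First, because $\theta_j$ attains both values $0$ and $1$, one automatically has $\|\theta_j\|_{\mathcal{C}^2}\geq 1$, hence $K>1$, so the passage from $\|s_j\|_{\mathcal{C}^2}\leq 1+K$ to a bound of the form $CK$ with $C$ universal costs nothing. Second, the smoothness of $\rho$ across $\partial^\pm V_j$ uses precisely that the bump functions are locally constant there, with the sign convention arranged so that the interpolated metric agrees with $k_j^*\rho_{N_j}$ at the slice shared with $N_0^j-U_j$ and with $\rho_Q$ at the slice shared with $Q_0-(V_1\cup V_2)$; you mention this but it is worth stating explicitly, since the orientation and $\partial^\pm$ bookkeeping in the topological identification (in particular that the induced gluing of the two handlebodies is by an orientation-reversing surface diffeomorphism in the class of $f$) is the one place where care is genuinely required.
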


In the next three sections we show how to find convex-cocompact handlebodies $N_1,N_2$ and quasi-fuchsian manifolds $Q$ that satisfy the assumptions of Lemma \ref{cut-and-glue}. The conditions that we will find are variations of earlier work of Brock, Minsky, Namazi and Souto \cite{Na05}, \cite{NS09} and \cite{BMNS16}. Our setup, however, is different from the ones of those papers as we do not assume any global control of the injectivity radius of $N_1,N_2,Q$, that is, very short curves will appear in our manifolds.

\section{Collar geometry of convex-cocompact handlebodies}
\label{relativebounded}

The goal of this section is to find conditions on $Y,X\in\T$ such that a large collar of $\partial\mc{CC}(H(X))$ closely resembles a large collar of the component of $\partial\mc{CC}(Q(Y,X))$ facing the conformal boundary $X$. This will be our main tool to single out from convex cocompact handlebodies the gluing blocks $N_0^1,N_0^2$ needed for the cut and glue construction Lemma \ref{cut-and-glue}. The following is the main result: 

\begin{pro}
\label{large-thick collar}
Let $g\ge 2$ be fixed. For all $L,\delta,\xi>0$ there exists $h=h(L,\delta,\xi)>0$ such that the following holds: If the pair $(Y,X)\in\T_\delta\times\T_\delta$ has {\em relative $\delta$-bounded combinatorics} with respect to $H_g$ and {\em height} at least $h$, then the boundary of the convex core of $N=H(X)$ has a collar of width at least $L$ which is $\xi$-almost isometric to a collar about the boundary component facing $X$ of the convex core of the quasi-fuchsian manifold $Q=Q(Y,X)$.
\end{pro}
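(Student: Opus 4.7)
I would argue by contradiction using pointed geometric convergence. Assume the conclusion fails: there are $L,\delta,\xi>0$ and pairs $(Y_n,X_n)\in\T_\delta\times\T_\delta$ of relative $\delta$-bounded combinatorics with respect to $H_g$ and heights $h_n\to\infty$ such that no width-$L$ collar of $\partial\mc{CC}(H(X_n))$ is $\xi$-almost isometric to the $X$-facing collar of $\partial\mc{CC}(Q(Y_n,X_n))$. Choose basepoints $p_n\in\partial\mc{CC}(H(X_n))$ and $q_n\in\partial_X\mc{CC}(Q(Y_n,X_n))$. Theorem \ref{bridgeman-canary} places the two pleated boundary components in $\T_{2\delta/G(\delta)}$, and Lemma \ref{thick near boundary} gives a uniform lower bound $\eta=\eta(\delta,g)>0$ on the injectivity radii at the basepoints. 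After simultaneously acting by a suitable sequence in ${\rm Mod}(\Sigma)$ (which re-marks but does not change the underlying hyperbolic geometry of $H(X_n)$ or $Q(Y_n,X_n)$) and invoking Mumford cocompactness on $\T_{2\delta/G(\delta)}$ (Theorem 12.6 of \cite{FM}), I may arrange that both pleated surfaces converge as marked hyperbolic surfaces to a common limit $S_\infty$. Theorem \ref{compactness} then yields, on a further subsequence, pointed geometric limits $(H(X_n),p_n)\to(N_\infty,p_\infty)$ and $(Q(Y_n,X_n),q_n)\to(Q_\infty,q_\infty)$, each containing an isometrically embedded copy of $S_\infty$ through its basepoint.

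\textbf{Identifying the two limits.} The crux is to promote the common boundary surface $S_\infty$ to an actual isometry $\Phi:B(p_\infty,L+2)\to B(q_\infty,L+2)$ between the two limit manifolds that intertwines the basepoints and $S_\infty$. Large height forces the \emph{other} end of each manifold to recede. For $Q(Y_n,X_n)$ this is immediate: the $Y_n$-side boundary of the convex core leaves every bounded neighborhood of $q_n$, so $Q_\infty$ has a single, geometrically finite end at $X_\infty$ near $q_\infty$ with no other component of $\partial\mc{CC}$ accessible at bounded distance. For $H(X_n)$ the same conclusion requires the combinatorial hypothesis: relative $\delta$-bounded combinatorics with respect to $H_g$ together with $h_n\to\infty$ forces the image of the handlebody disk set of $H_g$ to sit arbitrarily far in the curve complex from the combinatorics of $X_n$ as seen from $p_n$, so no compressible loop of bounded length near $p_n$ survives to the limit. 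Consequently $N_\infty$ likewise has a geometrically finite end at $X_\infty$ near $p_\infty$, and both limits contain isometric copies of the canonical one-ended convex-cocompact model determined by $S_\infty$ that engulf the $(L+2)$-balls around their basepoints. Patching along $S_\infty$ gives the isometry $\Phi$.

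\textbf{Conclusion and main obstacle.} Once $\Phi$ exists, for all sufficiently large $n$ the approximating maps $k_n^N$ and $k_n^Q$ of Definition \ref{geomconv} are $\xi'$-almost isometric on the relevant balls with $\xi'$ as small as we like, and the composition $k_n^Q\circ\Phi\circ(k_n^N)^{-1}$ (appropriately restricted) is an orientation preserving $\xi$-almost isometric diffeomorphism from a width-$L$ product collar of $\partial\mc{CC}(H(X_n))$ in $H(X_n)$ onto a width-$L$ product collar of $\partial_X\mc{CC}(Q(Y_n,X_n))$ in $Q(Y_n,X_n)$. This contradicts the defining property of the sequence $(Y_n,X_n)$ and proves the proposition. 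The main obstacle is clearly the second paragraph: translating the combinatorial input (relative $\delta$-bounded combinatorics and large height) into the geometric assertion that the handlebody end of $N_\infty$ facing $X_\infty$ coincides with the quasi-fuchsian end of $Q_\infty$ facing $X_\infty$. This is precisely the kind of relative model-manifold control developed in \cite{Na05}, \cite{NS09}, \cite{BMNS16}, and its adaptation to the thick-in-the-collar but globally thin setting is what makes this section the technical heart of the construction.
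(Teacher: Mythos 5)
Your setup mirrors the paper's: argue by contradiction, choose basepoints on the convex core boundaries, invoke Bridgeman--Canary and Mumford cocompactness for a uniform injectivity radius bound and a convergent remarked sequence, and pass to pointed geometric limits $(N_\infty, p_\infty)$, $(Q_\infty, q_\infty)$.

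The gap is in the paragraph you flag as "the crux," and it is not merely a technical lacuna to be filled by citation --- the step as you state it is false. You assert that both limits "contain isometric copies of the canonical one-ended convex-cocompact model determined by $S_\infty$" and that "patching along $S_\infty$ gives the isometry $\Phi$." But a pleated boundary surface (equivalently, a boundary component of $\partial\mc{CC}$) does \emph{not} determine the geometry of a collar in the ambient $3$-manifold. For a counterexample in spirit: $Q(Y,X)$ and $Q(Y',X)$ have essentially the same $X$-facing pleated boundary (controlled by $X$ via Bridgeman--Canary), yet the metric in a width-$L$ collar of $\partial_X\mc{CC}$ depends heavily on $Y$ vs.\ $Y'$. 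So knowing that $N_\infty$ and $Q_\infty$ both contain $S_\infty$ as a convex core boundary with conformal boundary $X_\infty$ gives you no isometry between $(L+2)$-balls, and there is no "canonical one-ended convex-cocompact model" to invoke --- indeed the geometric limits here are not convex cocompact at all; they are singly degenerate.

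What the paper actually does is globalize rather than localize. It shows that $N_\infty$ (after passing to the $f_*\pi_1$-cover, which is then shown to be trivial) and $Q_\infty$ are \emph{both} singly degenerate hyperbolic structures on $\Sigma\times\mathbb{R}$, and it computes \emph{both} end invariants: the geometrically finite end has conformal boundary $X_\infty$ (Lemma~\ref{conf boundary}), and the other end is simply degenerate with ending lamination $\lambda$, where $\lambda\in\partial\mc{C}$ is simultaneously the limit of the translated disk set $\phi_n^{-1}\mc{D}$ \emph{and} of $\phi_n^{-1}\Upsilon(Y_n)$ --- this coincidence is precisely what relative $\delta$-bounded combinatorics buys, via Lemma~\ref{disks disappear}(2) and Lemma~\ref{divergence of disks}. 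Only then does the Ending Lamination Theorem (Minsky, Brock--Canary--Minsky) supply the global isometry $N_\infty\cong Q_\infty$, which is transported back by the approximating maps. Your proposal never identifies the degenerate end invariant, never observes that the two degenerate ends have the same lamination, and never invokes a rigidity theorem; it tries to conclude from the shared boundary surface alone, which is insufficient. The reference to the model-manifold machinery of \cite{Na05}, \cite{NS09}, \cite{BMNS16} is also a misdirection: that machinery (uniform bi-Lipschitz models) requires global bounded geometry, which is exactly what the paper's setting does not have, and the paper explicitly avoids it by working with geometric limits and the Ending Lamination Theorem instead.
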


The proof of Proposition \ref{large-thick collar} will be only carried out at the end of the section. We begin, instead, with the definition and discussion of the condition of {\em relative $\delta$-bounded combinatorics} and large {\em height} which is a variation of the ones described in \cite{Na05}, \cite{NS09} and \cite{BMNS16}. The deep connection between bounded combinatorics and the geometry of quasi-fuchsian manifolds was discovered originally by Minsky \cite{M01}, \cite{M10}. However, the Teichm\"uller perspective that we adopt here is closer to the work of Rafi \cite{R05} (see Theorem \ref{bounded} in the next section).

In order to describe what we mean by relative bounded combinatorics we briefly recall some facts about the {\em curve graph} and the {\em disk graph} and their relation with Teichm\"uller space.

\subsection{Curve graph} 
The {\em curve graph} of $\Sigma$ is the graph $\mc{C}$ whose vertices are isotopy classes of essential simple closed curves on $\Sigma$ and where two such curves are connected by an edge of length one if and only if they can be realized disjointly. 

Masur and Minsky proved in \cite{MM99} that this graph is a {\em Gromov hyperbolic} space of infinite diameter, and Klarreich \cite{K99} identified the {\em Gromov boundary} $\partial\mc{C}$ with the space of {\em filling unmeasured laminations} 
(see also \cite{H06} for a different approach).

Convergence to the boundary is governed by the {\em Gromov product} (see Section 3 of Chapter III.H of \cite{BH99}):

\begin{dfn}[Gromov Product and Convergence]
Given $\alpha,\beta,\gamma\in\mc{C}$, the quantity 
\[
(\alpha|\beta)_\gamma:=\frac{1}{2}[d_\mc{C}(\alpha,\gamma)+d_\mc{C}(\beta,\gamma)-d_\mc{C}(\alpha,\beta)]
\]
is the Gromov product of $\alpha,\beta$ based at $\gamma$. A sequence $\{\alpha_n\}_{n\in\mb{N}}\subset\mc{C}$ {\em converges at infinity} to a point in $\partial\mc{C}$ if and only if for some base point $\gamma$ (and hence for any) we have $\liminf_{n,m\rightarrow\infty}(\alpha_n|\alpha_m)_\gamma\rightarrow\infty$. If $\{\alpha_n\}_{n\in\mb{N}}$ converges at infinity and $\{\beta_n\}_{n\in\mb{N}}$ satisfies $\liminf_{n,m\to\infty}(\alpha_n|\beta_m)=\infty$, then $\{\beta_n\}_{n\in\mb{N}}$ converges to the same point in $\partial\mc{C}$.  
\end{dfn}

The geometry of the curve graph is coarsely tied to the geometry of Teichm\"uller space: There is a (coarsely well-defined) ${\rm Mod}(\Sigma)$-equivariant Lipschitz map $\Upsilon:\T\to\mc{C}$, called the {\em systole map}, that associates to every marked hyperbolic structure $X\in\T$ a shortest geodesic $\Upsilon(X)$ on it. It follows from Masur-Minsky \cite{MM99} that there exist constants $L,C>0$ only depending on $\Sigma$ such that for every Teichm\"uller geodesic $l:I\to\T$ (here $I$ can be an interval, a half-line or the whole real line) the composition $\Upsilon l:I\to\mc{C}$ is an {\em unparametrized} $(L,C)$-{\em quasi-geodesic}. Moreover, if we restrict our attention to the $\delta$-thick part $\T_\delta$ of Teichm\"uller space, then the situation improves: In \cite{H10} it is shown that for every $\delta>0$ there exist $L_\delta,C_\delta>0$ such that if $l$ is parameterized by arc length on an interval of length $l(I)\ge L_\delta$ and if $l(I)\subset \T_\delta$ then $\Upsilon l$ is a {\em parametrized} $(L_\delta,C_\delta)$-quasi-geodesic.

\subsection{Disk graph}
Since for us $\Sigma=\partial H_g$ is the boundary of the handlebody $H_g$, we can associate to it also a {\em disk graph} $\mc{D}$ which is the subgraph of $\mc{C}$ spanned by disk-bounding curves. Masur and Minsky showed in \cite{MM95} that the disk graph $\mc{D}$ is a {\em quasi-convex} subset of the curve graph $\mc{C}$. Being quasi-convex, by hyperbolicity of $\mc{C}$, there is a coarsely defined nearest point projection $\pi_{\mc{D}}:\mc{C}\rightarrow\mc{D}$.

\subsection{Relative bounded combinatorics}
We are ready to define the notion of {\em relative bounded combinatorics} and {\em height} that we will use: We fix, once and for all, a sufficiently small threshold $\delta>0$. 

\begin{dfn}[Relative Bounded Combinatorics]
Consider $Y,X\in\T$. We say that $(Y,X)$ has {\em relative $\delta$-bounded combinatorics} with respect to the handlebody $H_g$ if the Teichm\"uller geodesic $[Y,X]$ is contained in $\T_\delta$ and 
\[
d_\mc{C}(\mc{D},\Upsilon(Y))+d_\mc{C}(\Upsilon(Y),\Upsilon(X))\le d_\mc{C}(\mc{D},\Upsilon(X))+\frac{1}{\delta}.
\]
The {\em height} of the pair $(Y,X)$ is $d_\T(Y,X)$. 
\end{dfn}

Notice that, since $[Y,X]\subset\T_\delta$, by work of Hamenst\"adt \cite{H10}, we have $d_{\mc{C}}(\Upsilon(Y),\Upsilon(X))\ge d_\T(Y,X)/L_\delta-C_\delta$ for some uniform $L_\delta,C_\delta$ provided that $d_\T(Y,X)\ge L_\delta$. In particular, $d_{\mc{C}}(\Upsilon(X),\mc{D})$ is coarsely uniformly bounded from below by the height $d_\T(Y,X)$ provided that this is sufficiently large.

We have the following properties:
 
\begin{lem}
\label{disks disappear}
Fix $g\ge 2$ and $\delta>0$. Let $(Y_n,X_n)$ be a sequence of pairs that have relative $\delta$-bounded combinatorics with respect to $H_g$ and heights $h_n\uparrow\infty$. Then:
\begin{enumerate}
\item{We have
\[
 (\zeta|\Upsilon(Y_n))_{\Upsilon(X_n)}\rightarrow\infty
\]
uniformly in $\zeta\in\mc{D}$.}
\item{If $\phi_n\in{\rm Mod}(\Sigma)$ are mapping classes such that $\phi_nX_n$ lies in a fixed compact set of $\T_\delta$, then, up to passing to subsequences, the sequence of geodesics $\phi_n[X_n,Y_n]$ converges to a Teichm\"uller ray which is entirely contained in $\T_\delta$ and converges to a uniquely ergodic filling lamination $[\lambda]\in\mc{PML}$. Moreover, $\phi_n\mc{D}$ and $\Upsilon(\phi_nY_n)$ both converge in $\mc{C}\cup\partial\mc{C}$ to the point of $\partial\mc{C}$ defined by $\lambda$.}
\item{The length on $X_n$ of the shortest compressible curve in $H_g$ diverges.}
\end{enumerate}
\end{lem}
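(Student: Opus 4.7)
\emph{Approach.} All three claims are driven by a single picture: relative $\delta$-bounded combinatorics with large height forces $\Upsilon(Y_n)$ to sit essentially on a curve-graph geodesic from $\mc{D}$ to $\Upsilon(X_n)$, so that any disk $\zeta\in\mc{D}$ and $\Upsilon(Y_n)$ look like they converge to the same ideal point of $\mc{C}$ when viewed from $\Upsilon(X_n)$.

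For (1), the plan is to exploit the Masur--Minsky $Q$-quasi-convexity of $\mc{D}$ in the Gromov hyperbolic space $\mc{C}$. Set $p_n=\Upsilon(X_n)$, $q_n=\Upsilon(Y_n)$, and let $\pi_n,\pi_n'\in\mc{D}$ be nearest-point projections of $p_n$ and $q_n$. Rewriting the bounded-combinatorics inequality as
\[
d_\mc{C}(q_n,\pi_n')+d_\mc{C}(q_n,p_n)\le d_\mc{C}(p_n,\pi_n)+1/\delta,
\]
combined with the triangle inequality, forces $d_\mc{C}(p_n,\pi_n')\approx d_\mc{C}(p_n,\pi_n)=d_\mc{C}(\mc{D},p_n)$; thinness of triangles then places $\pi_n'$ within $O(1)$ of $\pi_n$. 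Quasi-convexity further ensures that for every $\zeta\in\mc{D}$, the geodesic $[\zeta,q_n]$ passes $O(1)$-close to $\pi_n'$, hence to $\pi_n$. Consequently
\[
d_\mc{C}(p_n,[\zeta,q_n])\ge d_\mc{C}(\mc{D},p_n)-O(1)
\]
uniformly in $\zeta$. Combined with the standard identity $(\zeta|q_n)_{p_n}\approx d_\mc{C}(p_n,[\zeta,q_n])$ and the lower bound $d_\mc{C}(\mc{D},p_n)\ge d_\mc{C}(q_n,p_n)-1/\delta \ge h_n/L_\delta - C_\delta - 1/\delta$ (from the bounded combinatorics and Hamenst\"adt's parameterized quasi-geodesic theorem for $\Upsilon$ on thick Teichm\"uller geodesics), this shows that $(\zeta|q_n)_{p_n}\to\infty$ uniformly in $\zeta\in\mc{D}$.

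For (2), first extract a subsequence so that $\phi_n X_n\to X_\infty\in\T_\delta$. The reversed Teichm\"uller geodesics $\phi_n[X_n,Y_n]$ are based at $\phi_n X_n$, have length $h_n\to\infty$, and lie entirely in the closed set $\T_\delta$; by Arzel\`a--Ascoli a further subsequence converges to a Teichm\"uller ray $r:[0,\infty)\to\T_\delta$ emanating from $X_\infty$. Its projection to moduli space stays in the $\delta$-thick part, which is compact by Mumford, so Masur's criterion forces the vertical foliation $[\lambda]\in\mc{PML}$ of $r$ to be uniquely ergodic and filling. Klarreich's identification of $\partial\mc{C}$ with filling unmeasured laminations -- equivalently, the fact that $\Upsilon\circ r$ is a parameterized quasi-geodesic in $\mc{C}$ -- then gives $\Upsilon(\phi_n Y_n)\to[\lambda]$ in $\mc{C}\cup\partial\mc{C}$. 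Applying $\phi_n$ to (1), and using that $\Upsilon(\phi_n X_n)$ remains bounded, yields $(\phi_n\zeta_n|\Upsilon(\phi_n Y_n))_{\gamma_0}\to\infty$ uniformly for any $\zeta_n\in\mc{D}$ and any fixed basepoint $\gamma_0$; combining this with $\Upsilon(\phi_n Y_n)\to[\lambda]$ and the thin-triangle inequality for Gromov products forces $\phi_n\zeta_n\to[\lambda]$, i.e.\ $\phi_n\mc{D}\to[\lambda]$.

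Part (3) follows by contradiction from (2). If the shortest compressible curve on $X_n$ has length $\le L$ along a subsequence, call it $\gamma_n$, and pick $\phi_n$ by Mumford so that $\phi_n X_n$ lies in a fixed compact $K\subset\T_\delta$. Then $\phi_n\gamma_n$ has length $\le L$ on $\phi_n X_n\in K$, and only finitely many simple closed geodesics of length $\le L$ exist on surfaces in $K$; a further subsequence satisfies $\phi_n\gamma_n=\gamma$ constant. Compressibility of $\gamma_n$ in $H_g$ means $\gamma\in\phi_n\mc{D}$ for all $n$, but a constant sequence cannot converge to a boundary point of $\mc{C}$, contradicting (2). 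The main obstacle is (1): aligning the projections $\pi_n,\pi_n'$ via the relative bounded combinatorics and uniformly placing $[\zeta,q_n]$ near them requires careful use of quasi-convexity; once (1) is in hand, (2) and (3) are routine packaging via standard compactness (Mumford, Arzel\`a--Ascoli), Masur's criterion, Klarreich's boundary identification, and a finiteness count for short geodesics.
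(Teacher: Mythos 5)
Your arguments for (1) and (2) follow essentially the paper's strategy (Gromov products, quasi-convexity of $\mc{D}$, Mumford compactness, Masur's criterion, Klarreich's boundary identification), and part (3) takes a different route. Two issues need to be addressed.

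In part (1), the intermediate inequality $d_\mc{C}(p_n,[\zeta,q_n])\ge d_\mc{C}(\mc{D},p_n)-O(1)$ is false in general. The content of relative bounded combinatorics is that $q_n$ lies within $O(1)$ of a geodesic from $\pi_n:=\pi_\mc{D}(p_n)$ to $p_n$: indeed your own identification $\pi_n'\approx\pi_n$, together with $d_\mc{C}(\mc{D},q_n)+d_\mc{C}(q_n,p_n)\le d_\mc{C}(\mc{D},p_n)+1/\delta$, forces $d_\mc{C}(\pi_n,q_n)+d_\mc{C}(q_n,p_n)\approx d_\mc{C}(\pi_n,p_n)$. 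Hence the part of $[\zeta,q_n]$ beyond $\pi_n'$ coarsely coincides with the initial segment of $[\pi_n,p_n]$ ending at $q_n$, whose closest point to $p_n$ is $q_n$ itself. As $q_n$ is an endpoint of $[\zeta,q_n]$ and $d_\mc{C}(p_n,q_n)=d_\mc{C}(\mc{D},p_n)-d_\mc{C}(\mc{D},q_n)+O(1)$, your claimed lower bound fails whenever $d_\mc{C}(\mc{D},q_n)$ is large, which the hypotheses do not exclude. The correct estimate from your picture is $d_\mc{C}(p_n,[\zeta,q_n])\ge d_\mc{C}(p_n,q_n)-O(1)$, which still diverges because $d_\mc{C}(p_n,q_n)\ge h_n/L_\delta-C_\delta$, so the conclusion survives, but the detour through $d_\mc{C}(\mc{D},p_n)$ should be dropped. (The paper computes the Gromov product directly, splitting off the projections $\ol\alpha_n=\pi_\mc{D}(\alpha_n)$ and $\ol\beta_n=\pi_\mc{D}(\beta_n)$ and obtaining $(\zeta|\alpha_n)_{\beta_n}\gtrapprox d_\mc{C}(\alpha_n,\beta_n)$.)

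For part (3), your route via a finiteness count of short geodesics combined with (2) differs from the paper's direct estimate, which pairs the Collar Lemma bound $L_{X_n}(\gamma_n)\ge c\cdot i(\Upsilon(X_n),\gamma_n)$ with the elementary inequality $d_\mc{C}(\alpha,\beta)\le 2i(\alpha,\beta)+2$ and the divergence of $d_\mc{C}(\Upsilon(X_n),\mc{D})$. Both approaches work, but yours has a genuine gap: to write $\gamma=\phi_n\gamma_n\in\phi_n\mc{D}$ and then invoke (2), you need $\gamma_n$ to be a \emph{simple} closed curve, and this is not automatic for a shortest compressible curve. The paper establishes simplicity by perturbing $\gamma_n$ to a $4$-valent graph and applying the Loop Theorem, which produces a strictly shorter simple diskbounding subcurve unless $\gamma_n$ was already simple and embedded. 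You should either include this step, or pass from $\gamma_n$ to such a simple diskbounding curve of no greater length, before the finiteness and constancy argument can be applied.
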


In the proof and in the sequel we use the following notations:

\begin{center}
\label{notation2}
\begin{minipage}{.8\linewidth}
\textbf{Notation}. If $X$ is a hyperbolic surface and $\gamma:S^1\to X$ is a smooth closed curve, then we denote by $L(\gamma)$ the length of $\gamma$, and by $L_X(\gamma)$ the length of the geodesic representative of $\gamma$ on $X$. For a curve $\gamma$ in a hyperbolic 3-manifold $M$ we use the notation $l(\gamma)$ and $l_M(\gamma)$ for the analogous quantities.
\end{minipage}
\end{center}

\begin{proof}
{\bf Property (1)}. For simplicity, define $\alpha_n:=\Upsilon(Y_n)$ and $\beta_n:=\Upsilon(X_n)$. Notice that $d_{\mc{C}}(\alpha_n,\beta_n)\ge h_n/L_\delta-C_\delta$ because $\Upsilon$ restricted to $[Y_n,X_n]$ is a parametrized $(L_\delta,C_\delta)$-quasi geodesic by \cite{H10}. In particular $d_\mc{C}(\alpha_n,\beta_n)\rightarrow\infty$.

We show that $(\zeta|\alpha_n)_{\beta_n}\gtrapprox d_{\mc{C}}(\alpha_n,\beta_n)$ for every $\zeta\in\mc{D}$ (here the symbol $\gtrapprox$ means greater up to a uniform additive constant). Consider the nearest point projection $\ol{\alpha}_n:=\pi_\mc{D}(\alpha_n)$. Recall that $\mc{C}$ is Gromov hyperbolic. By basic properties of Gromov products in Gromov hyperbolic spaces, we have
\[
(\zeta|\alpha_n)_{\beta_n}\gtrapprox\min\{(\ol{\alpha}_n|\alpha_n)_{\beta_n},(\zeta|\ol{\alpha}_n)_{\beta_n}\}.
\]
So, it is enough to show that both $(\ol{\alpha}_n|\alpha_n)_{\beta_n}$ and $(\zeta|\ol{\alpha}_n)_{\beta_n}$ are at least $d_\mc{C}(\alpha_n,\beta_n)$. Consider first $(\ol{\alpha}_n|\alpha_n)_{\beta_n}$. We have:
\begin{align*}
d_\mc{C}(\beta_n,\ol{\alpha}_n) &\ge d_\mc{C}(\beta_n,\mc{D})\\
 &\ge d_\mc{C}(\alpha_n,\beta_n)+d_\mc{C}(\alpha_n,\mc{D})-1/\delta\\
 &=d_\mc{C}(\alpha_n,\beta_n)+d_\mc{C}(\alpha_n,\ol{\alpha}_n)-1/\delta.
\end{align*}
Thus, we get $(\ol{\alpha}_n|\alpha_n)_{\beta_n}\ge d_\mc{C}(\alpha_n,\beta_n)-1/2\delta$. 

Now consider $(\zeta|\ol{\alpha}_n)_{\beta_n}$. Let $\ol{\beta}_n:=\pi_\mc{D}(\beta_n)$ be the nearest point projection of $\beta_n$ to $\mc{D}$. By quasi-convexity of $\mc{D}$, for every $\gamma\in\mc{D}$ we have $d_\mc{C}(\beta_n,\gamma)\approx d_\mc{C}(\beta_n,\ol{\beta}_n)+d_\mc{C}(\ol{\beta}_n,\gamma)$ (here the symbol $\approx$ means equal up to a uniform additive constant). In particular this holds for $\gamma=\ol{\alpha}_n,\zeta$. Therefore
\begin{align*}
2(\zeta|\ol{\alpha}_n)_{\beta_n} &=d_\mc{C}(\beta_n,\zeta)+d_\mc{C}(\beta_n,\ol{\alpha}_n)-d_\mc{C}(\zeta,\ol{\alpha}_n)\\
 &\approx 2d_\mc{C}(\beta_n,\ol{\beta}_n)+d_\mc{C}(\ol{\beta}_n,\zeta)+d_\mc{C}(\ol{\beta}_n,\ol{\alpha}_n)-d_\mc{C}(\zeta,\ol{\alpha}_n)\\
  &\ge 2d_\mc{C}(\beta_n,\ol{\beta}_n)=2d_\mc{C}(\beta_n,\mc{D})\ge 2d_\mc{C}(\alpha_n,\beta_n)-2/\delta.
\end{align*}

{\bf Property (2)}. Since $\phi_n[X_n,Y_n]\subset\T_\delta$, $d_\T(\phi_nX_n,\phi_nY_n)=h_n\uparrow\infty$, and $\phi_nX_n$ lies in a fixed compact subset of $\T_\delta$, up to extracting subsequences, the sequence of geodesics $\phi_n[X_n,Y_n]$ converges uniformly on compact subsets to a geodesic ray $l:[0,\infty)\rightarrow\T$ that stays in $\T_\delta$. By work of Masur \cite{Mas80}, \cite{Mas92}, the ray $l$ converges to the projective class of a filling uniquely ergodic measured lamination $[\lambda]\in\mc{PML}$. This implies that $\phi_nY_n$ converges to $[\lambda]$ as well. By a result of Klarreich \cite{K99}, $[\lambda]$ defines a point on $\partial\mc{C}$ and the sequence of simple closed curves $\Upsilon(\phi_nY_n)$ converges to it. 

Denote by $\phi_n\alpha_n=\Upsilon(\phi_nY_n),\phi_n\beta_n=\Upsilon(\phi_nX_n)$ a pair of shortest closed geodesics for the metrics $\phi_nY_n,\phi_nX_n$. Since $\phi_nX_n$ is converging, we can assume that $\phi_n\beta_n$ is constant. Consider a sequence of translates of disks $\phi_n\zeta_n$ with $\zeta_n\in\mc{D}$. In order to prove that $\phi_n\zeta_n$ converges to $\lambda$, it is enough to show that $(\phi_m\zeta_m|\phi_n\alpha_n)_{\phi_n\beta_n}\rightarrow\infty$ as $n,m\rightarrow\infty$. 

Fix $M>0$. By the convergence $\phi_n\alpha_n\rightarrow\lambda$, there exists $N>0$ such that for every $n,m\ge N$ we have $(\phi_m\alpha_m|\phi_n\alpha_n)_{\phi_n\beta_n}\ge M$. By property (1) we can also assume that for every $m\ge N$ we have $(\phi_m\zeta_m|\phi_m\alpha_m)_{\phi_m\beta_m}\ge M$. The claim follows from basic properties of Gromov products: Recall that $\phi_n\beta_n$ is constant
\[
(\phi_m\zeta_m|\phi_n\alpha_n)_{\phi_n\beta_n}\gtrapprox\min\{(\phi_m\zeta_m|\phi_m\alpha_m)_{\phi_m\beta_m},(\phi_m\alpha_m|\phi_n\alpha_n)_{\phi_n\beta_n}\}\ge M
\]
for every $n,m\ge N$. 

{\bf Property (3)}. Let $\gamma_n$ be a shortest goedesic for $X_n$ which is compressible in $H_g$. We first show that $\gamma_n$ is simple: Notice that $\gamma_n$ is primitive. We slightly perturb $\gamma_n$ at the self intersections to represent it as a 4-valent graph on $X_n$. By the Loop Theorem there is a simple cycle in this graph that represents a diskbounding curve $\zeta_n$. Such a surgery of $\gamma_n$ has smaller length. Since $\gamma_n$ is the shortest curve which is compressible in $H_g$, we must have that $\gamma_n$ is simple. 

Consider $\beta_n=\Upsilon(X_n)$. Denote by $i(\cdot,\cdot)$ the geometric intersection number between two simple closed curves. Since $L_{X_n}(\beta_n)$ is uniformly bounded away from 0 and $\infty$, a standard consequence of the Collar Lemma gives us a constant $c>0$ such that $L_{X_n}(\gamma_n)\ge c\cdot i(\beta_n,\gamma_n)$. Thus, it is enough to show that $i(\beta_n,\gamma_n)\rightarrow\infty$. This comes from the fact that $\gamma_n\in\mc{D}$ is compressible, the distances $d_\mc{C}(\beta_n,\mc{D})$ diverge, and the fact that distances in the curve graph are bounded by intersections $d_{\mc{C}}(\beta_n,\gamma_n)\le 2i(\beta_n,\gamma_n)+2$ (see Lemma 2.1 in \cite{MM99}).
\end{proof}

\subsection{The proof of Proposition \ref{large-thick collar}}
We are now ready to prove the main result of this section. 

The strategy is easy to state: We argue by contradiction. Suppose that, for some fixed $\delta >0$ and numbers $L>0,\xi>0$, 
we have a sequence of counterexamples $N_n=H(X_n)$ and $Q_n=Q(Y_n,X_n)$ with relative $\delta$-bounded combinatorics and diverging heights, but not satisfying the conclusion of the proposition.

Using the results from Section \ref{hyperbolicstructures}, we can pass to geometric limits $f:X\rightarrow N$ and $f':X'\rightarrow Q$ of the sequences $\partial\mc{CC}(N_n)\subset N_n$ and $\partial_{X_n}\mc{CC}(Q_n)\subset Q_n$ (the component of the boundary of the convex core facing the conformal boundary $X_n$). The main point of the proof is the following claim:

{\bf Claim}: The hyperbolic manifolds $N$ and $Q$ are {\em singly degenerate} hyperbolic structures on $\Sigma\times\mb{R}$ with the same {\em end invariants}.

For a comprehensive discussion of ends and end invariants of hyperbolic 3-mani\-folds, we refer to Section 2 of \cite{M10}.

Once we know that the claim holds, the solution of the Ending Lamination Conjecture by Minsky \cite{M10} and Brock-Canary-Minsky \cite{BCM12} will tell us that $Q$ and $N$ are isometric via an orientaion preserving isometry in the correct homotopy class. By the definition and properties of geometric convergence, this is enough to find $\xi$-almost isometric embeddings in the right homotopy classes of collars of any arbitrary fized size $k:V_n\subset\mc{CC}(Q_n)\rightarrow U_n\subset\mc{CC}(N_n)$ for all sufficiently large $n$. Thus we will obtain a contradiction to the initial assumptions.

For convenience, we divide the proof of the claim into several small steps. 

We first consider the limit geometric limit $f:X\rightarrow N$. To begin with, recall that in our setup we have a diagram, commutative up to homotopy, which is provided by geometric convergence
\[
 \xymatrix{
 \partial\mc{CC}(N_n)\ar[r]^(0.60){f_n} &N_n\\
 X\ar[r]_{f}\ar[u]^{\phi_n} &N\ar[u]_{k_n}.\\
 }
\]
Here the vertical arrows are the approximating maps, $f_n:\partial\mc{CC}(N_n)\rightarrow N_n$ is the inclusion of the boundary of the convex core, and $f:X\rightarrow N$ is the limit pleated surface in $N$. 

We give an arbitrary marking to $X$ in order to identify it to a point in $\T$. For simplicity, with a little abuse of notations, we will also denote by $\phi_n$ the homotopy class of the map $\phi_n:\partial\mc{CC}(N_n)\rightarrow X$ with respect to the markings on $\partial\mc{CC}(N_n)$ and $X$. With this notation we have that $\phi_n^{-1}\partial\mc{CC}(N_n)$ converges to $X$ as a sequence of points in $\T$. 

\begin{lem}
\label{limit incompressible}
The map $f$ is incompressible.
\end{lem}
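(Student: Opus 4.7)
The plan is to argue by contradiction using Lemma~\ref{disks disappear}(3), which under our hypotheses says that the length on $X_n$ of the shortest curve compressible in $H_g$ diverges. Suppose there is an essential simple closed curve $\gamma$ on $X$ whose image $f(\gamma)$ is null-homotopic in $N$. I will transport $\gamma$ through the commutative diagram provided by geometric convergence into a bounded-length compressible curve on $X_n$, contradicting the lemma.

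First, fix $R>0$ large enough that $B_X(x,R)$ contains $\gamma$ and $B_N(m,R)$ contains both $f(\gamma)$ and a chosen null-homotopy of it. For all $n$ sufficiently large, the approximating maps $\phi_n\colon X\to\partial\mc{CC}(N_n)$ and $k_n\colon N\to N_n$ are defined on these balls and are $\xi_n$-almost isometric with $\xi_n\to 0$. Since $X$ and $\partial\mc{CC}(N_n)$ are closed hyperbolic surfaces of genus $g$ of uniformly bounded diameter, for large $n$ the approximating map $\phi_n$ is in fact a global diffeomorphism; in particular $\phi_n(\gamma)$ is an essential simple closed curve on $\partial\mc{CC}(N_n)$ of length close to $L_X(\gamma)$.

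Next, pushing the chosen null-homotopy forward by $k_n$ produces a null-homotopy of $k_nf(\gamma)$ in $N_n$. Since the diagram commutes up to homotopy, $f_n\phi_n(\gamma)$ is freely homotopic to $k_nf(\gamma)$ in $N_n$, hence is itself null-homotopic. Because $f_n$ is the inclusion $\partial\mc{CC}(N_n)\hookrightarrow N_n\simeq H_g$, this means $\phi_n(\gamma)$ is compressible in $H_g$ with respect to the marking. Pushing this curve onto the conformal boundary via the $G(\delta)$-Lipschitz homotopy inverse of the nearest point retraction from Theorem~\ref{bridgeman-canary} produces a curve on $X_n$ of uniformly bounded length that is still compressible in $H_g$, contradicting Lemma~\ref{disks disappear}(3).

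The main point to be careful with is the essentiality and the correct homotopy class of $\phi_n(\gamma)$ on the whole of $\partial\mc{CC}(N_n)$, not merely on the image of $\phi_n$. This relies on $X$ being a closed surface and on the approximating maps becoming genuine diffeomorphisms for large $n$, which in turn uses the uniform diameter bound on $\partial\mc{CC}(N_n)$ coming from cocompactness of the ${\rm Mod}(\Sigma)$-action on the thick part of Teichm\"uller space, as in Lemma~\ref{thick near boundary}. One should also check that the homotopy between $f_n\phi_n$ and $k_nf$ stays inside $N_n$; this is ensured by the clause in the definition of geometric convergence stating that such homotopies take place in small neighbourhoods of the images of $f_n\phi_n$ and $k_nf$.
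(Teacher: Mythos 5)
Your proof is correct and follows essentially the same route as the paper's: assume an essential curve $\gamma$ on $X$ compresses, push a null-homotopy of $f(\gamma)$ forward via $k_n$, use the local homotopy between $k_nf$ and $f_n\phi_n$ to conclude $\phi_n(\gamma)$ is null-homotopic in $N_n$, then transfer the length bound to $X_n$ via Theorem~\ref{bridgeman-canary} and contradict Lemma~\ref{disks disappear}(3). Your added remarks on $\phi_n$ becoming a global diffeomorphism and on the homotopies staying in small neighbourhoods are reasonable clarifications of details the paper leaves implicit.
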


\begin{proof}
Suppose that $f$ is compressible. Let $\gamma\subset X$ be an essential closed curve such that $f(\gamma)$ is null-homotopic in $N$. If $n$ is large enough, then we can transport a null-homotopy of $f(\gamma)$ to $N_n$ using the approximating map $k_n$. Since $k_nf$ is locally homotopic to $f_n\phi_n$, we conclude that $\phi_n(\gamma)\subset\partial\mc{CC}(N_n)$ is null-homotopic in $N_n$ for every large $n$. But the length of $\phi_n(\gamma)$ on $\partial\mc{CC}(N_n)$ is roughly the length of $\gamma$ on $X$, in particular it is bounded. By Theorem \ref{bridgeman-canary}, $\phi_n(\gamma)$ has also uniformly bounded length on $X_n$. This contradicts property (3) of Lemma \ref{disks disappear}.
\end{proof}

Let us consider now the $f_*\pi_1(X)$-covering of $N$ which we denote by $p:\ol{N}\rightarrow N$. By covering theory, the map $f:X\rightarrow N$ lifts to $\ol{N}$, and the lift $\ol{f}:X\rightarrow\ol{N}$ is a homotopy equivalence. We fix once and for all such a lift $\ol{f}$. 

By work of Thurston \cite{Th79} and Bonahon \cite{Bo86}, we know that, in this setting, $\ol{N}$ is homeomorphic to $\Sigma\times\mb{R}$.

The next step of the proof consists in analizing the ends of $\ol{N}$: We show that $\ol{N}$ has a visible {\em geometrically finite} end and another one which is {\em simply degenerate}, both homeomorphic to $\Sigma\times[0,\infty)$.

\begin{lem}
\label{divergence of disks}
Up to passing to a subsequence of the sequence $N_n$, we have
\begin{itemize}
\item{The sequence $\phi_n^{-1}X_n$ converges to $X_\infty\in\T_\delta$.}
\item{There exists a filling lamination $\lambda\in\partial\mc{C}$ with the following property: If $\zeta_n\in\mc{D}$ is a simple closed diskbounding curve for each $n$, then the sequence of simple closed curves $\phi_n^{-1}\zeta_n$ converges to $\lambda$. Furthermore, $\lambda$ is also the limit of the sequence $\phi_n^{-1}\Upsilon(Y_n)$.}
\end{itemize}
\end{lem}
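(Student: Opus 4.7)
\textbf{Proof proposal for Lemma \ref{divergence of disks}.}

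The plan is to reduce both statements to Property (2) of Lemma \ref{disks disappear}, applied to the sequence of mapping classes $\phi_n^{-1}\in\mathrm{Mod}(\Sigma)$ coming from the approximating maps. The key preliminary step is therefore to verify the hypothesis of that property, namely that $\phi_n^{-1}X_n$ stays in a fixed compact subset of $\T_\delta$.

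First, I would compare $X_n$ with $\partial\mathcal{CC}(N_n)$. Since $X_n\in\T_\delta$, Theorem \ref{bridgeman-canary} applies (the bound on the length of compressible curves comes from Property (3) of Lemma \ref{disks disappear}, which gives a lower bound growing with $n$; but the simpler consequence of having $X_n\in\T_\delta$ is enough to get that $\partial\mathcal{CC}(N_n)\in\T_{2\delta/G(\delta)}$ and that the nearest point retraction is a $(J(\delta),G(\delta))$-bi-Lipschitz homotopy equivalence between the two hyperbolic surfaces). In particular, $d_\T(X_n,\partial\mathcal{CC}(N_n))\le D$ for a uniform constant $D=D(\delta)$. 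Applying the isometry $\phi_n^{-1}$ of $\T$ yields
\[
d_\T\bigl(\phi_n^{-1}X_n,\,\phi_n^{-1}\partial\mathcal{CC}(N_n)\bigr)\le D.
\]
By the definition of the approximating maps, $\phi_n^{-1}\partial\mathcal{CC}(N_n)\to X$ in $\T$, so the sequence $\phi_n^{-1}X_n$ is contained in a bounded Teichm\"uller ball about $X$. Since $\phi_n^{-1}X_n\in\T_\delta$ (isometries of $\T$ preserve thickness), Mumford's compactness theorem gives a subsequence converging to a limit $X_\infty\in\T_\delta$, proving the first bullet.

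For the second bullet, I now apply Property (2) of Lemma \ref{disks disappear} to the mapping classes $\phi_n^{-1}$: the hypothesis $\phi_n^{-1}X_n$ lies in a fixed compact subset of $\T_\delta$ was just verified. The conclusion is that, up to a further subsequence, $\phi_n^{-1}[X_n,Y_n]$ converges to a Teichm\"uller ray in $\T_\delta$ whose endpoint on $\partial\T$ is a uniquely ergodic filling lamination $\lambda\in\mathcal{PML}$, and that both $\phi_n^{-1}\mathcal{D}$ and $\Upsilon(\phi_n^{-1}Y_n)$ converge to $\lambda\in\partial\mathcal{C}$ in $\mathcal{C}\cup\partial\mathcal{C}$. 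The convergence $\phi_n^{-1}\mathcal{D}\to\lambda$ is the statement that $\phi_n^{-1}\zeta_n\to\lambda$ for every choice of sequence $\zeta_n\in\mathcal{D}$, which is precisely the claim about disk-bounding curves. For the second convergence, $\mathrm{Mod}(\Sigma)$ acts on $\mathcal{C}$ by isometries, so $\phi_n^{-1}\Upsilon(Y_n)=\Upsilon(\phi_n^{-1}Y_n)\to\lambda$.

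The main conceptual point is the first step: identifying $\phi_n^{-1}X_n$ with the action of a mapping class on $X_n$ and controlling it through the geometry of the convex core via Bridgeman–Canary. Once this compactness is in hand, the second part is a direct application of the already-established Property (2) of Lemma \ref{disks disappear}, and nothing new is needed.
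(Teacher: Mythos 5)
Your proof is correct and follows essentially the same path as the paper's: both establish $\phi_n^{-1}X_n$ lies in a fixed compact subset of $\T_\delta$ by combining Theorem \ref{bridgeman-canary} (to compare $X_n$ with $\partial\mc{CC}(N_n)$) with the geometric convergence $\phi_n^{-1}\partial\mc{CC}(N_n)\to X$, and then invoke Property (2) of Lemma \ref{disks disappear}. Your write-up is slightly more explicit (spelling out the Teichm\"uller-distance bound and the reduction to Mumford compactness, and noting that the simpler hypothesis $X_n\in\T_\delta$ already verifies the Bridgeman--Canary assumption on compressible curves), but the underlying argument is the same.
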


\begin{proof}
The second point follows from the first one and property (2) of Lemma \ref{disks disappear}. We only have to show that the maps $\phi_n^{-1}$ are such that $\phi_n^{-1}X_n$ lies in a fixed compact subset of $\T_\delta$. This is a consequence of the fact that $X_n$ and $\phi_n^{-1}\partial\mc{CC}(N_n)$ have coarsely the same length spectrum by Theorem \ref{bridgeman-canary} and the fact that $\phi_n^{-1}\partial\mc{CC}(N_n)$ converges to $X$ in $\T$. 
\end{proof}

The hyperbolic structure $X_\infty$ will be the conformal boundary of the geometrically finite end of $\ol{N}$ while $\lambda$ will be the ending lamination of the simply degenerate end.

\begin{lem}
\label{convex cocompact end}
$\ol{N}$ has a geometrically finite end $\ol{E}$ homeomorphic to $\Sigma\times[0,\infty)$. 
\end{lem}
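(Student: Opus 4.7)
The plan is to realize $\ol E$ as the pointed geometric limit of the convex-cocompact ends of the handlebodies $N_n = H(X_n)$, and to verify geometric finiteness by extending the conformal boundary structure to the limit.

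\emph{Bounded geometry in the convex cocompact ends.} By Theorem \ref{bridgeman-canary} combined with the hypothesis $X_n \in \T_\delta$, each $\partial\mc{CC}(N_n)$ lies in $\T_{2\delta/G(\delta)}$, and Lemma \ref{thick near boundary} yields a uniform lower bound $\eta = \eta(\delta, g) > 0$ on the injectivity radius along $\partial\mc{CC}(N_n)$. Since $\mc{CC}(N_n)$ is convex, the nearest-point retraction from any equidistant surface to $\partial\mc{CC}(N_n)$ is distance-decreasing, so a loop realizing the injectivity radius at any point of $\mc{E}_n := N_n \setminus \mc{CC}(N_n)$ projects to a shorter loop on $\partial \mc{CC}(N_n)$ in the same free homotopy class; hence the injectivity radius is $\geq \eta$ throughout $\mc{E}_n$. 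For each $R > 0$, the collar $C_n(R) \subset \mc{E}_n$ of points at distance at most $R$ from $\partial\mc{CC}(N_n)$ is diffeomorphic to $\Sigma \times [0,R]$ with uniformly bounded geometry.

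\emph{Taking limits and lifting.} With basepoints $m_n \in \partial\mc{CC}(N_n)$, Theorem \ref{compactness} and a Cantor diagonal argument yield a subsequence along which $(N_n, m_n) \to (N, m)$ geometrically and, for every $R > 0$, the collars $C_n(R)$ converge via the approximating maps $k_n$ to nested regions $C(R) \subset N$ diffeomorphic to $\Sigma \times [0,R]$, with $f(X)$ sitting at one of their boundary components. Since $f$ is incompressible by Lemma \ref{limit incompressible} and $\ol f : X \to \ol N$ is a homotopy equivalence, the inclusion $C(R) \hookrightarrow N$ lifts homeomorphically to a submanifold $\ol C(R) \subset \ol N$ on the appropriate side of $\ol f(X)$; choosing these lifts compatibly the $\ol C(R)$ nest, and their union $\ol E := \bigcup_R \ol C(R)$ is a neighborhood of an end of $\ol N$ diffeomorphic to $\Sigma \times [0,\infty)$.

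\emph{Geometric finiteness.} To verify that $\ol E$ is geometrically finite, I would push the conformal boundary to the limit. By Lemma \ref{divergence of disks} and the assumption $X_n \in \T_\delta$, the reparametrized conformal boundaries $\phi_n^{-1} X_n$ converge to $X_\infty \in \T_\delta$. By Bers-Kra-Maskit, the reparametrized marked handlebodies $H(\phi_n^{-1} X_n)$ converge algebraically to $H(X_\infty)$, and convex-cocompactness forces this algebraic convergence to be strong (no bumping). Matching up the strong convergence of $H(\phi_n^{-1} X_n)$ with the pointed geometric convergence of $(N_n, m_n)$ via the uniform collar geometry, each $\ol C(R)$ is almost-isometric to a corresponding collar of the (lifted) geometrically finite end associated to $X_\infty$. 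Letting $R \to \infty$ identifies $\ol E$ with a geometrically finite hyperbolic end on $\Sigma \times [0,\infty)$ having conformal boundary $X_\infty$.

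The main obstacle is this final identification: reconciling the paper's pointed geometric convergence of the $N_n$ (with basepoints on $\partial\mc{CC}(N_n)$ and no marking reparametrization) with the Bers-type convergence of the handlebodies after reparametrization by $\phi_n$, and handling the fact that the surface group $\pi_1(\Sigma)$ is not literally a subgroup of $\pi_1(N_n) = F_g$ (so the cover structure only appears in the limit). The uniform thickness $X_n \in \T_\delta$ is the key compatibility tool — via Theorem \ref{bridgeman-canary} and Lemma \ref{thick near boundary} it provides uniform geometric control near $\partial\mc{CC}(N_n)$, allowing the two convergence regimes to be matched coherently and the conformal boundary at infinity of $\ol E$ to be read off from the limit as $X_\infty$.
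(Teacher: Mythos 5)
Your first two steps are reasonable in outline: the uniform lower bound on the injectivity radius along $\partial\mc{CC}(N_n)$ (via Theorem \ref{bridgeman-canary} and Lemma \ref{thick near boundary}) does extend to all of $N_n\setminus\mc{CC}(N_n)$ by the $1$-Lipschitz nearest-point retraction to the convex set $\mc{CC}(N_n)$, and the collars $C_n(R)$ do have bounded diameter for fixed $R$, so one can take geometric limits and build a product end neighborhood $\ol E$. (A small caveat: this lifting step requires knowing that the limit pleated surface $f(X)\subset N$ is actually embedded; in the paper this comes for free since $f(X)=\partial K/\Gamma$ bounds a convex set, whereas in your setup it needs a separate argument.)

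The genuine gap is in the third step. You write that $H(\phi_n^{-1}X_n)$ converges strongly to $H(X_\infty)$ and want to ``match this up'' with the geometric limit of $(N_n,m_n)$. But $H(\phi_n^{-1}X_n)$ and $N_n=H(X_n)$ are \emph{different} hyperbolic $3$-manifolds: the approximating mapping classes $\phi_n$ produced by the geometric convergence are arbitrary elements of ${\rm Mod}(\Sigma)$ and in general do \emph{not} extend to self-homeomorphisms of the handlebody $H_g$, so re-marking the conformal boundary by $\phi_n$ changes the underlying convex cocompact manifold, not just its marking. Strong convergence of $H(\phi_n^{-1}X_n)$ therefore carries no direct information about the sequence $N_n$ or its geometric limit $N$, and it cannot be used to conclude that $\ol E$ is geometrically finite. (Indeed, if the matching you propose worked one would conclude $N\cong H(X_\infty)$ is a convex cocompact handlebody, which contradicts the later conclusion of this section that $\ol N=N$ is a singly degenerate structure on $\Sigma\times\mb{R}$.) Notice also that producing an end neighborhood homeomorphic to $\Sigma\times[0,\infty)$ with bounded geometry is by itself not enough to certify geometric finiteness: a degenerate end also has such neighborhoods.

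The paper's argument sidesteps this entirely and is much more direct: it works in $\mb{H}^3$ and takes the Chabauty limit $K$ of the convex hulls $\mc{CH}_n=\mc{CH}(\Lambda_{\Gamma_n})$. General properties of the Chabauty topology give that $K$ is convex, proper, and $\Gamma$-invariant, hence $C:=K/f_*\pi_1(X)$ is a proper convex subset of $\ol N$ containing $\mc{CC}(\ol N)$. The boundary $\partial C$ is identified with $\ol f(X)$ as the limit of the lifted pleated maps $F_n$, hence it is a closed surface homeomorphic to $\Sigma$. Convexity of $C$ together with compactness of $\partial C$ immediately yields a neighborhood of the end disjoint from the convex core, which is exactly geometric finiteness — no algebraic or strong convergence of any auxiliary sequence of handlebodies is invoked. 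To repair your proof you would need to replace the strong-convergence step by a direct convexity argument along these lines, for instance by observing that the equidistant surfaces bounding your collars $C_n(R)$ are boundaries of convex sets and that this property persists in the limit; at that point you would essentially be reproducing the paper's proof.
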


\begin{proof}
We show that there is a proper closed convex subset $C\subset\ol{N}$ that contains the convex core $\mc{CC}(\ol{N})$. This immediately implies that, up to removing standard neighbourhoods of cusps, $\partial C$ bounds a neighbourhood of a geometrically finite end of $\ol{N}$ homeomorphic to $\partial C\times(0,\infty)$. By the basic structure of the ends of the hyperbolic 3-manifold $Q\simeq\Sigma\times\mb{R}$, if $\partial C$ is a closed surface, then $\partial C\simeq\Sigma$.

We now produce the convex set $C\subset\ol{N}$ as a limit of the convex cores $\mc{CC}(N_n)$. In order to do so, it is convenient to work on the universal coverings: We identify $(N_n,x_n),(N,x)$ with $(\mb{H}^3/\Gamma_n,o_3),(\mb{H}^3/\Gamma,o_3)$, where $o_3\in\mb{H}^3$ is a fixed basepoint. Consider the convex hulls $\mc{CH}_n=\mc{CH}(\Lambda_n),\mc{CH}=\mc{CH}(\Lambda)$ of the limit sets $\Lambda_n,\Lambda$ of $\Gamma_n,\Gamma$. We have that, up to subsequences, $\mc{CH}_n$ converges in the Chabauty topology on closed subsets of $\mb{H}^3$ (see Chapter E.1 of \cite{BP92}) to a closed set $K\subset\mb{H}^3$. It follows from general properties of the Chabauty topology (see Proposition E.1.2 of \cite{BP92}) that $K$ is convex and invariant under $\Gamma$. Moreover, as $o_3\in\partial\mc{CH}_n$ for all $n$, $K$ is also proper. Invariance under $\Gamma$ implies that $\mc{CH}\subset K$ so that the quotient $C:=K/f_*\pi_1(X)\subset\ol{N}$ is a convex set containing the convex core of $\ol{N}$.

We now show that the boundary $\partial C=\partial K/f_*\pi_1(X)$ is a closed surface just by observing that it is the image $\ol{f}(X)=\partial C$ of the closed surface $X$ under the map $\ol{f}$. Again, it is convenient to work on the universal coverings: We identify $(\partial\mc{CC}(N_n),x_n)$ and $(X,x)$ with $(\mb{H}^2/\Theta_n,o_2),(\mb{H}^2/\Theta,o_2)$ where $o_2\in\mb{H}^2$ is a fixed basepoint, and coherently lift $f_n,f$ to equivariant pleated maps $F_n,F:(\mb{H}^2,o_2)\rightarrow(\mb{H}^3,o_3)$ with $F_n(\mb{H}^2)=\partial\mc{CH}_n$. Notice that the boundary $\partial K$ is the Chabauty limit of the boundaries $\partial\mc{CH}_n$. Since $F_n\rightarrow F$ uniformly on compact sets, we have $F(\mb{H}^2)=\partial K$. In particular, $\ol{f}(X)=\partial C$. 
\end{proof}

It also follows from the proof that the restriction of the covering projection $p$ to $\ol{E}$ is a homeomorphism onto a neighbourhood of a geometrically finite end $E$ of $N$ bounded by $f(X)$. Namely, $f(X)=\partial K/\Gamma\subset N$ also is 
an embedded surface bounding the convex set $K/\Gamma\subset N$. The covering projection $p$ restricts to a covering projection $p:\partial C\rightarrow f(X)$. 
This covering is a homeomorphism provided that $p_*\pi_1(\partial C)=\pi_1(f(X))$, and in this case
the restriction of $p$ to $\ol{E}$ is a homeomorphism onto the image which is a neighbourhood of a geometrically finite end homeomorphic to $\Sigma\times[0,\infty)$. 

By construction, we have $p_*\pi_1(\partial C)=f_*\pi_1(X)$.
A priori, $f_*\pi_1(X)$ might be different from $\pi_1(f(X))$. But 
$f$ is locally homotopic an embedding (such as $k_n^{-1}f_n\phi_n$ for $n$ large enough) 
and can be perturbed to an embedding, with image contained in a small tubular neighbourhood of $f(X)$ of the form $f(X)\times[-1,1]$. 
Since $f$ is also $\pi_1$-injective, by standard 3-manifold topology (see Proposition 3.1 of \cite{W68}), we conclude that $f_*\pi_1(X)=\pi_1(f(X))$.

We now prove that $\ol{N}$ has also a simply degenerate end. Using the fact that $\lambda$ is a limit of translates of disk bounding curves, we show the following:

\begin{lem}
\label{lamination not realized}
The lamination $\lambda$ is not realized in $\ol{N}$.
\end{lem}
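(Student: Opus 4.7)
The strategy is to argue by contradiction: assume $\lambda$ is realized in $\ol N$ by a pleated surface $g\colon S\to\ol N$ in the homotopy class of $\ol f$, and show that the translation lengths $\ell_{\ol N}(\phi_n^{-1}\zeta_n)$ must simultaneously tend to infinity and stay uniformly bounded. By Lemma~\ref{disks disappear}(2), $\lambda$ is filling and uniquely ergodic, and $\phi_n^{-1}\zeta_n\to[\lambda]$ in $\mc{PML}$ via Klarreich. On a fixed thick reference surface $S\in\T_\delta$ we have $\ell_S(\phi_n^{-1}\zeta_n)\to\infty$, and Bonahon's continuity of the length functional on $\mc{ML}(\Sigma)$, combined with the fact that a pleated surface realizing $\lambda$ is isometric on it, forces
\[
\frac{\ell_{\ol N}(\phi_n^{-1}\zeta_n)}{\ell_S(\phi_n^{-1}\zeta_n)}\to \frac{\ell_{\ol N}(\lambda_\ast)}{\ell_S(\lambda_\ast)}=1,
\]
where $\lambda_\ast$ is the normalized ergodic transverse measure. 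Hence $\ell_{\ol N}(\phi_n^{-1}\zeta_n)\to\infty$.

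For the bound in the opposite direction, $\ol f_\ast\colon\pi_1(X)\to\pi_1(\ol N)$ is an isomorphism and the covering $p\colon\ol N\to N$ is $\pi_1$-injective, so $\ell_{\ol N}(\phi_n^{-1}\zeta_n)=\ell_N(f_\ast(\phi_n^{-1}\zeta_n))$. The key input is that $\zeta_n\in\mc D$, so $\zeta_n$ is null-homotopic in $N_n=H(X_n)$, which means $(f_n)_\ast(\zeta_n)=1$ in $\Gamma_n\subset\mathrm{Isom}^+(\mb H^3)$ and therefore has translation length $0$. To transfer this vanishing across the geometric limit, I work on the level of the lifts $\widetilde{f_n}\colon\mb H^2\to\mb H^3$, which converge uniformly on compact sets to $\widetilde f\colon\mb H^2\to\mb H^3$. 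By Lemma~\ref{thick near boundary} and Mumford's theorem, $\partial\mc{CC}(N_n)$ has uniformly bounded diameter, so a lift $\widetilde{\zeta_n}$ of $\zeta_n$ can be chosen starting at a point $q_n\in\mb H^2$ inside a fixed ball $B(o_2,C)$. The equivariance $\widetilde{f_n}(\gamma_n q_n)=(f_n)_\ast(\gamma_n)\widetilde{f_n}(q_n)=\widetilde{f_n}(q_n)$ then shows that the $\widetilde{f_n}$-image of the geodesic segment $\widetilde{\zeta_n}$ is a closed loop in $\mb H^3$ based at a point of $B(o_3,C)$, although its length $L_n\to\infty$.

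The main obstacle is to promote this bounded-basepoint picture to a uniform bound on the displacement $d_{\mb H^3}(o_3,f_\ast(\phi_n^{-1}\zeta_n)\cdot o_3)$, and hence on $\ell_N(f_\ast(\phi_n^{-1}\zeta_n))$, despite the fact that the word length of $\phi_n^{-1}\zeta_n$ in $\pi_1(X)$ diverges (so algebraic convergence of $(f_n)_\ast\circ\phi_n$ to $f_\ast$ on fixed generators does not suffice). My plan is to exploit the Chabauty convergence $\Gamma_n\to\Gamma_\infty$ together with the uniform convergence $\widetilde{f_n}\to\widetilde f$ on increasing compact subsets of $\mb H^2$: using the $\pi_1(X)$-equivariance of $\widetilde f$ under $f_\ast$, the endpoint $f_\ast(\phi_n^{-1}\zeta_n)\cdot\widetilde f(q)$ (where $q=\lim q_n$) of the $\widetilde f$-image of the corresponding lift can be matched via Chabauty to the endpoint $\widetilde{f_n}(\gamma_nq_n)=\widetilde{f_n}(q_n)\in B(o_3,C)$ of the $\widetilde{f_n}$-image. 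This identifies $f_\ast(\phi_n^{-1}\zeta_n)\cdot\widetilde f(q)$ as lying in a bounded neighborhood of $\widetilde f(q)$, yielding the required uniform bound on the translation length and contradicting the first paragraph.
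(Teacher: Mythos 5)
Your proposal correctly sets up the contradiction but the second half of the argument — the claimed uniform upper bound on $\ell_{\ol N}(\phi_n^{-1}\zeta_n)$ — contains a genuine gap, and in fact the claim itself is false.

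The problem is exactly the one you flag as ``the main obstacle.'' The elements $\phi_n^{-1}\zeta_n$ have word length in $\pi_1(X)$ tending to infinity. Algebraic convergence of the holonomies $\rho_n := (f_n\phi_n)_*$ to $\rho := f_*$, as well as Chabauty convergence $\Gamma_n\to\Gamma$ and uniform convergence of the lifts $\widetilde{f}_n\to\widetilde{f}$ on compacta, only give control over images of elements lying in a fixed finite subset of $\pi_1(X)$, or of points lying in a fixed compact ball in $\mb{H}^2$. The endpoint $\gamma_n q_n = \widetilde{\phi}_n(\gamma_n' q)$ of the lift of $\zeta_n$ escapes every compact set (its hyperbolic distance from $o_2$ is comparable to $L_{\partial\mc{CC}(N_n)}(\zeta_n)\to\infty$), so $\widetilde{f}_n(\gamma_n q_n)$ and $\widetilde{f}(\gamma_n' q)$ cannot be ``matched'' by the available convergence. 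Indeed they \emph{cannot} be close: $\rho_n(\phi_n^{-1}\zeta_n)$ is the identity by construction, whereas $\rho(\phi_n^{-1}\zeta_n)$ is a nontrivial loxodromic (by Lemma \ref{limit incompressible}, $\rho$ is injective, while each $\rho_n$ has $\phi_n^{-1}\zeta_n$ in its kernel), and its translation length diverges. Your own first half, which is essentially sound (modulo the inessential and not-quite-correct claim that the limiting ratio equals $1$ — the reference surface $S\in\T_\delta$ need not be the domain of the pleated surface realizing $\lambda$, but positivity of the limiting ratio is all that is needed), is in fact \emph{the} truth: under the realization hypothesis $\ell_{\ol N}(\phi_n^{-1}\zeta_n)\to\infty$, and there is no competing bound to play it against.

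The paper avoids this trap by not working with translation lengths at all. The key observation is a \emph{spatial} bound, not a \emph{length} bound: if $\lambda$ were realized by a pleated surface $g:Y\to\ol N$, the image $g(Y)$ is a fixed compact subset of $\ol N$, and the standard train-track approximation argument yields, for large $n$, a \emph{closed geodesic} in $\ol N$ representing $\phi_n^{-1}\zeta$ that lies in a bounded neighborhood of $g(Y)$ — even though its length diverges, it stays in a fixed compact ball. Projecting to $N$ and transporting via the $\xi$-almost-isometric approximating maps $k_n$ produces, in $N_n$, a closed curve homotopic to $\zeta$ with very small geodesic curvature. A closed curve of geodesic curvature $<1$ in a hyperbolic manifold lifts to a bi-infinite quasi-geodesic in $\mb H^3$ and therefore cannot be nullhomotopic; but $\zeta$ bounds a disk in $N_n$. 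That is the contradiction. If you want to salvage your strategy, replace the length estimate by this spatial containment: it is the boundedness of the region swept out, not of the translation length, that survives the geometric limit.
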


\begin{proof}
Recall that, by Lemma \ref{divergence of disks}, $\lambda$ is a limit in $\mc{C}\cup\partial\mc{C}$ of the sequence $\phi_n^{-1}\zeta$ where $\zeta\in\mc{D}$ is a fixed diskbounding curve. Once we fixed an auxiliary hyperbolic structure on $\Sigma$, this implies that $\lambda$ is contained in a Hausdorff limit of the sequence of geodesic realizations of the simple closed curves $\phi_n^{-1}\zeta$. Suppose we can realize $\lambda$ in the homotopy class of the map $\ol{f}:X\rightarrow\ol{N}$. Then, a standard train-track approximation argument (see the discussion above Theorem I.5.3.10 in \cite{CEG06}), tells us that for sufficiently large $n$ the curve $\phi_n^{-1}\zeta$ is also realized as a closed geodesic, in a bounded neighbourhood of a realization of $\lambda$ in $\ol{N}$. By composition with the covering projection, the curve $\phi_n^{-1}\zeta$ can also be realized in $N$ in the homotopy class of $f:X\rightarrow N$. By geometric convergence, this implies that for large $n$, we can represent the curve $\zeta$ in $N_n$ as a curve with very small geodesic curvature. Such a curve is not nullhomotopic. But this is absurd as $\zeta$ is compressible in $N_n$.  
\end{proof}

By Proposition 9.7.1 of \cite{Th79} or Theorem 1.4 of \cite{NS12}, we deduce: 

\begin{cor}
\label{simply degenerate end}
$\ol{N}$ has a simply degenerate end homeomorphic to $\Sigma\times[0,\infty)$ with ending lamination $\lambda$. 
\end{cor}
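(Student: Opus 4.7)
The plan is to combine the topological structure of $\ol{N}$ with the two previous lemmas and then invoke the cited tameness/realization results to pin down that the remaining end is simply degenerate with ending lamination exactly $\lambda$.

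First I would recall that by the work of Thurston and Bonahon already cited (used in the proof of Lemma \ref{convex cocompact end}), $\ol N$ is homeomorphic to $\Sigma\times\mb R$ and therefore has exactly two topological ends, each homeomorphic to $\Sigma\times[0,\infty)$. Lemma \ref{convex cocompact end} identifies one of them, call it $\ol E_+$, as geometrically finite with conformal boundary $X_\infty$. Call the other end $\ol E_-$. By tameness (see Theorem 1.4 of \cite{NS12}, or Proposition 9.7.1 of \cite{Th79} together with tameness of covers of geometric limits arising this way), every end of $\ol N$ is either geometrically finite or simply degenerate, so $\ol E_-$ falls into one of these two classes.

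Next I would rule out that $\ol E_-$ is geometrically finite. If both ends of $\ol N$ were geometrically finite, then $\ol N$ would be quasi-fuchsian. In that case every filling lamination on $\Sigma$ is realized by a pleated surface in the correct homotopy class (see Chapter I.5 of \cite{CEG06}), which contradicts Lemma \ref{lamination not realized}. Hence $\ol E_-$ must be simply degenerate, and in particular carries an ending lamination $\mu\in\partial\mc C$.

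Finally I would identify $\mu$ with $\lambda$. By the characterization of non-realized laminations for tame hyperbolic structures on $\Sigma\times\mb R$ (this is precisely the content of Proposition 9.7.1 of \cite{Th79}, see also Theorem 1.4 of \cite{NS12}), the only filling lamination that is not realized in the homotopy class of $\ol f$ is the ending lamination of the simply degenerate end. Since by Lemma \ref{lamination not realized} the lamination $\lambda$ is not realized, we conclude $\lambda=\mu$, completing the proof. The main point that does real work is the non-realization Lemma \ref{lamination not realized}; once that is in hand, everything else is a straightforward application of the tameness/ending lamination machinery, so I do not expect any further obstacle here.
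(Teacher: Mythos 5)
Your proof is correct and takes essentially the same approach as the paper: the paper's own argument is the one-line citation ``By Proposition 9.7.1 of \cite{Th79} or Theorem 1.4 of \cite{NS12}'' applied immediately after Lemma \ref{lamination not realized}, and your write-up simply unpacks what that citation is doing (ruling out a second geometrically finite end via realizability of all laminations in the quasi-fuchsian case, then matching $\lambda$ with the ending lamination of the remaining simply degenerate end).
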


To conclude, we found that $\ol{N}$ is a hyperbolic structure on $\Sigma\times\mb{R}$ for which one of the end invariants is a filling lamination, and the second is a marked conformal structure on $\Sigma$. Since there is no room for other ends, we see that $\ol{N}$ is singly degenerate. This immediately implies:

\begin{lem}
\label{trivial covering}
The covering $p:\ol{N}\rightarrow N$ is trivial.
\end{lem}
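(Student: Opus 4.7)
My plan is to show $p_*\pi_1(\ol{N}, m) = \pi_1(N, m)$, which is equivalent to the covering being trivial. From Lemma \ref{convex cocompact end} and Corollary \ref{simply degenerate end}, $\ol{N}$ is homeomorphic to $\Sigma \times \mathbb{R}$, so the embedded surface $\partial C \subset \ol{N}$ is $\pi_1$-surjective. Combined with the fact, established at the end of the proof of Lemma \ref{convex cocompact end}, that $p|_{\partial C}: \partial C \to f(X)$ is a homeomorphism, this yields
\[
p_*\pi_1(\ol{N}, m) \;=\; p_*\pi_1(\partial C, m) \;=\; \pi_1(f(X), m) \;\leq\; \pi_1(N, m).
\]
It remains to verify that this inclusion is an equality.

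To this end, consider the complementary convex region $N^- := K/\Gamma \subset N$ of the geometrically finite end $E = p(\ol{E})$, with $\partial N^- = f(X)$. Since $N$ deformation retracts onto its convex core, which is contained in $N^-$, one has $\pi_1(N) = \pi_1(N^-)$. It thus suffices to prove $N^-$ is homeomorphic to $\Sigma \times [0, \infty)$, in which case $N^-$ deformation retracts to $f(X)$ and $\pi_1(N^-) = \pi_1(f(X))$. The cover $\ol{N}^- := \ol{N} \setminus \ol{E}$ is already homeomorphic to $\Sigma \times [0, \infty)$ (the singly degenerate side of $\ol{N}$), with boundary $\partial C$ mapped homeomorphically to $f(X)$ via $p$.

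By tameness of $N$ (Agol, Calegari-Gabai), $N^-$ is the interior of a compact 3-manifold with one boundary component $f(X)$. Any failure of $p|_{\ol{N}^-}$ to be a homeomorphism would yield a nontrivial self-correspondence of the simply degenerate end of $\ol{N}$, corresponding geometrically to an isometry of that end that preserves both the conformal boundary $X_\infty$ and the ending lamination $\lambda$. Since $\lambda$ is uniquely ergodic and filling (as established in the proof of Lemma \ref{divergence of disks} via Lemma \ref{disks disappear}(2)), its ${\rm Mod}(\Sigma)$-stabilizer consists only of pseudo-Anosov elements (and the identity), none of which fix a point of $\T$. Hence no such nontrivial isometry exists, $N^- \simeq \Sigma \times [0, \infty)$, and the cover $p$ is trivial. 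The main obstacle is making rigorous the passage from ``nontrivial covering behavior'' to a ``nontrivial isometry of the degenerate end''; I expect the cleanest route is via Canary's covering theorem applied to the tame manifold $N$ and its cover $\ol{N}$, which precisely ensures that simply degenerate ends of the cover map homeomorphically to ends of the base.
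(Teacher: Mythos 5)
Your proposal reduces the lemma to showing that $N^- = K/\Gamma$ is homeomorphic to $\Sigma\times[0,\infty)$, which would indeed give $\pi_1(f(X)) = \pi_1(N^-) = \pi_1(N)$ and hence triviality of the cover. That reduction is sound. The gap is in the step where you try to prove $N^-\cong\Sigma\times[0,\infty)$. The assertion that ``any failure of $p|_{\ol{N}^-}$ to be a homeomorphism would yield a nontrivial self-correspondence of the simply degenerate end of $\ol{N}$, corresponding geometrically to an isometry of that end'' is not justified: a nontrivial covering does not supply a nontrivial deck transformation unless the cover is regular, and there is no reason for $p$ to be regular a priori. Even granting such an isometry, it would act on all of $\ol{N}$, not on an end in isolation, and there is no direct translation into an element of ${\rm Mod}(\Sigma)$ stabilizing $\lambda$, so the appeal to unique ergodicity of $\lambda$ does not apply. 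You also invoke Canary's covering theorem as if it ``precisely ensures that simply degenerate ends of the cover map homeomorphically to ends of the base''; what the theorem actually gives is a \emph{finite-to-one} covering onto a neighborhood of a simply degenerate end of the base. Combined with the fact that the preimage of the geometrically finite end is $\ol{E}$, this would show the degree of $p$ is finite, but by itself does not show the degree is one.

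The paper's proof avoids all of this. Since $p|_{\ol{E}}:\ol{E}\to E$ is already a homeomorphism onto the geometrically finite end $E$ of $N$, a nontrivial cover would force $p^{-1}(E)$ to contain at least one additional component; such a component lies over $N\setminus\mc{CC}(N)$, hence inside $\ol{N}\setminus\mc{CC}(\ol{N})$, and is therefore a second geometrically finite end of $\ol{N}$. But $\ol{N}$ is singly degenerate and has exactly one geometrically finite end, a contradiction. Working with the geometrically finite end, where the degree is visibly one, rather than with the degenerate end, is the key simplification that your approach misses.
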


\begin{proof} 
If the covering $p:\ol{N}\rightarrow N$ is {\em not} trivial, then, as the restriction of $p$ to the geometrically finite end $\ol{E}$ of $\ol{N}$ is a homeomorphism onto a geometrically finite end $E$ of $N$, there exists at least one other preimage of $E$. This preimage then is a geometrically finite end of $\ol{N}$  different from $\ol{E}$. But $\ol{N}$ has a single geometrically finite end and hence we get a contradiction.
\end{proof}

We now identify $N=\ol{N}$ and compute the conformal boundary of the geometrically finite end:

\begin{lem}
\label{conf boundary}
The end $E$ is conformally compactified by $X_\infty$.
\end{lem}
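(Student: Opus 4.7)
My plan is to identify the conformal boundary of the geometrically finite end $E$ with $X_\infty$ by tracking the conformal boundaries $X_n$ of $N_n$ through the geometric convergence $(N_n,x_n)\to(N,x)$, using the Bridgeman-Canary comparison (Theorem \ref{bridgeman-canary}) between the conformal boundary and the boundary of the convex core.

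First I would lift to the universal cover and pass, after a subsequence and a suitable choice of base frames, to Chabauty convergence $\Gamma_n\to\Gamma$ of the uniformizing Kleinian groups. From the proof of Lemma \ref{convex cocompact end}, the convex hulls $\mc{CH}(\Lambda_{\Gamma_n})$ converge in Chabauty to the closed convex set $K\supset\mc{CH}(\Lambda_\Gamma)$ whose boundary $\partial K$ covers $\partial C=\ol{f}(X)$. Each $\Gamma_n$ is Schottky, so the complement $\mb{H}^3\setminus\mc{CH}(\Lambda_{\Gamma_n})$ has a single unbounded component whose closure meets $\partial\mb{H}^3$ in the Schottky domain of discontinuity $\Omega_{\Gamma_n}$. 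Chabauty convergence of these complements yields a limit open set $\Omega_E\subset\partial\mb{H}^3$ lying on the side of $\partial K$ facing $E$, and $\Omega_E/\Gamma$ is by definition the conformal compactification of $E$.

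Next I would exploit continuity of the Poincar\'e metric under Chabauty convergence of open subsets of $\partial\mb{H}^3$, a classical fact from planar complex analysis. Combined with $\Gamma$-equivariance, this yields convergence of the hyperbolic surfaces $X_n=\Omega_{\Gamma_n}/\Gamma_n$ to $X_E:=\Omega_E/\Gamma$ in the pointed geometric topology on hyperbolic surfaces. To track the markings I would use the $\Gamma_n$-equivariant lift of the nearest point retraction from $X_n$ to $\partial\mc{CC}(N_n)$, which by Theorem \ref{bridgeman-canary} has uniformly bounded bi-Lipschitz constants. In the geometric limit these retractions converge to an equivariant bi-Lipschitz map from $\Omega_E$ to $\partial K$, descending to a marked bi-Lipschitz identification of $X_E$ with $X=\partial C$ (marked via $\ol{f}$).

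Finally, by Lemma \ref{divergence of disks}, $\phi_n^{-1}X_n$ converges to $X_\infty$ in Teichm\"uller space. The previous paragraph shows that, equipped with their natural markings, the same surfaces converge to $X_E$. Uniqueness of limits in $\T$ then gives $X_E=X_\infty$, which is exactly the assertion that $X_\infty$ conformally compactifies $E$. The main technical difficulty is combining Chabauty convergence of domains of discontinuity, continuity of the Poincar\'e metric, and compatibility of the markings inherited from the approximating maps $\phi_n$; each ingredient is classical, but they need to be assembled consistently with the pleated-surface convergence already developed in Lemma \ref{convex cocompact end}.
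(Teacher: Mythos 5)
Your outline has the right overall shape (pass to Chabauty limits of the Kleinian groups, identify the limit of the conformal boundaries with $X_\infty$, match markings via the nearest point retraction of Theorem \ref{bridgeman-canary} and the convergence $\phi_n^{-1}X_n\to X_\infty$ from Lemma \ref{divergence of disks}). However, there is a genuine gap at the central step, and it is precisely the point the paper spends most of its effort on.

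You assert that the Chabauty limit $\Omega_E$ of the complements of the convex hulls, restricted to $\partial\mb{H}^3$, is ``by definition'' the conformal compactification of $E$. This is not automatic. The conformal boundary of $N=\mb{H}^3/\Gamma$ is $\Omega_\Gamma/\Gamma$, and what has to be shown is that the limit of the domains of discontinuity $\Omega_{\Gamma_n}$ actually coincides with $\Omega_\Gamma$. In general a Chabauty/geometric limit of Kleinian groups only gives $\Lambda_\Gamma\subset\liminf\Lambda_{\Gamma_n}$; the limit set can jump in the limit (Kerckhoff--Thurston), in which case the Chabauty limit of the $\Omega_{\Gamma_n}$ would be strictly smaller than $\Omega_\Gamma$ and your $\Omega_E$ would not conformally compactify $E$. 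The paper's proof isolates exactly this issue: it establishes $\Lambda_n\to\Lambda$ in the Hausdorff topology by combining Lemma 7.33 of \cite{MT98} with Bowditch's uniform upper bound \cite{Bow13} on the injectivity radius inside $\mc{CC}(N_n)$. Nothing in your proposal provides a substitute for this, and without it the argument does not go through. Similarly, in the proof of Lemma \ref{convex cocompact end} the Chabauty limit $K$ of the convex hulls is only shown to \emph{contain} $\mc{CH}(\Lambda)$, so you cannot read off from that lemma that the complements converge to $\mb{H}^3\cup\Omega_\Gamma$.

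Once $\Lambda_n\to\Lambda$ is in place, your appeal to ``continuity of the Poincar\'e metric under Chabauty convergence'' is a reasonable shorthand for what the paper actually does, but the careful implementation is not trivial either: since $\Lambda_n$ is a Cantor set, $\Omega_n$ is not simply connected, so one must work with the intermediate covers $\beta_n:\mb{H}^2\to\Omega_n$ (equivariant for the representations $\sigma_n,\rho_n$) and use Montel, Hurwitz and Carath\'eodory's kernel theorem to pass to the limit, then argue that the limiting holomorphic covering $\mb{H}^2/\sigma_\infty\to\Omega/\Gamma$ is a biholomorphism. If you flesh out these two points, your plan converges to the paper's proof.
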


\begin{proof}
We isometrically identify $(N_n,x_n)$ and $(N,x)$ with $(\mb{H}^3/\Gamma_n,o_3)$ and $(\mb{H}^3/\Gamma,o_3)$ where $o_3\in\mb{H}^3$ is a fixed origin. Let $\pi_1(N_n,x_n)\rightarrow\Gamma_n<{\rm Isom}^+(\mb{H}^3)$ and $\pi_1(N,x)\rightarrow\Gamma<{\rm Isom}^+(\mb{H}^3)$ be the homonomy identifications. Let $\Lambda_n$ and $\Lambda$ be the limit sets of $\Gamma_n$ and $\Gamma$ and let $\Omega_n$ and $\Omega$ be the domains of discontinuity (notice that all of them are connected). 

The main observation is that $\Lambda_n\rightarrow\Lambda$ in the Hausdorff topology of $\partial\mb{H}^3$: This follows from Lemma 7.33 of \cite{MT98} (see also Kerckhoff-Thurston \cite{KT90}) combined with a result of Bowditch \cite{Bow13} that gives a uniform upper bound on ${\rm inj}_x(N_n)$ for every $x\in\mc{CC}(N_n)$. We now use this fact to compute directly the uniformization of $\Omega$.

Fix $z\in\Omega$ a basepoint. Since $\Lambda_n\rightarrow\Lambda$, we have that $z\in\Omega_n$ for every $n$ large enough. Denote by $\pi_n:(\mb{H}^2,o_2)\rightarrow(\Omega_n/\Gamma_n,z)$ the universal covering projection where $o_2\in\mb{H}^2$ is a fixed origin. Let $\pi_1(\Omega_n/\Gamma_n,z)\rightarrow{\rm Isom}^+(\mb{H}^2)$ be the corresponding holonomy representation. Notice that $\pi_n$ factors through the covering $\beta_n:(\mb{H}^2,o_2)\rightarrow(\Omega_n,z)$ corresponding to the subgroup ${\rm ker}\{\pi_1(\Omega_n/\Gamma_n)\rightarrow\pi_1({\hat N_n})\}$.

Consider the representations
\[
\sigma_n:\pi_1(X)\stackrel{\phi_n}{\longrightarrow}\pi_1(\partial\mc{CC}(N_n))\simeq\pi_1(\Omega_n/\Gamma_n)\rightarrow{\rm Isom}^+(\mb{H}^2).
\]
Since the conformal boundaries $\phi_n^{-1}X_n$ converge to $X_\infty$, we have that $\sigma_n$ converges to a representation $\sigma_\infty$ such that $\mb{H}^2/\sigma_\infty$ is isometric to $X_\infty$. Similarly, consider the representations 
\[
\rho_n:\pi_1(X)\stackrel{\phi_n}{\longrightarrow}\pi_1(\partial\mc{CC}(N_n))\rightarrow\pi_1(N_n)\rightarrow{\rm Isom}^+(\mb{H}^3).
\]
Again, the sequence of representations converges to $\rho_\infty$ which is an isomorphism between $\pi_1(X)$ and $\Gamma$. The covering maps $\beta_n$ are $\pi_1(X)$-equivariant with respect to $\sigma_n$ and $\rho_n$.

We show that $\beta_n$ converges uniformly on compact sets to a biholomorphism $\beta:\mb{H}^2\rightarrow\Omega$ that is equivariant with respect to $\sigma_\infty$ and $\rho_\infty$. Equivariance will be automatic once we prove that $\beta_n$ converges uniformly on compact sets.

Since $\Lambda_n\rightarrow\Lambda$, up to slightly changing the basepoints $x_n$, we can assume that for $n$ large enough $\Lambda_n$ passes through a fixed triple of points $\{p_1,p_2,p_3\}\subset\Lambda$ so that $\Omega_n\subset\partial\mb{H}^3-\{p_1,p_2,p_3\}$. Therefore, by Montel's and Hurwitz's Theorems (see for example \cite{Mil06}), we have that the sequence of locally univalent holomorphic maps $\beta_n:\mb{H}^2\rightarrow\Omega_n$ converges uniformly on compact sets to a locally univalent holomorphic map $\beta:\mb{H}^2\rightarrow\partial\mb{H}^3$. Furthermore, by one half of Caratheodory's Kernel Theorem (see Theorem 7.30 in \cite{MT98}), we have $\beta(\mb{H}^2)=\Omega$. 

The equivariant locally univalent holomorphic map $\beta$ descends to a holomorphic covering map $\mb{H}^2/\sigma_\infty\rightarrow\Omega/\Gamma$. Such a map must be a biholomorphism as the source and the target are homeomorphic.
\end{proof}

By the discussion so far, we have identified $N$ with a {\em singly degenerate} structure on $\Sigma\times\mb{R}$, and we computed the end invariants. We now repeat the same analysis for the limit $f':X'\rightarrow Q$ of the sequence $f_n':\partial_{X_n}\mc{CC}(Q_n)\rightarrow Q_n$ of quasi-fuchsian manifolds and its $f'_*\pi_1(X')$-covering $\ol{Q}$. The same exact arguments given above work also in this case except for the proof of non-realizability of $\lambda$:

\begin{proof}[Proof of Corollary \ref{simply degenerate end} for $\ol{Q}$]
There are several simple ways to proceed here. For the sake of brevity, and since we will use this result also later on, we just invoke Theorem 1.1 of \cite{BBCM13}: It is enough to observe that the sequence of quasi-fuchsian manifolds $\phi_n^{-1}Q_n$ converges {\em algebraically} to $\ol{Q}$ (because the maps $f_n\phi_n$ converge to $f$) and that $\Upsilon(\phi_n^{-1}Y_n)\rightarrow\lambda$ by Lemma \ref{divergence of disks}.
\end{proof}

In conclusion, $Q$ and $N$ are hyperbolic structures on $\Sigma\times\mb{R}$ with the same end invariants. Thus, the solution of the Ending Lamination Conjecture \cite{M10}, \cite{BCM12} provides us an orientation preserving isometry $Q\rightarrow N$ in the correct homotopy class. Using this isometry and the approximating maps from the geometric convergences $Q_n\rightarrow Q$ and $N_n\rightarrow N$ we obtain the desired $\xi$-almost isometric embeddings of a large collar of the component of $\partial\mc{CC}(Q_n)$ facing $X_n$ into a collar of $\partial\mc{CC}(N_n)$. This contradicts the initial assumptions and finishes the proof of Proposition \ref{large-thick collar}. \qed

\section{Gluing}
\label{almostisometric}

In this section we prove the gluing theorem.

We briefly recall our setup: We consider four points $Y<X<X'<Y'$ aligned on the Teichm\"uller segment $[Y,Y']$ and associate to them the hyperbolic structures an $N_1=H(X)$, $Q=Q(Y,Y')$, $N_2=H(f^{-1}X')$. We want to cut from their convex cores gluing blocks $N_0^1\subset\mc{CC}(N_1)$, $Q_0\subset\mc{CC}(Q)$, $N_0^2\subset\mc{CC}(N_2)$, produce identifications $k_j:V_j\subset Q_0\rightarrow U_j\subset N_0^j$ between their collars, and then apply the cut and glue construction Lemma \ref{cut-and-glue}.

In Proposition \ref{unif emb} and Corollary \ref{cor unif emb}, we show that, under suitable assumptions on 
the pairs $(Y,X)$ and $(Y',X')$, there are product regions in $\mc{CC}(Q(Y,X)),\mc{CC}(Q(Y',X'))$ which {\em simultaneously} $\xi$-almost isometrically embed 
into $\mc{CC}(H(X)),\mc{CC}(H(f^{-1}X))$ and $\mc{CC}(Q(Y,Y'))$. Then we proceed and show in Lemma \ref{position} and Lemma \ref{middle gluing block} that, under suitable assumptions, the embeddings of such product regions in $\mc{CC}(Q(Y,Y'))$ can be chosen to have disjoint images so that they cobound a gluing block $Q_0$. This is the last ingredient needed for applying Lemma \ref{cut-and-glue} and prove Theorem \ref{gluing}. 

\subsection{Quasi-fuchsian manifolds with bounded geometry}
In the proof of the main results of the section, we will use the following fundamental relation between the geometry of Teichm\"uller space and the geometry of quasi-fuchsian manifolds:  

\begin{thm}[Rafi \cite{R05}, see also Minsky \cite{M01}]
\label{bounded}
Fix $g\ge 2$. For every $\delta>0$ there exists $\ep>0$ such that if $[Y,X]\subset\T_\delta$, then ${\rm inj}(Q(Y,X))\ge\ep$. 
\end{thm}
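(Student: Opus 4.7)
The plan is to argue by contradiction, using compactness in the geometric topology together with the well-known dictionary between pleated surfaces in a quasi-fuchsian manifold and points on the Teichm\"uller geodesic connecting its conformal boundaries.

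Suppose the conclusion fails: there is a sequence $(Y_n,X_n)\in\T_\delta\times\T_\delta$ with $[Y_n,X_n]\subset\T_\delta$ and points $p_n\in Q_n:=Q(Y_n,X_n)$ with ${\rm inj}_{p_n}(Q_n)\to 0$. If $d_\T(Y_n,X_n)$ stays bounded, then after normalizing by ${\rm Mod}(\Sigma)$ using Mumford's cocompactness on $\T_\delta$, I may assume both $Y_n$ and $X_n$ lie in a fixed compact subset of $\T_\delta$; continuity of the Bers parametrization $(Y,X)\mapsto Q(Y,X)$ then gives a uniform positive lower bound on the injectivity radius, a contradiction. So I may assume $d_\T(Y_n,X_n)\to\infty$.

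Next I produce a pleated surface capturing the short curve at $p_n$. Since ${\rm inj}_{p_n}(Q_n)\to 0$, by the Margulis lemma $p_n$ lies in a Margulis tube around a simple closed geodesic $\gamma_n^*\subset Q_n$ with $\ell_{Q_n}(\gamma_n^*)\to 0$, corresponding to a simple closed curve $\gamma_n$ on $\Sigma$. By Thurston's realization theorem there exist pleated surfaces $h_n\colon V_n\to Q_n$ in the homotopy class of the inclusion $\Sigma\hookrightarrow Q_n$ which realize $\gamma_n$ as a simple closed geodesic, so that $\ell_{V_n}(\gamma_n)=\ell_{Q_n}(\gamma_n^*)\to 0$.

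The heart of the argument — and the main obstacle — is to control the hyperbolic structure $V_n$ in terms of the Teichm\"uller geodesic $[Y_n,X_n]$. The essential input, which is the content of Rafi's theorem in \cite{R05} (see also Minsky \cite{M01}), is that any pleated surface $V$ realizing a closed curve in $Q(Y,X)$ lies within uniformly bounded Teichm\"uller distance of some point on $[Y,X]$: a curve on $\Sigma$ becomes short in the Minsky model for $Q(Y,X)$ only when it is short somewhere on the geodesic or a subsurface projection relative to it is large, and each forces short length along the geodesic itself. Granting this, there is $Z_n\in[Y_n,X_n]$ with $d_\T(V_n,Z_n)\le C$; since length functions on $\T$ change by at most $e^{2d_\T}$, this yields $\ell_{Z_n}(\gamma_n)\to 0$, contradicting $Z_n\in\T_\delta$.

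Alternatively, one can close the argument purely geometrically: choose a basepoint $y_n=h_n(x_n)$ with $x_n\in V_n$ bounded away from the short geodesic, apply Theorem \ref{compactness} to subconverge to a pointed limit $(Q_\infty,y_\infty)$ carrying a limiting pleated map $h_\infty\colon V_\infty\to Q_\infty$ from a $\delta'$-thick limit surface, and observe that the tubes around $\gamma_n^*$ degenerate to a rank-one cusp of $Q_\infty$ that a closed pleated surface $h_\infty(V_\infty)$ cannot enter beyond a fixed Margulis neighborhood — a contradiction, since the short curves in $V_n$ produce basepoints of $V_\infty$ pushed arbitrarily deep into the cusp.
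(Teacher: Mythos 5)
The paper itself gives no proof of this statement; it is cited directly to Rafi \cite{R05} and Minsky \cite{M01}, so the comparison here is with what those references actually deliver rather than with an internal argument.

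Your main argument has a genuine circularity at its heart. You declare the ``essential input'' to be that a pleated surface realizing a closed curve in $Q(Y,X)$ lies within uniformly bounded Teichm\"uller distance of a point of $[Y,X]$. That statement is Theorem A of Minsky's paper \emph{Teichm\"uller geodesics and ends of hyperbolic 3-manifolds} \cite{M93}, and it carries the hypothesis that $Q(Y,X)$ already has a uniform lower bound on its injectivity radius --- which is precisely what you are trying to prove. (This is how the present paper uses \cite{M93}: in Lemma \ref{position} and in Proposition \ref{unif emb}, Theorem \ref{bounded} is invoked \emph{first} to guarantee $\mathrm{inj}(Q)\ge\epsilon$, and only then is Theorem A of \cite{M93} applied to locate pleated surfaces near the Teichm\"uller geodesic.) It is also not ``the content of Rafi's theorem in \cite{R05}'': Rafi's theorem is a subsurface-projection characterization of which curves get short along a Teichm\"uller geodesic, and Minsky's \cite{M01} is the matching characterization of which curves get short in a Kleinian surface group. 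Neither says anything about pleated surfaces. The correct route is the one you only gesture at in a subordinate clause: by Rafi \cite{R05}, $[Y,X]\subset\T_\delta$ forces every subsurface coefficient $d_W(Y,X)$ to be uniformly bounded (and $Y,X$ to be thick); by Minsky \cite{M01}, bounded combinatorics of the end invariants implies a uniform lower bound for the injectivity radius of $Q(Y,X)$. No pleated surface is needed, and no circularity arises.

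The ``alternative'' geometric-limit argument does not repair this. First, it never uses the hypothesis $[Y,X]\subset\T_\delta$ at all, so it cannot be a proof of a statement that is false without that hypothesis. Second, to invoke Theorem \ref{compactness} at the basepoints $y_n$ you need $\mathrm{inj}_{y_n}(Q_n)\ge\eta$ for a fixed $\eta>0$ --- again precisely the sort of bound under dispute --- and choosing $x_n$ ``bounded away from the short geodesic'' does not supply it. Third, the limit surface $V_\infty$ is not ``$\delta'$-thick'': the $V_n$ contain the increasingly short curve $\gamma_n$, so any limit is a cusped (noncompact) hyperbolic surface, and the concluding sentence about a closed pleated surface entering a cusp is incoherent with the setup. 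The preliminary steps --- the reduction to $d_\T(Y_n,X_n)\to\infty$ via Mumford cocompactness and continuity of the Bers map, and the production of a short simple geodesic $\gamma_n^*$ and a realizing pleated surface (here one should cite Otal's unknottedness of short geodesics to get that $\gamma_n$ is a simple closed curve on $\Sigma$) --- are sound, but the decisive middle step is not.
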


It follows from work of Thurston that the space of pointed hyperbolic manifolds $Q$ diffeomorphic to $\Sigma\times\mb{R}$ and with ${\rm inj}(Q)\ge\ep$ is compact in the geometric topology (see Theorem 4.3 and Corollary 4.4 in \cite{M93}). In particular, a simple compactness argument implies the following property which will be useful for us later on: 

\begin{lem}
\label{large implies product}
Fix $\ep>0$. For every size $D>0$ there exists $D'>0$ such that for every quasi-fuchsian manifold $Q=Q(Y,X)$ with ${\rm inj}(Q)\ge\ep$ and every point $x\in\mc{CC}(Q(Y,X))$ with $d_Q(x,\partial\mc{CC}(Q))\ge D'$, there is a product region $U\subset\mc{CC}(Q)$ of size $D$ with $x\in U$.
\end{lem}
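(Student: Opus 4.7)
The plan is to argue by contradiction via the pointed geometric topology, in the spirit of the proof of Lemma \ref{bump function}. Suppose the conclusion fails for some $D>0$: then there exist quasi-fuchsian manifolds $Q_n=Q(Y_n,X_n)$ with $\mathrm{inj}(Q_n)\ge\ep$ and points $x_n\in\mc{CC}(Q_n)$ with $d_{Q_n}(x_n,\partial\mc{CC}(Q_n))\ge n$, yet no product region of size $D$ in $\mc{CC}(Q_n)$ contains $x_n$.

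First, I would apply the Thurston compactness result cited immediately before the lemma: since each $Q_n\simeq \Sigma\times\mb{R}$ and $\mathrm{inj}(Q_n)\ge\ep$, a subsequence of $(Q_n,x_n)$ converges in the pointed geometric topology to a complete pointed hyperbolic 3-manifold $(Q_\infty,x_\infty)$ with $Q_\infty\simeq\Sigma\times\mb{R}$ and $\mathrm{inj}(Q_\infty)\ge\ep$. Let $k_n\colon B_{Q_\infty}(x_\infty,R_n)\to Q_n$ denote the $\xi_n$-almost isometric approximating embeddings, with $R_n\to\infty$ and $\xi_n\to 0$, satisfying $k_n(x_\infty)=x_n$. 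Because $d_{Q_n}(x_n,\partial\mc{CC}(Q_n))\to\infty$, for every fixed $R>0$ the image $k_n(B_{Q_\infty}(x_\infty,R))$ lies in $\mc{CC}(Q_n)$ for all sufficiently large $n$.

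Second, I would produce in $Q_\infty$ an honest product region $U_\infty$ of size strictly larger than $D$ that contains $x_\infty$ in its interior. Since $Q_\infty\simeq\Sigma\times\mb{R}$ is tame and has bounded geometry, pleated surface technology (as developed by Thurston and Bonahon) furnishes $\pi_1$-injective embedded surfaces $S^\pm\subset Q_\infty$ in the homotopy class of the fibre, lying on opposite sides of $x_\infty$ at distance at least $D+1$ from it, of intrinsic diameter bounded only in terms of $\ep$ and $g$ via Gauss--Bonnet and the Margulis lemma. By standard 3-manifold topology (Waldhausen) the compact region $U_\infty$ they cobound is homeomorphic to $\Sigma\times[0,1]$ with $\pi_1$-surjective inclusion into $Q_\infty$, and its width and diameter meet the definition of size $D+1$ once $D$ exceeds a threshold depending only on $\ep$ and $g$.

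Finally, for $n$ large enough, $k_n(U_\infty)$ is a smoothly embedded submanifold contained in $\mc{CC}(Q_n)$, and since $k_n$ is $\xi_n$-almost isometric with $\xi_n\to 0$, both the intrinsic width and the intrinsic diameter of $k_n(U_\infty)$ differ from those of $U_\infty$ by $O(\xi_n)$. Hence $k_n(U_\infty)$ is a product region of size $D$ in $\mc{CC}(Q_n)$ containing $x_n=k_n(x_\infty)$, contradicting the choice of the sequence. The main technical obstacle, and the point where bounded geometry of the geometric limit is indispensable, is the construction of the two bounded-diameter surfaces $S^\pm$ in $Q_\infty$; every other step is a direct application of the pointed geometric convergence framework already used in this section.
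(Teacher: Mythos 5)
Your overall strategy — contradiction via the pointed geometric topology, construct a product region in the limit, transport it back with the approximating maps — is exactly the ``simple compactness argument'' the paper alludes to (the paper does not supply a proof, so you had to fill it in). Most of your steps are sound: the Thurston compactness result cited just before the lemma gives the geometric limit $(Q_\infty,x_\infty)$ diffeomorphic to $\Sigma\times\mb{R}$ with $\mathrm{inj}\geq\ep$; the hypothesis $d_{Q_n}(x_n,\partial\mc{CC}(Q_n))\to\infty$ forces the convex hulls to fill out the limit so that $Q_\infty$ is doubly degenerate (a point worth stating explicitly, since otherwise one does not know $\mc{CC}(Q_\infty)=Q_\infty$); the ball containment $k_n(B(x_\infty,R))\subset\mc{CC}(Q_n)$ is justified because $B(x_n,D')\subset\mc{CC}(Q_n)$ once $d(x_n,\partial\mc{CC}(Q_n))\geq D'$; and the transport of the product-region property back to $Q_n$ works, provided one also notes (via Waldhausen, Proposition 3.1 of \cite{W68}, as is done elsewhere in Section~\ref{productregions}) that an embedded $\pi_1$-injective copy of $\Sigma\times[0,1]$ inside $\mc{CC}(Q_n)\simeq\Sigma\times[0,1]$ is automatically isotopic to a sub-product, so the homotopy-equivalence condition in the definition of a product region holds, and that $\pi_1$-injectivity transfers from $Q_\infty$ to $Q_n$ because essential loops of bounded length are sent by $k_n$ to curves of small geodesic curvature, which cannot be null-homotopic.

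The one genuine gap is exactly the step you flag: the construction of the two embedded, bounded-diameter, $\pi_1$-injective surfaces $S^\pm$ on opposite sides of $x_\infty$. Pleated surfaces are merely $1$-Lipschitz maps and are in general \emph{not} embedded, and two pleated surfaces need not have disjoint images, so ``the region they cobound'' is not a priori a well-defined compact submanifold; the Thurston--Bonahon theory gives incompressible immersed surfaces, not an embedded sweepout with diameter control. The standard fix is to invoke Minsky's biLipschitz model for bounded-geometry surface group manifolds (\cite{M93}, which the paper cites just above the lemma, or \cite{M01}): a doubly degenerate $\Sigma\times\mb{R}$ with $\mathrm{inj}\geq\ep$ is $K(\ep,g)$-biLipschitz to a product model $\Sigma\times\mb{R}$ in which every level surface is uniformly thick, and then the pullback of a slab $\Sigma\times[a,b]$ around $x_\infty$ is an embedded product region whose width and diameter are controlled in terms of $b-a$ and $K$. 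With this substitution the argument is complete; as written, the appeal to ``pleated surface technology furnishes embedded surfaces'' is not correct as stated.
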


\subsection{Embedding product regions in quasi-fuchsian manifolds}
Our next goal is to find simultaneous embeddings of product regions of $Q(Y,X)$ in $\mc{CC}(H(X))$ and $\mc{CC}(Q(Y,Y'))$. 

\begin{pro} 
\label{unif emb}
For every $\delta,\xi,D$ there exists $L>0$ such that the following holds: Suppose that we have a product region $U\subset\mc{CC}(Q)$ in a quasi fuchsian manifold $Q=Q(Y,X)$. Let $Q'=Q(Z,Z')$ be another quasi-fuchsian manifold with $[Y,X]\subset[Z,Z']$. Suppose that $U$ has size $D$, $d_Q(U,\partial\mc{CC}(Q))\ge L$ and $[Y,X]\subset\T_\delta$. Then, there exists an orientation preserving $\xi$-almost isometric embedding $k:U\rightarrow\mc{CC}(Q')$ in the homotopy class of the identity.
\end{pro}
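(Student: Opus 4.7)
The plan is to argue by contradiction along the lines of the proof of Proposition \ref{large-thick collar}: produce geometric limits of the quasi-Fuchsian manifolds involved, identify these limits with doubly degenerate hyperbolic structures on $\Sigma\times\mb{R}$ sharing the same ending laminations, and conclude via the Ending Lamination Theorem \cite{M10}, \cite{BCM12}. Suppose the conclusion fails, so we have sequences $Q_n = Q(Y_n, X_n)$ with $[Y_n, X_n]\subset\T_\delta$, product regions $U_n\subset\mc{CC}(Q_n)$ of size $D$ with $d_{Q_n}(U_n,\partial\mc{CC}(Q_n))\uparrow\infty$, and ambient manifolds $Q'_n = Q(Z_n, Z'_n)$ with $[Y_n, X_n]\subset[Z_n, Z'_n]$, admitting no $\xi$-almost isometric orientation-preserving embedding $U_n\to\mc{CC}(Q'_n)$ in the homotopy class of the identity. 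Theorem \ref{bounded} and Lemma \ref{inj prod region} guarantee that $Q_n$ is uniformly thick; choosing $x_n\in U_n$ and normalizing by mapping classes $\phi_n$ that place the Teichm\"uller time of $x_n$ inside a fixed compact subset of $\T_\delta$, Theorem \ref{compactness} provides, up to subsequences, a geometric limit $(Q_n,x_n)\to(Q_\infty,x_\infty)$.

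To take a compatible limit of the $Q'_n$, I would anchor a basepoint $x'_n\in Q'_n$ by a pleated surface realizing a short curve of a time slice of $[Y_n, X_n]$ inside $Q'_n$. Since $[Y_n, X_n]$ is a thick sub-segment of $[Z_n, Z'_n]$, Rafi--Minsky-type bounded combinatorics ensure that such a pleated surface has uniformly bounded geometry and that $x'_n$ has injectivity radius bounded below, so Theorem \ref{compactness} yields a geometric limit $(Q'_n,x'_n)\to(Q'_\infty,x'_\infty)$. Arguing as in Lemmas \ref{limit incompressible}--\ref{conf boundary}, but now with both boundary components of the convex core escaping to infinity rather than just one, I identify $Q_\infty$ and $Q'_\infty$, after passing to the appropriate surface covers, with hyperbolic structures on $\Sigma\times\mb{R}$; the absence of geometrically finite ends on either side follows from $d_{Q_n}(x_n,\partial\mc{CC}(Q_n)),\, d_{Q'_n}(x'_n,\partial\mc{CC}(Q'_n))\uparrow\infty$, so both limits are doubly degenerate. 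Their ending laminations are the limits in $\partial\mc{C}$ of $\phi_n\Upsilon(Y_n), \phi_n\Upsilon(X_n)$ and of $\phi_n\Upsilon(Z_n), \phi_n\Upsilon(Z'_n)$ respectively. Since $[Y_n, X_n]\subset[Z_n, Z'_n]$, with the thick sub-segment fixed near a compact piece of $\T_\delta$ and of diverging length on both sides, the normalized geodesics $\phi_n[Y_n, X_n]$ and $\phi_n[Z_n, Z'_n]$ converge to a common bi-infinite Teichm\"uller geodesic in $\T_\delta$; hence the two pairs of ending laminations coincide.

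The Ending Lamination Theorem \cite{M10}, \cite{BCM12} then yields an orientation-preserving isometry $Q_\infty\to Q'_\infty$ in the identity homotopy class. Pre- and post-composing with the approximating maps from the two geometric convergences produces, for all sufficiently large $n$, an orientation-preserving $\xi$-almost isometric embedding $U_n\to\mc{CC}(Q'_n)$ in the identity homotopy class, contradicting the choice of the sequence. The principal technical obstacle is the coherent choice of basepoint $x'_n\in Q'_n$ together with the identification of $Q'_\infty$ as a doubly degenerate hyperbolic structure on $\Sigma\times\mb{R}$ with the correct marking: because $[Z_n, Z'_n]$ is not assumed to lie in $\T_\delta$, one must exploit the bounded combinatorics of the thick sub-segment $[Y_n, X_n]$ to control the geometry of $Q'_n$ in the relevant region and to guarantee that the homotopy-class data at the limit corresponds to the identity rather than to some nontrivial mapping class.
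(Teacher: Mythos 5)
Your broad strategy (contradiction, geometric limits, Ending Lamination Theorem) matches the paper's, but you handle the sequence $Q'_n=Q(Z_n,Z'_n)$ in a genuinely different way, and that is where the gap lies.

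The paper does \emph{not} take a geometric limit of $(Q'_n,x'_n)$ for an ad hoc choice of basepoint $x'_n$. Instead it proves that $Z_n\to[\lambda^Y]$ and $Z'_n\to[\lambda^X]$ in $\mathcal{PML}$, where $[\lambda^Y],[\lambda^X]$ are the uniquely ergodic filling laminations at the ends of the limiting bi-infinite geodesic $l\subset\T_\delta$. Since these two laminations bind the surface, Thurston's Double Limit Theorem gives an \emph{algebraic} limit $Q'_\infty$ of the quasi-Fuchsian manifolds $\phi_n Q'_n$. An algebraic limit is automatically a hyperbolic structure on $\Sigma\times\mb{R}$ with the correct marking, so the covering issue simply does not arise. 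Theorem 1.1 of \cite{BBCM13} then identifies the ending laminations and shows there are no accidental parabolics, whence Theorem 9.2 of \cite{Th79} upgrades the algebraic convergence to geometric convergence. With both algebraic and geometric convergence in hand, the Ending Lamination Theorem produces the contradiction.

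Your route is to anchor a basepoint $x'_n$ in $Q'_n$ via a pleated surface over a thick time slice and take a geometric limit directly. This is the crux of the problem, and you acknowledge but do not resolve it: since $[Z_n,Z'_n]$ is \emph{not} assumed to lie in $\T_\delta$, the manifolds $Q'_n$ may contain arbitrarily short geodesics, so the geometric limit $Q'_\infty$ may a priori fail to coincide with the relevant surface cover, may develop unwanted parabolics, or may carry the wrong marking. The argument used in Lemma \ref{trivial covering} to trivialize the cover relied on the existence of a geometrically finite end, which is unavailable here since you correctly observe that both ends are degenerate. You would need a separate mechanism (e.g.\ deducing algebraic convergence from geometric convergence plus control of $\pi_1$-injectivity of the limiting pleated surface, then invoking strong convergence as above) to close this gap, and no such mechanism is supplied. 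A secondary imprecision: you assert that $\phi_n[Z_n,Z'_n]$ converges to a bi-infinite geodesic \emph{in} $\T_\delta$; only the sub-segment $\phi_n[Y_n,X_n]$ lies in $\T_\delta$, and while $\phi_n[Z_n,Z'_n]$ does converge uniformly on compacta to the same geodesic $l$ by uniqueness of Teichm\"uller geodesics, the full translated segments need not be thick.
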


Combining Proposition \ref{large-thick collar}, Proposition \ref{unif emb} (with $[Z,Z']=[Y,Y']$), and Lemma \ref{large implies product}, we get:

\begin{cor}
\label{cor unif emb}
Fix $\delta,\xi,D>0$. Then there exists $h=h(\delta,\xi,D)>0$ such that the following holds: Suppose that $(Y,X)$ has relative $\delta$-bounded combinatorics and height at least $h$. Consider the convex cocompact handlebody $H(X)$ and a quasi-fuchsian manifold $Q(Y,Y')$ for which $[Y,X]\subset[Y,Y']$. Then there exist a product region $U\subset\mc{CC}(H(X))$ of size $D$ in a collar of the boundary of the convex core, a product region $V\subset\mc{CC}(Q(Y,Y'))$ of size $D$ and a $\xi$-almost isometric orientation preserving diffeomorphism $k:V\rightarrow U$ in the homotopy class of the identity.
\end{cor}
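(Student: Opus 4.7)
My plan is to chain together the three preceding results of this section. First, use Theorem \ref{bounded} together with Lemma \ref{large implies product} to locate a product region $W$ of size $D$ deep inside $\mc{CC}(Q(Y,X))$; then apply Proposition \ref{unif emb} (with $[Z,Z']=[Y,Y']$) to embed $W$ almost isometrically into $\mc{CC}(Q(Y,Y'))$ as a product region $V$, and apply Proposition \ref{large-thick collar} to embed $W$ almost isometrically into a collar of $\partial\mc{CC}(H(X))$ as a product region $U$. The desired diffeomorphism $k\colon V\to U$ will then be obtained by composing one map with the inverse of the other.

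Quantitatively, after fixing $\delta,\xi,D>0$, I would first choose $\xi'>0$ small enough that the composition of two $\xi'$-almost isometric diffeomorphisms between product regions of size $D$ is $\xi$-almost isometric (a routine $\mc{C}^2$ estimate). Theorem \ref{bounded} furnishes $\ep=\ep(\delta)>0$ with ${\rm inj}(Q(Y,X))\ge\ep$ whenever $[Y,X]\subset\T_\delta$, and Lemma \ref{large implies product} applied to $(\ep,D)$ provides a depth parameter $D'=D'(\delta,D)$. Proposition \ref{unif emb} with parameters $(\delta,\xi',D)$ yields $L_1=L_1(\delta,\xi',D)$. Set $L:=2L_1+4D'+10D$ and let $h:=h(L,\delta,\xi')$ be the constant from Proposition \ref{large-thick collar}. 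Given $(Y,X)$ with relative $\delta$-bounded combinatorics and height at least $h$, Proposition \ref{large-thick collar} produces collars $C_H\subset\mc{CC}(H(X))$ and $C_Q\subset\mc{CC}(Q(Y,X))$ of their respective $X$-facing boundary components of width at least $L$, together with a $\xi'$-almost isometric orientation preserving diffeomorphism $\phi\colon C_Q\to C_H$ in the homotopy class of the identity. I would then pick $p\in C_Q$ at depth in the interval $[\max\{L_1,D'\}+2D,\,L-\max\{L_1,D'\}-2D]$ from the $X$-facing boundary (nonempty by the choice of $L$), invoke Lemma \ref{large implies product} to obtain a product region $W\ni p$ of size $D$ in $\mc{CC}(Q(Y,X))$, and observe that ${\rm diam}(W)\le 2D$ forces $W\subset C_Q$ and $d_{Q(Y,X)}(W,\partial\mc{CC}(Q(Y,X)))\ge L_1$. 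Proposition \ref{unif emb} then produces a $\xi'$-almost isometric orientation preserving embedding $\psi\colon W\to\mc{CC}(Q(Y,Y'))$ in the homotopy class of the identity, and I set $V:=\psi(W)$, $U:=\phi(W)$, $k:=\phi\circ\psi^{-1}$. The map $k$ is an orientation preserving diffeomorphism in the homotopy class of the identity and is $\xi$-almost isometric by the choice of $\xi'$; a harmless slight inflation of the target size $D$ absorbs the distortion of size caused by $\phi,\psi$, so that $V$ and $U$ are product regions of size $D$, with $U$ lying in the collar $C_H$ of $\partial\mc{CC}(H(X))$ as required.

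The main obstacle, and the reason for the choice of $L$, is arranging that the product region $W$ produced by Lemma \ref{large implies product} simultaneously (i) sits inside $C_Q$ so that $\phi$ is defined on it and (ii) lies at distance at least $L_1$ from \emph{both} components of $\partial\mc{CC}(Q(Y,X))$ so that Proposition \ref{unif emb} applies. Here one uses that once a collar of width $L$ about the $X$-facing boundary of $\mc{CC}(Q(Y,X))$ exists, the $Y$-facing boundary lies beyond the inner surface of $C_Q$, so a depth at most $L-L_1-2D$ from the $X$-side automatically implies depth at least $L_1+2D$ from the $Y$-side. Everything else is routine bookkeeping of almost-isometric constants.
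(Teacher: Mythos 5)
Your proposal is correct and follows essentially the same approach as the paper's own proof: extract a product region of size $D$ deep in a collar of $\partial_X\mc{CC}(Q(Y,X))$ via Lemma \ref{large implies product}, transport it into $\mc{CC}(H(X))$ via the collar identification of Proposition \ref{large-thick collar}, and transport it into $\mc{CC}(Q(Y,Y'))$ via Proposition \ref{unif emb}, then compose. Your write-up is more explicit about the bookkeeping of constants (the $\xi'$-to-$\xi$ degradation under composition, and the choice of depth window making the product region simultaneously inside the collar and far from both boundary components), but the structure of the argument is identical to the paper's.
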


\begin{proof}[Proof of Corollary \ref{cor unif emb}]
Let $L>0$ be a large arbitrary size. By Proposition \ref{large-thick collar}, if $h$ is large enough, we can find a $\xi$-almost isometric diffeomorphism between a collar of width $2L+4D$ of $\partial_X\mc{CC}(Q=Q(Y,X))$ and a collar of $\partial\mc{CC}(H(X))$. By Lemma \ref{large implies product}, we can find a product region $U$ of size $D$ in the $Q$-collar with $d_Q(U,\partial\mc{CC}(Q))\ge L$. By Proposition \ref{unif emb}, if $L$ is sufficiently large, such product $\xi$-almost isometrically embeds also in $\mc{CC}(Q(Y,Y'))$. All the embeddings are in the homotopy class of the identity.
\end{proof}

We now prove the proposition.

\begin{proof}[Proof of Proposition \ref{unif emb}]
We argue by contradiction. Consider a sequence of counterexamples $U_n\subset Q_n=Q(Y_n,X_n)$ and $Q_n'=Q(Z_n,Z_n')$ with $d_{Q_n}(U_n,\partial\mc{CC}(Q_n))\ge n$ and such that there is no $\xi$-almost isometric embedding of $U_n$ in $\mc{CC}(Q_n')$. 

We show that, after suitable choices of basepoints, the sequences $Q_n$ and $Q_n'$ converge geometrically to the same doubly degenerate structure on $\Sigma\times\mb{R}$ so that we can almost isometrically embed $U_n$ in $Q_n'$ for large enough $n$. This will provide the desired contradiction. 

As a first step we consider the sequence $Q_n$. 

By the Canary-Thurston Filling Theorem (see Canary \cite{C96} and Theorem 9.5.13 of Thurston \cite{Th79}), in a uniform neighbourhood of each point in $\mc{CC}(Q_n)$ there exists a pleated surface. Thus, we can pick a pleated surface $f_n:W_n\rightarrow Q_n$ that passes through a point $x_n\in Q_n$ unifromly close to $U_n\subset\mc{CC}(Q_n)$. 

Notice that, by Theorem \ref{bounded}, the injectivity radius of $Q_n$ is uniformly bounded from below by $\text{\rm inj}(Q_n)\ge\ep$ where $\ep$ only depends on $\delta$ and on $g$. In particular, as $\text{\rm inj}(W_n)\ge\text{\rm inj}(Q_n)$, we have that $W_n\in\T_\ep$. Since the mapping class group acts cocompactly on $\T_\ep$, up to coherently remarking $Q_n$ and $W_n$, we can assume that $W_n$ lies in a fixed compact subset of $\T_\ep$. 

We now take a geometric limit of the sequence $f_n:W_n\rightarrow Q_n$. The limit is $f:W\rightarrow Q_\infty$ where $f:W\rightarrow Q_\infty$ is a pleated surface. As $\text{\rm inj}(Q_n)\ge\ep$, we also have $\text{\rm inj}(Q_\infty)\ge\ep$. 

\begin{lem}
$Q_\infty$ is a doubly degenerate structure on $\Sigma\times\mb{R}$.
\end{lem}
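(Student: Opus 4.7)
The plan is to follow the template used to prove Proposition \ref{large-thick collar} in Section \ref{relativebounded}, adapted to account for the new placement of basepoints, which now sit deep inside $\mc{CC}(Q_n)$ rather than on its boundary. First, I would argue that the limit pleated map $f:W\to Q_\infty$ is $\pi_1$-injective by repeating the contradiction argument of Lemma \ref{limit incompressible}: an essential closed curve $\gamma\subset W$ with $f(\gamma)$ null-homotopic in $Q_\infty$ would transport through the approximating maps to essential curves $\phi_n(\gamma)\subset W_n$ of bounded length with null-homotopic $f_n$-images, contradicting $\pi_1$-injectivity of the pleated surfaces $f_n:W_n\to Q_n$ (which are in fact homotopy equivalences, as $W_n$ lies in the homotopy class of a level fibre of the quasi-fuchsian manifold $Q_n$). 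Then I would pass to the cover $p:\bar{Q}_\infty\to Q_\infty$ associated to $f_*\pi_1(W)$, lift $f$ to $\bar{f}:W\to\bar{Q}_\infty$, and apply tameness (Bonahon / Agol / Calegari--Gabai) to conclude that $\bar{Q}_\infty$ is homeomorphic to $\Sigma\times\mb{R}$.

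The key new step is to show that both ends of $\bar{Q}_\infty$ are simply degenerate. Since $d_{Q_n}(x_n,\partial\mc{CC}(Q_n))\ge n-O(1)$, lifting to $\mb{H}^3$ with the lifts $\tilde{x}_n$ based at a common origin $o$ gives $B(o,R)\subset\mc{CH}(\Lambda_{\Gamma_n})$ for every $R>0$ once $n$ is sufficiently large. Running the Chabauty-convergence argument from the proof of Lemma \ref{convex cocompact end}, the sets $\mc{CH}(\Lambda_{\Gamma_n})$ converge to a closed convex $\Gamma$-invariant subset $K\subset\mb{H}^3$; by the nested ball containment, $K=\mb{H}^3$. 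Transferring this conclusion to the surface cover through the pleated surfaces (which are contained in the respective convex hulls, and whose limit surface realizes the subgroup $\bar\Gamma=\bar{f}_*\pi_1(W)<\Gamma$), one should deduce $\mc{CH}(\Lambda_{\bar\Gamma})=\mb{H}^3$, so that $\mc{CC}(\bar{Q}_\infty)=\bar{Q}_\infty$ and $\bar{Q}_\infty$ has no geometrically finite end. Being topologically $\Sigma\times\mb{R}$, it is doubly degenerate.

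Finally, to identify $Q_\infty$ itself with the claimed doubly degenerate manifold, I would prove that the covering $p$ is trivial. Since each $f_n$ is a homotopy equivalence, the induced holonomy $\pi_1(W_n)\to\Gamma_n$ is an isomorphism; its limit $\bar{f}_*:\pi_1(W)\to\Gamma$ is injective by the first step, and surjectivity would follow from Canary's Covering Theorem applied to the simply degenerate ends of $\bar{Q}_\infty$ (a non-trivial cover would force $Q_\infty$ to fibre over the circle and hence to be closed, incompatible with the incompressible pleated $\bar{f}(W)\subset\bar{Q}_\infty$ with pleated surfaces escaping on both sides). The hardest part will be the Chabauty-to-algebraic-limit bridge in the second step: one has to pass from the statement that $\tilde{x}_n$ lies infinitely far from $\partial\mc{CH}(\Lambda_{\Gamma_n})$, which concerns the full quasi-fuchsian groups $\Gamma_n$, to a statement about the limit set $\Lambda_{\bar\Gamma}\subset\partial\mb{H}^3$ of the surface subgroup of the geometric limit; this is the standard subtle interface between algebraic and geometric limits, and is precisely why an interior basepoint produces two simply degenerate ends where the boundary basepoint of Section \ref{relativebounded} produced only one.
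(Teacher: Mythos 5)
Your proposal takes a genuinely different route from the paper's, and while the pieces are in the right spirit, there is a real gap that you flag yourself but do not close.

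The paper's proof of this lemma is a single short paragraph, and it hinges on a theorem of Thurston (Theorem 9.2 of \cite{Th79}, see also Theorem 4.3 of \cite{M93}): for a sequence of surface-group representations with a uniform lower bound on injectivity radius, hence no new parabolics, the geometric limit coincides with the $f_*\pi_1(W)$-cover of itself. This immediately gives $Q_\infty\cong\Sigma\times\mb{R}$, with no need to prove incompressibility, pass to a cover, or invoke tameness and the Covering Theorem. Once this is in hand, the doubly degenerate conclusion is extracted from the divergence $d_{Q_n}(U_n,\partial\mc{CC}(Q_n))\to\infty$ by the much more elementary observation that every point of $Q_\infty$ lies at uniformly bounded distance from a closed geodesic (pleated surfaces pass through every point of $\mc{CC}(Q_n)$ up to bounded error, and each carries a short closed geodesic); since closed geodesics live in the convex core, $\mc{CC}(Q_\infty)=Q_\infty$ and there can be no geometrically finite end.

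You bypass Thurston's theorem and instead try to (i) prove incompressibility of the limit pleated surface, (ii) establish that both ends of the cover $\ol{Q}_\infty$ are simply degenerate by a Chabauty argument on $\mc{CH}(\Lambda_{\Gamma_n})$, and then (iii) deduce triviality of the covering from Canary's Covering Theorem. Step (i) is fine and in fact easier than the handlebody case since the $f_n$ are already homotopy equivalences. The problem is in step (ii), and it is exactly the point you identify as ``the hardest part.'' The Chabauty convergence $\mc{CH}(\Lambda_{\Gamma_n})\to\mb{H}^3$ concerns the geometric limit group $\Gamma$, not the subgroup $\ol{\Gamma}=\ol{f}_*\pi_1(W)<\Gamma$ that uniformizes $\ol{Q}_\infty$. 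The automatic inclusion $\mc{CH}(\Lambda_{\ol\Gamma})\subseteq K$ goes the wrong way, and there is no direct way to conclude $\mc{CH}(\Lambda_{\ol\Gamma})=\mb{H}^3$ without first knowing $\ol\Gamma=\Gamma$. (Even the identification of $K$ with $\mc{CH}(\Lambda_\Gamma)$ requires care, since $K$ is by construction only a closed convex $\Gamma$-invariant set, not a priori the smallest one.) This creates a circularity with step (iii): you want to use simple degeneracy of the ends of $\ol{Q}_\infty$ to invoke the Covering Theorem and show $\ol\Gamma=\Gamma$, but your argument for that degeneracy needs information about how closed geodesics and convex hulls behave under the covering $p:\ol{Q}_\infty\to Q_\infty$, which is exactly what triviality of $p$ would give you. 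The clean way to break the loop is the one the paper takes: invoke Thurston's bounded-geometry theorem first (this is precisely where the $\delta$-thickness of $[Y_n,X_n]$, hence the uniform injectivity radius bound, is used), conclude $Q_\infty=\ol{Q}_\infty\cong\Sigma\times\mb{R}$ at the outset, and then read off double degeneracy from the closed-geodesic observation.
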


\begin{proof}
In the absence of parabolics in the limit (recall that $\text{\rm inj}(Q_\infty)\ge\ep$), a result of Thurston (see Theorem 9.2 in \cite{Th79} and also Theorem 4.3 in \cite{M93}), implies that the geometric limit $Q_\infty$ and its $f_*\pi_1(W)$-covering coincide. In particular, $Q_\infty$ is a hyperbolic structure on $\Sigma\times\mb{R}$. Since we assumed that $d_{Q_n}(U_n,\partial\mc{CC}(Q_n))$ diverges, each end of $Q_\infty$ must be simply degenerate (for example because every point in $Q_\infty$ lies uniformly close to a closed geodesic). Thus $Q_\infty$ is a doubly degenerate structure on $\Sigma\times\mb{R}$. 
\end{proof}

Next, we compute the ending laminations of $Q_\infty$. 

Even if, in this case, the only ingredients required are standard arguments mainly due to Thurston (as can be found in Chapter 9 of \cite{Th79} and in \cite{Th2}), for the sake of brevity we exploit instead a simple criterion due to Brock-Bromberg-Canary-Minsky (see Theorem 1.1 of \cite{BBCM13}): If the sequences $\Upsilon(X_n)$ and $\Upsilon(Y_n)$ converge to laminations $\lambda^X$ and $\lambda^Y$ in $\partial\mc{C}$, then $\lambda^X$ and $\lambda^Y$ are respectively the ending laminations of the positive and negative ends of $Q_\infty$.   

We have the following:

\begin{lem}
Up to subsequences, both $X_n$ and $Y_n$ converge in $\T\cup\mc{PML}$ to the projective classes $[\lambda^X]$ and $[\lambda^Y]$ of measured laminations which are uniquely ergodic, filling and together bind the surface. Furthermore, $Z_n\rightarrow[\lambda^Y]$ and $Z_n'\rightarrow[\lambda^X]$.
\end{lem}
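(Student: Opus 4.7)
The plan is to exhibit $[\lambda^X]$ and $[\lambda^Y]$ as the Thurston-boundary endpoints of a biinfinite Teichm\"uller geodesic obtained as a limit of the segments $[Y_n,X_n]$, and then to apply Masur's theorems on the dynamics of the Teichm\"uller flow to extract the regularity properties.

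First I would argue that $d_\T(Y_n,X_n)\to\infty$ and that the segments $[Y_n,X_n]$ (after the re-marking already performed) pass through a fixed compact subset of $\T_\delta$. For the first point: since $[Y_n,X_n]\subset\T_\delta$, Theorem \ref{bounded} yields a uniform lower bound on ${\rm inj}(Q_n)$ and hence a bilipschitz Minsky model for $Q_n$ based on $[Y_n,X_n]$; under this model the basepoint $x_n$ corresponds to a point $\sigma_n\in[Y_n,X_n]$, and the hypothesis $d_{Q_n}(x_n,\partial\mc{CC}(Q_n))\ge n$ translates into $d_\T(\sigma_n,Y_n),d_\T(\sigma_n,X_n)\to\infty$. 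For the second point: the pleated surface $f_n:W_n\to Q_n$ through $x_n$ is, under the same model, at bounded Teichm\"uller distance from $\sigma_n$; since after re-marking $W_n$ lies in a fixed compact set and $\sigma_n\in\T_\delta$, the point $\sigma_n$ also lies in a uniform compact subset of $\T_\delta$.

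Passing to a subsequence, the segments $[Y_n,X_n]$ then converge uniformly on compact sets to a biinfinite Teichm\"uller geodesic $l:\mb{R}\to\T_\delta$ through $\sigma:=\lim\sigma_n$. By Masur \cite{Mas80,Mas92}, the two endpoints in the Thurston compactification of a biinfinite Teichm\"uller geodesic entirely contained in a thick part are projective classes of filling, uniquely ergodic measured laminations; being distinct, uniquely ergodic and filling, they automatically bind the surface. Continuity of the Thurston compactification then gives $X_n\to[\lambda^X]$ and $Y_n\to[\lambda^Y]$ along the chosen subsequence.

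For $Z_n$ and $Z'_n$: the containment $[Y_n,X_n]\subset[Z_n,Z'_n]$ forces $Z_n$ to lie on the geodesic past $Y_n$ in the direction away from $X_n$, and $Z'_n$ analogously past $X_n$; in particular $Z_n$ is on the ray from the compact region heading towards $[\lambda^Y]$ at Teichm\"uller distance from $\sigma_n$ no smaller than $d_\T(\sigma_n,Y_n)$. Since $Y_n\to[\lambda^Y]$ in $\T\cup\mc{PML}$ and any sequence on this ray further along than $Y_n$ (or at bounded distance from it) converges to the same limit, we get $Z_n\to[\lambda^Y]$, and symmetrically $Z'_n\to[\lambda^X]$. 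The main obstacle I foresee is the first step: producing the surface $\sigma_n\in[Y_n,X_n]$ associated to $x_n$ in a way compatible with the re-marking, which requires invoking the bounded-geometry Minsky model for quasi-Fuchsian manifolds rather than purely Teichm\"uller-theoretic reasoning.
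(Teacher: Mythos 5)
Your proposal follows essentially the same route as the paper: both proofs locate the pleated surface $W_n$ (or equivalently your $\sigma_n$) at bounded Teichm\"uller distance from $[Y_n,X_n]$, show the distances to both endpoints diverge using the divergence of $d_{Q_n}(x_n,\partial\mc{CC}(Q_n))$, pass to a limiting biinfinite geodesic in $\T_\delta$, and invoke Masur's unique ergodicity results together with the nesting $[Y_n,X_n]\subset[Z_n,Z_n']$. The paper cites Lemma 4.8 of Minsky \cite{M94} to translate $Q_n$-distance into Teichm\"uller distance and Theorem A of \cite{M93} to locate $W_n$ near the geodesic, rather than appealing to the bilipschitz model wholesale, but this is the same circle of ideas and the overall structure is identical.
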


\begin{proof}
We start by analizing the relative position of $W_n$ and $X_n$: We know that 
\[
d_{Q_n}(f_n(W_n),\partial_{X_n}\mc{CC}(Q_n))\rightarrow\infty.
\]
This implies (see for example Lemma 4.8 of \cite{M94}) that $d_\T(W_n,\partial_{X_n}\mc{CC}(Q_n))$ diverges as well. Since $d_\T(X_n,\partial_{X_n}\mc{CC}(Q_n))$ is uniformly bounded, we conclude that $d_\T(W_n,X_n)\rightarrow\infty$. Similarly, $d_\T(W_n,Y_n)\rightarrow\infty$.

By Theorem A of \cite{M93}, the hyperbolic structure $W_n$ lies uniformly close to the segment $[Y_n,X_n]$. Since $W_n$ converges to $W$, and the endpoints of the segment are escaping towards $\mc{PML}$, we can assume that, up to subsequences, $[Y_n,X_n]\subset\T_\delta$ converges to a bi-infinite Teichm\"uller line $l\subset\T_\delta$. By results of Masur \cite{Mas80}, \cite{Mas92}, $l$ converges in the forward and backward directions to the projective classes of filling uniquely ergodic laminations $\lambda^X$ and $\lambda^Y$ that together bind the surface $\Sigma$. 

By the properties of the Thurston compactification of Teichm\"uller space, this implies that $X_n\rightarrow[\lambda^X]$ and $Y_n\rightarrow[\lambda^Y]$.
But $[Y_n,X_n]\subset[Z_n,Z_n']$ and consequently $Z_n,Z_n'$ also converge to $[\lambda^Y],[\lambda^X]$.
\end{proof}

Using a result of Klarreich \cite{K99} we get:

\begin{cor}
\label{end inv}
$[\lambda^X],[\lambda^Y]$ determine points $\lambda^X,\lambda^Y\in\partial\mc{C}$, and we have $\Upsilon(X_n),\Upsilon(Z'_n)\rightarrow\lambda^X$ and $\Upsilon(Y_n),\Upsilon(Z_n)\rightarrow\lambda^Y$.
\end{cor}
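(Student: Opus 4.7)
The plan is to deduce the corollary from Klarreich's theorem \cite{K99} together with a standard observation about bounded-length curves in the Thurston compactification. Recall that Klarreich identifies the Gromov boundary $\partial\mc{C}$ with the space of filling minimal geodesic laminations on $\Sigma$, and characterizes convergence in $\mc{C}\cup\partial\mc{C}$ as follows: a sequence of simple closed curves $\alpha_n$ converges to $\lambda\in\partial\mc{C}$ precisely when every $\mc{PML}$-accumulation point of $\alpha_n$ (viewed as measured laminations with the counting measure) is a filling measured lamination with topological support equal to $\lambda$.

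The first assertion of the corollary is then immediate: since $\lambda^X$ and $\lambda^Y$ are filling and uniquely ergodic (hence in particular minimal) by the previous lemma, their underlying topological supports define bona fide points of $\partial\mc{C}$ via Klarreich's identification.

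For the convergence statements, the key input is Bers' lemma: there exists a constant $B=B(g)>0$ such that $L_{X_n}(\Upsilon(X_n))\le B$ for all $n$, and similarly for the three other sequences of systoles. I would combine this with the following standard fact: if $\alpha_n$ is a sequence of simple closed curves with $L_{X_n}(\alpha_n)$ uniformly bounded and $X_n\to[\lambda]$ in the Thurston compactification, with $\lambda$ filling and uniquely ergodic, then $\alpha_n\to[\lambda]$ in $\mc{PML}$. Granted this, one applies it to $\Upsilon(X_n)$ with $X_n\to[\lambda^X]$, to $\Upsilon(Y_n)$ with $Y_n\to[\lambda^Y]$, and to $\Upsilon(Z_n),\Upsilon(Z'_n)$ with $Z_n\to[\lambda^Y]$ and $Z'_n\to[\lambda^X]$ from the previous lemma. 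Klarreich's criterion then upgrades each of these $\mc{PML}$-convergences to a convergence in $\mc{C}\cup\partial\mc{C}$, yielding the corollary.

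The main subtlety is the standard fact stated above, which I expect to handle as follows: let $[\nu]$ be any $\mc{PML}$-accumulation point of $\alpha_n$. The length and intersection functions extend continuously to $\T\cup\mc{PML}$ with the appropriate homogeneous normalizations, and the bound $L_{X_n}(\alpha_n)\le B$ combined with $X_n\to[\lambda]$ in Thurston compactification forces $i(\lambda,\nu)=0$. Since $\lambda$ is filling, this means $\nu$ and $\lambda$ have the same topological support, and unique ergodicity of $\lambda$ then gives $[\nu]=[\lambda]$. The qualitative content is a classical ingredient in the theory of Teichm\"uller geodesics and ending laminations (e.g. in \cite{MM99} and \cite{M01}); the only possible obstacle is the bookkeeping of scaling factors in the Thurston compactification, which is routine.
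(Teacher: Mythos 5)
Your proof is correct and follows essentially the same route as the paper, which simply cites Klarreich \cite{K99} and leaves the details implicit. Your reconstruction (Bers' lemma for bounded systole length, the intersection-number argument to pin down the $\mc{PML}$ limit, then Klarreich's characterization of convergence to $\partial\mc{C}$) is an accurate unwinding of what that citation entails; the "bookkeeping of scaling factors" you flag is handled cleanly by the continuity of the intersection pairing on geodesic currents.
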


This finishes the computation of the end invariants of $Q_\infty$.

We now show that, up to subsequences, also $Q_n'$ converges to $Q_\infty$. 

Since $Z_n',Z_n\rightarrow[\lambda^X],[\lambda^Y]$ and $\lambda^X,\lambda^Y$ bind the surface, we can apply Thurston's Double Limit Theorem \cite{Th2} and show that the sequence of quasi-fuchsian manifolds $Q_n'$ has a subsequence that converges algebraically to a manifold $Q_\infty'$. Theorem 1.1 of \cite{BBCM13} and Corollary \ref{end inv} imply that $Q_\infty'$ is a doubly degenerate hyperbolic structure where the ending laminations of the positive and negative ends are respectively $\lambda^X$ and $\lambda^Y$. Since the limit is doubly degenerate, in particular has no accidental parabolics, the convergence $Q_n'\rightarrow Q_\infty'$ is not only algebraic, but also geometric (by Theorem 9.2 of \cite{Th79}).

By the solution of the Ending Lamination Conjecture \cite{M10}, \cite{BCM12}, there is an orientation preserving isometry between $Q_\infty$ and $Q_\infty'$ in the homotopy class of the identity as desired. Since the convergence $Q_n'\rightarrow Q_\infty'$ is both algebraic and geometric, we obtain a $\xi$-almost isometric embedding $U_n\subset\mc{CC}(Q_n)\rightarrow\mc{CC}(Q_n')$ in the homotopy class of the identity for all sufficiently large $n$. This contradicts our initial assumptions and concludes the proof of Proposition \ref{unif emb}.  
\end{proof}

\subsection{Geometry of the middle gluing block}
Let $f\in{\rm Mod}(\Sigma)$ be our gluing map. If $(Y,X)$ and $(f^{-1}Y',f^{-1}X')$ have relative $\delta$-bounded combinatorics with respect to $H_g$ and sufficiently large height, then Corollary \ref{cor unif emb} provides us orientation preserving $\xi$-almost isometric diffeomorphisms $k_1:V_1\subset\mc{CC}(Q(Y,Y'))\rightarrow U_1\subset\mc{CC}(H(X))$ and $k_2:V_2\subset\mc{CC}(Q(Y,Y'))\rightarrow U_2\subset\mc{CC}(H(f^{-1}X'))$ of product regions of size $D$ and where $k_1$ is in the homotopy class of the identity and $k_2$ is in the homotopy class of $f$. 

We now analyze the relative position of $V_1$ and $V_2$ in $\mc{CC}(Q(Y,Y'))$ and show that, under suitable assumptions, $V_1,V_2\subset\mc{CC}(Q(Y,Y'))$ also determine a gluing block $Q_0$ of which they are respectively the bottom and top collars. Since $\mc{CC}(Q(Y,Y'))\simeq\Sigma\times[0,1]$ and both $V_1,V_2$ are parallel to the boundary components, it is enough to prove that $V_1,V_2$ are disjoint and $V_1$ is closer to $\partial_Y\mc{CC}(Q(Y,Y'))$ than $V_2$. 

In order to locate $V_1,V_2$ inside $\mc{CC}(Q(Y,Y'))$ we exploit the geometry of the pleated surfaces with image in $V_1,V_2$. This is the content of the next few lemmas. We begin with the following:

\begin{lem}
\label{position}
For every $\delta,D_0>0$ and there exist $A(\delta)>0$ and $D_1=D_1(D_0,\delta)>0$ such that for every product region $U\subset\mc{CC}(Q)$ of size $D\ge D_1$ in a quasi-fuchsian manifold $Q=Q(Y,X)$ with $[Y,X]\subset\T_\delta$, there exists a hyperbolic surface $Z\in\T$ contained in the $A$-neighbourhood of $[Y,X]$ and a 1-Lipschitz map $f:Z\rightarrow U$ with $d_Q(f(Z),\partial U)\ge D_0$.
\end{lem}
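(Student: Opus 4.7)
The plan is to realize $f: Z \to U$ as a pleated surface passing through a point deep in the interior of $U$, and then invoke standard comparison theorems. Three ingredients will drive the argument: Rafi's Theorem \ref{bounded}, which gives $\text{\rm inj}(Q) \geq \epsilon = \epsilon(\delta) > 0$; the Canary-Thurston Filling Theorem (as in \cite{C96} and Theorem 9.5.13 of \cite{Th79}, already used in the proof of Proposition \ref{unif emb}), which guarantees that any point of $\mc{CC}(Q)$ lies within a universally bounded distance $C$ of the image of some pleated surface; and Theorem A of Minsky \cite{M93}, which bounds the Teichm\"uller distance from a pleated surface in $Q = Q(Y,X)$ to the geodesic $[Y,X]$ in terms of a lower injectivity radius bound for the surface. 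I also need Mumford's compactness to conclude that every hyperbolic surface in $\T_\epsilon$ has diameter at most some $R = R(g,\epsilon)$.

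The first substantive step is to locate a point $x \in U$ with $d_Q(x, \partial U) \geq D/2$. Since $\Sigma$ is closed, $\partial U = \partial^+ U \cup \partial^- U$. I would consider the continuous function $g(z) := d_Q(z, \partial^+ U) - d_Q(z, \partial^- U)$ on $U$: it is $\leq 0$ on $\partial^+ U$ and $\geq 0$ on $\partial^- U$, so it must vanish somewhere along any path in $U$ joining the two boundary components. At such a zero $x$, the triangle inequality applied to the infimum defining the width yields $2 d_Q(x, \partial^\pm U) \geq d_Q(\partial^+ U, \partial^- U) \geq \text{\rm width}(U) \geq D$, hence $d_Q(x, \partial U) \geq D/2$.

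With $x$ chosen, Canary-Thurston furnishes a pleated surface $f: Z \to Q$ whose image meets $B_Q(x, C)$. Since $f$ is $1$-Lipschitz and $\pi_1$-injective, any essential simple closed curve $\alpha$ on $Z$ satisfies $l_Q(f_*\alpha) \leq L_Z(\alpha)$ with $f_*\alpha$ non-trivial, and as $\text{\rm inj}(Q) \geq \epsilon$ precludes closed geodesics in $Q$ of length below $2\epsilon$, this forces $L_Z(\alpha) \geq 2\epsilon$. Hence $Z \in \T_\epsilon$ and $\text{\rm diam}(Z) \leq R$, so $f(Z) \subset B_Q(x, R+C)$. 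Taking $D_1 := 2(D_0 + R + C)$ and combining with $d_Q(x, \partial U) \geq D/2 \geq D_0 + R + C$ would guarantee both $f(Z) \subset U$ (any geodesic from $x$ of length below $R+C$ cannot reach $\partial U$, so it stays in the component of $Q \setminus \partial U$ containing $x$) and $d_Q(f(Z), \partial U) \geq D_0$. Finally, Theorem A of \cite{M93} applied to the pleated surface $Z \in \T_\epsilon$ would give $d_\T(Z, [Y,X]) \leq A(\delta)$, completing the argument.

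The only delicate point is that the estimate $d_Q(x, \partial U) \geq D/2$ must be with respect to the ambient distance in $Q$ rather than the intrinsic distance of $U$; the sign-change argument above (equivalently, taking the midpoint of a shortest $Q$-geodesic between $\partial^+ U$ and $\partial^- U$) extracts this from the width hypothesis alone. All other steps are standard applications of machinery already used in Sections \ref{relativebounded} and \ref{almostisometric}.
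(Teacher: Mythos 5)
Your proposal is correct and follows essentially the same route as the paper's proof: pick a point $x\in U$ at ambient distance at least $D/2$ from $\partial U$, take a pleated surface passing uniformly close to $x$ (Canary--Thurston), note that $Z\in\T_\epsilon$ with uniformly bounded diameter thanks to ${\rm inj}(Q)\geq\epsilon(\delta)$ (Theorem \ref{bounded}) and Mumford compactness, and locate $Z$ near $[Y,X]$ via Theorem A of \cite{M93}. The only additions are details the paper leaves implicit (the sign-change argument producing $x$, the $\pi_1$-injectivity argument for $Z\in\T_\epsilon$, and the connectedness argument for $f(Z)\subset U$), all of which are sound.
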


\begin{proof}
Recall that pleated surfaces in $Q$ have uniformly bounded diameter as their hyperbolic structures all live in $\T_\ep$ where $\ep=\ep(\delta)>0$ is the uniform lower bound on the injectivity radius $\text{\rm inj}(Q)\ge \ep$ (Theorem \ref{bounded}). Also recall that for each point in the convex core there is a pleated surface that passes uniformly nearby. Let $D$ be very large. Pick a point $x\in U$ such that $d_Q(x,\partial U)=D/2$ and let $f:Z\rightarrow Q$ be any pleated surface that passes uniformly close to $x$. By Theorem A of Minsky \cite{M93}, we have that $Z$ lies in the $A$-neighbourhood of $[X,Y]$ for some uniform $A=A(\delta)>0$. Since $f(Z)$ has bounded diameter, its distance from $\partial U$ is roughly $D/2$. In particular, if $D$ is large enough, it is larger than $D_0$. 
\end{proof}

If we have a $\xi$-almost isometric diffeomorphism between product regions $k:U\rightarrow V$, where $U$ is as in the previous lemma and $V$ is arbitrary, we can transport the 1-Lipschitz map $f:Z\rightarrow U$ to a 2-Lipschitz map $kf:Z\rightarrow V$ with roughly the same geometric behaviour.

To control better the position of a 2-Lipschitz map $kf:Z\rightarrow V$, we anchor it to a moderate length closed geodesic: 

\begin{lem}
\label{not short}
For every $\ep>0$ there exists $D_0>0$ such that the following holds: Let $V\subset\mc{CC}(Q)$ be a product region with injectivity radius ${\rm inj}(V)\ge\ep$ and containing the image of a 2-Lipschitz map $f:Z\rightarrow V$ satisfying $d_Q(f(Z),\partial V)\ge D_0$. Let $\beta$ be a shortest closed geodesic for $Z$. Then the geodesic representative of $f(\beta)$ in $Q$ lies in $V$ and has length at least $2\ep$. 
\end{lem}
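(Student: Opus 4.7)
The plan is to argue by contradiction and take a geometric limit, in the spirit of the proof of Lemma~\ref{bump function}. By Bers' theorem there is a constant $L_0=L_0(g)$ such that the shortest closed geodesic $\beta$ on any closed hyperbolic surface of genus $g$ has length at most $L_0$, hence $|f(\beta)|_Q\le 2L_0$. In the applications of this lemma the map $f$ is a composition of a pleated surface with a $\xi$-almost isometric embedding, so it is $\pi_1$-injective; we adopt this as a standing assumption, which guarantees that $f(\beta)$ is not nullhomotopic and has a geodesic representative $\gamma^*$ in $Q$ of length at most $2L_0$. Once we know $\gamma^*\subset V$, the length bound $|\gamma^*|\ge 2\ep$ is immediate since $\gamma^*$ then passes through a point of injectivity radius at least $\ep$.

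Suppose the lemma fails: there exist $\ep>0$ and a sequence of counterexamples $(V_n\subset\mc{CC}(Q_n),\ f_n\colon Z_n\to V_n)$ with $d_{Q_n}(f_n(Z_n),\partial V_n)\ge n$, such that for every $n$ either $\gamma_n^*\not\subset V_n$ or $|\gamma_n^*|<2\ep$. Pick basepoints $x_n=f_n(z_n)\in V_n$. Since $B_{Q_n}(x_n,n)\subset V_n$ is $\ep$-thick and the $Z_n$ are uniformly thick (being pleated surfaces in $\delta$-thick parts of Teichm\"uller space, cf.\ Theorem~\ref{bounded}), Theorem~\ref{compactness} allows us to extract, after passing to subsequences, geometric limits $(Q_n,x_n)\to(Q_\infty,x_\infty)$ and $(Z_n,z_n)\to(Z_\infty,z_\infty)$, together with a $2$-Lipschitz limit $f_\infty\colon Z_\infty\to Q_\infty$. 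The $\ep$-thick balls $B_{Q_n}(x_n,n)$ exhaust $Q_\infty$, so the limit manifold is globally $\ep$-thick, and the systoles $\beta_n$ converge to a closed geodesic $\beta_\infty$ on $Z_\infty$ of length at most $L_0$.

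By a standard argument, $\pi_1$-injectivity passes to the limit, so $f_\infty(\beta_\infty)$ represents a nontrivial conjugacy class in $\pi_1(Q_\infty)$ of length at most $2L_0$, with a geodesic representative $\gamma^*_\infty$. Because every point of $Q_\infty$ has injectivity radius at least $\ep$, the geodesic $\gamma^*_\infty$ has length at least $2\ep$. With this positive lower bound on $|\gamma^*_\infty|$ in hand, the standard hyperbolic estimate (lifting $f_\infty(\beta_\infty)$ to an arc from $p$ to $\phi p$ in $\mb{H}^3$ and projecting orthogonally onto the axis of $\phi$) produces a uniform bound $R_0=R_0(L_0,\ep)$ on the Hausdorff distance between $\gamma^*_\infty$ and $f_\infty(\beta_\infty)$.

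Transporting back through the approximating maps of the geometric convergence, for all sufficiently large $n$ the geodesic $\gamma_n^*$ has length arbitrarily close to $|\gamma^*_\infty|\ge 2\ep$ and lies within Hausdorff distance $R_0+1$ of $f_n(\beta_n)\subset f_n(Z_n)$. Choosing $n$ so large that $R_0+1<n\le d_{Q_n}(f_n(Z_n),\partial V_n)$ forces both $\gamma_n^*\subset V_n$ and $|\gamma_n^*|\ge 2\ep$, contradicting both cases of the failure hypothesis. The main obstacle in trying to carry this out directly, without taking a limit, is that the $\mb{H}^3$ estimate for the Hausdorff distance between a loop and its geodesic representative blows up as the translation length $t\downarrow 0$; the limit argument bypasses this difficulty precisely because the globally $\ep$-thick limit manifold $Q_\infty$ supplies an a priori lower bound $t\ge 2\ep$ in the limit.
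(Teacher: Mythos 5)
Your proof is a contradiction-plus-geometric-limit argument, while the paper gives a direct estimate and is considerably shorter. The key point you missed is that the direct approach does \emph{not} founder on the blow-up you describe: the paper does not estimate the distance from $f(\beta)$ to the \emph{core geodesic} $\gamma^*$ (which indeed diverges as $l(\gamma^*)\downarrow 0$), but rather the distance from $f(\beta)$ to the $\epsilon$-\emph{Margulis tube} $\mb{T}$ of $\gamma^*$. That quantity is bounded by $\log\bigl(2\,l(f(\beta))/\epsilon\bigr)$ independently of the translation length, because the tube swells exactly at the rate at which the core recedes. Since $l(f(\beta))\le 2L_Z(\beta)\le 2B$ with $B$ a Bers constant, $\mb{T}$ lies within a uniform distance of $f(\beta)$. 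Taking $D_0$ larger than this distance plus $B$ forces a point of the (open) tube, where the injectivity radius is strictly less than $\epsilon$, into $V$ whenever $l(\gamma^*)<2\epsilon$ — a contradiction; and when $l(\gamma^*)\ge 2\epsilon$ the tube degenerates to $\gamma^*$ itself, so the same distance bound together with $\mathrm{diam}(\gamma^*)\le B$ puts $\gamma^*$ inside $V$. Your limiting argument is viable, but the final ``transport back'' step needs more care than you give it: to know that $\gamma_n^*$ converges to $\gamma_\infty^*$ (or even stays within the domain of the approximating maps) you should argue via the images $k_n(\gamma_\infty^*)$, which are loops of small geodesic curvature homotopic to $f_n(\beta_n)$ and hence uniformly close to their geodesic representatives; you also implicitly rely on $\pi_1$-injectivity of $f$ and a uniform lower bound on $\mathrm{inj}(Z_n)$, both of which are consequences of the hypotheses rather than free (a $2$-Lipschitz $\pi_1$-injective map into the $\epsilon$-thick set $V$ forces the systole of $Z$ to be at least $\epsilon$). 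The paper's argument sidesteps all of these compactness issues.
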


\begin{proof}
Notice that $L_Z(\beta)\le B$ for some uniform $B>0$. The curve $f(\beta)$ has also uniformly bounded length as $l(f(\beta))\le 2L_Z(\beta)\le 2B$. Let $\mb{T}$ be the $\ep$-Margulis tube of the geodesic representative of $f(\beta)$ (it reduces to the core geodesic if $l_Q(f(\beta))\ge 2\ep$). By standard hyperbolic geometry we have $d_Q(f(\beta),\mb{T})\le \log(2l(f(\beta))/\ep)$. Therefore, as $f(\beta)$ has uniformly bounded length, it lies at a uniformly bounded distance from $\mb{T}$. However $f(\beta)$ lies in the middle of the product region $U$ where ${\rm inj}(U)\ge\ep$, therefore, if $D_0$ is larger than $d_Q(f(\beta),\mb{T})+B$, the only possibility is that the geodesic representative of $f(\beta)$ has length at least $2\ep$ and is contained in $U$. 
\end{proof}

Using the above setup we can control the relative position of two different product regions:

\begin{lem}
\label{middle gluing block}
Fix $\delta>0$. Let $\ep(\delta)>0$ be the constant of Theorem \ref{bounded}. Let $D_0(\ep)>0$ be the constant of Lemma \ref{not short}. Let $A(\delta),D_1(D_0,\delta)$ be the constants of Lemma \ref{position}. For every $h>0$ and $D\ge D_1$ there exist $T>0$ and $R>0$ such that the following holds: Let $Q=Q(Y,Y')$ be a quasi-fuchsian manifold. Suppose that we have:
\begin{enumerate}
\item{Large distance in the curve graph $d_{\mc{C}}(\Upsilon(Y),\Upsilon(Y'))\ge T$.}
\item{A pair of product regions $V_1,V_2\subset\mc{CC}(Q)$ that satisfy the following properties: They have size $D$ and injectivity radius bounded from below by ${\rm inj}(V_1),{\rm inj}(V_2)\ge\ep$. They contain the images of 2-Lipschitz maps $f:Z\rightarrow V_1$ and $f':Z'\rightarrow V_2$ with $d_Q(f(Z),\partial V_1)$, $d_Q(f'(Z'),\partial V_2)\ge D_0$ where $Z,Z'\in\T$ lie in the $A$-neighbourhoods of initial and terminal segments of length $h$ of $[Y,Y']$, respectively.}
\end{enumerate}
Then $V_1$ and $V_2$ are disjoint and cobound a gluing block $Q_0\subset\mc{CC}(Q)$ for which $V_1$ and $V_2$ are respectively the bottom and top collars. Furthermore, the volume of $Q_0$ is bounded by ${\rm vol}(Q)-{\rm vol}(Q_0)\le R$.
\end{lem}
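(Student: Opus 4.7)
My plan is to anchor the product regions by short closed geodesics, use the curve-graph distance hypothesis to separate them in $Q$, and then use the Teichm\"uller-closeness of $Z$ to $Y$ (respectively $Z'$ to $Y'$) to fix the ordering and bound the complementary volume.

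First, I apply Lemma~\ref{not short} to the pairs $(V_1,f,Z)$ and $(V_2,f',Z')$: the geodesic representatives of $f(\Upsilon(Z))$ and $f'(\Upsilon(Z'))$ are closed geodesics $\beta^*\subset V_1$ and ${\beta'}^*\subset V_2$ of $Q$-length at least $2\epsilon$. Since the systole map $\Upsilon\colon\T\to\mc{C}$ is Lipschitz and $d_\T(Z,Y),d_\T(Z',Y')\le A+h$, I get that $d_{\mc{C}}(\Upsilon(Z),\Upsilon(Y))$ and $d_{\mc{C}}(\Upsilon(Z'),\Upsilon(Y'))$ are both bounded above by some $L(A+h)+C$. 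Combined with the hypothesis $d_{\mc{C}}(\Upsilon(Y),\Upsilon(Y'))\ge T$, this forces $d_{\mc{C}}(\beta^*,{\beta'}^*)\ge T-2L(A+h)-2C$, which becomes arbitrarily large upon choosing $T$ large.

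Next I translate this curve-graph lower bound into a $Q$-distance lower bound via the standard Thurston--Minsky pleated surface interpolation: in a quasi-fuchsian manifold, closed geodesics of uniformly bounded length satisfy $d_{\mc{C}}(\gamma_1,\gamma_2)\le K\,d_Q(\gamma_1,\gamma_2)+K'$ for constants depending only on the length bound and the genus. One interpolates between $\gamma_1$ and $\gamma_2$ by a chain of pleated surfaces of uniformly bounded diameter, each realizing a curve at bounded $\mc{C}$-distance from its neighbors. This yields $d_Q(\beta^*,{\beta'}^*)\gtrsim T$, which for $T$ large exceeds $4D\ge\text{diam}(V_1)+\text{diam}(V_2)$, giving $V_1\cap V_2=\emptyset$.

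Disjointness, together with parallelism of both $V_j$ to $\partial\mc{CC}(Q)$, forces them to cobound a sub-product $Q_0\subset\mc{CC}(Q)\simeq\Sigma\times[0,1]$. To identify $V_1$ as the bottom collar I invoke Minsky's Theorem~A from \cite{M93}: since $d_\T(Z,Y)\le A+h$, the pleated surface $f(Z)$ lies within a $Q$-distance depending only on $A,h,\delta,g$ of $\partial_Y\mc{CC}(Q)$, and analogously $f'(Z')$ lies near $\partial_{Y'}\mc{CC}(Q)$; for $T$ large enough that the vertical span of $\mc{CC}(Q)$ exceeds twice this bound, the ordering is pinned down. Finally, the complement $\mc{CC}(Q)-Q_0$ consists of two submanifolds $N_Y,N_{Y'}$ whose $Q$-diameters are bounded in terms of $A,h,D,\delta$, and since each sits in a hyperbolic $3$-manifold of bounded topological type, bounded diameter gives the desired volume bound $R=R(h,D,\delta,g)$. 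The main obstacle is the linear estimate $d_{\mc{C}}(\gamma_1,\gamma_2)\le K\,d_Q(\gamma_1,\gamma_2)+K'$ together with the companion ``Teichm\"uller position $\leftrightarrow$ position in $\mc{CC}(Q)$'' estimate needed for the ordering; both rest on the Minsky--Brock--Canary model for quasi-fuchsian manifolds, and their careful application is what converts the hypothesis on $T$ into the geometric separation conclusion.
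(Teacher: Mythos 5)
Your disjointness argument --- anchoring $V_1, V_2$ by closed geodesics $\beta^*, \beta'^*$ of bounded length and comparing curve-graph distance to $Q$-distance --- is valid in spirit and parallels the paper's use of Corollary~7.18 of Brock--Bromberg. However, for the ordering you invoke Minsky's Theorem~A applied to $Q = Q(Y,Y')$, and that theorem requires a uniform lower bound on the injectivity radius throughout the convex core. This is precisely what you do not have here: the hypotheses only give thickness near the two ends of $Q(Y,Y')$ (via the thick Teichm\"uller segments near $Y$ and $Y'$), not globally. The paper sidesteps this with an elementary estimate: $\cosh\bigl(d_Q(\beta^*,\partial_Y\mc{CC}(Q))\bigr) \le L_{\partial_Y\mc{CC}(Q)}(\beta)/l_Q(\beta^*)$, where the numerator is controlled by Wolpert's inequality together with the hypothesis $d_\T(Y,Z)\le A+h$, and the denominator is at least $2\ep$ by Lemma~\ref{not short}. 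This gives $d_Q(V_1,\partial_Y\mc{CC}(Q))$ bounded in terms of $h$ (and likewise for $V_2$ and $\partial_{Y'}\mc{CC}(Q)$), which both pins down the ordering and feeds directly into the volume estimate.

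The more serious gap is the volume bound. You assert that the two complementary pieces have $Q$-diameter bounded in terms of $A,h,D,\delta$, and deduce the volume bound from that. But the diameter bound is neither proved nor obvious: a priori the region between $\partial_Y\mc{CC}(Q)$ and $V_1$ could contain Margulis tubes of large depth, making the diameter (and indeed the volume) large, even though both bounding surfaces are thick and at bounded distance from each other --- ruling this out is essentially part of what the lemma is supposed to establish. The paper avoids the need for any diameter control: it triangulates $\Sigma\times[0,1]$ with a number of simplices bounded by the marking-graph distance between short complete clean markings of $\partial_Y\mc{CC}(Q)$ and of $Z$ (Lemma~\ref{triangulation}), which in turn is coarsely bounded by $d_\T(\partial_Y\mc{CC}(Q),Z)\le A + h + O(1)$; it then straightens an arbitrary homotopy $H:\Sigma\times[0,1]\to Q$ from the inclusion of $\partial_Y\mc{CC}(Q)$ to $f:Z\to Q$ relative to the vertices, and bounds the volume of the covered region (which, after straightening, still contains the complementary piece up to a set of uniformly bounded volume) by the number of simplices times the maximal volume of a straight hyperbolic $3$-simplex. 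You should replace your diameter argument with a simplicial-volume estimate of this kind.
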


\begin{proof}
We first show that the product regions are disjoint. 

Consider shortest geodesics $\alpha:=\Upsilon(Y)$ and $\alpha':=\Upsilon(Y')$. By Corollary 7.18 of Brock-Bromberg \cite{BB11} we have that 
\[
d_Q(\partial_Y\mc{CC}(Q),\partial_{Y'}\mc{CC}(Q))\ge\frac{1}{C}d_{\mc{C}}(\alpha,\alpha')-C\ge T/C-C
\]
for some uniform $C=C(g)>0$.

We now estimate the distance of $V_1$ from $\partial_Y\mc{CC}(Q)$. Let $\beta$ be a shortest geodesic for $Z$. It has $L_Z(\beta)\le B$ for a uniform constant $B=B(g)>0$. Denote by $\beta^*$ the geodesic representative of $\beta$ in $Q$. By Lemma \ref{not short}, we have $l_Q(\beta)\ge 2\ep$ and $\beta^*\subset V_1$ so that $d_Q(\partial_Y\mc{CC}(Q),V_1)\le d_Q(\beta^*,\partial_Y\mc{CC}(Q))$. By standard hyperbolic geometry 
\[
\cosh\left(d_Q(\beta^*,\partial_Y\mc{CC}(Q))\right)\le\frac{L_{\partial_Y\mc{CC}(Q)}(\beta)}{l_Q(\beta)}.
\]
Using the fact that $\partial_Y\mc{CC}(Q)$ and $Y$ are $K$-bilipschitz for some universal $K>0$, Wolpert's inequality $L_Y(\beta)\le e^{2d_\T(Y,Z)}L_Z(\beta)$, the assumptions $L_Z(\beta)\le B$ and $d_\T(Y,Z)\le h+A$, we get
\[
\frac{L_{\partial_Y\mc{CC}(Q)}(\beta)}{l_Q(\beta)}\le\frac{KB}{2\ep}e^{2(A+h)}.
\]
Similarly, we obtain an analogue estimate for the distance $d_Q(V_2,\partial_{Y'}\mc{CC}(Q))$.

Thus, as $V_1$ and $V_2$ have diameter at most $2D$ and are uniformly close to $\partial_Y\mc{CC}(Q)$ and $\partial_{Y'}\mc{CC}(Q)$ respectively, only depending on $h$, in order to make sure that they are disjoint, it is enough to require that $T$ is much larger than the previous constants. 

Let $Q_0$ be the gluing block bounded by $V_1,V_2$. From the previous discussion we also know that $V_1$ and $V_2$ are the bottom and top collars of $Q_0$ respectively.

We now estimate the volume of $Q_0$. We show that the region (homologically) bounded by $\partial_Y\mc{CC}(Q)$ and $f(Z)$ can be covered by a controlled number of straight hyperbolic simplices (recall that there is a universal upper bound on the volume of a hyperbolic straight simplex, see Chapter C of \cite{BP92}). To this purpose we need the following:

\begin{lem}
\label{triangulation}
Given two {\em complete clean markings} $\mu,\nu$ of $\Sigma$, it is possible to find a triangulation of $\Sigma\times[0,1]$ such that: 
\begin{itemize}
\item{The number of simplices is bounded by a uniform multiple of the distance between $\mu,\nu$ in the {\em marking graph}.}
\item{The induced triangulation to $\Sigma\times\{0\}$ and $\Sigma\times\{1\}$ can be taken to be any refinements of $\mu$ and $\nu$ to triangulations with the same vertex sets.}
\end{itemize} 
\end{lem} 

For the definition of {\em complete clean marking} and {\em marking graph} we refer to \cite{MM99}. We observe that if $X\in\T_\delta$, then there is a shortest complete clean marking $\mu_X$ of uniformly bounded length and, furthermore, if $[X,Y]\subset\T_\delta$ then the distance in the marking graph between the shortest complete clean markings of $X,Y$ is coarsely comparable with $d_\T(X,Y)$ by a result of Rafi \cite{R07}.  

\begin{proof}
Notice that every {\em complete clean marking} $\mu$ of $\Sigma$ decomposes the surface into a uniformly bounded number of polygonal disks with vertices in the intersections of the curves in the complete clean marking. We can refine this polygonal structure on $\Sigma$ to a triangulation by adding diagonals and keeping the same vertex set. Note that this procedure is by no means unique, but the number of combinatorial possibilities is bounded from above by a constant only depending on the genus of $\Sigma$.

Consider a geodesic sequence of elementary moves $\mu_1\rightarrow\cdots\rightarrow\mu_n$ in the marking graph that connects $\mu_1=\mu$ with $\mu_n=\nu$. We produce a triangulation of $\Sigma\times[0,n]$ by inductively stacking triangulations of $\Sigma\times[j-1,j]$ with a uniformly bounded number of simplices and such that the restriction of the triangulation to the boundaries $\Sigma\times\{j-1\}$ and $\Sigma\times\{j\}$ consists of triangulations $T_{j-1}$ and $T_j$ of $\Sigma$ which refine $\mu_{j-1}$ and $\mu_j$.

Let us assume that $\mu_j$ is obtained from $\mu_j$ by a {\em Dehn twist move} about the pants curves of $\mu_{j-1}$. Let $T_{j-1}$ be a triangulation of $\Sigma$ defined by $\mu_{j-1}$ and let $T_j$ be its image under the Dehn twist. Then there exists a triangulation $\tau_j$ of $\Sigma\times[j-1,j]$ which restricts to $T_{j-1},T_j$ on the boundary. As up to the action of the mapping class group there are only finitely many combinatorial possibilities for this situation, we can find such a triangulation of $\Sigma\times[j-1,j]$ with a uniformly bounded number of simplices. The same argument holds true for the move which replaces a pants curve by a marking curve and clears intersections. 
\end{proof}

We now return to the proof of the volume estimate in Lemma \ref{middle gluing block}.

Using Lemma \ref{triangulation}, we associate to the two surfaces $\partial_Y\mc{CC}(Q)$ and $Z$ a triangulation of $\Sigma\times[0,1]$: Let $\mu_Y$ and $\mu_Z$ be short complete clean markings for $\partial_Y\mc{CC}(Q)$ and $Z$ respectively. Notice that, since both hyperbolic surfaces lie in a uniformly thick part of Teichm\"uller space, we can find refinements of $\mu_Y$ and $\mu_Z$ to triangulations with uniformly bounded length. Let $H:\Sigma\times[0,1]\rightarrow Q$ be an arbitrary homotopy between the inclusion of $\partial_Y\mc{CC}(Q)$ and $f:Z\rightarrow Q$. 

We now {\em straighten} $H$ relative to the vertices (see Chapter C of \cite{BP92}). Since the boundary triangulations have uniformly bounded length on $\partial_Y\mc{CC}(Q)$ and $Z$, after straightening relative to the vertices their images of their 1-skeleta are contained in a uniform neighbourhood of $\partial_Y\mc{CC}(Q)$ and $f(Z)$. Since every point in a straight 2-simplex is uniformly close to the sides of the 2-simplex, we conclude that the whole straightening of $H$ restricted to the boundary lies uniformly close to $\partial_Y\mc{CC}(Q)$ and $f(Z)$. In particular, as $f(Z)$ lies deep inside $V_1$, the same is true for the straightening of $H(\Sigma\times\{1\})$. 

As a consequence, after straightening, $H$ still covers the region between $V_1$ and $\partial_Y\mc{CC}(Q)$ perhaps only missing a uniform neighbourhood of $\partial_Y\mc{CC}(Q)$ (which has uniformly bounded volume). The volume of the image of the straightened $H$ is bounded by a uniform constant (the maximal volume of a straight 3-simplex) times the number of simplices in the triangulation. By Lemma \ref{triangulation}, the number of simplices is bounded by the distance in the marking graph between $\mu_Y$ and $\mu_X$ which is coarsely equal to $d_\T(\partial_Y\mc{CC}(Y),Z)$ which in turn is coarsely bounded from above by $h$. 
\end{proof}

Notice that, since $\Upsilon[Y,Y']$ is a uniform unparametrized quasi-geodesic that restricts to a parametrized quasi-geodesic on $\delta$-thick subsegments (see \cite{H10}), there is a $T_1=T_1(T,\delta)>0$ such that if $[Y,Y']$ contains a $\delta$-thick subsegment of length $T_1$, then $d_\mc{C}(\Upsilon(Y),\Upsilon(Y'))\ge T$ and, hence, condition (1) is satisfied.

\subsection{A gluing theorem}

Finally, we are ready to state the gluing theorem in the form that we will use in the next sections:

\begin{thm}
\label{gluing}
Let $\delta,\xi>0$ be fixed. There exists $h_{\text{\rm gluing}}(\delta,\xi)>0$ such that for every $h\ge h_{\text{\rm gluing}}$ the following holds: Let $f$ be a gluing map. Consider a geodesic segment $[Y,Y']\subset\T$ with endpoints $Y,Y'\in\T_\delta$. Suppose that there exist $Y<X<X'<Y'$ satisfying the following relative bounded combinatorics and large heights properties:
\begin{itemize}
\item{We have $[Y,X],[Y',X']\subset\T_\delta$ and $d_\T(Y,X),d_\T(Y',X')\in[h,2h]$.}
\item{The pair $(Y,X)$ satisfies 
\[
d_{\mc{C}}(\Upsilon(X),\mc{D})\ge d_{\mc{C}}(\Upsilon(Y),\mc{D})+d_{\mc{C}}(\Upsilon(X),\Upsilon(Y))-\frac{1}{\delta}.
\]
The same holds true for the pair $(f^{-1}Y',f^{-1}X')$.}
\item{We have $d_{\mc{C}}(\Upsilon(Y),\Upsilon(Y'))\ge h$.}
\end{itemize}
Consider $N_1=H(X),N_2=H(f^{-1}X'),Q=Q(Y,Y')$. Then there exist: 
\begin{itemize}
\item{Disjoint product regions $V_1,V_2\subset\mc{CC}(Q)$ with $V_1$ below $V_2$ and both of size $D_1$ (as in Lemma \ref{middle gluing block}). We denote by $Q_0\subset\mc{CC}(Q)$ the gluing block bounded by $V_1,V_2$ for which $V_1$ and $V_2$ are respectively the bottom and top boundary.}
\item{Product regions $U_j\subset\mc{CC}(N_j)$ of size $D_1$ for $j=1,2$. We denote by $N_0^j\subset\mc{CC}(N_j)$ the gluing blocks that they bound.}
\item{Orientation preserving $\xi$-almost isometric diffeomorphisms $k_j:V_j\rightarrow U_j$ for $j=1,2$ where $k_1$ is in the homotopy class of the identity while $k_2$ in the homotopy class of $f$.}
\end{itemize}
In particular, we can form the 3-manifold
\[
X_f=N_0^1\cup_{k_1:V_1\rightarrow U_1}Q_0\cup_{k_2:V_2\rightarrow U_2}N_0^2
\] 
using the cut and glue construction as in Lemma \ref{cut-and-glue}. For simplicity we denote by $\Omega:=V_1\sqcup V_2$ the union of the product regions along which we performed the gluing. The manifold $X_f$ is diffeomorphic to $M_f=H_g\cup_fH_g$ and comes equipped with a Riemannian metric $\rho$ with the following properties:
\begin{enumerate}
\item{The sectional curvature of the metric is contained in the interval $(-1-\xi,-1+\xi)$ and it is constant $-1$ on $X_f-\Omega$.}
\item{Each connected component of $\Omega$ has uniformly bounded diameter and injectivity radius.} 
\item{The I-bundle piece $Q_0$ is isometric to the complement in $\mc{CC}(Q)$ of a collar neighborhood of the boundary of the convex core whose volume is uniformly bounded (only depending on $h$).}
\item{The handlebody pieces $N_0^1,N_0^2$ are isometric to the complement in $\mc{CC}(N_1),\mc{CC}(N_2)$ of collar neighborhoods of the boundaries of the convex cores of uniformly bounded diameter (not depending on $h$).} 
\end{enumerate}
\end{thm}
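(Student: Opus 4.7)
The plan is to assemble the three building blocks developed earlier: Corollary \ref{cor unif emb} to identify collars of the handlebodies with product regions in $\mc{CC}(Q(Y,Y'))$ via almost isometries, Lemma \ref{middle gluing block} to certify that the two product regions thus produced are disjoint and cobound a middle block $Q_0$, and Lemma \ref{cut-and-glue} to promote the topological gluing to a Riemannian metric. The threshold $h_{\text{gluing}}$ will be chosen larger than every constant required by these three inputs.

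First I would apply Corollary \ref{cor unif emb} to the pair $(Y,X)$ together with the quasi-fuchsian manifold $Q(Y,Y')$. This produces a size-$D_1$ product region $V_1 \subset \mc{CC}(Q(Y,Y'))$, a size-$D_1$ product region $U_1 \subset \mc{CC}(H(X))$ in a collar of the convex core boundary, and an orientation-preserving $\xi$-almost isometric diffeomorphism $k_1 \colon V_1 \to U_1$ in the homotopy class of the identity. For the other handlebody, I apply the same corollary to the pair $(f^{-1}Y', f^{-1}X')$ with $Q(f^{-1}Y', f^{-1}Y)$, producing $\tilde V \subset \mc{CC}(Q(f^{-1}Y', f^{-1}Y))$ and $U_2 \subset \mc{CC}(H(f^{-1}X'))$ with $\tilde k \colon \tilde V \to U_2$ in the identity class. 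I then transport $\tilde V$ to $V_2 \subset \mc{CC}(Q(Y,Y'))$ using the equivariance of the Bers parametrization: $f$ acts isometrically $Q(f^{-1}Y', f^{-1}Y) \to Q(Y',Y)$, and $Q(Y',Y)$ is identified with $Q(Y,Y')$ as an unmarked hyperbolic manifold after swapping the top and bottom of the convex core. Careful tracking of how markings and the orientation of the $I$-direction transform through this transfer shows that $k_2 \colon V_2 \to U_2$ is an orientation-preserving $\xi$-almost isometry in the homotopy class of $f$; this bookkeeping is the one delicate point in the argument.

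Next I would verify the hypotheses of Lemma \ref{middle gluing block} for $V_1, V_2 \subset \mc{CC}(Q(Y,Y'))$. By construction the pre-transfer product regions sit near the convex core boundaries facing $X$ and $f^{-1}X'$ respectively, so after transfer $V_1$ lies on the $Y$-side and $V_2$ on the $Y'$-side of $\mc{CC}(Q(Y,Y'))$. Size and injectivity-radius hypotheses are immediate by Lemma \ref{inj prod region}. For the 2-Lipschitz maps from thick surfaces deep inside $V_1, V_2$, I apply Lemma \ref{position} to the pre-transfer product regions in $\mc{CC}(Q(Y,X))$ and $\mc{CC}(Q(f^{-1}Y', f^{-1}Y))$ to produce 1-Lipschitz pleated maps from surfaces $Z, Z' \in \T$ in the $A$-neighborhoods of $[Y,X]$ and $[f^{-1}Y', f^{-1}X']$; composing with the $\xi$-almost isometric transfers into $\mc{CC}(Q(Y,Y'))$ yields 2-Lipschitz maps into $V_1, V_2$, and enlarging $D_1$ relative to $D_0(\ep)$ keeps their images at distance at least $D_0$ from $\partial V_j$. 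After re-marking the source via $f$ the surface $Z'$ sits in the $A$-neighborhood of the terminal subsegment $[X', Y']$ of $[Y,Y']$, so both $Z, Z'$ lie in $A$-neighborhoods of initial and terminal subsegments of $[Y,Y']$ of length at most $2h$. The curve graph distance hypothesis $d_\mc{C}(\Upsilon(Y), \Upsilon(Y')) \ge T$ follows from the third bulleted assumption once $h_{\text{gluing}} \ge T$. Lemma \ref{middle gluing block} then produces $Q_0$ with $V_1, V_2$ as bottom and top collars and with $\text{vol}(\mc{CC}(Q(Y,Y'))) - \text{vol}(Q_0) \le R$ for some $R$ depending on $h, \delta, \xi$.

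Finally, Lemma \ref{bump function} supplies bump functions $\theta_j \colon V_j \to [0,1]$ of uniformly bounded $\mc{C}^2$-norm, and Lemma \ref{cut-and-glue} assembles $X_f = N_0^1 \cup_{k_1} Q_0 \cup_{k_2} N_0^2$ with its Riemannian metric $\rho$. Running the applications of Corollary \ref{cor unif emb} with almost-isometric parameter $\xi/(c_3 K)$ in place of $\xi$ forces sectional curvature in $(-1-\xi, -1+\xi)$ and gives property (1). Property (2) follows from the size-$D_1$ statement together with Lemma \ref{inj prod region}. Property (3) is the content of Lemma \ref{middle gluing block}. For property (4), the complements $\mc{CC}(N_j) \setminus N_0^j$ are collars of $\partial \mc{CC}(N_j)$ of width controlled by the parameters $L, D_1$ used in Proposition \ref{large-thick collar}, with intrinsic boundary diameter bounded via Theorem \ref{bridgeman-canary} and Mumford's cocompactness of $\mathrm{Mod}(\Sigma) \curvearrowright \T_\delta$; all of these bounds depend only on $\delta$ and $\xi$, not on $h$.
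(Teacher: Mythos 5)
Your proposal is correct and follows essentially the same route as the paper, whose proof of Theorem \ref{gluing} is exactly the assembly you describe: Corollary \ref{cor unif emb} applied to $[Y,X]\subset[Y,Y']$ and to $f^{-1}[Y',X']\subset f^{-1}[Y',Y]$ (with the marking/orientation transfer by $f$ that you flag), Lemma \ref{middle gluing block} for the middle block and volume estimate, and Lemma \ref{cut-and-glue} for the metric. The additional verifications you supply (the 2-Lipschitz anchoring maps via Lemma \ref{position}, the curvature bound via the choice of almost-isometry parameter relative to $c_3K$) are consistent with the paper's preparatory discussion and fill in details the paper leaves implicit.
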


The proof is an application of the cut and glue construction Lemma \ref{cut-and-glue} where the input product regions $V_j,U_j$, handlebody gluing blocks $N_0^j$ and $\xi$-almost isometric diffeomorphisms are provided by Corollary \ref{cor unif emb} (applied to $[Y,X]\subset[Y,Y']$ and $f^{-1}[Y',X']\subset f^{-1}[Y',Y]$) and the I-bundle gluing block $Q_0$ is obtained by Lemma \ref{middle gluing block} (which also gives the volume estimate).


\section{Random Heegaard splittings}
\label{randomheegaard}

In this section we establish some geometric control on random 3-manifolds. 

We briefly recall the basic setup: Let $\mu$ by a {\em symmetric} probability measure on ${\rm Mod}(\Sigma)$ whose support $S$ generates the group. Let $\{s_j\}_{j\in\mb{N}}$ be a sequence of independent, $\mu$-distributed random variables with values in ${\rm Mod}(\Sigma)$. The $n$-th step of the {\em random walk driven by $\mu$} is the random variable $\omega_n:=s_1\cdots s_n$. We denote by $\mb{P}_n$ the distribution of $\omega_n$ (it coincides with the $n$-th convolution of $\mu$ with itself). If $\mc{P}_n$ is a property of 3-manifolds (possibly depending on the step of the walk) we say that $\mc{P}_n$ holds for a {\em random 3-manifold} if 
\[
\mb{P}_n\left[f\in{\rm Mod}(\Sigma)\left|\text{ $M_f$ has $\mc{P}_n$}\right.\right]\stackrel{n\rightarrow\infty}{\longrightarrow}1.
\]

With this notations, we can state the main result of the section: 

\begin{pro}
\label{random relative bounded}
Let $g\ge 2$ and $\ep,b,\delta>0$ be fixed. Let $\mu$ be a symmetric probability measure on 
${\rm Mod}(\Sigma)$ whose support is a finite generating set. Then
\[
\mb{P}_n\left[f\in{\rm Mod}(\Sigma)\left|\text{ $M_f$ has gluing with {\rm $(\epsilon,b,\delta)$-controlled geometry}}\right.\right]\stackrel{n\rightarrow\infty}{\longrightarrow}1.
\]
\end{pro}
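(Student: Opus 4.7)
We interpret ``$(\epsilon,b,\delta)$-controlled geometry'' as saying that the hypotheses of Theorem \ref{gluing} are satisfied for some Teichm\"uller geodesic $[Y,Y']$ with intermediate points $Y<X<X'<Y'$, with thickness parameter $\delta$, almost-isometric parameter $\xi=\xi(\epsilon)$, and height $h=h(\epsilon,b)$. The plan is to produce such data with probability tending to one as $n\to\infty$, using classical results on random walks on ${\rm Mod}(\Sigma)$.

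For a symmetric, finitely supported measure $\mu$ generating ${\rm Mod}(\Sigma)$ the induced walk is non-elementary, and one has the following standard inputs: (i) by Maher, the curve-graph drift satisfies $d_\mc{C}(\Upsilon(X_0),\omega_n\Upsilon(X_0))\ge\kappa n$ with probability tending to one for some $\kappa=\kappa(\mu)>0$; (ii) by Kaimanovich--Masur the sample path converges almost surely in the Thurston boundary to a uniquely ergodic filling lamination $\nu$, hence in $\partial\mc{C}$ by Klarreich, and since $\mc{D}$ is quasi-convex with proper limit set in $\partial\mc{C}$, $\nu$ almost surely lies outside this limit set, so the coarse projection $\pi_\mc{D}(\omega_k X_0)$ stabilizes and $d_\mc{C}(\omega_k X_0,\mc{D})$ grows at linear rate; (iii) the sample path sublinearly fellow-travels a random Teichm\"uller geodesic $\ell$ converging to $\nu$, and this geodesic stays in $\T_\delta$ on subintervals of positive density. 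By symmetry of $\mu$, all of these properties simultaneously hold for the time-reversed walk $s_n^{-1}\cdots s_1^{-1}$, which has the same distribution as $\omega_n$ and identifies with $f^{-1}$, so the $Y'$-end enjoys symmetric behavior.

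Given these inputs, we fix a parameter $k_1$ much larger than $h_{{\rm gluing}}(\delta,\xi)$ but much smaller than $\kappa n$, and choose $Y$, $X$ on $\ell$ at times shadowing $\omega_{k_1}X_0$, $\omega_{k_1+ch}X_0$ (after a small perturbation of $k_1$ to land in a thick-part stretch of $\ell$), and $X'$, $Y'$ symmetrically near $\omega_{n-k_1-ch}X_0$, $\omega_{n-k_1}X_0$, for a suitable constant $c$. The thickness and length conditions on $[Y,X]$ and $[Y',X']$ follow from (iii); the inequality
\[
d_\mc{C}(\Upsilon(X),\mc{D})\ge d_\mc{C}(\Upsilon(Y),\mc{D})+d_\mc{C}(\Upsilon(Y),\Upsilon(X))-1/\delta
\]
is a direct consequence of (ii) combined with the parametrized quasi-geodesic property of the systole map $\Upsilon$ on $\T_\delta$ from \cite{H10}, provided $\delta$ is small enough that $1/\delta$ absorbs the additive errors. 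The analogous inequality for $(f^{-1}Y',f^{-1}X')$ follows by symmetry of $\mu$: $f^{-1}X'$ is distributed like the position at time $k_1+ch$ of an independent random walk started at $X_0$, and $f^{-1}Y'$ is essentially $X_0$, so the same verification applies. Finally $d_\mc{C}(\Upsilon(Y),\Upsilon(Y'))\ge h$ is immediate from (i) applied between times $k_1$ and $n-k_1$.

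The principal obstacle is the interplay between the sublinear tracking error of the sample path along $\ell$ and the sharp slack $1/\delta$ in the relative bounded combinatorics inequality: one must transfer the control on the coarse projection $\pi_\mc{D}$ from the sample path to $\ell$ itself, and ensure that the chosen segments $[Y,X]$, $[Y',X']$ lie in $\T_\delta$ rather than merely in the thick part of moduli space. This is handled by a careful union bound over the finitely many events described above, each of which has probability tending to one; the joint probability therefore also tends to one.
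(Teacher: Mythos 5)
There is a genuine gap, and it stems from the opening sentence: you interpret ``$(\epsilon,b,\delta)$-controlled geometry'' as merely asserting that the hypotheses of Theorem \ref{gluing} hold with $\xi<\epsilon$, but the definition in the paper requires four things, and your proposal only addresses item (a). Items (b) and (c) are linear volume bounds (${\rm vol}(N_1^0\cup N_2^0\cup\Omega)\leq\epsilon n$ and ${\rm vol}(Q_0)\geq C_1n$), and item (d) asserts that $Q_0$ contains at least $C_2n$ pairwise disjoint $(b,\delta)$-product regions. Notice that the parameter $b$ never appears anywhere in your argument; this should have been a warning, since all the content involving $b$ lives in item (d), which is the technical core of the proposition.

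Proving (d) requires producing linearly many disjoint, linearly aligned product regions inside the middle I-bundle piece. In the paper this is done by combining Proposition \ref{prop610} (positive-density returns of the tracking Teichm\"uller ray to the thick part), Proposition \ref{recursion} (the segment $\tau_{o,\omega_no}$ uniformly fellow-travels the tracking ray on a $(1-\epsilon)$-fraction of its length), and Lemma \ref{many product regions} (well-separated thick subintervals of a Teichm\"uller geodesic produce disjoint linearly aligned $(b,\delta)$-product regions in the associated quasi-fuchsian manifold), followed by a counting argument with the intervals $(t(m)-R,t(m)+R)$. Your point (iii) mentions positive-density thick-part visits but then never uses them. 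Items (b) and (c) are also not free: they need Proposition \ref{finitevolume} to bound the volume of the handlebody gluing blocks by a multiple of the marking-graph distance, together with the requirement (built into the admissibility definition) that the cut points $Y,X$ and $X',Y'$ be located within the initial and terminal $\epsilon$-fraction of $\tau_{o,\omega_no}$; your ``$k_1$ much smaller than $\kappa n$'' is not quantitative enough to yield $\epsilon n$ rather than $O(n)$.

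One smaller point: the claim that ``$f^{-1}X'$ is distributed like the position at time $k_1+ch$ of an \emph{independent} random walk started at $X_0$'' is false as stated; the reversed increments have the same law as a fresh walk but are not independent of the forward walk. This does not sink your union-bound strategy, since you correctly observe that each event separately has probability tending to $1$, but the paper phrases the time-reversal argument in terms of $(R,\delta,\epsilon)$-admissibility of $\tau_{o,\omega_no}$ and its reverse $\tau_{\omega_no,o}$ (Proposition \ref{admissible}), which avoids the issue cleanly. Overall, the part of the argument you sketch roughly parallels Propositions \ref{small fellow-travelling} and \ref{admissible}, but the omission of (b), (c), (d) means most of the actual proof is missing.
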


We now explain what we mean by {\em $(\ep,b,\delta)$-controlled geometry}. We begin with the following definition:

\begin{dfn}\label{productregion}
For $\delta\in (0,1/2),b>1$ and $g\geq 2$, a {\em $(b,\delta)$-product region of genus $g$} in a Riemannian 3-manifold $M$ is a closed subset $V$ of $M$ with the following properties:
\begin{enumerate}
\item $V$ is diffeomorphic to $\Sigma\times [0,1]$ where $\Sigma$ is a closed surface of genus $g$, and $V$ {\em separates} $M$, that is, $M-{\rm int}(V)$ consists of two connected components with boundary $\Sigma\times \{0\},\Sigma\times \{1\}$, respectively. 
\item The injectivity radius of $M$ at points in $V$ is at least $\delta$, and the diameters of the surfaces $\Sigma\times \{0\}$ and $\Sigma\times \{1\}$ are at most $1/\delta$. 
\item The restriction of the metric of $M$ to $V$ is of constant curvature $-1$.
\item The distance between the boundary components $\Sigma\times \{0\}$ and $\Sigma\times \{1\}$ equals at least $b$.
\end{enumerate}
\end{dfn}  

Note that as, $b>1$, the volume of a $(b,\delta)$-product region is bounded from below by a universal constant. 

By definition, a $(b,\delta)$-product region $V\subset M$ separates $M$. In particular, if $V^\prime\subset M$ is another such region which is disjoint from $V$, then it is contained in one of the two components of $M-V$. Thus, if ${\mc V}\subset M$ is a disjoint union of $k\geq 1$ $(b,\delta)$-product regions, then the dual graph whose vertices are the components of $M-{\mc{V}}$ and where two such components are connected by an edge if their closures intersect the same component of ${\mc V}$ is a tree. We say that the components of ${\mc V}$ are {\em linearly aligned} if this tree is just a segment. 

The geometric control that we need is the following property $\mc{P}_n$ (depending on the step of the walk): 

\begin{dfn}[$(\ep,b,\delta)$-controlled geometry]
Let $n$ be the step of the random walk driven by $\mu$. We say that $M_f$ has a gluing with {\em $(\ep,b,\delta)$-controlled geometry} when the following properties are satisfied: There exist constants $C_1=C_1(\mu)>0$ and $C_2=C_2(b,\delta)>0$ such that the splitting $M_f$ admits a negatively curved metric as described in Theorem \ref{gluing} with the following additional features:
\begin{enumerate}
\item[(a)] The gluing control parameter $\xi$ is smaller than $\ep$. 
\item[(b)] ${\rm vol}(N_1^0\cup N_2^0\cup \Omega)\leq \ep n$ where $n$ is the step of the walk. 
\item[(c)] ${\rm vol}(Q_0)\geq C_1n$ where $n$ is the step of the walk. 
\item[(d)] The set $Q_0$ contains a subset $Q_0^\prime$ which is a disjoint union of $(b,\delta)$-product regions of genus $g$ and cardinality at least $C_2 n$.
\end{enumerate}
\end{dfn}

Notice that the constants $C_1,C_2>0$ implicitly contained in the statement of Proposition \ref{random relative bounded} are independent of $\ep$: The constant $C_1$ only depends on $\mu$ and the constant $C_2$ only depends on $b,\delta$. They will be determined in the course of the proof.

\subsection{Quasi-fuchsian manifolds with many $(b,\delta)$-product regions}
For us, the main example of a manifold with many linearly aligned $(b,\delta)$-product regions is the following:

\begin{lem}
\label{many product regions}
For every $b,\delta>0$ there exists $R>0$ such that the following holds: Consider a Teichm\"uller geodesic $\gamma:[0,T]\rightarrow\T$ with endpoints $Y:=\gamma(0)$ and $Y':=\gamma(T)$. Consider a sequence of times $R<t_1<\cdots<t_k<T-R$. Suppose that we have the following properties: The intervals $[t_j-R,t_j+R]$ are disjoint and their images $\gamma[t_j-R,t_j+R]$ are contained in $\T_\delta$. Then the convex core of $Q=Q(Y,Y')$ contains a collection of disjoint linearly aligned $(b,\delta)$-product regions $U_j\subset\mc{CC}(Q)$.
\end{lem}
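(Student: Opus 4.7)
The idea is to produce each $U_j$ as the image of a product region $W_j$ living in an auxiliary quasi-fuchsian manifold $Q_j$ of bounded geometry, transplanted into $\mc{CC}(Q)$ via the almost isometric embedding of Proposition \ref{unif emb}. The key ingredients are Theorem \ref{bounded}, Lemma \ref{large implies product}, Proposition \ref{unif emb}, and Minsky's Theorem A from \cite{M93} locating pleated surfaces in the convex core in terms of Teichm\"uller distance.

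Fix $b,\delta>0$. For each index $j$, form the auxiliary quasi-fuchsian manifold $Q_j := Q(\gamma(t_j-R), \gamma(t_j+R))$. Since $\gamma[t_j-R, t_j+R] \subset \T_\delta$, Theorem \ref{bounded} supplies a uniform lower bound ${\rm inj}(Q_j) \geq \epsilon_0 = \epsilon_0(\delta) > 0$. By Lemma \ref{large implies product} there is $D' = D'(b,\delta)$ so that every point of $\mc{CC}(Q_j)$ at distance at least $D'$ from $\partial\mc{CC}(Q_j)$ lies in a product region of size slightly larger than $b$. Combining Minsky's Theorem A with the Brock-Bromberg hyperbolic distance estimate exploited in Lemma \ref{middle gluing block}, any pleated surface realizing data coming from $\gamma(t_j)$ lies at hyperbolic distance comparable to $R$ from $\partial\mc{CC}(Q_j)$. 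Hence, for $R$ large enough in terms of $b, \delta$ and a small almost isometric parameter $\xi$ to be chosen, we find a product region $W_j \subset \mc{CC}(Q_j)$ of size $\sim b$ sitting at $Q_j$-distance at least $L(\delta,\xi,b+1)$ from $\partial\mc{CC}(Q_j)$, where $L$ is the constant furnished by Proposition \ref{unif emb}. Since $[\gamma(t_j-R), \gamma(t_j+R)] \subset [Y,Y']$, Proposition \ref{unif emb} then delivers an orientation preserving $\xi$-almost isometric embedding $k_j : W_j \to \mc{CC}(Q)$ in the homotopy class of the identity. Set $U_j := k_j(W_j)$. As a subset of $\mc{CC}(Q)$, this region is automatically hyperbolic, diffeomorphic to $\Sigma\times[0,1]$, and $\pi_1$-surjective, hence separating. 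For $\xi$ small enough in terms of $b, \delta, g$, the width, boundary diameter, and injectivity radius bounds for $W_j$ carry over to $U_j$, yielding a $(b,\delta)$-product region (after absorbing an explicit renaming of the constant $\delta$ in the conclusion into the choice of $\epsilon_0$, which can be done by slightly shrinking the input $\delta$).

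It remains to verify disjointness and linear alignment. Each $W_j$ is anchored to a pleated surface realizing $\gamma(t_j)$-type data, and by Minsky's Theorem A the corresponding pleated image in $Q$ lies in a bounded neighborhood of the ``time $t_j$'' slice of $Q$, with bound depending only on $\delta$ and $g$. The pairwise disjointness of the intervals $[t_j-R, t_j+R]$, together with $R$ being sufficiently large, then forces the $U_j$ to be pairwise disjoint and to appear along $\mc{CC}(Q) \simeq \Sigma \times [0,1]$ in the same order as the $t_j$. Any disjoint collection of separating $\pi_1$-surjective product regions in $\Sigma \times [0,1]$ is automatically linearly aligned, which completes the construction. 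The main delicate point, and essentially the only step going beyond bookkeeping from earlier results, is the position control for $k_j(W_j)$ inside $\mc{CC}(Q)$: one must guarantee that the almost isometric embedding produced by Proposition \ref{unif emb} lands near the expected time slice of $Q$ rather than drifting, and this is ensured by anchoring $W_j$ to a pleated surface realizing $\delta$-thick data associated to $\gamma(t_j)$, together with Minsky's Theorem A applied now inside $Q$.
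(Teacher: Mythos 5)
The construction of the individual product regions $U_j$ follows essentially the same route as the paper: auxiliary quasi-fuchsian manifolds $Q_j$ over short thick subsegments, uniform injectivity radius from Theorem \ref{bounded}, a product region via Lemma \ref{large implies product} (and Lemma \ref{position}) far from $\partial\mc{CC}(Q_j)$, then transplanting into $\mc{CC}(Q)$ by Proposition \ref{unif emb}. (The paper uses $Q_j=Q(\gamma(t_j-R/2),\gamma(t_j+R/2))$ rather than the full interval, leaving slack on both sides, but this is a minor discrepancy.) Up to this point the proposal is fine.

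The gap is in the disjointness argument. You write that ``by Minsky's Theorem A the corresponding pleated image in $Q$ lies in a bounded neighborhood of the `time $t_j$' slice of $Q$,'' and conclude disjointness from disjointness of the intervals $[t_j-R,t_j+R]$. This does not work as stated, for two reasons. First, Minsky's Theorem A locates the hyperbolic structure $Z_j$ of a pleated surface $f_j:Z_j\to Q$ in a neighborhood of $[Y,Y']$ \emph{inside Teichm\"uller space}; it says nothing directly about where $f_j(Z_j)$ sits in the 3-manifold $Q$, and there is no canonical ``time $t_j$ slice'' of $\mc{CC}(Q)$ to refer to. Second, even accepting a rough ``slicing,'' what is actually needed is a quantitative lower bound on the hyperbolic distance $d_Q(U_i,U_j)$ that grows with $R$; nothing you cite gives this. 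The paper supplies exactly this ingredient: it uses Lemma \ref{not short} to find in each $V_j$ a closed geodesic $\alpha_j^*$ of length in $[2\ep,2B]$ (the geodesic representative of $\Upsilon(Z_j)$), then invokes Theorem 7.16 of Brock--Bromberg \cite{BB11} to get $d_Q(\alpha_i^*,\alpha_j^*)\gtrsim d_\mc{C}(\Upsilon(Z_i),\Upsilon(Z_j))$, and finally uses \cite{H10} (the thick segments of $[Y,Y']$ project to parametrized quasi-geodesics in $\mc{C}$) to bound $d_\mc{C}(\Upsilon(Z_i),\Upsilon(Z_j))$ from below by a linear function of $R$. Without this chain of estimates, or some explicit substitute converting Teichm\"uller separation into $Q$-separation, the step ``therefore the $U_j$ are pairwise disjoint'' is unsupported. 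Once disjointness is in hand, your observation that disjoint separating product regions in $\Sigma\times[0,1]$ are automatically linearly aligned is correct and agrees with the paper.
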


\begin{proof}
Let $R>0$ be a very large constant. Consider $Q_j:=Q(\gamma(t_j-R/2),\gamma(t_j+R/2))$. By Corollary 7.18 of \cite{BB11} combined with \cite{H10}, we have that the width of $\mc{CC}(Q_j)$ is coarsely bounded from below by $R$. Moreover, by Theorem \ref{bounded}, the injectivity radius of the same manifold is bounded from below by $\ep=\ep(\delta)>0$. 

Thus, we can apply Lemma \ref{position} and Lemma \ref{large implies product}, and find a $(b,\ep)$-product region $U_j\subset\mc{CC}(Q_j)$ of uniformly bounded size that contains the image of a 1-Lipschitz map $f_j:Z_j\rightarrow U_j$, where $Z_j$ lies in a uniform neighbourhood of $\gamma[t_j-R/2,t_j+R/2]$, and such that $d_{Q_j}(U_j,\partial\mc{CC}(Q_j))$ is coarsely bounded from below by $R/2$. 

By Proposition \ref{unif emb}, if $R$ is large enough, we have a $\xi$-almost isometric embedding $k_j:U_j\rightarrow V_j\subset\mc{CC}(Q)$ in the homotopy class of the identity. By Lemma \ref{not short}, $V_j$ contains the geodesic representative $\alpha_j^*$ of $\Upsilon(Z_j)$, a curve which has length $l_{Q_j}(\alpha_j)$ in the interval $[2\ep,2B]$ for some uniform $B>0$. Thus, $d_Q(V_i,V_j)$ is coarsely bounded from below by $d_Q(\alpha_i^*,\alpha_j^*)$. 

By Theorem 7.16 of \cite{BB11}, the latter is coarsely bounded from below by $d_{\mc{C}}(\Upsilon(Z_i),\Upsilon(Z_j))$, which, in turn, by \cite{H10}, is coarsely bounded from below by $R$ (recall that $[t_i-R,t_i+R],[t_j-R,t_j+R]$ are disjoint subinterval of the same Teichm\"uller geodesic $[Y,Y']$ entirely contained in $\T_\delta$). Therefore, provided that $R$ is large enough also compared to the uniform size of the product regions $V_j$, we have that $V_1,\cdots,V_k$ are disjoint and, hence, linearly aligned (because each $\partial V_j$ is parallel to $\partial\mc{CC}(Q)$). 
\end{proof}

We remark that, with a little more effort, using the distance estimates provided by Theorem 7.16 of \cite{BB11} one can also establish that the linear order of the product regions is $V_1<\cdots<V_k$.

\subsection{Random walks on the mapping class group} \label{randommap}
We now recall some facts about random walks on ${\rm Mod}(\Sigma)$. 

\begin{center}
\label{notation3}
\begin{minipage}{.8\linewidth}
{\bf Standing assumptions}. In the sequel we always consider {\em symmetric} probability measures $\mu$, that is $\mu(s)=\mu(s^{-1})$, whose support $S$ is a {\em finite generating set} of the mapping class group ${\rm Mod}(\Sigma)$. 
\end{minipage}
\end{center}

Associated to the random walk generated by $\mu$ is a {\em space of sample paths} $\Omega:={\rm Mod}(\Sigma)^{\mb{N}}$ endowed with the $\sigma$-algebra of cylinder sets and the probability measure $\mb{P}:=T_*\mu^{\otimes \mathbb{N}}$ where $T:\Omega\rightarrow\Omega$ is the measurable map defined by $T(s_i)_{i\in\mb{N}}=(\omega_j:=s_1\cdots s_j)_{j\in\mb{N}}$.

We will use a geometric statement for the action of random mapping classes on Teichm\"uller space. The following
result is due to Tiozzo.

\begin{thm}[Tiozzo, Theorem 1 of \cite{T15}]
\label{tiozzo}
Fix some basepoint $o\in\T$ in the Teichm\"uller space of $\Sigma$. Then there exists $L_\T>0$ such that for almost all sample paths $\omega$ there exists a Teichm\"uller geodesic ray $\gamma_\omega:[0,\infty)\to\T$ with $\gamma(0)=o$ and such that
\[
\lim_{n\to \infty} \frac{d_{\mc{T}}(\omega_no,\gamma_\omega(L_\T n))}{n}\to 0.
\]
\end{thm}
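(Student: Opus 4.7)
The plan is to combine three ingredients: a subadditive ergodic theorem to produce the drift constant $L_\T$, the Kaimanovich--Masur boundary convergence result to identify an almost-sure direction at infinity, and a Karlsson--Margulis style horofunction argument to upgrade directional convergence into sublinear tracking.

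First I would establish existence and positivity of $L_\T$. The sequence $a_n(\omega) := d_\T(o, \omega_n o)$ is subadditive with respect to the shift on sample paths: since $\omega_{n+m} = \omega_n (s_{n+1}\cdots s_{n+m})$, we have
\[
d_\T(o,\omega_{n+m} o) \leq d_\T(o,\omega_n o) + d_\T(o, s_{n+1}\cdots s_{n+m}\,o),
\]
and the last term is distributed as $a_m$. Finite support of $\mu$ gives integrability, so Kingman's subadditive ergodic theorem produces an almost sure limit $a_n/n \to L_\T \geq 0$. Positivity of $L_\T$ follows because the support of $\mu$ generates $\mathrm{Mod}(\Sigma)$, hence acts non-elementarily on the Gromov hyperbolic curve graph $\mc{C}$: positive drift in $\mc{C}$ is classical, and it lifts to positive drift in $\T$ using that $\Upsilon$ restricted to $\delta$-thick geodesic subsegments is a parametrized quasi-geodesic by \cite{H10}, together with the fact that typical orbits spend a definite fraction of time in a thick part.

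Next I would single out the candidate tracking ray. By the Kaimanovich--Masur theorem, almost every sample path converges in the Thurston compactification $\T \cup \mc{PML}$ to a filling, uniquely ergodic projective measured lamination $[\lambda_\omega]$. By Masur's uniqueness theorem, there is a unique Teichm\"uller geodesic ray $\gamma_\omega:[0,\infty)\to\T$ with $\gamma_\omega(0)=o$ whose forward limit in $\mc{PML}$ is $[\lambda_\omega]$, and this ray recurs to the thick part of moduli space; this is the candidate ray.

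For sublinear tracking itself I would invoke the Karlsson--Margulis multiplicative ergodic theorem for isometric cocycles, applied to the action of $\mathrm{Mod}(\Sigma)$ on $(\T,d_\T)$ via its horofunction compactification. This produces, for almost every $\omega$, a horofunction $h_\omega$ with
\[
-\frac{1}{n}\, h_\omega(\omega_n o) \;\longrightarrow\; L_\T,
\]
and a geodesic ray along which the decay is maximal. Almost sure unique ergodicity of $[\lambda_\omega]$ from Kaimanovich--Masur implies, via work of Masur, Miyachi and Walsh on the horofunction boundary of $\T$, that every horofunction in the fiber over $[\lambda_\omega]$ coincides asymptotically with a Busemann function along $\gamma_\omega$. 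Identifying the abstract Karlsson--Margulis ray with $\gamma_\omega$ then converts the linear decay of $h_\omega(\omega_n o)$ into the desired distance estimate $d_\T(\omega_n o,\gamma_\omega(L_\T n)) = o(n)$.

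The main obstacle is precisely this last identification. Unlike in the CAT(0) setting, the Teichm\"uller metric has exotic horofunctions concentrated at non-uniquely-ergodic points of the Thurston boundary, and a general Teichm\"uller ray need not converge in the horofunction compactification; consequently Karlsson--Margulis cannot be applied as a black box. The resolution is that the Kaimanovich--Masur dichotomy places the hitting measure on the uniquely ergodic locus, where Busemann functions along $\gamma_\omega$ exist and are linear, allowing one to pass from the $L^1$-type control on horofunctions to genuine displacement control on a set of full $\mathbb{P}$-measure.
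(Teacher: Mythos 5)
This theorem is quoted in the paper as Theorem 1 of \cite{T15}; the paper supplies no proof, so your proposal has to be measured against Tiozzo's argument. Your first two steps are sound: Kingman's theorem with finite support gives $L_\T$, positivity follows from linear progress in $\mc{C}$ (or directly from \cite{KM96}) since the systole map is coarsely Lipschitz, and Kaimanovich--Masur plus Masur's unique-ergodicity results correctly single out the candidate ray $\gamma_\omega$. The problem is the third step, which is where the whole content of the theorem sits.

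Even granting Walsh's identification of the horoboundary with the Gardiner--Masur boundary, Miyachi-type results at uniquely ergodic points, and the additional (nontrivial, unaddressed) step that the random horofunction produced by the ergodic theorem lies over the same boundary point $\lambda_\omega$ as the Thurston limit of the orbit, the two facts $h_\omega(\omega_no)=-L_\T n+o(n)$ and $d_\T(o,\omega_no)=L_\T n+o(n)$ do \emph{not} formally imply $d_\T(\omega_no,\gamma_\omega(L_\T n))=o(n)$. That implication holds in CAT(0) or Gromov hyperbolic spaces, but it fails in general metric spaces (already in a normed space with polyhedral unit ball a point can be Busemann-efficient while lying linearly far from the ray), and by Minsky's product regions theorem the Teichm\"uller metric in the thin part is, up to additive error, exactly such a sup-type metric; this is the same failure of convexity that prevents a black-box Karlsson--Margulis argument (the ergodic statement you actually need is Karlsson--Ledrappier, which is fine, but it only gives the horofunction estimate). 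Unique ergodicity of $\lambda_\omega$ is a condition on the endpoint and yields no quantitative control at time scale $n$, so your closing sentence asserting the passage ``from $L^1$-type control on horofunctions to genuine displacement control'' is precisely the missing theorem, not a resolution of it. Closing the gap requires quantitative geometric input, e.g.\ recurrence of $\gamma_\omega$ to the thick part with positive frequency (in the spirit of Proposition \ref{prop610}) combined with contraction/fellow-traveling of thick Teichm\"uller geodesics as in Lemma \ref{thick}, \cite{H10}, \cite{R14}. Tiozzo's actual proof avoids horofunctions altogether: roughly, it is a Kaimanovich-style ray/strip approximation using the two-sided walk, the Teichm\"uller geodesic spanned by the a.s.\ uniquely ergodic and jointly filling forward and backward limit laminations, an ergodic-theorem estimate for the distance from $\omega_no$ to that moving geodesic, and Masur's theorem that rays with the same uniquely ergodic vertical foliation are asymptotic, which transfers the estimate to the ray based at $o$. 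As written, your argument would prove sublinear tracking in any space where Karlsson--Ledrappier applies and rays to boundary points exist, which is false, so the geometric ingredient cannot be dispensed with.
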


Positivity of the drift $L_\T$ is a consequence of work of Kaimanovich and Masur \cite{KM96}.

There also is a statement concerning the action of the random walk on the curve graph $(\mc{C},d_{\mc{C}})$ of $\Sigma$ which is due to Maher and Tiozzo \cite{MT18}. 

\begin{thm}[Maher-Tiozzo, Theorem 1.2 and Theorem 1.3 of \cite{MT18}]
\label{mahertiozzo}
Let $\alpha\in \mc{C}$ be a basepoint. Then there exists $L_{\mc{C}}>0$ such that for almost every sample path $\omega=(\omega_n)_{n\in\mb{N}}$ we have
\[
\lim_{n\to \infty} \frac{d_{\mc{C}}(\alpha,\omega_n\alpha)}{n}=L_{\mc{C}}>0.
\]
Moreover, for almost every sample path $\omega$, there exists a uniformly quasi-geodesic ray $\eta_\omega\subset {\mathcal C}$ which tracks the sample path sublinearly, that is, 
\[
\lim_{n\to \infty}\frac{d_{\mc{C}}(\omega_n\alpha,\eta_\omega)}{n}=0.
\]
\end{thm}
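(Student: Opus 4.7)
The plan is to verify the hypotheses of Theorem \ref{gluing} for $f = \omega_n$ with $\mb{P}_n$-probability tending to one as $n \to \infty$, and then to read off conditions (a)--(d) from its conclusion. Fix $h = h_{\text{gluing}}(\delta, \xi)$ for some $\xi < \epsilon$, so that (a) is automatic. By Theorem \ref{tiozzo} applied with a basepoint $o \in \T$, for almost every sample path there is a Teichm\"uller ray $\gamma_\omega$ starting at $o$ with $d_\T(\omega_n o, \gamma_\omega(L_\T n)) = o(n)$. Masur's recurrence of the Teichm\"uller geodesic flow, together with ergodicity on moduli space, implies that $\gamma_\omega$ almost surely visits $\T_\delta$ with positive upper density, so there are long $\delta$-thick subsegments throughout $\gamma_\omega$. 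I will pick $Y$ as a $\delta$-thick point on $\gamma_\omega$ at Teichm\"uller distance $\approx h$ from $o$, $X$ a $\delta$-thick point at distance $\approx h$ further along, and symmetrically $Y', X'$ near $\omega_n o$; sublinear tracking allows these points to be chosen on the segment $[o, \omega_n o]$ itself, hence on $[Y, Y']$.

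The crux is verifying relative $\delta$-bounded combinatorics for $(Y, X)$ and for $(f^{-1} Y', f^{-1} X')$. Both reduce to showing that the projection $\pi_\mc{D}(\omega_n \alpha)$ to the disk set stays bounded in $\mb{P}_n$-probability, where $\alpha = \Upsilon(o) \in \mc{C}$. By Theorem \ref{mahertiozzo}, $\omega_n \alpha$ converges almost surely in $\mc{C} \cup \partial \mc{C}$ to a point $\xi_\omega \in \partial \mc{C}$. A non-degeneracy argument using that $\text{supp}(\mu)$ generates $\text{Mod}(\Sigma)$ and hence does not stabilize $\partial \mc{D}$ (a proper closed invariant subset under the handlebody subgroup) shows that the hitting measure on $\partial \mc{C}$ gives zero mass to $\partial \mc{D}$, so $\xi_\omega \notin \partial \mc{D}$ almost surely. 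Quasi-convexity of $\mc{D}$ in the Gromov hyperbolic $\mc{C}$ then forces $\pi_\mc{D}(\omega_n \alpha)$ to remain bounded along each sample path, and hence $\sup_n d_\mc{C}(\pi_\mc{D}(\alpha), \pi_\mc{D}(\omega_n \alpha))$ is $\mb{P}$-a.s.\ finite. Tightness of this supremum as a random variable yields a uniform bound on $\pi_\mc{D}(\omega_n \alpha)$ with $\mb{P}_n$-probability $\geq 1 - \eta$ for any $\eta > 0$. Combined with the parametrized quasi-geodesic property of the systole map on $\delta$-thick Teichm\"uller segments, this implies that $\Upsilon(Y)$ lies within bounded distance of a $\mc{C}$-geodesic from $\pi_\mc{D}(\Upsilon(X))$ to $\Upsilon(X)$, giving the defining inequality of relative $\delta$-bounded combinatorics once $h$ is chosen large enough to dominate the additive error. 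For the pair $(f^{-1} Y', f^{-1} X')$ I apply the same argument to the time-reversed walk $\omega_n^{-1}$; by symmetry of $\mu$ this is again distributed as a random walk, so its disk projection is tight too.

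With the hypotheses of Theorem \ref{gluing} in place, I obtain the Riemannian model $(X_f, \rho)$. Condition (a) holds by the choice $\xi < \epsilon$. The collar union $\Omega = V_1 \sqcup V_2$ and the handlebody blocks $N_0^1, N_0^2$ have total volume bounded by a constant $C = C(g, h, \delta)$ independent of $n$, by the last two items of Theorem \ref{gluing}, so (b) holds as soon as $n \geq C / \epsilon$. For (d), the middle Teichm\"uller segment $[Y, Y']$ has length $\sim L_\T n$, and the positive density of $\delta$-thick subsegments of length $\geq R$ (the constant from Lemma \ref{many product regions}) produces at least $C_2 n$ disjoint linearly aligned $(b, \delta)$-product regions inside $\mc{CC}(Q(Y, Y'))$, with $C_2 = C_2(b, \delta) > 0$; all but finitely many sit inside $Q_0$. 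Condition (c) follows either by summing volumes of the product regions themselves or by a bounded-geometry convex core volume estimate, yielding $\text{vol}(Q_0) \geq C_1 n$ with $C_1 = C_1(\mu) > 0$.

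The main obstacle is the probabilistic step in the second paragraph: controlling $\pi_\mc{D}(\omega_n \alpha)$ uniformly in $n$ with high probability. The qualitative almost-sure boundedness combines Maher--Tiozzo boundary convergence with non-charging of $\partial \mc{D}$ by the hitting measure; the latter relies on non-elementarity of the action of $\text{supp}(\mu)$ on $\mc{C}$ and on $\partial \mc{D}$ being a proper invariant set for the handlebody subgroup, so that no $\mu$-stationary measure on $\partial \mc{C}$ concentrates on it. Promoting almost-sure boundedness to the tightness required for the statement is then standard, and the analogous argument for the time-reversed walk is symmetric. Everything else reduces either to the deterministic Theorem \ref{gluing} or to the quasi-fuchsian / Teichm\"uller geometric inputs already established in the paper.
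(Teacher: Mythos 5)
Your proposal does not prove the statement in question. The statement is Theorem \ref{mahertiozzo} itself: positive linear progress of the random walk in the curve graph (existence of a drift $L_{\mc{C}}>0$ with $d_{\mc{C}}(\alpha,\omega_n\alpha)/n\to L_{\mc{C}}$ almost surely) and sublinear tracking of the sample path by a quasi-geodesic ray $\eta_\omega\subset\mc{C}$. In the paper this is a quoted result of Maher--Tiozzo \cite{MT18}; it is not proved there, and any self-contained proof would have to engage with the machinery of random walks on (weakly) hyperbolic spaces --- non-elementarity of the action of the semigroup generated by ${\rm supp}(\mu)$ on $\mc{C}$, convergence of $\omega_n\alpha$ to the Gromov boundary via $\mu$-stationary measures, positivity of the drift (e.g.\ through a subadditive-ergodic-theorem argument plus a non-degeneracy input), and the construction of the tracking quasi-geodesic. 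None of this appears in your write-up.

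What you have written instead is an outline of the proof of Proposition \ref{random relative bounded} (verifying the hypotheses of Theorem \ref{gluing} for a random gluing map and deducing the $(\ep,b,\delta)$-controlled geometry conditions (a)--(d)), and in the second paragraph you explicitly invoke Theorem \ref{mahertiozzo} as a known input (``By Theorem \ref{mahertiozzo}, $\omega_n\alpha$ converges almost surely in $\mc{C}\cup\partial\mc{C}$\dots''). As an argument for the stated theorem this is therefore circular: the conclusion you are asked to establish --- positive drift and sublinear tracking in the curve graph --- is assumed, not derived, and the material you do develop (relative bounded combinatorics, disk-set projections, volume counts of product regions) is aimed at a different result in the paper. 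To address the actual statement you would either have to reproduce the Maher--Tiozzo argument or explicitly treat it as an external citation, as the paper does.
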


As a combination of Theorem \ref{tiozzo}, Theorem \ref{mahertiozzo} and \cite{KM96} we have the following statement. For its formulation, recall that a point in the Gromov boundary $\partial\mc{C}$ of $\mc{C}$ is an unmeasured filling geodesic lamination on $\Sigma$.

\begin{thm}[\cite{KM96}, \cite{T15}, \cite{MT18}]
\label{exit map gromov}
For $\mb{P}$-almost every sample path $\omega\in\Omega$, the following holds true.
\begin{enumerate}
\item
  For every base-point $\alpha\in\mc{C}$,
  the sequence $\{\omega_n\alpha\}_{n\in\mb{N}}\subset\mc{C}$
  converges to a point 
  $\lambda_\omega\in\partial\mc{C}$
in the Gromov boundary of ${\mc{C}}$ which is
  independent of $\alpha$.
\item The point $\lambda_\omega$ supports a unique
  transverse invariant measure up to scale, and the Teichm\"uller ray
  $\tau_{o,\lambda_\omega}$ 
  issuing from a fixed basepoint $o\in \mc{T}$ which is determined
  by
  $\lambda_\omega$, equipped with this transverse invariant measure, 
  has the sublinear tracking property from Theorem \ref{tiozzo}.
\end{enumerate}

\end{thm}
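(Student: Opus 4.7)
The statement is a synthesis of the three cited results: Theorem \ref{mahertiozzo} (Maher--Tiozzo) gives the Gromov-boundary convergence, Kaimanovich--Masur \cite{KM96} upgrades the limit to a uniquely ergodic projective measured lamination, and Theorem \ref{tiozzo} (Tiozzo) provides the sublinearly tracking Teichm\"uller ray. What the proof really needs to do is glue these three almost-sure statements consistently and identify their three limits with one another.

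For part (1), I work with a sample path $\omega$ satisfying the conclusions of Theorem \ref{mahertiozzo}. That theorem produces an unparametrized quasi-geodesic ray $\eta_\omega \subset \mc{C}$; since $\mc{C}$ is Gromov hyperbolic, $\eta_\omega$ accumulates at a unique point $\lambda_\omega \in \partial\mc{C}$. Fix a basepoint $\alpha_0 \in \mc{C}$ and pick $p_n \in \eta_\omega$ nearest to $\omega_n\alpha_0$. Sublinear tracking gives $d_\mc{C}(\omega_n\alpha_0, p_n) = o(n)$, while positive drift gives $d_\mc{C}(\alpha_0, \omega_n\alpha_0) = L_\mc{C} n + o(n)$, so by the triangle inequality $d_\mc{C}(\alpha_0, p_n) = L_\mc{C} n + o(n)$ as well. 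A direct computation of the Gromov product $(\omega_n\alpha_0 \mid p_n)_{\alpha_0} = \tfrac{1}{2}(d_\mc{C}(\alpha_0,\omega_n\alpha_0) + d_\mc{C}(\alpha_0,p_n) - d_\mc{C}(\omega_n\alpha_0,p_n))$ then yields $L_\mc{C} n + o(n) \to \infty$, so $\omega_n\alpha_0$ and $p_n$ converge to the same boundary point $\lambda_\omega$. Independence of the basepoint is automatic: for any other $\alpha \in \mc{C}$, the isometric action of $\omega_n$ on $\mc{C}$ gives $d_\mc{C}(\omega_n\alpha, \omega_n\alpha_0) = d_\mc{C}(\alpha, \alpha_0)$, a constant, so the Gromov product $(\omega_n\alpha \mid \omega_n\alpha_0)_{\alpha_0}$ also diverges and both sequences limit to $\lambda_\omega$.

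For part (2), I combine \cite{KM96} with Klarreich's identification of $\partial\mc{C}$. Kaimanovich--Masur show that, almost surely, $\omega_n o$ converges in Thurston's compactification $\mc{T} \cup \mc{PML}$ to a projective class $[\mu_\omega]$ whose underlying geodesic lamination $|\mu_\omega|$ is filling, minimal and uniquely ergodic; unique ergodicity means that $[\mu_\omega]$ is determined by the unmeasured lamination $|\mu_\omega|$ and conversely. By Klarreich \cite{K99}, filling unmeasured laminations parametrize $\partial\mc{C}$, and the systole map $\Upsilon:\mc{T}\to\mc{C}$ extends continuously to uniquely ergodic filling limits. Since $\Upsilon(\omega_n o) = \omega_n \Upsilon(o)$ remains within bounded $\mc{C}$-distance of $\omega_n\alpha_0$, part (1) forces the identification $\lambda_\omega = |\mu_\omega|$, and this yields the transverse invariant measure on $\lambda_\omega$, unique up to scale. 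Finally, Theorem \ref{tiozzo} applied to $\omega$ produces a Teichm\"uller ray $\gamma_\omega$ from $o$ tracking $\omega_n o$ sublinearly; since $\omega_n o \to [\mu_\omega]$ in $\mc{T} \cup \mc{PML}$, the ray $\gamma_\omega$ must accumulate on $[\mu_\omega]$ in $\mc{PML}$, and by Masur's criterion for uniquely ergodic vertical foliations the ray is uniquely determined by $o$ and $\lambda_\omega$, so $\gamma_\omega = \tau_{o,\lambda_\omega}$ as asserted.

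The principal subtlety is the consistency across three distinct compactifications ($\mc{PML}$, Thurston's $\mc{T} \cup \mc{PML}$, and $\partial\mc{C}$): a priori the three almost-sure limits could refer to different objects or require measurable selection. Unique ergodicity of the Kaimanovich--Masur limit is exactly the hypothesis that makes the passage between measured and unmeasured laminations unambiguous, and Klarreich's continuity of $\Upsilon$ at uniquely ergodic filling points is what makes the systole map compatible on the respective boundaries. Once these identifications are in place, the three tracking statements refer to the same geometric object $\lambda_\omega$ and the theorem follows.
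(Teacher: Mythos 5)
The paper offers no argument for this statement at all: it is quoted as a black box, introduced by the sentence that it is ``a combination of Theorem \ref{tiozzo}, Theorem \ref{mahertiozzo} and \cite{KM96}''. So your proposal is a reconstruction of an assembly the authors leave implicit, and most of it is the intended one. Part (1) is fine: the drift and sublinear tracking of Theorem \ref{mahertiozzo} give diverging Gromov products, and basepoint independence follows from equivariance as you say (the only cosmetic point is that you verify the products $(\omega_n\alpha_0\mid p_n)_{\alpha_0}$ only along the diagonal $n=m$, whereas the convergence criterion quoted in the paper asks for the double-index $\liminf$; since the $p_n$ march to infinity along the fixed quasi-geodesic ray $\eta_\omega$, hyperbolicity upgrades the diagonal estimate, but this half-sentence should be said).

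The one step that is genuinely under-justified is the identification $\gamma_\omega=\tau_{o,\lambda_\omega}$ in part (2). You argue: $\omega_no\to[\mu_\omega]$ in Thurston's compactification, $\gamma_\omega$ tracks sublinearly, hence $\gamma_\omega$ accumulates at $[\mu_\omega]$ in $\mc{PML}$, hence by Masur's criterion $\gamma_\omega=\tau_{o,\lambda_\omega}$. The first inference does not follow as stated: an error of size $o(n)$ in the Teichm\"uller metric gives no direct control on limits in Thurston's boundary (bounded Teichm\"uller distance does, by Masur's work on uniquely ergodic limits, but $o(n)$ does not), and Masur's unique-ergodicity criterion applies to a ray only once you know its vertical foliation is in the class $[\mu_\omega]$ --- which is exactly what you are trying to establish. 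There are two standard repairs. The cheap one is to note that in \cite{T15} the tracking ray is constructed precisely as the ray from $o$ toward the Kaimanovich--Masur exit point, so statement (2) is literally contained in \cite{T15} together with \cite{KM96} and there is nothing to identify. Alternatively, stay inside the curve graph, as you do elsewhere: $\Upsilon$ is coarsely Lipschitz and equivariant, so $d_{\mc{C}}\bigl(\Upsilon(\omega_no),\Upsilon(\gamma_\omega(L_\T n))\bigr)=o(n)$ while $d_{\mc{C}}(\Upsilon(o),\Upsilon(\omega_no))$ grows linearly by Theorem \ref{mahertiozzo}; hence the Gromov products diverge and the unparametrized quasi-geodesic $\Upsilon(\gamma_\omega)$ converges to $\lambda_\omega$ in $\partial\mc{C}$. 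Klarreich's theorem then identifies the vertical lamination of the ray $\gamma_\omega$ topologically with $\lambda_\omega$, and unique ergodicity from \cite{KM96} forces its projective class to be $[\mu_\omega]$, so $\gamma_\omega$ is the ray determined by $o$ and $\lambda_\omega$. With either repair your assembly is complete and matches what the paper tacitly assumes.
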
 

The next statement is Proposition 6.10 of \cite{BGH16}. 

\begin{pro}\label{prop610}
  Let $W\subset \mc{T}$ be a
  ${\rm Mod}(\Sigma)$-invariant open subset that contains an axis
  of a pseudo-Anosov mapping class. Then for all $h>0$
  there exists $\hat c=\hat c(W,h)>0$ such that for almost
every sample path $\omega$, we have 
\[\lim\inf\frac{1}{T}\vert \{t\in [0,T]\mid 
\tau_{o,\lambda_\omega}[t-h,t+h]\subset W\}\vert >\hat c.\]
\end{pro}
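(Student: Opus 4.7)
The plan is to rephrase the density in the statement as a Birkhoff time-average for the Teichm\"uller geodesic flow on moduli space, build a flow-invariant probability measure $m$ from the random walk, and use the pseudo-Anosov axis contained in $W$ to force $m$ to have positive mass on the relevant open set. Since $W$ is ${\rm Mod}(\Sigma)$-invariant, the quotient $\bar W := W/{\rm Mod}(\Sigma)$ is a well-defined open subset of $\mc{M}(\Sigma) := \T/{\rm Mod}(\Sigma)$, and $\tau_{o,\lambda_\omega}$ projects to a ray $\bar\tau_\omega$ in $\mc{M}(\Sigma)$. With $g_s$ the Teichm\"uller geodesic flow on $T^1\mc{M}(\Sigma)$, $\pi$ the basepoint projection, and
\[
A_h := \bigl\{v \in T^1\mc{M}(\Sigma) : \pi(g_s v) \in \bar W \text{ for all } s \in [-h,h]\bigr\},
\]
the assertion becomes $\liminf_{T\to\infty}\tfrac{1}{T}\int_0^T \mathbf{1}_{A_h}(g_t v_\omega)\,dt \ge \hat c$ for $\mb{P}$-a.e.\ $\omega$, where $v_\omega := \dot{\bar\tau}_\omega(0)$.

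To produce the invariant measure, set $m := \tfrac{1}{L_\T}\int_0^{L_\T}(g_s)_*(v_\bullet)_*\mb{P}\,ds$. Theorem \ref{tiozzo} gives sublinear tracking of the walk by the ray with drift $L_\T$, so shifting $\omega$ by one step of the walk corresponds, after passage to the moduli space quotient, to flowing $v_\omega$ by time $L_\T$ up to a sublinear error; averaging over a window of length $L_\T$ yields $g_s$-invariance of $m$ in the limit.

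The heart of the proof is to show $m(A_h) > 0$. The axis of a pseudo-Anosov $\phi \in {\rm Mod}(\Sigma)$ contained in $W$ descends to a closed $g_s$-orbit $\gamma \subset \bar W$ in $T^1\mc{M}(\Sigma)$. Passing to a power of $\phi$ with period greater than $2h$ ensures that a small open neighborhood $\mc{U} \supset \gamma$ is still contained in $A_h$. By Theorem \ref{mahertiozzo}, together with the fact that ${\rm supp}(\mu)$ generates the non-elementary group ${\rm Mod}(\Sigma)$, the hitting measure $\nu = (\lambda_\bullet)_*\mb{P}$ on $\partial\mc{C}$ has full support. In particular a set of positive $\mb{P}$-mass of sample paths has $\lambda_\omega$ close to the attracting fixed point of $\phi$ in $\partial\mc{C}$; by continuity of the assignment $\lambda \mapsto \tau_{o,\lambda}$ at uniquely ergodic limits and stability of attracting rays of pseudo-Anosovs, for such $\omega$ the ray $\bar\tau_\omega$ shadows $\gamma$ over a definite initial interval of times, forcing $g_s v_\omega \in \mc{U}$ on that interval. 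Averaging gives $m(\mc{U}) > 0$ and therefore $m(A_h) > 0$.

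Finally, apply the Birkhoff ergodic theorem in $(T^1\mc{M}(\Sigma), g_s, m)$: for $m$-a.e.\ $v$, the time average $\tfrac{1}{T}\int_0^T \mathbf{1}_{A_h}(g_t v)\,dt$ converges to a $g_s$-invariant function with $m$-integral equal to $m(A_h)$. Since $(v_\bullet)_*\mb{P} \ll m$ by construction, and the tail $\sigma$-algebra of the random walk is trivial so that the $\liminf$ in the statement is $\mb{P}$-a.s.\ constant, any $\hat c \in (0, m(A_h))$ works. The main obstacle is the third step: quantitatively matching the harmonic measure $\nu$ on $\partial\mc{C}$ to the local geometry of the flow near the closed orbit $\gamma$, i.e.\ showing that the push-forward of $\mb{P}$ on $T^1\mc{M}(\Sigma)$ genuinely charges neighborhoods of pseudo-Anosov orbits lying in $\bar W$. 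Once this positivity is established, the remaining steps are standard consequences of the ergodic theorem and the $0$-$1$ law for the random walk.
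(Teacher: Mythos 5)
First, a point of comparison: the paper does not prove this statement at all --- it is imported verbatim as Proposition 6.10 of \cite{BGH16} --- so there is no internal argument to measure yours against; your outline (an invariant measure for the Teichm\"uller flow built from the walk, positivity on a neighbourhood of a pseudo-Anosov axis via full support of the harmonic measure, then the ergodic theorem) is in spirit the standard route. As written, however, it has genuine gaps. The measure $m=\frac{1}{L_\T}\int_0^{L_\T}(g_s)_*(v_\bullet)_*\mb{P}\,ds$ is not $g_s$-invariant: sublinear tracking (Theorem \ref{tiozzo}) relates the shift on sample paths to the time-$L_\T$ map only up to errors that are sublinear in the number of steps, not small at each step, and no limit is actually taken in your definition. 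A correct construction either uses the two-sided walk (the pair of backward and forward limit laminations determines a biinfinite Teichm\"uller geodesic, and one builds a flow-invariant measure from the harmonic and reflected harmonic measures together with a time parameter), or takes weak-$*$ limits of Ces\`aro averages $\frac{1}{T}\int_0^T(g_s)_*\nu\,ds$; in the latter case one must rule out escape of mass into the thin part of moduli space, which is essentially the recurrence statement being proved, so independent input (positive drift in $\mc{C}$ plus a thickness criterion in the spirit of Lemma \ref{thick}) is needed to break the circularity.

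Second, the transfer from $m$-a.e.\ to $\mb{P}$-a.e.\ is broken: the claim $(v_\bullet)_*\mb{P}\ll m$ is false for your $m$ (a time average of push-forwards need not dominate its time-zero slice; a point mass at a non-periodic point is singular to the length measure along its orbit segment). Moreover, Birkhoff alone produces a $g_s$-invariant limit function whose integral is $m(A_h)$; without ergodicity of $m$, which you never address, positivity of $m(A_h)$ does not give positivity of the time average along almost every flow line. And before invoking the $0$--$1$ law you must verify that the liminf density is a shift-invariant function of $\omega$ up to null sets: this requires comparing $\tau_{o,\lambda_\omega}$ with $\tau_{o,\omega_1^{-1}\lambda_\omega}$, which are only asymptotic rays, and since $W$ is merely open this uses unique ergodicity of $\lambda_\omega$ and the slack provided by the $\pm h$ window. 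Finally, closeness of $\lambda_\omega$ to the attracting fixed point of $\phi$ in the Gromov boundary $\partial\mc{C}$ does not by itself force the Teichm\"uller ray to fellow-travel the axis inside $W$ (which may be a small invariant neighbourhood of that axis); you need convergence in $\mc{PML}$ to the uniquely ergodic attracting lamination together with continuity of Teichm\"uller geodesics in their defining data, as in Lemma \ref{thick} and \cite{Mas80}, i.e.\ full support of the hitting measure on $\mc{PML}$ rather than on $\partial\mc{C}$. Each of these points can be repaired along the lines of Section 6 of \cite{BGH16}, but as it stands the proposal is not a proof.
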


The ${\rm Mod}(\Sigma)$-invariants sets $W$ we are going to
use in the sequel are the sets $\mc{T}_\delta$ for some
suitably chosen numbers $\delta >0$.

\subsection{Random walks on Teichm\"uller space}
In this subsection we consider the orbit map of a random
walk on ${\rm Mod}(\Sigma)$ on Teichm\"uller space ${\mc{T}}$.
We always assume that the random walk is 
generated by a symmetric
probability measure $\mu$ whose finite support generates
${\rm Mod}(\Sigma)$. The results in this section are small variations of 
statements available in the literature, adjusted to our need. 
We provide proofs whenever we did not find a fully fitting reference. 

We begin with some information on the Teichm\"uller geodesic
$\tau_{o,fo}$ connecting a fixed point $o\in {\mc{T}}$ to $fo$ 
for a random mapping class $f$. To this end we use
a statement which is similar to statements from
\cite{H10} and to Proposition 4.6 of \cite{DH18}.

\begin{lem}\label{thick}
Let $\delta >0$ and $m>0$, $k>0$. There exists 
$R_0(\delta,m,k)>0$ with the following property. Let $R\geq R_0(\delta,m,k)$ and let
$\eta:[-R,R]\to {\mc{T}}_\delta$ be a Teichm\"uller geodesic segment. Let furthermore  
$\gamma:\mathbb{R}\to {\mc{T}}$ be a Teichm\"uller geodesic 
whose projection to ${\mc{C}}$ contains the projection of $\eta$
in its $k$-neighborhood. Then $\eta(0)$ is contained in the 
$m$-neighborhood of $\gamma$.
\end{lem}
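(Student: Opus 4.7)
The plan is to argue by contradiction via a compactness/limit argument in the style of Sections \ref{relativebounded} and \ref{almostisometric}. Suppose there exist sequences $R_n\to\infty$, $\delta$-thick Teichm\"uller segments $\eta_n:[-R_n,R_n]\to\T_\delta$, and Teichm\"uller geodesics $\gamma_n:\mb{R}\to\T$ such that $\Upsilon\eta_n([-R_n,R_n])$ lies in the $k$-neighbourhood of $\Upsilon\gamma_n(\mb{R})$ in $\mc{C}$, but $d_\T(\eta_n(0),\gamma_n)>m$. By Mumford's cocompactness of ${\rm Mod}(\Sigma)\curvearrowright\T_\delta$ (Theorem 12.6 of \cite{FM}), after acting by mapping classes I may assume $\eta_n(0)$ lies in a fixed compact subset of $\T_\delta$; passing to a subsequence, $\eta_n$ converges uniformly on compact subsets to a bi-infinite $\delta$-thick Teichm\"uller geodesic $\eta_\infty$.

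Next I locate a point $p_n\in\gamma_n$ whose systole is close in $\mc{C}$ to $\Upsilon\eta_n(0)$. Since $\eta_n\subset\T_\delta$, by \cite{H10} the projection $\Upsilon\eta_n$ is a parametrized $(L_\delta,C_\delta)$-quasi-geodesic in the hyperbolic graph $\mc{C}$, so $d_\mc{C}(\Upsilon\eta_n(\pm R_n),\Upsilon\eta_n(0))\to\infty$. The hypothesis provides parameters $s_\pm^n$ with $d_\mc{C}(\Upsilon\gamma_n(s_\pm^n),\Upsilon\eta_n(\pm R_n))\le k$; since $\Upsilon\gamma_n$ is an unparametrized quasi-geodesic by \cite{MM99}, the Morse lemma in $\mc{C}$ yields a parameter $s_n\in[s_-^n,s_+^n]$ and a universal constant $K=K(\delta,k)$ with $d_\mc{C}(\Upsilon\gamma_n(s_n),\Upsilon\eta_n(0))\le K$. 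Set $p_n:=\gamma_n(s_n)$ and $\alpha_n:=\Upsilon(p_n)$.

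The main obstacle is to upgrade this curve graph estimate to a Teichm\"uller bound on $d_\T(p_n,\eta_n(0))$. Because $\eta_n(0)$ is $\delta$-thick, two curves at $\mc{C}$-distance at most $K$ on it have intersection number bounded by a function of $\delta$ and $K$ (iterating the Collar Lemma along a $\mc{C}$-path of length at most $K$), so $L_{\eta_n(0)}(\alpha_n)\le K'$. If $d_\T(p_n,\eta_n(0))$ were unbounded, Wolpert's inequality would force $\alpha_n$ to become arbitrarily short on $p_n$, pushing $\gamma_n$ arbitrarily deeply into a Margulis tube for $\alpha_n$. Running the construction of the previous paragraph uniformly in $t\in[-R_n,R_n]$ produces a $K$-fellow-travelling correspondence $t\mapsto s_n(t)$ between $\Upsilon\eta_n$ and $\Upsilon\gamma_n$; any such deep excursion of $\gamma_n$ into the thin part would stall $\Upsilon\gamma_n$ in a bounded $\mc{C}$-neighbourhood of $\alpha_n$, and via the fellow-travelling this would force $\Upsilon\eta_n$ to stall on an interval around $0$ of length growing with the excursion, contradicting its parametrized quasi-geodesic property. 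Hence $d_\T(p_n,\eta_n(0))$ is uniformly bounded.

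Finally, Arzel\`a--Ascoli extracts a subsequential limit $\gamma_n\to\gamma_\infty$ on compact sets, with $\gamma_\infty$ a bi-infinite Teichm\"uller geodesic passing uniformly close to $\eta_\infty(0)$ and whose systole projection lies in the $K$-neighbourhood of $\Upsilon\eta_\infty$ in $\mc{C}$. Since $\eta_\infty$ is $\delta$-thick and bi-infinite, by \cite{Mas80}, \cite{Mas92} and \cite{K99} its forward and backward endpoints in the Thurston boundary are two distinct uniquely ergodic filling laminations, and the Teichm\"uller geodesic having these two laminations as its vertical and horizontal foliations is unique up to reparametrization by shifts. Therefore $\gamma_\infty=\eta_\infty$ as a subset of $\T$, and so $d_\T(\eta_n(0),\gamma_n)\to 0$ along the subsequence, contradicting $d_\T(\eta_n(0),\gamma_n)>m$ and completing the sketch.
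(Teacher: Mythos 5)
The argument breaks down in your third paragraph, where you try to show $d_\T(p_n,\eta_n(0))$ is uniformly bounded. There are several independent problems there.

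First, the claimed bound $L_{\eta_n(0)}(\alpha_n)\le K'$ does not follow from $d_{\mc{C}}(\alpha_n,\Upsilon\eta_n(0))\le K$. Small intersection number forces small curve-graph distance (Lemma 2.1 of \cite{MM99}), but not conversely: two curves at $\mc{C}$-distance $2$ can intersect arbitrarily often (take a high power of a Dehn twist about a curve disjoint from a fixed one). Iterating the Collar Lemma along a $\mc{C}$-path only yields disjointness of consecutive curves, which gives no control on $i(\alpha_n,\Upsilon\eta_n(0))$, hence no length bound on $\eta_n(0)$. Second, even granting that bound, the Wolpert step is run backwards: Wolpert gives $L_{\eta_n(0)}(\alpha_n)\le e^{2d_\T(p_n,\eta_n(0))}L_{p_n}(\alpha_n)$, so an upper bound on $L_{\eta_n(0)}(\alpha_n)$ places no constraint whatsoever on $L_{p_n}(\alpha_n)$ or on $d_\T(p_n,\eta_n(0))$. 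Third, the assertion that a deep thin excursion of $\gamma_n$ near $p_n$ would ``force $\Upsilon\eta_n$ to stall'' is not justified. The hypothesis is one-directional (the image of $\Upsilon\eta_n$ lies in a $k$-neighborhood of the image of $\Upsilon\gamma_n$) and $\Upsilon\gamma_n$ is only an unparametrized quasi-geodesic; the correspondence $t\mapsto s_n(t)$ can simply jump across the stalled interval of $\gamma_n$, with $\Upsilon\eta_n$ making uninterrupted progress. Since paragraph 3 fails, the Arzel\`a--Ascoli extraction in your fourth paragraph has nothing to extract: it needs $\gamma_n$ to pass through a fixed compact subset of $\T$, which is exactly the missing bound.

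The paper's proof avoids your paragraph 3 entirely by not trying to locate a point of $\gamma_i$ near $\eta_i(0)$ directly. Instead it works with the pair of endpoints of $\gamma_i$ in the compact space $\mc{PML}\times\mc{PML}$. Using hyperbolicity of $\mc{C}$, the $\partial\mc{C}$-endpoints of $\Upsilon(\gamma_i)$ are shown to converge to the $\partial\mc{C}$-endpoints of $\Upsilon(\eta)$, which are uniquely ergodic filling laminations because $\eta$ is a thick biinfinite geodesic. Unique ergodicity (together with the topology on $\partial\mc{C}$ from \cite{K99}, \cite{H06}) upgrades this $\partial\mc{C}$-convergence to convergence of the $\mc{PML}$-endpoint pairs of $\gamma_i$ to the pair of $\eta$. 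Continuity of the Teichm\"uller geodesic as a function of its binding vertical/horizontal pair then gives $\gamma_i\to\eta$, so $d_\T(\eta_i(0),\gamma_i)\to 0$ — contradicting $d_\T(\eta_i(0),\gamma_i)\ge m$. Notice that this route never needs an a priori local boundedness of the $\gamma_i$; that boundedness is a consequence, not an input. If you want to salvage your sketch you should replace paragraph 3 with this endpoint/unique ergodicity argument.
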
 
\begin{proof}
Assume that the lemma does not hold. Then there is a number $\delta >0$, a number 
$k>0$  and  a number $m>0$, 
and there is a sequence $R_i\to \infty$ and a
sequence of counter-examples, given by geodesic arcs
$\eta_i:[-R_i,R_i]\to {\mc{T}}_\delta$ and geodesics $\gamma_i$ 
whose projections to ${\mc{C}}$ contain the projections of $\eta_i$ in their
$k$-neighborhood, 
but such that 
$d_{\mc{T}}(\eta_i(0),\gamma_i)\geq m$.

Since the mapping class group acts properly and cocompactly on
${\mc{T}}_\delta$, by invariance under the action of the mapping class group we may assume
that $\eta_i(0)$ is contained in a fixed compact subset of ${\mc{T}}_\delta$. Thus
by passing to a subsequence, we may
assume that $\eta_i\to \eta:\mathbb{R}\to {\mc{T}}_\delta$ $(i\to \infty)$.
Now the projections $\Upsilon(\eta_i)$ 
of the segments $\eta_i$ to the curve graph $\mc{C}$ are 
parameterized $p$-quasi-geodesics for a number $p>0$ only depending on $\delta$
\cite{H10}, and the same holds true for $\eta$.
By convergence, as $i\to \infty$ the segments $\eta_i$ fellow-travel
$\eta$ on longer and longer subsegments. Moreover, the projection $\Upsilon$ is coarsely 
Lipschitz and hence by hyperbolicity of the curve graph, 
the projections of the endpoints of $\eta_i$ to ${\mc{C}}$ 
converge to the endpoints of the quasi-geodesic $\Upsilon(\eta)$ in the Gromov
boundary $\partial\mc{C}$ of ${\mc{C}}$. 
These endpoints are uniquely ergodic filling measured geodesic laminations \cite{H10}.

On the other hand, up to parameterization, 
the geodesics $\gamma_i$ are determined by a pair of points in the space
$\mc{PML}$ of projective measured laminations. 
By slightly changing $\gamma_i$ without changing the property
that the projection of $\eta_i$ is contained in the $k$-neighborhood of the projection of 
$\gamma_i$, we may assume that these endpoints are filling measured geodesic laminations
and hence they define a pair of points in $\partial\mc{C}$.  
This pair of points can be connected by a uniform quasi-geodesic in ${\mc{C}}$ 
which contains $\Upsilon(\eta_i)$ 
in its $k$-neighborhood. As the distance between $\Upsilon(\eta_i(0))$ and each of the two 
endpoints $\Upsilon(\eta_i(\pm R_i))$ of $\Upsilon(\eta_i)$ 
tends to infinity with $i$, 
hyperbolicity of the curve graph implies that the endpoints of 
the quasi-geodesics $\Upsilon(\gamma_i)$ converge as $i\to \infty$ to 
the endpoints of $\Upsilon(\eta)$.

Since the endpoints of $\Upsilon(\eta)$ in $\partial {\mc{C}}$ are 
uniquely ergodic, by the properties of the topology on $\partial {\mc{C}}$ \cite{K99,H06},
we conclude that the points in ${\mc{PML}}\times {\mc{PML}}$ 
which determine $\gamma_i$ converge in ${\mc{PML}}\times {\mc{PML}}$ to the  
point which determines $\eta$. 
By continuity of the dependence of a Teichm\"uller geodesic on the pair of its vertical and horizontal 
measured geodesic lamination, this implies that $\gamma_i\to \eta$. Thus by 
continuity, we have $d_{\mc{T}}(\eta_i(0),\gamma_i)\to 0$. This is a contradiction which
proves the lemma.
\end{proof}

Lemma \ref{thick} can be applied to obtain a fellow traveling
statement for Teichm\"uller geodesics.

\begin{dfn}\label{weakfellow}
For numbers $R>0,m>0$ and a sufficiently small number $\delta>0$, a 
Teichm\"uller geodesic segment $\eta:[0,T]\to {\mc{T}}$ {\em weakly
$(R,m,\delta)$-fellow
travels} a (finite or infinite) Teichm\"uller geodesic $\gamma$   
if the following holds true. Assume that 
$[s,t]\subset [0,T]$ 
is such that $t-s\geq 2R$ and 
$\eta([s,s+2R]\cup [t-2R,t]) \subset {\mc{T}}_\delta$, 
then $\eta[s+R,t-R]$ is contained in the 
$m$-neighborhood of $\gamma$.
\end{dfn}

Note that the definition is not symmetric in $\gamma,\eta$, and this fact will be convenient for us. 

As a consequence of Lemma \ref{thick}, the main result of \cite{H10} and results of 
\cite{R14} we obtain

\begin{lem}\label{weakfellowlem}
For every $k>0,\delta>0$ there are numbers $R=R_1(k,\delta)>0, m=m(k,\delta)>0$  
with the following property. 
Let $\gamma:(a,b)\to {\mc{T}}$ be a finite or infinite 
Teichm\"uller geodesic and let $\eta:[0,T]\to {\mc{T}}$ be a Teichm\"uller
geodesic segment such that $\Upsilon(\eta[0,T])$ is contained in the 
$k$-neighborhood of $\Upsilon(\gamma(a,b))\subset {\mc{C}}$. Then 
$\eta$ weakly $(R,m,\delta)$-fellow travels $\gamma$.
\end{lem}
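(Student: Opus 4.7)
The plan is to combine Lemma~\ref{thick} applied at both thick endpoint windows with Rafi's combinatorial description of Teichm\"uller geodesics from \cite{R14}.

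Fix a small constant $m_0>0$ to be specified below and choose $R\geq R_0(\delta,m_0,k)$ from Lemma~\ref{thick}. Let $[s,t]\subset[0,T]$ satisfy the hypothesis of Definition~\ref{weakfellow}, so that $\eta([s,s+2R]\cup[t-2R,t])\subset\T_\delta$. The two sub-arcs $\eta|_{[s,s+2R]}$ and $\eta|_{[t-2R,t]}$ lie in $\T_\delta$ and, by assumption, their $\Upsilon$-images lie in the $k$-neighborhood of $\Upsilon(\gamma)\subset\mc{C}$. Applying Lemma~\ref{thick} to each sub-arc separately (with the full geodesic $\gamma$ as the reference), we obtain points $\gamma(p),\gamma(q)$ on $\gamma$ with $d_\T(\eta(s+R),\gamma(p))\leq m_0$ and $d_\T(\eta(t-R),\gamma(q))\leq m_0$. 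This anchors $\eta$ to $\gamma$ at the two endpoints of the interval $[s+R,t-R]$.

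It remains to show that the interior Teichm\"uller segment $\eta|_{[s+R,t-R]}$ is contained in the $m$-neighborhood of $\gamma|_{[p,q]}$ for some uniform $m=m(k,\delta)$. This segment is itself a Teichm\"uller geodesic whose endpoints are $m_0$-close in $\T$ to the endpoints of $\gamma|_{[p,q]}$, and whose $\Upsilon$-image lies in a uniform neighborhood of $\Upsilon(\gamma|_{[p,q]})\subset\mc{C}$. Invoking the structural results of Rafi \cite{R14}, we conclude that the two segments have matching short-curve combinatorics: they enter the same Margulis tubes over comparable parameter intervals with comparable twist amounts. On thick subintervals of $\eta|_{[s+R,t-R]}$ of length at least $2R$ we reapply Lemma~\ref{thick} directly to pin those interior points to $\gamma$ within $m_0$. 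On thin subintervals the geodesic is determined up to bounded $\T$-error by its entry and exit data at the corresponding Margulis tube, which match by the combinatorial correspondence, so $\eta$ and $\gamma$ track each other throughout.

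The main obstacle is the interpolation across thin regions. Teichm\"uller geodesics with $\T$-close endpoints need not fellow travel in general---Lenzhen's examples of diverging rays illustrate this phenomenon---so the $\T$-closeness at the endpoints supplied by Lemma~\ref{thick} is by itself insufficient. It is the simultaneous use of the curve graph hypothesis that $\Upsilon(\eta[0,T])$ lies in the $k$-neighborhood of $\Upsilon(\gamma)$, combined with Rafi's combinatorial model, that forces the two geodesics into the same combinatorial pattern and hence rules out divergence across the thin parts; the appropriate constant $m$ is then extracted from the quantitative form of Rafi's distance formula.
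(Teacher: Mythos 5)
Your first step coincides with the paper's: apply Lemma~\ref{thick} to the two thick windows $\eta[s,s+2R]$ and $\eta[t-2R,t]$ (whose $\Upsilon$-images lie in the $k$-neighborhood of $\Upsilon(\gamma)$ by hypothesis) to pin $\eta(s+R)$ and $\eta(t-R)$ to within a uniform distance of $\gamma$. Where you diverge is in the treatment of the interior segment $\eta|_{[s+R,t-R]}$, and here you have made the problem harder than it is. The decisive point you overlook is that the two anchoring points $\eta(s+R)$ and $\eta(t-R)$ lie in $\T_\delta$ by the hypothesis of Definition~\ref{weakfellow}, so the nearby points of $\gamma$ lie in a slightly smaller thick part as well. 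The main result of \cite{R14} states precisely that two Teichm\"uller geodesic segments whose endpoints are uniformly close \emph{and contained in the thick part} are uniform fellow travelers, with constant depending only on $\delta$ and the closeness of the endpoints. This applies verbatim and finishes the proof in one line, which is exactly what the paper does. The Lenzhen-type divergence you invoke to argue that endpoint closeness is ``by itself insufficient'' only occurs when the endpoints are allowed to be thin; it is irrelevant here.

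Because you do not use this, your substitute argument for the interior has a genuine gap. Re-applying Lemma~\ref{thick} on thick subintervals is fine, but your claim that on thin subintervals the geodesic is ``determined up to bounded $\T$-error by its entry and exit data at the corresponding Margulis tube'' is not justified and is not obviously true: thin parts of $\T$ have infinite diameter, and controlling how two geodesics traverse them (relative twisting, relative sizes of the short curves) is exactly the hard content of Rafi's stability theorem. As written, your interpolation across thin regions is an unproved sketch of the very theorem you should simply be citing. Replace the entire second half of your argument with the direct appeal to the fellow-traveling theorem of \cite{R14} for segments with thick, close endpoints, and the proof is complete and matches the paper's.
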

\begin{proof} 
By the main result of \cite{H10}, for a number $\delta >0$, 
the image under the projection $\Upsilon$ of 
a sufficiently long Teichm\"uller geodesic segment $\gamma$ 
is a uniform quasi-geodesic in the 
curve graph if and only if $\gamma$ is entirely contained in the thick part of 
Teichm\"uller space, 
and this statement
can be made quantitative, relating the thickness constant to the
quasi-geodesic constant. 

Since the image under the projection $\Upsilon$ of any Teichm\"uller geodesic is 
a uniform
{\em unparameterized} quasi-geodesic in the curve graph, we conclude that the 
projection  to the curve graph of a sufficiently long Teichm\"uller geodesic segment 
detects when the segment is contained in ${\mc{T}}_\delta$ for an a priori chosen
number $\delta>0$.

Lemma \ref{thick} then states that up to a constant which depends on $\delta$, the precise location of such a sufficiently long 
Teichm\"uller geodesic segment in $\T_\delta$ is determined by its projection into the curve graph.

This completes the proof of the lemma as follows. If $\Upsilon(\eta[0,T])$ is contained
in the $k$-neighborhood of  
$\Upsilon(\gamma)$, then the projection to the curve graph of a sufficiently
long subsegment of $\eta$ contained in ${\mc{T}}_\delta$ is a uniform parameterized
quasi-geodesic which 
uniformly fellow travels
the projection of some segment of $\gamma$. Using Lemma \ref{thick}, we conclude that
under the assumption of the lemma, for given $\delta >0,k>0$ there 
is a number $R=R(k,\delta)>0$ such that
if $[s,s+2R]\subset [0,T]$ and $\eta[s,s+2R]\subset {\mc{T}}_\delta$, then 
$\eta(s+R)$ is contained in the 1-neighborhood of $\gamma$.

Now the main result of \cite{R14} implies that two Teichm\"uller geodesic segments whose
endpoints are of distance at most $1$ and 
contained in ${\mc{T}}_\delta$ 
are uniform fellow travelers, where the uniformity constant $m=m(1,\delta)$ 
depends on $\delta$. 
This completes the proof the lemma.
\end{proof}

In the statement of the following proposition, for a subset $A$ of $\mc{T}$ and a number
$m>0$, we denote by 
$N_m(A)$ the neighborhood of radius $m$ about $A$ for the Teichm\"uller metric. 
The following result is 
the main technical result for random
walks on Teichm\"uller space needed towards our goal. In its formulation, $L_{\mc{T}}$ denotes
as before the drift of the random walk on Teichm\"uller space.

\begin{pro}\label{recursion}
Let $\mu$ be a symmetric probability measure on ${\rm Mod}(S)$ whose support is a finite
generating set. There exists a number $m>0$ such that for all $\epsilon >0$, we have
\[\mb{P}[\omega\in\Omega\mid \tau_{o,\omega_no}[0,nL_{\mc{T}}(1-\ep)]\subset 
N_m(\tau_{o,\lambda_\omega})]\stackrel{n\to\infty}{\longrightarrow}1.\]
\end{pro}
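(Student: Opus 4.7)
The strategy combines the sublinear tracking of Theorem \ref{exit map gromov}, the weak fellow-traveling of Lemma \ref{weakfellowlem}, and the density of thick times from Proposition \ref{prop610}. First I would show that for almost every sample path and all large $n$, the projection to $\mc{C}$ of $\tau_{o,\omega_no}[0, nL_{\mc{T}}(1-\epsilon)]$ lies in a uniform $k$-neighborhood of $\Upsilon(\tau_{o,\lambda_\omega})$. Theorem \ref{exit map gromov}(2) gives $d_{\mc{T}}(\omega_n o, \tau_{o,\lambda_\omega}(nL_{\mc{T}})) = o(n)$, and since $\Upsilon$ is coarsely Lipschitz, $d_{\mc{C}}(\Upsilon(\omega_n o), \Upsilon(\tau_{o,\lambda_\omega}(nL_{\mc{T}}))) = o(n)$; while Theorem \ref{mahertiozzo} yields $d_{\mc{C}}(\Upsilon(o), \Upsilon(\omega_n o)) = L_{\mc{C}} n + o(n)$. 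Thus both $\Upsilon(\tau_{o,\omega_no})$ and $\Upsilon(\tau_{o,\lambda_\omega})$ are uniform unparametrized quasi-geodesics sharing a start near $\Upsilon(o)$ and with target points at $o(n)$-distance in $\mc{C}$. Stability of quasi-geodesics in the hyperbolic space $\mc{C}$ then forces them to $k$-fellow travel along their initial $(1-\epsilon)$-fraction for a uniform $k$.

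Fix $\delta > 0$ small enough that $\mc{T}_\delta$ contains a pseudo-Anosov axis. Lemma \ref{weakfellowlem} supplies constants $R = R_1(k, \delta)$ and $m = m(k, \delta)$ such that $\tau_{o,\omega_no}|_{[0, nL_{\mc{T}}(1-\epsilon)]}$ weakly $(R, m, \delta)$-fellow travels $\tau_{o,\lambda_\omega}$. To upgrade this to containment in $N_m(\tau_{o,\lambda_\omega})$, the segment needs $\delta$-thick subsegments of length $\geq 2R$ at uniformly bounded gaps. Choose $\delta' > \delta$ with $N_m(\mc{T}_{\delta'}) \subset \mc{T}_\delta$, and apply Proposition \ref{prop610} to $W = \mc{T}_{\delta'}$ with $h = 3R$: for almost every sample path, a positive asymptotic density of times $t$ satisfies $\tau_{o,\lambda_\omega}[t-3R, t+3R] \subset \mc{T}_{\delta'}$. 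At each such $t$, Lemma \ref{thick} applied with $\eta$ the thick subsegment of $\tau_{o,\lambda_\omega}$ and $\gamma$ a bi-infinite extension of $\tau_{o,\omega_no}$ (whose curve-graph projection contains the thick segment's projection in its $k$-neighborhood by Step 1) shows $\tau_{o,\lambda_\omega}(t) \in N_m(\tau_{o,\omega_no})$; iterating along the thick subsegment yields a genuine $\delta$-thick subsegment of $\tau_{o,\omega_no}$ of length $\geq 2R$ nearby. These occur with positive density in $[0, nL_{\mc{T}}(1-\epsilon)]$, so every point is bracketed by such subsegments, and the weak fellow-traveling conclusion gives the desired containment.

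\textbf{Main obstacle.} The principal subtlety is the circular dependence: Lemma \ref{weakfellowlem} requires thick subsegments of $\tau_{o,\omega_no}$, while Proposition \ref{prop610} only provides them for $\tau_{o,\lambda_\omega}$. The bridge is Lemma \ref{thick} combined with the thickness margin between $\delta$ and $\delta'$, but one must carefully track how the parametrizations of the two geodesics correspond — using that the projection $\Upsilon$ restricted to $\delta$-thick segments is a parameterized quasi-geodesic by \cite{H10} — so that thick subsegments of $\tau_{o,\lambda_\omega}$ translate into bracketing thick subsegments of $\tau_{o,\omega_no}$ at comparable density, and so that the $o(n)$ endpoint gap absorbs into the $(1-\epsilon)$-fraction.
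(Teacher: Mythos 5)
The key gap is in the final step. Proposition \ref{prop610} only provides a \emph{positive lower bound} $\hat c>0$ on the asymptotic density of thick times along $\tau_{o,\lambda_\omega}$, and this constant $\hat c$ is not close to $1$ in general. To cover all of $\tau_{o,\omega_no}[0,nL_\T(1-\ep)]$ by the weak fellow-traveling conclusion of Definition \ref{weakfellow}, you need a thick segment located at or just beyond parameter $nL_\T(1-\ep)$; a thick segment strictly before it cannot help you cover the terminal portion. But positive density does not rule out the thick set in $[0,T]$ (for $T\approx (1-\sigma)^2nL_\T$) being entirely contained in $[0,\hat c\, T]$, which is strictly to the left of $(1-\ep)nL_\T$ whenever $\ep<1-\hat c$. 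Since the proposition is asserted for \emph{all} $\ep>0$ with a single $m$, this leaves an uncontrolled terminal interval of length up to $(1-\hat c)nL_\T$. The paper instead invokes Proposition 6.14 of \cite{BGH16}, a random-walk statement giving, with probability tending to $1$, a thick time of $\tau_{o,\lambda_\omega}$ located specifically in the window $[(1-\ep)nL_\T,(1-\sigma)^2nL_\T]$; this is strictly stronger than positive density and is exactly what closes the argument. Trying to rescue your route by shrinking $\delta$ (so as to raise $\hat c$) fails because $m=m(k,\delta)$ then blows up, violating the requirement that $m$ be independent of $\ep$.

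There is also a secondary imprecision in your first step: the ``initial $(1-\ep)$-fraction'' of $\Upsilon(\tau_{o,\omega_no})$ is measured in Teichm\"uller time, while stability in $\mc{C}$ controls curve-graph distance; if the terminal $\ep$-fraction of $\tau_{o,\omega_no}$ lies in the thin part, the point $\Upsilon(\tau_{o,\omega_no}(nL_\T(1-\ep)))$ could be within $o(n)$ of $\Upsilon(\omega_no)$ in $\mc{C}$ and hence past the Gromov product, so not in a uniform $k$-neighborhood of $\Upsilon(\tau_{o,\lambda_\omega})$. You flag this, but the fix is not just bookkeeping: the paper handles it by tracking the curve-graph cut-off $\rho(\omega_n)$ explicitly and by using the drift identity $L_\mc{C}/L_\T$ (identity (13) of \cite{BGH16}) together with Propositions 2.6 and 2.9 of \cite{DH18}. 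In the end the obstruction you should have named is not the circularity between Lemma \ref{weakfellowlem} and Proposition \ref{prop610} (which the paper also faces and resolves through Lemma \ref{thick} together with \cite{R14}) but rather the inadequacy of a positive-density estimate to produce thickness in a prescribed window near the endpoint.
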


\begin{proof}
Let $\delta >0$ be sufficiently small that ${\mc{T}}_{2\delta}$ contains the axis of a pseudo-Anosov 
element and let $o\in {\mc{T}}_{2\delta}$ by the basepoint for the random walk on ${\mc{T}}$. 
Consider the projection of the random walk to the curve graph via the systole map $\Upsilon$. Denote as
before by $L_{\mc{C}}>0$ the drift of this random walk. 

By hyperbolicity of the curve graph, 
there exists a number $k>0$ with the following property. 
Let $p>1$ be such that the projection of a Teichm\"uller geodesic to ${\mc{C}}$ is 
an unparameterized $p$-quasi-geodesic. Then if 
$\alpha,\beta:\mathbb{R}\to {\mc{C}}$ are two simplicial 
$p$-quasi-geodesics and if the diameter of the shortest distance projection of 
$\alpha$ into $\beta$ equals $q>0$,
then there is a subsegment of $\alpha$ of length at least $q-2k$ which is contained
in the $k$-neighborhood of $\beta$. 

For this number $k>0$ and for an arbitrarily chosen number $\sigma >0$, we obtain from 
Proposition 2.6 and Proposition 2.9 of \cite{DH18} that
we have  
\[
\mb{P}[\omega\in\Omega\mid 
\Upsilon(\tau_{o,\omega_no}[0,\rho(\omega_n)])\subset 
N_k(\Upsilon(\tau_{o,\lambda_\omega}[0,\infty))]\stackrel{n\to\infty}{\longrightarrow}1
\]
where
\[
\rho(\omega_n)=\sup\{t\mid d_{\mc{C}}(\Upsilon(o),
\Upsilon(\tau_{o,\lambda_\omega}(s))\leq (1-\sigma)nL_{\mc{C}}\,\forall s\leq t\}.
\]
Furthermore, we have
\[
\mb{P}[\omega\in\Omega\mid \lim_{T\to \infty} \frac{1}{T}d_{\mc{C}}(\Upsilon(o),
\Upsilon(\tau_{o,\lambda_\omega}(T)))=L_{\mc{C}}/L_{\mc{T}}]=1
\]
(this fact can be found in many references, for example, it is the identity
(13) in \cite{BGH16}). 
As a consequence, we obtain that
\[
\mb{P}[\omega\in\Omega\left|\rho(\omega_n)\geq (1-\sigma)^2 nL_{\mc{T}}\right.]\stackrel{n\to\infty}{\longrightarrow}1.
\]

Let $R=R_1(k,\delta)>0$ be as in 
Lemma \ref{weakfellowlem} and let $\ep >0$.
By Lemma \ref{weakfellowlem}, the assumption that $o\in {\mc{T}}_{2\delta}$
and by \cite{R14}, fellow traveling almost surely 
of $\tau_{o,\omega_no}$ and $\tau_{o,\lambda_\omega}$ on a segment
of length $(1-\ep)L_{\mc{T}}n$ is guaranteed if   
for $\sigma>0$ with $(1-\sigma)^2 >1-\ep/4$ we have
\begin{align*}
\mathbb{P}[\omega\in\Omega\mid &\text{ there is } t\in [(1-\ep)nL_{\mc{T}},(1-\sigma)^2nL_{\mc{T}}] \\
&\text{ such that } \tau_{o,\lambda_\omega}[t-R,t+R]\subset {\mc{T}}_\delta]\stackrel{n\to\infty}{\longrightarrow}1.
\end{align*}
That this holds indeed true is shown in Proposition 6.14 of \cite{BGH16}. 
\end{proof}

\subsection{Random handlebodies}

From now on we fix a handlebody $H_g$ and a marking of the boundary surface $\Sigma$. Let $\mc{D}$ be corresponding disk set. Maher, exploiting work of Kerckhoff \cite{Ker90} (see also Gadre \cite{Ga12}), showed the following:

\begin{thm}[Maher, \cite{Ma10}]
\label{disk measure zero}
The {\rm Hempel distance} increases linearly along the random walk, that is, there exists a constant $K>1$ such that
\[
\mb{P}\left[\omega\in\Omega\mid d_{\mc{C}}(\mc{D},\omega_n\mc{D})\in[n/K,Kn]\right]\stackrel{n\rightarrow\infty}{\longrightarrow}1.
\]
\end{thm}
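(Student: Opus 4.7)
The upper bound is deterministic. The finite support $S$ of $\mu$ generates the group, and the action ${\rm Mod}(\Sigma)\curvearrowright\mc{C}$ is Lipschitz, so there exists $K_0>0$ with $d_{\mc{C}}(\alpha,g\alpha)\leq K_0$ for every $g\in S$ and every $\alpha\in\mc{C}$. Iterating along the sample path and choosing $\alpha\in\mc{D}$ gives the deterministic bound $d_{\mc{C}}(\mc{D},\omega_n\mc{D})\leq d_{\mc{C}}(\alpha,\omega_n\alpha)\leq K_0 n$.

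For the lower bound I would combine three ingredients. First, the Maher--Tiozzo drift theorem (Theorem \ref{mahertiozzo}) gives $d_{\mc{C}}(\alpha,\omega_n\alpha)\geq L_{\mc{C}}n/2$ with probability tending to $1$. Second, the quasiconvexity of $\mc{D}$ in $\mc{C}$ (Masur--Minsky \cite{MM95}) yields a coarsely Lipschitz nearest point projection $\pi_{\mc{D}}:\mc{C}\to\mc{D}$ with the standard bounded-projection property: there is $C_0>0$ such that whenever two $Q$-quasiconvex sets $A,B\subset\mc{C}$ satisfy $d_{\mc{C}}(A,B)\geq C_0$, then $\pi_A(B)$ has diameter at most a constant $C_1=C_1(Q)$. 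Third, Kerckhoff's theorem \cite{Ker90} (see also Gadre \cite{Ga12}) asserts that the hitting measure $\nu$ of the $\mu$-random walk on $\partial\mc{C}$ does not charge the limit set $\partial\mc{D}\subset\partial\mc{C}$ of the disk set.

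Combining these, Theorem \ref{exit map gromov} shows that the walk converges almost surely to $\lambda_\omega\in\partial\mc{C}\setminus\partial\mc{D}$; since in a Gromov hyperbolic space, nearest point projections onto a quasiconvex subset stay bounded along sequences converging to boundary points outside its limit set, we obtain the tightness
\[
\mb{P}\bigl[d_{\mc{C}}(\alpha,\pi_{\mc{D}}(\omega_n\alpha))\leq R\bigr]\geq 1-\ep
\]
for all $n$ and some $R=R(\ep)$. By symmetry of $\mu$, the random variable $\omega_n^{-1}$ has the same law as $\omega_n$, and applying the bound to $\omega_n^{-1}$ and then translating by $\omega_n$ yields $\mb{P}[d_{\mc{C}}(\omega_n\alpha,\pi_{\omega_n\mc{D}}(\alpha))\leq R]\geq 1-\ep$. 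On the intersection of these three high-probability events, once $d_{\mc{C}}(\mc{D},\omega_n\mc{D})\geq C_0$ (which holds eventually a.s., since $d_{\mc{C}}(\omega_n\alpha,\mc{D})\to\infty$), any pair $p^*\in\mc{D}, q^*\in\omega_n\mc{D}$ realizing the minimum $d_{\mc{C}}(\mc{D},\omega_n\mc{D})$ lies within $C_1$ of $\pi_{\mc{D}}(\omega_n\alpha)$ and $\pi_{\omega_n\mc{D}}(\alpha)$ respectively (by the bounded-projection property applied to $A=\mc{D}, B=\omega_n\mc{D}$). The triangle inequality then gives
\[
d_{\mc{C}}(\mc{D},\omega_n\mc{D})=d_{\mc{C}}(p^*,q^*)\geq d_{\mc{C}}(\alpha,\omega_n\alpha)-2R-2C_1\geq n/K
\]
for any fixed $K>2/L_{\mc{C}}$ and all $n$ sufficiently large. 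Taking $K$ larger than both $K_0$ and $2/L_{\mc{C}}$ yields the claim.

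The analytic heart of the argument, and the main obstacle, is Kerckhoff's statement $\nu(\partial\mc{D})=0$: that the random-walk limit is almost surely not a disk-bounding lamination. This is proved in \cite{Ker90} by explicit measure-theoretic control of disk-bounding measured foliations, and in \cite{Ga12} via Poisson-boundary and conformal-measure techniques for the walk. Once it is granted, the remaining steps are routine coarse hyperbolic geometry in $\mc{C}$ together with the Maher--Tiozzo drift and the symmetry of $\mu$.
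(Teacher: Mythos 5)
The paper does not prove this statement: it quotes it as a theorem of Maher \cite{Ma10}, so there is no in-text argument against which to check your reconstruction. On its own merits, your blind proof is essentially sound. The deterministic upper bound is correct, the combination of the Maher--Tiozzo drift, quasiconvexity of the disk graph with its bounded-projection property, and the a.s.\ boundedness of $d_{\mc{C}}(\alpha,\pi_{\mc{D}}(\omega_n\alpha))$ via tightness and equivariance under the symmetric measure all fit together correctly, and the triangle-inequality bookkeeping at the end is right. One subtlety you elide is that $\mc{C}$ is not locally compact, so the claim that projections to $\mc{D}$ stay bounded along a sequence converging to $\lambda_\omega\notin\partial\mc{D}$ should be run through Gromov products rather than via subsequential convergence of geodesics; the Gromov-product version does go through because $\alpha\in\mc{D}$ and nearest-point projection puts $[\alpha,\omega_n\alpha]$ uniformly close to $\pi_{\mc{D}}(\omega_n\alpha)$.

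The one genuine misstatement is the attribution of the key measure-theoretic input. You write that the identity $\nu(\partial\mc{D})=0$ for the \emph{hitting} measure $\nu$ is ``Kerckhoff's statement'' and ``proved in \cite{Ker90}.'' Kerckhoff's theorem concerns the Thurston/Lebesgue measure class on $\mc{PML}$; since the hitting measure of a finitely supported walk is generally singular with respect to Lebesgue, Kerckhoff's result by itself does \emph{not} give $\nu(\partial\mc{D})=0$. The hitting-measure version is exactly what Gadre's appendix \cite{Ga12} (and Maher's own argument in \cite{Ma10}) supply, and that is what the paper's phrase ``exploiting work of Kerckhoff (see also Gadre)'' is signalling. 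So the ingredient you need is available, but it should be credited to \cite{Ga12}/\cite{Ma10} rather than to \cite{Ker90} directly. With that correction your route is a clean modern proof, arguably more streamlined than Maher's original (which predates the Maher--Tiozzo machinery), and it is the natural way to see both the linear lower bound and the linear upper bound simultaneously.
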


Furthermore, the convergence in Theorem \ref{disk measure zero} happens exponentially fast as proved by Lubotzky-Maher-Wu \cite{LMW16}.

Maher's theorem has a few immediate consequences. First of all, for a random mapping class $\omega_n$, the 3-manifold $M_{\omega_n}$ is hyperbolic (see Hempel \cite{He01} and Dunfield-Thurston \cite{DT06}). Furthermore, let us choose once and for all a basepoint $o\in \mc{T}_\delta$ contained in the $\delta$-thick part of Teichm\"uller space for a suitably chosen number $\delta>0$. We select $o$ so that it admits a short complete clean marking whose base is a pants decomposition made of diskbounding curves for $H_g$. By Theorem \ref{mahertiozzo}, the distance in the curve graph between $\Upsilon(o)$ and $\Upsilon(\omega_no)$ makes linear progress in $n$, and by Theorem \ref{disk measure zero}, it makes linear progress away from the diskbounding curves. Here as before, $\Upsilon:\mc{T}\to \mc{C}$ denotes the systole map.

This property, however, is not sufficient to conclude that for a random element $\omega_n\in {\rm Mod}(\Sigma)$, the manifold
$M_{\omega_n}$ satisfies the assumptions in Proposition \ref{random relative bounded}. As additional properties, we have to control the transition of the Teichm\"uller geodesic segment $\tau_{o,\omega_no}$ connecting $o$ to $\omega_no$ through the thick part of Teichm\"uller space while controlling the rate of divergence of its trace from the disk set. We next establish this control.

Thus let $\omega_n\in{\rm Mod}(\Sigma)$ be a random mapping class. 

The subset $G_{\omega_n}:=\Upsilon (\tau_{o,\omega_no})$ of the curve graph ${\mc{C}}$ is a uniform unparameterized quasi-geodesic. Denote by $\pi_{G_{\omega_n}}$ the nearest point projection of the curve graph ${\mc{C}}$ onto $G_{\omega_n}$. Hyperbolicity of $\mc{C}$ yields that the projection $\pi_{G_{\omega_n}}(\mc{D})$ is a quasi-convex subset of $G_{\omega_n}$, with control constants not depending on $\omega_n$. Let $\vert \pi_{G_{\omega_n}}(\mc{D})\vert$ be its diameter. Our next goal is to prove that as the step length tends to infinity, this diameter is arbitrarily small compared to the diameter $\vert G_{\omega_n}\vert$ of $G_{\omega_n}$. This implies that $\Upsilon\tau_{o,\omega_no}$ can only be close to $\mc{D}$ on a small initial segment.
 
\begin{pro}
\label{small fellow-travelling}
Let $g\ge 2$ and $\ep>0$ be fixed. Let $\mu$ be a symmetric probability measure on ${\rm Mod}(\Sigma)$ whose 
support is a finite generating set. We have
\[
\mb{P}[\omega\in\Omega\left| \text{We have $\vert\pi_{G_{\omega_n}}(\mc{D})\vert/\vert G_{\omega_n}\vert\leq\ep$} \right.]\stackrel{n\rightarrow\infty}{\longrightarrow}1.
\]
\end{pro}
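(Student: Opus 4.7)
The plan is to establish the stronger almost-sure statement that $|\pi_{G_{\omega_n}}(\mc{D})|$ stays bounded as $n\to\infty$. Combined with the linear growth $|G_{\omega_n}|\asymp d_\mc{C}(\Upsilon(o),\Upsilon(\omega_n o))\sim L_\mc{C}\cdot n$ from Theorem \ref{mahertiozzo}, this forces $|\pi_{G_{\omega_n}}(\mc{D})|/|G_{\omega_n}|\to 0$ almost surely, so a fortiori in probability, which is what we need.

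The key geometric input is the claim that the boundary point $\lambda_\omega\in\partial\mc{C}$ furnished by Theorem \ref{exit map gromov} almost surely lies outside the limit set $\partial_\infty\mc{D}$ of the quasi-convex subset $\mc{D}\subset\mc{C}$. By our choice of basepoint $o$ we have $d_\mc{C}(\Upsilon(o),\mc{D})=O(1)$, so by equivariance of $\Upsilon$ and $\mc{D}$ under ${\rm Mod}(\Sigma)$ also $d_\mc{C}(\Upsilon(\omega_n o),\omega_n\mc{D})=O(1)$; Maher's Theorem \ref{disk measure zero} then gives
\[
d_\mc{C}(\Upsilon(\omega_n o),\mc{D})\geq n/K-O(1)
\]
almost surely. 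If on the contrary $\lambda_\omega\in\partial_\infty\mc{D}$, then quasi-convexity of $\mc{D}$ would yield a quasi-geodesic ray inside $\mc{D}$ from $\Upsilon(o)$ to $\lambda_\omega$, and by the Morse property for quasi-geodesic rays in the hyperbolic space $\mc{C}$ this ray would uniformly fellow-travel the Maher--Tiozzo tracking ray $\eta_\omega$ from Theorem \ref{mahertiozzo}; hence $\eta_\omega$ itself would lie within bounded Hausdorff distance of $\mc{D}$. The sublinear tracking $d_\mc{C}(\Upsilon(\omega_n o),\eta_\omega)=o(n)$ provided by the same theorem would then force $d_\mc{C}(\Upsilon(\omega_n o),\mc{D})=o(n)$, contradicting the linear lower bound above.

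Once $\lambda_\omega\notin\partial_\infty\mc{D}$ almost surely, the quantity $B(\omega):=\sup_{\alpha\in\mc{D}}(\alpha|\lambda_\omega)_{\Upsilon(o)}$ is almost surely finite, as this finiteness is the standard characterization of points of $\partial\mc{C}$ outside the limit set of a quasi-convex subset. Applying the four-point condition for $\delta$-hyperbolicity of $\mc{C}$ to the quadruple $\alpha, \Upsilon(\omega_n o),\lambda_\omega,\Upsilon(o)$, one deduces that as soon as $(\Upsilon(\omega_n o)|\lambda_\omega)_{\Upsilon(o)}>B(\omega)+\delta$—which holds for all large $n$ almost surely, since $\Upsilon(\omega_n o)\to\lambda_\omega$ in $\mc{C}\cup\partial\mc{C}$—we have
\[
(\alpha|\Upsilon(\omega_n o))_{\Upsilon(o)}\leq B(\omega)+\delta\quad\text{for every }\alpha\in\mc{D}.
\]
The distance from $\Upsilon(o)$ to any point $\pi_{G_{\omega_n}}(\alpha)$ along the uniform quasi-geodesic $G_{\omega_n}$ is coarsely equal to $(\alpha|\Upsilon(\omega_n o))_{\Upsilon(o)}$, while the opposite extreme of the quasi-convex set $\pi_{G_{\omega_n}}(\mc{D})$ sits near $\Upsilon(o)$ because $\Upsilon(o)$ itself is close to $\mc{D}$; together these give $|\pi_{G_{\omega_n}}(\mc{D})|\leq B(\omega)+O(1)$ for all sufficiently large $n$, which completes the reduction.

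I expect the only delicate step to be the verification that $\lambda_\omega\notin\partial_\infty\mc{D}$ almost surely; the rest is coarse hyperbolic geometry in $\mc{C}$ together with the standard boundary behavior of Gromov products. Both ingredients needed for the delicate step—Maher's linear lower bound on $d_\mc{C}(\mc{D},\omega_n\mc{D})$ (Theorem \ref{disk measure zero}) and Maher--Tiozzo's sublinear tracking of the random walk by a quasi-geodesic ray to $\lambda_\omega$ (Theorem \ref{mahertiozzo})—are already at our disposal.
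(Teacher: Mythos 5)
Your proof is correct and takes a genuinely different route from the paper's. The paper runs a finitary threshold argument: for each $\rho>0$ it fixes $n_0=n_0(\rho)$, defines a set $\Omega_{n_0}$ of measure $\geq 1-\rho$ on which linear $\mc{C}$-progress, tracking of $\gamma_\omega$, and linear Hempel distance all hold, and then shows via a concrete nearest-point-projection estimate (the constant $A$ such that once a quasi-geodesic ray from $\alpha\in\mc{D}$ leaves the $A$-neighbourhood of $\mc{D}$, the projection of $\mc{D}$ onto it stops there) that $\vert\pi_{G_{\omega_n}}(\mc{D})\vert\leq L_{\mc{C}}(1+\epsilon)n_0$ for all $\omega\in\Omega_{n_0}$ and $n\geq n_0$; then it chooses $n_1$ so the ratio drops below $\epsilon$. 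Your argument instead passes to the boundary: you show $\lambda_\omega\notin\partial_\infty\mc{D}$ almost surely via the Morse property and Maher's linear Hempel progress, package the quasi-convexity of $\mc{D}$ into the a.s.\ finite Gromov-product supremum $B(\omega)$, and then use the four-point inequality to bound $\vert\pi_{G_{\omega_n}}(\mc{D})\vert$ by $B(\omega)+O(1)$ for large $n$. Both arguments consume the same two random-walk inputs (Maher's Theorem \ref{disk measure zero} and Maher--Tiozzo's Theorem \ref{mahertiozzo}), but the paper's version is more quantitative and finitary, whereas yours is more conceptual, expressing everything in terms of the boundary point $\lambda_\omega$ and yielding the almost-sure statement cleanly.

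One minor point worth flagging: the conclusion you draw from Theorem \ref{disk measure zero}, that $d_\mc{C}(\Upsilon(\omega_no),\mc{D})\geq n/K-O(1)$ \emph{almost surely} (for all large $n$), does not follow from the convergence-in-probability formulation of that theorem as stated. You need either to invoke the exponential rate of decay established in \cite{LMW16} and apply Borel--Cantelli to get the full a.s.\ version, or to pass to a subsequence $n_k$ with summable failure probabilities --- which is all the contradiction with $d_\mc{C}(\Upsilon(\omega_no),\mc{D})=o(n)$ actually requires. Since the paper itself cites the exponential decay result, this is easily repaired, but it should be said explicitly.
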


\begin{proof} 
Let $K>0$ be the constant from Theorem \ref{disk measure zero} and let $L_{\mc{C}}$ be the constant from Theorem \ref{mahertiozzo}. Let $\ep>0$ be such that $L_{\mc{C}}\epsilon <1/2K$. 

Let $\alpha:=\Upsilon(o)\in {\mc{C}}$. By the choice of $o\in\T$, we may assume that $\alpha$ is diskbounding in the handlebody $H_g$.
   
For $n_0>0$ let $\Omega_{n_0}\subset\Omega$ be the set of all sample paths $\omega=(\omega_n)$ such that for all $n\geq n_0$  the following properties are fulfilled. 
\begin{enumerate}
\item{$L_{\mc{C}}(1-\epsilon/2)n\leq d_{\mc{C}}(\alpha,\omega_n\alpha)\leq L_{\mc{C}}(1+\epsilon/2)n$.}
\item{Let $\gamma_\omega$ be a uniform quasi-geodesic ray in $\mc{C}$ connecting $\gamma_\omega(0)=\alpha$ to $\gamma_\omega(\infty)=\lambda_\omega$. Then $d_{\mc{C}}(\gamma_\omega,\omega_n\alpha)\leq L_{\mc{C}}\epsilon n/2$.
\item $d_{\mc{C}}(\mc{D},\omega_n\mc{D})\geq n/2K$.}
\end{enumerate}
Note that we have $\Omega_{n_1}\supset\Omega_{n_0}$ for all $n_1\geq n_0$. By Theorem \ref{mahertiozzo} and Theorem \ref{disk measure zero}, for every $\rho>0$  there exists a number $n_0=n_0(\rho)>0$ so that $\mb{P}(\Omega_{n_0})\geq 1-\rho$ for all $n\geq n_0$.
 
The disk set $\mc{D}\subset \mc{C}$ is quasi-convex. Thus, by hyperbolicity of $\mc{C}$, there exists a number $A>0$ with the following property. Let $\zeta:[0,\infty)\to \mc{C}$ be a uniform quasi-geodesic ray beginning at $\zeta(0)=\alpha\in \mc{D}$. If $t>0$ is such that $d_{\mc{C}}(\zeta(t),\mc{D})>A$ and if $\beta\in \mc{C}$ is such that
$\zeta(t)$ equals a shortest distance projection of $\beta$ into $\zeta$, then a shortest geodesic connecting $\beta$ to $\mc{D}$ passes through a uniformly bounded neighborhood of $\zeta(t)$. In particular, up to increasing $A$, the point $\zeta(t)$ is not contained in the image of the shortest distance projection of ${\mc{D}}$ into the uniform quasi-geodesic ray $\zeta$.
  
Assume from now on that $n_0/4K>A$. Let $(\omega_n)\in \Omega_{n_0}$ and let $n\geq n_0$. Denote by $\gamma_\omega$ the quasi-geodesic ray in $\mc{C}$ as in property (2) above. Then, on the one hand, we have
\[
L_{\mc{C}}(1-\epsilon/2)n\leq d_{\mc{C}}(\alpha,\omega_n\alpha)\leq L_{\mc{C}}(1+\epsilon/2)n,
\]
on the other hand, also $d_{\mc{C}}(\gamma_\omega,\omega_n\alpha)\leq L_{\mc{C}}\epsilon n/2$. In particular, by property (3) above, the nearest point projection $q_n$ of $\omega_n\alpha$ into $\gamma_\omega$ is of distance at least $n/2K-L_{\mc{C}}\epsilon n/2\geq n/4K>A$ from $\mc{D}$. This implies that a geodesic in ${\mc{C}}$ which connects $\omega_n\alpha$ to a shortest distance projection into $\mc{D}$ passes through a uniformly bounded neighborhood of $q_n$. Using again uniform quasi-convexity of $\mc{D}$ and the fact that $\alpha\in\mc{D}$, we conclude that the diameter of the shortest distance projection of $\mc{D}$ into the uniform quasi-geodesic $\Upsilon(\tau_{o,\omega_no})$ does not exceed the distance between $\alpha$ and $q_{n_0}$. Hence this diameter is at most $L_{\mc{C}}(1+\epsilon)n_0$, independent of $n\geq n_0$ and $\omega\in\Omega_{n_0}$. 
 
Choose $n_1>0$ sufficiently large that $L_{\mc{C}}(1+\epsilon)n_0\leq\ep L_{\mc{C}}(1-\epsilon)n_1$. Then for $\omega\in \Omega_{n_0}$ and for $n\geq n_1$, the distance between $\omega_n\alpha$ and $\alpha$ is at least $L_{\mc{C}}(1-\ep /2)n$, while the diameter of the projection of $\mc{D}$ into $\Upsilon(\tau_{o,\omega_no})$ does not exceed $L_{\mc{C}}(1+\ep)n_0$. By the choice of $n_1$, this means that for $n\geq n_1$ and every $\omega\in\Omega_{n_0}$, the properties required in the proposition are fulfilled for $\omega_n$, that is, we have $\vert \pi_{G_{\omega_n}}(\mc{D})\vert \leq\ep\vert G_{\omega_n}\vert $ as claimed.

As $\rho>0$ was arbitrary, the proposition follows. 
\end{proof}

Let as before $A>0$ be sufficiently large that the following holds true. Let $\gamma:[0,\infty)\to {\mc{C}}$ be a uniform quasi-geodesic beginning at the diskbounding curve $\gamma(0)=\alpha$ (this should mean that we choose once and for all a quasi-geodesic constant so that any two distinct points in $\mc{C}\cup \partial\mc{C}$ can be connected by a quasi-geodesic for this constant). We require that whenever $\beta\in \mc{C}$ is such that a shortest distance projection $\gamma(t)$ of $\beta$ into $\gamma$ has distance at least $A$ from the set ${\mc{D}}$ of diskbounding curves in $H_g$, then a shortest geodesic connecting $\beta$ to $\mc{D}$ passes through a uniformly bounded neighborhood of $\gamma(t)$. That such a number $A>0$ exists was a main technical ingredient in the proof of Proposition \ref{small fellow-travelling}.

Consider again a symmetric probability measure $\mu$ on ${\rm Mod}(\Sigma)$ whose support is a finite generating set and which induces the probability measure $\mathbb{P}$ on $\Omega$. Let $\delta, R,\ep>0$ be arbitrarily fixed. We require that $\delta>0$ is small enough that the conditions in Proposition \ref{prop610} are fulfilled for $W={\mc{T}}_{2\delta}$. 

Let $f\in {\rm Mod}(\Sigma)$ and consider the Teichm\"uller geodesic $\tau_{o,fo}$ connecting $o$ to $fo$. We say that $\tau_{o,fo}$ is {\em $(R,\delta,\epsilon)$-admissible} if the following holds true. There exist numbers $\rho_1,\rho_2\leq 
\epsilon d_{\mc{T}}(o,fo)$ such that:
\begin{itemize}
\item{The distance between $\Upsilon(\tau_{o,fo}[\rho_1-2R,\rho_1])$ and $\mc{D}$ and the distance between 
$\Upsilon(\tau_{o,fo}[d_{\mc{T}}(o,fo)-\rho_2,d_{\mc{T}}(o,fo)-\rho_2+2R])$ and $f\mc{D}$ is at least $A$.}
\item{$\tau_{o,fo}([\rho_1-2R,\rho_1]\cup[d_{\mc{T}}(o,fo)-\rho_2,d_{\mc{T}}(o,fo)-\rho_2+2R])\subset \mc{T}_\delta$.} 
\end{itemize}

Thus if the Teichm\"uller geodesic segment $\tau_{o,fo}$ is admissible, then it contains a subsegment of length at least $2R$ 
which is contained in the $\delta$-thick part of Teichm\"uller space, whose projection to the curve graph is separated from the set of diskbounding curves in a controlled way, and which is located uniformly near the starting point of the geodesic. Furthermore, there also is a segment of length at least $2R$ with these properties near the endpoint of the geodesic segment.

\begin{pro}\label{admissible}
We have
  \[
  \mathbb{P}[\omega\in\Omega\mid\text{The segment $\tau_{o,\omega_no}$ is $(R,\delta,\epsilon)$-admissible }]\stackrel{n\to\infty}{\longrightarrow}1.
  \] 
\end{pro}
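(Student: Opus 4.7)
The plan is to combine three ingredients established in this subsection together with the symmetry of $\mu$: Proposition \ref{recursion}, which controls the fellow-traveling of $\tau_{o,\omega_no}$ with the asymptotic ray $\tau_{o,\lambda_\omega}$; Proposition \ref{small fellow-travelling}, which ensures that the shadow $\pi_{G_{\omega_n}}(\mc{D})$ is a negligible fraction of $G_{\omega_n}$; and Proposition \ref{prop610} applied to the open set $W=\mc{T}_{2\delta}$, which provides, with positive density, windows of length $2R$ along $\tau_{o,\lambda_\omega}$ entirely contained in $\mc{T}_{2\delta}$.

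Fix the parameters $R,\delta,\ep>0$ and choose a small auxiliary constant $\ep'>0$ to be determined below. I would let $\Omega_n\subset\Omega$ be the intersection of the following high probability events: the $\ep/4$-fellow-traveling event from Proposition \ref{recursion}; the $\ep'$-small-shadow event from Proposition \ref{small fellow-travelling}; the drift events $d_{\mc{T}}(o,\omega_no)\geq L_{\mc{T}}n(1-\ep/4)$ and $d_{\mc{C}}(\alpha,\omega_n\alpha)\geq L_{\mc{C}}n(1-\ep/4)$ from Theorems \ref{tiozzo} and \ref{mahertiozzo}; and the Proposition \ref{prop610} density event for $W=\mc{T}_{2\delta}$. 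On $\Omega_n$, the density event produces some $t_1\in[2R,\ep n L_{\mc{T}}/2]$ with $\tau_{o,\lambda_\omega}[t_1-R,t_1+R]\subset\mc{T}_{2\delta}$. Since $\tau_{o,\omega_no}$ and $\tau_{o,\lambda_\omega}$ emanate from the common basepoint $o\in\mc{T}_{2\delta}$ and are $m$-close on the initial window of length $nL_{\mc{T}}(1-\ep/4)$, Rafi's stability result \cite{R14} yields a parameter $\rho_1\leq\ep n L_{\mc{T}}$ with $\tau_{o,\omega_no}[\rho_1-2R,\rho_1]\subset\mc{T}_\delta$.

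To verify the curve graph half of admissibility at $\rho_1$, I would use that the projection $\Upsilon$ restricts to this thick subsegment as a parametrized $p$-quasi-geodesic by \cite{H10}, so that $\Upsilon(\tau_{o,\omega_no}(\rho_1-2R))$ sits at curve graph distance from $\alpha=\Upsilon(o)$ comparable to $\rho_1 L_{\mc{C}}/L_{\mc{T}}$. Choosing $\ep'$ small enough that the shadow diameter $\ep'|G_{\omega_n}|\leq\ep'L_{\mc{C}}(1+\ep/4)n$ plus $A$ is strictly less than this distance, the defining property of $A$ recalled before the statement of Proposition \ref{admissible} forces the $\Upsilon$-image of $\tau_{o,\omega_no}[\rho_1-2R,\rho_1]$ to lie at distance at least $A$ from $\mc{D}$. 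The endpoint half of the admissibility condition follows by symmetry: since $\mu=\mu^{-1}$, the law of $\omega_n^{-1}$ equals that of $\omega_n$, so applying the previous argument to $\omega_n^{-1}$ yields $\rho_2'\leq\ep n L_{\mc{T}}$; translating by the $\mc{T}$-isometry $\omega_n$ and using that $\omega_n\tau_{o,\omega_n^{-1}o}$ is the reverse parametrization of $\tau_{o,\omega_no}$ converts this into the required $\rho_2$, with $\mc{D}$ replaced by $\omega_n\mc{D}=f\mc{D}$.

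The main obstacle is the quantitative matching in the previous paragraph between Teichm\"uller and curve graph scales: I must choose $\ep'$ small enough in terms of $\ep$, $L_{\mc{T}},L_{\mc{C}}$, the quasi-geodesic constant $p$, and $A$ so that the small shadow hypothesis wins over the initial window of curve graph length $\ep n L_{\mc{C}}$. Once $\ep'$ is fixed, all events defining $\Omega_n$ have probability tending to one, and a union bound yields $\mathbb{P}(\Omega_n)\to 1$, which is the required conclusion.
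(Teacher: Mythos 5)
Your strategy is essentially the one in the paper: combine Proposition \ref{recursion} (fellow-traveling of $\tau_{o,\omega_no}$ with the asymptotic ray), Proposition \ref{small fellow-travelling} (small shadow of $\mc{D}$), a thick-window statement along the ray (you cite Proposition \ref{prop610}; the paper cites Proposition 6.14 of \cite{BGH16}), and time reversal via the symmetry of $\mu$. The paper packages the first three into the two events $\Omega_{n,0},\Omega_{n,1}$ rather than a single $\Omega_n$, but the content is the same.

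Two steps in your write-up need tightening, and they interact. You take the thick window at a parameter $t_1\in[2R,\ep n L_{\mc{T}}/2]$; the left endpoint $2R$ does not grow with $n$, so after transporting through fellow-traveling you could end up with a $\rho_1$ bounded independently of $n$. In that case $\Upsilon(\tau_{o,\omega_no}(\rho_1-2R))$ stays within bounded curve-graph distance of $\alpha$, and since $\alpha$ lies in the shadow $\pi_{G_{\omega_n}}(\mc{D})$, the required separation by $A$ cannot be guaranteed. You must force $\rho_1$ to be of order $n$; the paper does this by demanding the thick window lie in $[\ep nL_{\mc{T}}/2,\ep nL_{\mc{T}}]$, both endpoints comparable to $n$. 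Related to this, your assertion that the parametrized quasi-geodesic property of $\Upsilon$ on the thick subsegment yields a curve-graph distance from $\alpha$ comparable to $\rho_1 L_{\mc{C}}/L_{\mc{T}}$ is not justified as stated: thickness of a window of length $2R$ only controls curve-graph distances within that window, not the distance back to $o$, since the initial segment $[0,\rho_1]$ need not be thick. The correct route is the one your event $\Omega_n$ already supplies: by Proposition \ref{recursion}, $\tau_{o,\omega_no}(\rho_1-2R)$ lies within $m$ of the ray $\tau_{o,\lambda_\omega}$ at a comparable parameter, $\Upsilon$ is coarsely Lipschitz, and the linear curve-graph growth along that ray (the identity (13) of \cite{BGH16}, already invoked in the proof of Proposition \ref{recursion}) gives the linear lower bound, provided $\rho_1$ is of order $n$ as above. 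With these two corrections, your $\ep'$-tuning argument and the time-reversal step using $\mu=\mu^{-1}$ are exactly what the paper does.
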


\begin{proof}   
Let $R>0$, let $\delta>0$ be sufficiently small but fixed, and let $\ep >0$. Let furthermore $\sigma >0$. For $n>0$ define 
\begin{align*}
\Omega_{n,0}& :=\{\omega\in\Omega\mid d_\T(o,\omega_no)\in [nL_{\mc{T}}(1-\ep/2), nL_{\mc{T}}(1+\ep/2)] \text{ and }\\
 & \tau_{o,\omega_no}[\ep nL_{\mc{T}}/2,\ep nL_{\mc{T}}]\text{ contains a segment of length }2R \text{ in }\T_\delta\}.
\end{align*}
Proposition \ref{recursion} together with Proposition 6.14 of \cite{BGH16} shows that 
\[
\mathbb{P}(\Omega_{n,0})\stackrel{n\to\infty}{\longrightarrow}1.
\]
On the other hand, if for a fixed number $A>0$, chosen as above, we define 
\[
\Omega_{n,1}:=\{\omega\in\Omega\mid d_\mc{C}(\Upsilon(\tau_{o,\omega_no}(\epsilon nL_{\mc{T}}/2)),\pi_{G_{\omega_n}}({\mc {D}}))\geq A\},
\] 
then Proposition \ref{small fellow-travelling} shows that
\[
\mathbb{P}(\Omega_{n,1})\stackrel{n\to\infty}{\longrightarrow}1.
\]

Now if $\omega\in \Omega_{n,0}\cap \Omega_{n,1}$, then there exists an initial segment of length $2R$ on the geodesic $\tau_{o,\omega_no}$ which is mapped into ${\mc{T}}_\delta$ as required in the definition of admissibility, and this segment is separated away from the projection of the disk sets into the projection of $\tau_{o,\omega_no}$. Thus, such an element fulfills the requirement in the definition of admissibility near the starting point of $\tau_{o,\omega_no}$. Reversal of time then implies that with probability tending to 1 as $n\to \infty$, we may assume that the same is true for the inverse $\tau_{\omega_no,o}$. Together this shows the proposition. 
\end{proof}

\subsection{Good gluing regions}

The goal of this subsection is to show
Proposition \ref{random relative bounded}.
The argument is very similar to the argument in the proof 
of Proposition \ref{small fellow-travelling}.

We begin with a volume control for convex cocompact
hyperbolic structures on handlebodies. 
To this end
choose as before  a marking $\eta$ for the boundary $\Sigma$ of the handlebody
$H_g$ so that the base pants decomposition consists of diskbounding curves.
The following proposition is well known in various settings. As we did not find a directly
quotable statement in the literature, we sketch a proof.

\begin{pro}\label{finitevolume}
Let $\epsilon>0$ be a fixed number and let $\nu$ be any complete clean marking on $\Sigma$ with the property that the distance in the curve graph of a component of $\nu$ to a diskbounding curve is at least three. Suppose that $H_g$ is equipped with a convex cocompact hyperbolic structure $H(X)$ with conformal boundary $X\in \mc{T}_\delta$ such that $\nu$ is short for $X$. Then the volume of the convex core of $H(X)$ is bounded from above by a fixed multiple of the distance between $\eta,\nu$ in the marking graph.  
\end{pro}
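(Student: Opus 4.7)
The plan is to cover $\mc{CC}(H(X))$ with straight hyperbolic $3$-simplices whose number is bounded by a multiple of the marking graph distance $d_{\mc{M}}(\eta,\nu)$, in the spirit of the volume estimate in Lemma \ref{middle gluing block}.

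First I would build a triangulation $T$ of the handlebody $H_g$ with $|T| \leq C \cdot d_{\mc{M}}(\eta,\nu)$ whose restriction to $\Sigma=\partial H_g$ is a refinement of $\nu$ with only boundedly many additional edges. This is done in two pieces glued together: an \emph{inner piece} triangulating $H_g$ with a uniformly bounded number of simplices whose boundary refines $\eta$, built using the fact that the base pants curves of $\eta$ bound disjoint meridian disks (so $H_g$ is cut into a ball admitting a uniformly bounded triangulation compatible with the induced boundary pattern), and an \emph{outer piece} triangulating $\Sigma\times[0,1]$ with at most $C'\cdot d_{\mc{M}}(\eta,\nu)$ simplices whose boundary triangulations refine $\eta$ and $\nu$ respectively, provided by Lemma \ref{triangulation}. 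Choosing the $\eta$-refinements compatibly on the common side, we concatenate to obtain $T$.

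Next I would verify that $T|_\Sigma$ has uniformly bounded length on $\partial\mc{CC}(H(X))$. Since $X\in\T_\delta$ and $\nu$ is short on $X$, a refinement of $\nu$ to $T|_\Sigma$ can be chosen with uniformly bounded length on $X$. Every essential curve on $X$ has length at least $2\delta$ by thickness, so Theorem \ref{bridgeman-canary} yields a uniformly Lipschitz (with constants depending on $\delta$) nearest point retraction $X\to\partial\mc{CC}(H(X))$, and the length bound on $T|_\Sigma$ descends to the boundary of the convex core.

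Then I would take a map $\Phi:H_g\to\overline{\mc{CC}(H(X))}$ in the homotopy class of the marking (composed with the retraction above), sending $\Sigma$ onto $\partial\mc{CC}(H(X))$ and realizing each edge of $T|_\Sigma$ as a geodesic segment of bounded length, and straighten $\Phi$ relative to the $0$-skeleton of $T$. Each $3$-simplex of $T$ is sent to a straight hyperbolic simplex of volume at most the universal constant $v_3$. The straightened boundary $\Phi^\ast(\Sigma)$ lies in a uniform neighborhood of $\partial\mc{CC}(H(X))$ by the length bound from the previous step, and since $\Phi^\ast$ is homotopic to a homeomorphism onto $\mc{CC}(H(X))$, its image covers $\mc{CC}(H(X))$ up to a collar of uniformly bounded volume. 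This yields
\[
{\rm vol}(\mc{CC}(H(X)))\leq v_3\cdot|T|+C_0\leq v_3C\cdot d_{\mc{M}}(\eta,\nu)+C_0,
\]
which is dominated by a multiple of $d_{\mc{M}}(\eta,\nu)$ since $d_{\mc{M}}(\eta,\nu)\geq 3$ (as $\eta$ has diskbounding base while every component of $\nu$ has curve graph distance at least $3$ from $\mc{D}$). The main technical obstacle is the coverage claim: showing that the straightened image actually contains $\mc{CC}(H(X))$ up to a controlled collar. This requires that $\Phi^\ast$ stays close to $\Phi$ (a geometric consequence of the bounded length of the boundary triangulation together with the fact that straightening rel $0$-skeleton only moves simplices within their convex hulls) and that $\Phi$ is a degree-one map of pairs $(H_g,\Sigma)\to(\overline{\mc{CC}(H(X))},\partial\mc{CC}(H(X)))$.
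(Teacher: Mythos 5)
Your proof takes essentially the same approach as the paper: build a triangulation of the handlebody from two pieces (a uniformly bounded inner handlebody triangulation adapted to the diskbounding marking $\eta$, and an outer $\Sigma\times[0,1]$ triangulation via Lemma \ref{triangulation} with $O(d_{\mc{M}}(\eta,\nu))$ simplices), use Theorem \ref{bridgeman-canary} to control the boundary triangulation's length on $\partial\mc{CC}(H(X))$, straighten relative to the $0$-skeleton, and observe that the image covers $\mc{CC}(H(X))$ up to a collar of uniformly bounded volume. The only extra remark you add---that $d_{\mc{M}}(\eta,\nu)$ is bounded below so the additive error can be absorbed into the multiplicative constant---is implicit in the paper and harmless.
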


\begin{proof} 
We proceed as in the volume estimate in Lemma \ref{middle gluing block} and show that we can cover $\mc{CC}(H(X))$ with a straight triangulation where number of simplices is bounded from above by a fixed multiple of the distance between $\eta,\nu$ in the marking graph. 

Let us consider for the moment an abstract handlebody $H$ of genus $g$ and a complete clean marking $\hat \eta$ of $\partial H$ whose pants decomposition consists of diskbounding curves. The disks bounded by these curves decompose $H$ into balls. Thus, up to a diffeomorphism of $H$, there are only finitely many combinatorial possibilities for the marking $\hat \eta$. As a consequence, a triangulation of $\partial H$ constructed from $\hat \eta$ by adding diagonals can be extended to a triangulation of $H$ with uniformly few simplices. 

Now, the pants decomposition of the marking $\eta$ consists of diskbounding curves and hence there exists an embedded handlebody $H\subset \mc{CC}(H(X))$ with the following property.
\begin{itemize}
\item $\mc{CC}(H(X))-{\rm int}(H)$ is diffeomorphic to $\Sigma\times [0,1]$.
\item $H$ is triangulated into uniformly few simplicies, and the restriction of this triangulation to $\partial H=\Sigma\times \{0\}\subset \mc{CC}(H(X))$ is constructed from the marking which is the image of $\eta\subset \Sigma\times \{1\}=\partial\mc{CC}(H(X))$ by the diffeomorphism $\Sigma\times \{1\}\to \Sigma \times \{0\}$ isotopic to the inclusion.
\end{itemize}

By Lemma \ref{triangulation}, we can extend this triangulation to $\mc{CC}(H(X))-{\rm int}(H)$ in such a way that restriction to 
$\Sigma\times \{1\}$ is a subdivision of $\nu$ and the number of simplices is bounded from above by a fixed multiple of the distance between $\eta$ and $\nu$ in the marking graph. 

By the assumption that $\nu$ is short for $X$, the marking $\nu$ is short for the boundary of the convex core (by Theorem \ref{bridgeman-canary}). Thus straightening this triangulation yields a triangulation of a subset of $\mc{CC}(H(X))$ whose complement is contained in a uniformly bounded neighborhood of the boundary. Since the boundary has uniformly bounded diameter, such a neighbourhood has uniformly bounded volume. This yields the proposition.  
\end{proof}

We are now ready to complete the proof of Proposition \ref{random relative bounded}. 

\begin{proof}[Proof of Proposition \ref{random relative bounded}]
Let $o\in\T_\delta$ be a point in the thick part of Teichm\"uller space for which a fixed complete clean marking $\eta$ on $\Sigma$ with pants curves consisting of diskbounding curves in the handlebody $H_g$ is short. The strategy is to find a quadruple of points $Y<X<X'<Y'$ on the Teichm\"uller geodesic $\tau_{o,\omega_no}$ connecting $o$ to its image under a random mapping class $\omega_n$ which fulfills the assumptions in Theorem \ref{gluing}. Furthermore, the points $Y,X$ should be contained in the initial subsegment of the geodesic of length at most $\ep$ times the total length, and the points $X',Y'$ should be contained in the terminal subsegment of the geodesic of length at most $\ep$ times the total length.

Using Proposition \ref{finitevolume}, we then argue that the sum of the volumes of the convex cocompact handlebodies corresponding to this initial and terminal segments of the Teichm\"uller geodesic cut at $X,X'$ are small compared to the volume of the center piece. Using Lemma \ref{many product regions}, we will then show that the center piece contains linearly aligned product regions as predicted in the proposition.  

By Proposition \ref{admissible}, it suffices to assume that for an a priori given constant $R>0$, the Teichm\"uller geodesic $\tau_{o,\omega_no}$ is $(R,\delta,\ep)$-admissible. Let us assume that $\rho_1<\ep d_{\mc{T}}(o,\omega_no)$ and $\rho_2>(1-\ep)d_{{\mc{T}}(o,\omega_no)}$ are as in the definition of $(R,\delta,\ep)$-admissibility. Combining Proposition \ref{recursion}, Proposition \ref{prop610} and Proposition \ref{admissible}, we conclude that we may assume that the total measure of the closed set $A\subset [\rho_1,\rho_2]$ of all points $t$ such that $\tau_{o,\omega_no}[t-R,t+R]\subset {\mc{T}}_\delta$ is at least $\hat c d_{\mc{T}}(o,\omega_no)/2$.

Let $\lambda$ be the standard Lebesgue measure on $\mathbb{R}$ and for $m\geq 1$ define $t(m)>0$ by $t(m)=\sup\{t\in A\mid\lambda[\rho_1,t]\cap A<2mR\}$. Since $A$ is closed, we have $t(m)\in A$. Furthermore, as $\lambda$ is the standard Lebesgue measure, we also have $t(m-1)\leq t(m)-2R$. In particular, the open intervals $(t(m)-R,t(m)+R)$ are pairwise
disjoint, moreover there are at least $\hat c/4R$ such intervals. 

Consider the quasi-fuchsian manifold defined by the $(R,\delta,\epsilon)$-good gluing region and the Teichm\"uller segment
 $\tau_{o,\omega_no}$. Lemma \ref{many product regions} shows that this quasi-fuchsian manifold satisfies property (d) in the definition of a gluing with $(\epsilon,b,\delta)$-controlled geometry where the constant $R>0$ as above depends on the choice of the a priori prescribed number $b>1$. This completes the proof of the proposition.  
 \end{proof}

\section{Geometric control of random hyperbolic 3-manifolds}\label{geometric}

In Section \ref{randomheegaard} we established that a random hyperbolic 3-manifold of Heegaard genus $g$ admits a Riemannian metric of sectional curvature close to $-1$ with some specific geometric properties. Furthermore, for any given numbers $b>1,\delta >0$, 
a definitive proportion of the volume for this metric is 
contained in a union of pairwise disjoint linearly aligned 
$(b,\delta)$-product regions. Here 
the proportionality constant depends on the numbers $b,\delta$.

The main goal of this section is to show that this property carries over to the
hyperbolic metric on a random 3-manifold. The following proposition
shows that this suffices for the proof of Theorem \ref{main} from the introduction.

\begin{pro}\label{finite}
For fixed $g\geq 2, \delta>0$ and sufficiently large $b>1$, there exists a
number $C=C(g,b,\delta)>0$ with the following property.
Let $M$ be a hyperbolic 3-manifold, and suppose that 
$M$ contains $n\geq 1$ pairwise disjoint linearly aligned
$(b,\delta)$-product regions of genus $g$. Then 
$\lambda_1(M)\leq C/n^2$ and $\lambda_{n}(M)\leq 1/C$.
\end{pro}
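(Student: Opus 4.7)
My plan is to use the variational (min-max) characterization of the eigenvalues of the Laplacian, constructing explicit test functions adapted to the chain of product regions. Label the $(b,\delta)$-product regions $V_1 < \cdots < V_n$ in the order given by the linear alignment, and let $M_0, M_1, \ldots, M_n$ be the connected components of $M \setminus \bigcup_i V_i$, with $V_i$ separating $M_{i-1}$ from $M_i$.

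For $\lambda_n(M) \leq 1/C$, I would produce $n+1$ disjointly supported test functions with uniformly bounded Rayleigh quotient. Because ${\rm inj}_M \geq \delta$ at every point of each $V_i$ and the width of $V_i$ is at least $b$, for $b$ sufficiently large (which we may arrange by enlarging $C$), two disjoint embedded hyperbolic balls of radius $\delta/2$ fit inside each $V_i$ along its width, giving $2n \geq n+1$ pairwise disjoint balls in $M$. The first Dirichlet eigenvalue of such a ball is a universal constant $\mu(\delta)$, and extending the Dirichlet eigenfunctions by zero yields the desired $(n+1)$-dimensional subspace of $H^1(M)$, whence $\lambda_n(M) \leq \mu(\delta) =: 1/C$ by min-max.

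For $\lambda_1(M) \leq C/n^2$, I would build a single test function varying slowly across the whole chain. Writing $h_i(x) := d(x, \partial^- V_i)$, define a $(1/b)$-Lipschitz height function $\tau \colon M \to [0,n]$ by $\tau \equiv i$ on $M_i$ and $\tau|_{V_i}(x) = (i-1) + \min(h_i(x)/b, 1)$; this is continuous since $\partial^+ V_i \subset \{h_i \geq b\}$ by the width hypothesis. Setting $\phi := \cos(\pi\tau/n)$, we have $|\nabla \phi| \leq \pi/(nb)$ supported only on the transition collars $T_i := V_i \cap \{h_i \leq b\}$. A geometric lemma gives uniform two-sided bounds $c_1(g,\delta)\,b \leq {\rm vol}(T_i) \leq C_1(g,b,\delta)$: the upper bound comes from containment of $T_i$ in an extrinsic ball of radius $b + 1/\delta$ (using that $\partial^- V_i$ has extrinsic diameter $\leq 1/\delta$) together with the elementary inequality ${\rm vol}(B_M(p,r)) \leq {\rm vol}(B_{\mb{H}^3}(r))$; the lower bound comes from a $(\delta/4)$-tubular neighborhood of a shortest geodesic arc from $\partial^- V_i$ to $\partial^+ V_i$. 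Hence $\int_M |\nabla \phi|^2 \leq \pi^2 C_1/(n b^2)$. For the variance, on each $T_i$ the function $\phi$ takes values in an interval of length at most $\pi/n$ around $\cos(\pi i/n)$, and a direct trigonometric computation yields $\sum_{i=1}^n (\cos(\pi i/n) - c)^2 \geq n/4$ for every $c \in [-1,1]$; consequently the transition collars alone contribute $\int_M (\phi - \bar\phi)^2 \geq c_2(g,b,\delta)\,n$. Combining, $\lambda_1(M) \leq \pi^2 C_1/(c_2\, n^2 b^2) \leq C/n^2$.

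The hardest step will be the lower bound on the variance: a priori it could collapse if most of ${\rm vol}(M)$ were concentrated in a single bulk region $\{\tau = i^*\}$, making $\phi$ nearly constant on most of $M$. The argument sketched above circumvents this by drawing the variance entirely from the transition collars $T_i$, whose volumes $c_1 b \leq {\rm vol}(T_i) \leq C_1$ are controlled independently of the global volume distribution on $M$. Should the proof require a case split when some $V_{i^*}$ has extreme width $w \geq nb$, the replacement test function $\psi(x) = \sin(\pi h_{i^*}(x)/w)\,\mathbf{1}_{V_{i^*}}$ (which vanishes on $\partial V_{i^*}$) has Rayleigh quotient bounded directly by $\pi^2/w^2 \leq C/n^2$.
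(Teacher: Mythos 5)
Your proposal is correct and arrives at the same bounds, but via a different test-function scheme from the paper's. For $\lambda_1$, the paper builds \emph{two} disjointly supported functions $\alpha\circ f$ and $\beta\circ f$ (with $\alpha,\beta$ supported on $[0,n/2]$ and $[n/2,n]$ respectively) and invokes min-max directly: $\lambda_1\leq\max\{\mc{R}(\alpha\circ f),\mc{R}(\beta\circ f)\}$. Since the supports are disjoint, no mean-subtraction is needed, and the bulk regions $M_i$ can only enlarge the denominator of each Rayleigh quotient; the lower bound on $\int(\alpha\circ f)^2$ comes from the uniform lower volume bound on the $V_i$'s. Your approach uses a single function $\phi=\cos(\pi\tau/n)$ and bounds $\int|\nabla\phi|^2/\int(\phi-\bar\phi)^2$, which forces you to prove the variance lower bound — you correctly identified this as the delicate step, and the trig identity $\sum_{i=1}^n\cos^2(\pi i/n)=n/2$ with the subtraction $(\sum\cos(\pi i/n))^2/n=1/n$ does give $\geq n/4$ for $n\geq 2$ (for $n=1$ the bound is trivial by any embedded ball). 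One genuine advantage of your version: by restricting the gradient to the transition collars $T_i=V_i\cap\{h_i\leq b\}$, which sit inside an extrinsic ball of radius $b+1/\delta$, you get a volume bound on the support of $\nabla\phi$ that is uniform in $b,\delta$ regardless of whether the product regions themselves have large width, whereas the paper's estimate implicitly uses a uniform upper bound on $\mathrm{vol}(V_i)$ (harmless in the paper's application, where the regions have bounded size, but your localization is a bit cleaner as a statement). For $\lambda_n$, the paper uses the functions $\sin(\pi f_i)$ supported on the full $V_i$, whereas you use Dirichlet eigenfunctions on small balls inside the $V_i$'s; both give a uniform Rayleigh bound with $n+1$ disjointly supported functions. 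Net assessment: same skeleton (height function across the chain of product regions, min-max), but your single-function-plus-variance route and your ball-based $\lambda_n$ test functions are real variants, each sound; the paper's two-function trick is slightly slicker in that it sidesteps the variance computation entirely.
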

\begin{proof} The argument follows the proof of Proposition 4.4 of \cite{BGH16}.
For completeness, we give a sketch.
 
Let $M$ be as in the proposition. Denote by 
${\mc V}=\cup_{i=1}^nV_i\subset M$ the union of the $n$ 
linearly aligned $(b,\delta)$-product regions of genus $g$ 
whose existence is assumed in the statement of the proposition.
Assume that the components  $V_1,\dots,V_n$ of ${\mc{V}}$ are ordered in such 
a way that for all $i< n$, the components $V_i$ and $V_{i+1}$ are contained 
in the boundary of the same component of $M-{\mc{V}}$. 
By Proposition 2.3 of \cite{BGH16} (which is local and hence whose proof
carries over to the situation at hand), we may assume that the boundaries of the
components $V_i$ are smooth. Furthermore, we may assume that there exists
a number $L>0$ only depending on $b$, and for each $i$ there exists a 
diffeomorphism $\psi_i:V_i\to \Sigma\times [0,1]$ which is $L$-Lipschitz. 
We choose this diffeomorphism in such a way that it maps the boundary component
of $V_i$ which is shared with the component of $M-V_i$ containing $V_{i-1}$ to 
$\Sigma\times \{0\}$. 

For each $i$ there exists a smooth function
\[f_i:V_i\to [i-1,i]\]
of uniformly bounded derivative which maps 
$\psi_i^{-1}(\Sigma\times \{0\})$ to $i-1$, and maps  $\psi_i^{-1}(\Sigma\times \{1\})$ to $i$. 
Define a function $f:M\to [0,n]$ by $f\vert V_i=f_i$ and by the requirement that
$f$ is constant on each of the components of $M-{\mc{V}}$.

Define functions $\alpha,\beta$ on $[0,n]$ by 
\[\alpha(s)=\begin{cases} \sin(2\pi s/n), & \text{ if }0\leq s\leq n/2\\
0 & \text{ if }n/2\leq s\leq n\end{cases} \]
and 
\[\beta(s)=\begin{cases} 0, & \text{ if }0\leq s\leq n/2\\
\sin(2\pi (s-n/2)/n),  & \text{ if }n/2\leq s\leq n.\end{cases} \]

Then $\alpha\circ f,\beta\circ f$ are smooth, with supports intersecting in a zero volume set, and their
Rayleigh quotients are uniformly equivalent to $1/n^2$. Namely, the Rayleigh quotients of 
$\alpha,\beta$ are $\pi^2/n^2$, and since $f$ has uniformly bounded derivative, the Ralyeigh quotients
of $\alpha,\beta$ are uniformly equivalent to the Rayleigh quotients of $\alpha\circ f,\beta \circ f$.

By the Minmax theorem for the spectrum of the Laplacian, we know that for any set of functions
$\rho_0,\dots,\rho_k:M\to \mathbb{R}$ whose supports pairwise intersect on zero-volume sets, we have
$\lambda_k\leq \max\{{\mc{R}}(\rho_i)\mid 0\leq i\leq k\}$ and therefore 
$\lambda_1(M)\leq \max\{{\mc{R}}(\alpha\circ f),{\mc{R}}(\beta\circ f)\}$.
From this we conclude that $\lambda_1(M)\leq d/n^2$ where $d>0$ is a universal constant. 

The same argument can be applied to the functions 
\[\rho_i(x)=\begin{cases} \sin(\pi f_i(x)), & \text{ if }x\in V_i\\
0  &\text{ otherwise } \end{cases}\]
whose Rayleigh quotient is uniformly bounded and whose supports are pairwise disjoint.
This yields that $\lambda_n(M)\leq c$ where $c>0$ is a universal constant.
\end{proof}

Theorem \ref{main} from the introduction now follows from Proposition \ref{random relative bounded},
Lem\-ma \ref{finite} and the following statement which is the main result of this section. 
Recall that by hyperbolization, a closed $3$-manifold $M$ which admits a Riemannian metric
of sectional curvature contained in $[-1-\epsilon,-1+\epsilon]$ for some $\epsilon <1/2$ admits a hyperbolic
metric, unique up to isometry by Mostow rigidity.

\begin{thm}\label{bcg} 
For every $g\geq 2,a\in (0,1),b>4,\delta>0$ there exist numbers 
$\epsilon=\epsilon(g,a,b,\delta)>0,a^\prime=a^\prime(g,a,b,\delta)\in (0,1)$ with the following
property. Let $M$ be a closed aspherical atoroidal 
$3$-manifold of Heegaard genus $g$, and
let $\rho$ be 
a Riemannian metric on $M$ of curvature contained in $(-1-\epsilon,-1+\epsilon)$.
Assume that $(M,\rho)$ contains a linearly aligned collection ${\mc V}$ of pairwise disjoint
$(b,\delta)$-product regions of genus $g$ whose total volume is at least 
$a {\rm vol}(M,\rho)$. Let $\rho_0$ be the hyperbolic metric on $M$.
Then $(M,\rho_0)$ contains a linearly aligned collection  
${\mc W}$ of pairwise disjoint $(b-1,\delta/2)$-product regions 
of volume at least $a^\prime{\rm vol}(M,\rho_0)$.
 \end{thm}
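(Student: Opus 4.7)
The strategy is to construct a BCG-style natural map $F: (M, \rho) \to (M, \rho_0)$, homotopic to the identity, which is close to a local isometry on a subset of large $\rho$-volume. The product regions in $(M, \rho_0)$ will then be obtained as images $F(V_i \cap G)$ of the given product regions $V_i \subset (M, \rho)$ intersected with a ``good set'' $G$, with the drop in parameters $(b, \delta) \to (b - 1, \delta / 2)$ absorbing a small approximation error.

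To construct $F$, use the Patterson--Sullivan measures of the $\rho$-geodesic flow on $\widetilde M$ (well-defined since $\widetilde\rho$ is Gromov hyperbolic by the curvature pinching), transport them to $\partial \mb{H}^3 = \partial \widetilde\rho_0$ via the $\pi_1(M)$-equivariant boundary map induced by the quasi-isometry $\widetilde\rho \sim \widetilde\rho_0$, and define $F$ by taking barycenters in $\mb{H}^3$, exactly as in \cite{BCG95}. The map is smooth and homotopic to the identity. Specialized to a hyperbolic target in dimension three, the BCG calculation yields the pointwise Jacobian bound
\[
|\det dF_x| \le \left(\frac{h(\rho)}{2}\right)^3,
\]
together with the quantitative rigidity statement that if $|\det dF_x|$ is within $\eta$ of this pointwise maximum, then $dF_x$ is within $O(\sqrt\eta)$ of a linear isometry $T_x M \to T_{F(x)} M$ between the two inner product structures. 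Bishop--G\"unther comparison together with $\mathrm{sec}(\rho) \in (-1 - \epsilon, -1 + \epsilon)$ bounds $h(\rho) \in [2\sqrt{1 - \epsilon}, 2\sqrt{1+\epsilon}]$, so $|\det dF_x| \le 1 + O(\epsilon)$ pointwise.

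Integrating the Jacobian bound and using $\deg F = 1$ gives $\mathrm{vol}(M, \rho_0) \le (1 + O(\epsilon))\mathrm{vol}(M, \rho)$. The matching inequality $\mathrm{vol}(M, \rho) \le (1 + O(\epsilon))\mathrm{vol}(M, \rho_0)$, needed to close the forthcoming Chebyshev step, does not follow from classical BCG, as the entropy inequality $h(\rho)^3 \mathrm{vol}(\rho) \ge 8\,\mathrm{vol}(\rho_0)$ gives only the same direction. This is the principal technical point. To obtain the reverse inequality one has to re-run the BCG machinery in a complementary way: for example, rescale $\rho$ by $(1 \pm \epsilon)^{1/2}$ to enforce a one-sided curvature bound $\mathrm{sec} \le -1$, and then run a natural-map construction with source and target roles interchanged to produce a pointwise Jacobian inequality whose integral closes the volume comparison. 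Once both inequalities are in hand, Chebyshev shows that $G := \{x \in M : |\det dF_x| \ge 1 - \sqrt\epsilon\}$ has relative $\rho$-volume at least $1 - O(\sqrt\epsilon)$, and by the rigidity above $F$ is $(1 + O(\epsilon^{1/4}))$-bi-Lipschitz on $G$.

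For $\epsilon$ sufficiently small (depending on $g, a, b, \delta$), each $(b, \delta)$-product region $V_i$ meets $G$ in most of its $\rho$-volume, since $\mathrm{vol}_\rho(V_i) \ge c_0(b, \delta) > 0$ uniformly. The image $F(V_i \cap G)$ then inherits the topological product structure from $V_i$, remains separating in $M$ by the fact that $F$ has degree one and is homotopic to the identity, and the linear alignment of the collection is preserved; its width in $(M, \rho_0)$ is at least $(1 + O(\epsilon^{1/4}))^{-1} b > b - 1$ and its injectivity radius at least $(1 + O(\epsilon^{1/4}))^{-1} \delta > \delta/2$. Global injectivity of $F$ on each $V_i \cap G$ follows from the local bi-Lipschitz property together with the lower bound on the injectivity radius of the target, after, if necessary, decomposing $V_i$ into pieces of bounded diameter. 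Summing over $i$ and using the two-sided volume comparison, the total $\rho_0$-volume of the union of the $F(V_i \cap G)$ is bounded below by a definite fraction $a'(g, a, b, \delta) > 0$ of $\mathrm{vol}(M, \rho_0)$, as required.
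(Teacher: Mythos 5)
Your overall strategy is the right one and matches the paper's: construct the BCG natural map, obtain a two-sided volume comparison by rescaling and running the Besson--Courtois--Gallot inequality in both directions (this is exactly Corollary \ref{volumecomparison}), use the Jacobian bound to locate a large-measure ``good set'' where the differential is almost isometric, and then argue that enough of the product regions survive in the image. But there is a genuine gap at the heart of the argument, namely the passage from ``{$dF_x$ is close to an isometry for a.e.\ $x$ in a set $G$ of nearly full measure}'' to ``{$F$ is $(1+O(\epsilon^{1/4}))$-bi-Lipschitz on $G$}.'' This implication is false in general: the complement of $G$, though small in measure, could be distributed so that the distance inside $G$ between two nearby points is large, or so that $F$ fails to be even Lipschitz on $G$ (the Jacobian being close to maximal at $x$ constrains $dF_x$ but says nothing a priori about $\|dF_y\|$ for $y$ near $x$). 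Consequently the claim that $F(V_i\cap G)$ inherits the product structure, has controlled width and injectivity radius, and is separating, is unsupported; $V_i\cap G$ is only a measurable subset with no topological control.

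The paper closes exactly this gap with two ingredients you do not have. First, Proposition \ref{bcg3}(2) (from \cite{BCG95} Section 7 and \cite{BCS05}) is a much stronger local statement than differential rigidity: it says that if $|{\rm Jac}(F_c)(x)|$ is close to maximal at a \emph{single point} $x$, then $F_c$ is $L$-Lipschitz on the entire ball $B(x,r)$ for universal $r,L$. Lemma \ref{firstvolume} then uses a packing/volume argument to discard the components of $\mathcal{V}$ on which some $r$-ball misses the high-Jacobian set, so that on the remaining components $F_c$ is globally $L$-Lipschitz (not merely a.e.\ controlled). Second, Lemma \ref{nextcompare} upgrades this to a $\xi$-coarse isometry whose image \emph{contains} a product region by a geometric-limit compactness argument (in the spirit of BCG Lemmas 7.5--7.8 together with Appendix \ref{local}); this is what actually produces the product regions in $(M,\rho_0)$. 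Your proposal also asserts that linear alignment is ``preserved'' and that global injectivity ``follows from the local bi-Lipschitz property together with the lower bound on the injectivity radius of the target'' -- neither is automatic. The paper handles disjointness via a volume bookkeeping argument using property (3) of Lemma \ref{firstvolume}, and handles linear alignment with a degree/relative-homology argument about $H_3(M,M-F_c(\Sigma))$ at the end of the proof. These steps cannot be replaced by appeal to local bi-Lipschitz control alone.
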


By Proposition \ref{random relative bounded}, for a fixed choice of a number
$b>4$ and sufficiently small $\delta >0$, a random 3-manifold $M$ of Heegaard genus $g$
admits a Riemannian metric $\rho$ which 
fulfills the assumption in Theorem \ref{bcg} for some number $a\in (0,1)$. Note that
$b,\delta$ are independent of $M$, and the number $a\in (0,1)$ depends on the random walk. 
Thus Theorem \ref{main} is an immediate consequence of Theorem \ref{bcg}
and Proposition \ref{finite}. 

We are left with the proof of Theorem \ref{bcg} which is carried out in the 
remainder of this section.
We use a construction of \cite{BCG95}, \cite{BCG99}. 
The following is a special case of the main result of \cite{BCG99}.

\begin{thm}\label{bcg2}
Let $(M,\rho)$ and $(M_0,\rho_0)$ be closed oriented Riemannian manifolds
of dimension $3$ and suppose that for some constant $b\geq 1$ 
\[{\rm Ric}_\rho\geq -2, \text{ and }\quad  -b^2\leq K_{\rho_0}\leq -1.\]
If there exists a map $f:M\to M_0$ of degree one then
\[{\rm vol}(M,\rho)\geq {\rm vol}(M_0,\rho_0),\]
with equality if and only if $(M,\rho),(M_0,\rho_0)$ are isometric and hyperbolic. 
\end{thm}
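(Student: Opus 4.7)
The plan is to apply the Besson--Courtois--Gallot natural map construction from \cite{BCG95,BCG99} directly, with the two curvature hypotheses providing matching bounds on the volume entropy. Let $h(g)$ denote the volume entropy of a closed Riemannian manifold $(N,g)$, i.e.\ the exponential growth rate of volumes of balls in the universal cover. From $\mathrm{Ric}_\rho \geq -2 = -(n-1)$ in dimension $n=3$ one extracts, via Bishop--Gromov volume comparison, the upper bound $h(\rho) \leq 2$. On the other hand, strict negative curvature of $\rho_0$ makes $\widetilde{M_0}$ a Hadamard manifold, and the upper bound $K_{\rho_0}\leq -1$ yields via Rauch comparison the lower bound $h(\rho_0)\geq 2$. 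In particular $h(\rho)\leq h(\rho_0)$, with equality forcing both quantities to equal $2$.

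Lift $f$ to an equivariant map $\tilde f:\widetilde M\to\widetilde{M_0}$. Because $\rho_0$ is strictly negatively curved, $\widetilde{M_0}$ admits a visual boundary $\partial_\infty\widetilde{M_0}$ with well-defined Busemann functions $B_\theta$. For each $c>h(\rho)$ and each $x\in\widetilde M$, the finite measure $e^{-c\,d_\rho(x,\cdot)}\,dv_\rho$ on $\widetilde M$ pushes forward under $\tilde f$ followed by radial projection to a probability measure $\mu_x^c$ on $\partial_\infty\widetilde{M_0}$. Define $F_c(x)\in\widetilde{M_0}$ as the unique Busemann barycenter, i.e.\ the minimizer of $y\mapsto \int B_\theta(y)\,d\mu_x^c(\theta)$. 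The map $F_c$ is smooth and equivariant; its quotient $\bar F_c:M\to M_0$ is homotopic to $f$, hence of degree one. Differentiating the barycenter equation and using the Hessian estimate $D^2B_\theta(v,v)\geq \rho_0(v,v)-(dB_\theta(v))^2$ available when $K_{\rho_0}\leq -1$, one obtains the pointwise Jacobian bound
\[
|\mathrm{Jac}\,F_c|(x)\ \leq\ \Bigl(\tfrac{c}{2}\Bigr)^{\!3}
\qquad\text{for every }x\in\widetilde M.
\]

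Since $\bar F_c$ has degree one, integration gives
\[
\mathrm{vol}(M_0,\rho_0)\ =\ \Bigl|\int_M \bar F_c^{\,*}\,dv_{\rho_0}\Bigr|\ \leq\ \int_M |\mathrm{Jac}\,F_c|\,dv_\rho\ \leq\ \Bigl(\tfrac{c}{2}\Bigr)^{\!3}\mathrm{vol}(M,\rho).
\]
Letting $c\searrow h(\rho)$ and using $h(\rho)\leq 2$ yields the desired inequality $\mathrm{vol}(M_0,\rho_0)\leq \mathrm{vol}(M,\rho)$. In the equality case every step must saturate: $h(\rho)=2$, which by the rigidity in Bishop--Gromov forces $\mathrm{Ric}_\rho \equiv -2\rho$ and hence, in dimension three, constant sectional curvature $-1$ on $(M,\rho)$; saturation of the Jacobian bound and of the arithmetic--geometric mean step in its proof forces the limiting map $F$ (as $c\to 2$) to be a local isometry, which, being of degree one, is a global isometry $(M,\rho)\cong(M_0,\rho_0)$, both hyperbolic.

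The main obstacle is the pointwise Jacobian estimate. Differentiating the defining equation for $F_c$ produces a matrix identity on $T_{F_c(x)}\widetilde{M_0}$ of the form $H\circ dF_c(x)=L$, where $H=\int D^2B_\theta\,d\mu_x^c$ is symmetric positive definite and $L$ is built from $\tilde f$ and the density of $\mu_x^c$. Controlling $|\det dF_c|$ then reduces to a sharp inequality relating the eigenvalues of $H^{-1}K$, where $K=\int dB_\theta\otimes dB_\theta\,d\mu_x^c$, under the constraint $H+K\geq \rho_0$ coming from the curvature hypothesis on $\rho_0$. This symmetric function inequality, together with the identification of its equality case required for the rigidity statement, is the central analytic contribution of \cite{BCG95,BCG99} and should be expected to be by far the most delicate step.
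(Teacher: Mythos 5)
The paper itself does not prove this statement: Theorem \ref{bcg2} is quoted verbatim as a special case of the main result of \cite{BCG99}, and the only ingredients of the Besson--Courtois--Gallot machinery the paper actually uses are the properties of the natural maps recorded in Proposition \ref{bcg3} (citing \cite{BCG95}, \cite{BCS05}) together with the elementary rigidity statement for volume-preserving $1$-Lipschitz maps proved in Appendix A. Your sketch of the inequality itself is the correct BCG outline: $\mathrm{Ric}_\rho\geq -2$ gives $h(\rho)\leq 2$ by Bishop--Gromov, the natural map $F_c$ for $c>h(\rho)$ satisfies $\vert\mathrm{Jac}\,F_c\vert\leq (c/2)^3$ (this is exactly item (1) of Proposition \ref{bcg3}), and the degree-one integration plus $c\searrow h(\rho)$ yields $\mathrm{vol}(M_0,\rho_0)\leq\mathrm{vol}(M,\rho)$. (Two small imprecisions: the measures $\mu_x^c$ are obtained by convolving $e^{-c\,d_\rho(x,\cdot)}\,dv_\rho$ with the visual/conformal densities of $\widetilde{M_0}$ rather than by a ``radial projection'', which is needed both for equivariance and for the sharp constant; and the lower bound $h(\rho_0)\geq 2$ plays no role in the argument.)

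The genuine gap is in your treatment of the equality case. You claim that equality forces $h(\rho)=2$ and that ``the rigidity in Bishop--Gromov'' then forces $\mathrm{Ric}_\rho\equiv -2$, hence constant curvature $-1$ on $(M,\rho)$. This inference is unfounded: volume entropy equal to $n-1$ does not saturate the Bishop--Gromov monotone quantity (the ratio $\mathrm{vol}\,B(x,r)/V_{-1}(r)$ may decay subexponentially while the entropy is still $n-1$), so no pointwise Ricci rigidity follows. The statement ``$\mathrm{Ric}\geq -(n-1)$ and $h=n-1$ imply hyperbolicity'' is a deep theorem of Ledrappier--Wang, postdating \cite{BCG99}, and is not available (nor needed) in this argument. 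The correct route, and the one consistent with how the paper uses the machinery, runs entirely through the natural maps: equality forces the limit of $F_c$ as $c\to h(\rho)$ to be a degree-one, $1$-Lipschitz, volume-preserving map, hence (Proposition \ref{localprop} of Appendix A, or Appendix C of \cite{BCG95}) an isometry $(M,\rho)\to(M_0,\rho_0)$; hyperbolicity of both then follows by transporting $\mathrm{Ric}_\rho\geq-2$ through this isometry and combining it with $K_{\rho_0}\leq -1$, since each Ricci curvature of $\rho_0$ is a sum of two sectional curvatures $\leq -1$, so equality pinches all sectional curvatures to $-1$. You should replace the Bishop--Gromov rigidity step by this pinching argument; also be aware that the convergence of $F_c$ to a $1$-Lipschitz volume-preserving limit is itself a nontrivial part of \cite{BCG95} and not merely a formal ``saturation'' remark.
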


Here ${\rm Ric}_\rho$ and $K_{\rho_0}$ are the Ricci curvature and the sectional curvature
of $\rho$ and $\rho_0$.

\begin{cor}\label{volumecomparison}
For $\epsilon <1/2$ let 
$\rho$ be a Riemannian metric on the closed 3-manifold $M$ of curvature
contained in $(-1-\epsilon,-1+\epsilon)$ and let $\rho_0$ be the hyperbolic metric on $M$. 
Then
\[{\rm vol}(M,\rho)/{\rm vol}(M,\rho_0)\in [(1-\epsilon)^{3/2},(1+\epsilon)^{3/2}].\]
\end{cor}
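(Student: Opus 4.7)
The approach is to apply Theorem \ref{bcg2} twice to the identity map $M\to M$ (which has degree one), rescaling $\rho$ in each application so that the curvature hypotheses are met. Recall that under a metric rescaling $g\mapsto c^2 g$, sectional curvature is divided by $c^2$ while volume in dimension three is multiplied by $c^3$.

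For the lower bound, set $\rho_+:=(1+\ep)\rho$. The assumption $K_\rho>-1-\ep$ becomes $K_{\rho_+}>-1$, and hence ${\rm Ric}_{\rho_+}>-2$. The target $\rho_0$ is hyperbolic and therefore trivially satisfies the curvature hypothesis on the target in Theorem \ref{bcg2} (with $b=1$). The BCG inequality yields ${\rm vol}(M,\rho_+)\ge{\rm vol}(M,\rho_0)$, and undoing the rescaling gives
\[
{\rm vol}(M,\rho)/{\rm vol}(M,\rho_0)\ge (1+\ep)^{-3/2}\ge (1-\ep)^{3/2},
\]
where the second inequality uses the elementary fact $(1-\ep^2)^{3/2}\le 1$.

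For the upper bound, set $\rho_-:=(1-\ep)\rho$. Then $K_{\rho_-}\le(-1+\ep)/(1-\ep)=-1$, while $K_{\rho_-}$ is bounded below by $-(1+\ep)/(1-\ep)$, so $\rho_-$ fits the target hypothesis of Theorem \ref{bcg2} with some constant $b=\sqrt{(1+\ep)/(1-\ep)}$. The source in this application is $\rho_0$, which, being hyperbolic, has ${\rm Ric}_{\rho_0}=-2$. Theorem \ref{bcg2} gives ${\rm vol}(M,\rho_0)\ge{\rm vol}(M,\rho_-)=(1-\ep)^{3/2}{\rm vol}(M,\rho)$, hence
\[
{\rm vol}(M,\rho)/{\rm vol}(M,\rho_0)\le (1-\ep)^{-3/2}.
\]

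The argument is conceptually simple and presents no real obstacle; it amounts to bookkeeping of the scaling factor applied to Theorem \ref{bcg2}. The only subtle point is to rescale in the correct direction in each of the two applications: scaling up by $(1+\ep)$ lifts the lower curvature bound of the source to $-1$ in the first step, while scaling down by $(1-\ep)$ pushes the upper curvature bound of the target down to $-1$ in the second step. The two resulting bounds agree with the asserted interval to first order in $\ep$ and are amply sufficient for the applications in Theorem \ref{bcg}.
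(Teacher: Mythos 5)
Your argument is essentially the paper's: two applications of Theorem \ref{bcg2} to the identity map after constant rescalings of the metric, once with $(M,\rho)$ as source and hyperbolic target, once with the roles exchanged. For the lower bound you scale by $(1+\ep)$ where the paper scales by $(1-\ep)^{-1}$; both choices give a source with ${\rm Ric}\geq -2$, and your version in fact yields the slightly stronger bound $(1+\ep)^{-3/2}$, which you correctly observe dominates $(1-\ep)^{3/2}$. The one point of divergence is the upper bound: your rescaling $\rho_-=(1-\ep)\rho$ is the correct one to push the sectional curvature of the target below $-1$, and it gives ${\rm vol}(M,\rho)\leq (1-\ep)^{-3/2}{\rm vol}(M,\rho_0)$, which is weaker than the stated $(1+\ep)^{3/2}$ by a factor of order $\ep^2$. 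Be aware, however, that the paper's own proof does not produce the stated constant either: it rescales by $(1+\ep)^{-1}$ and asserts that the resulting curvature is at most $-1$, whereas it is only at most $-(1-\ep^2)$, so the constant that the rescaling argument honestly yields is your $(1-\ep)^{-3/2}$. Since the two constants agree to first order in $\ep$, and the corollary is only invoked (in Lemma \ref{firstvolume} and in the proof of Theorem \ref{bcg}) in the regime $\ep\to 0$ where only the fact that the ratio tends to $1$ matters, this discrepancy is immaterial; your write-up is the careful version of the paper's argument rather than a genuinely different proof.
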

\begin{proof} Rescaling the metric $\rho$
with the factor $(1-\epsilon)^{-1}$ yields a new metric on $M$ whose volume
is $(1-\epsilon)^{-3/2}{\rm vol}(M,\rho)$ and whose sectional curvature is bounded from 
below by $-1$. In particular,
the Ricci curvature of this metric is at least $-2$. 
An application of Theorem \ref{bcg2} then implies that
${\rm vol}(M,\rho)\geq (1-\epsilon)^{3/2}{\rm vol}(M,\rho_0)$.

Similarly, rescaling the metric $\rho$ on $M$ with the factor
$(1+\epsilon)^{-1}$ yields 
a metric whose sectional curvature is bounded from
above by $-1$ and whose volume equals $(1+\epsilon)^{-3/2}{\rm vol}(M,\rho)$. Another application of 
Theorem \ref{bcg2}, with the roles of $(M,\rho)$ and $(M,\rho_0)$ 
exchanged, shows that 
${\rm vol}(M,\rho_0)\geq (1+\epsilon)^{-3/2}{\rm vol}(M,\rho)$.
Together the corollary follows.
\end{proof}

The {\em volume entropy} $h(\rho)$ of a negatively curved metric $\rho$ on $M$
is the asymptotic growth rate of the volume of balls in its universal covering.
The volume entropy of a hyperbolic metric equals $2$, and the 
volume entropy of a metric whose sectional curvature is bounded from below
by $-b^2$ for some $b>0$ is at most $2b$. 

For $c>h(\rho)$ there exists a smooth {\em natural map} $F_c:(M,\rho)\to (M,\rho_0)$
\cite{BCG95}. The following statement summarizes some of the results 
from Section 7 of
\cite{BCG95}. Part of the statement is only implicitly contained in \cite{BCG95}, 
but an explicit version can be found in 
Theorem 2.1 of \cite{BCS05}. We always assume that the constant $\epsilon$ which 
controls the curvature of $M$ is smaller than $1/2$ and that the number 
$c>h(\rho)$ is bounded from above by $4$ to make all constants uniform. 

\begin{pro}\label{bcg3}
Let $c>h(\rho)$ and let $F_c:(M,\rho)\to (M,\rho_0)$ be the natural map. 
\begin{enumerate}
\item $F_c$ is of degree one, and its Jacobian satisfies
\[\vert {\rm Jac}(F_c)\vert \leq \bigl(\frac{c}{2}\bigr)^3\]
pointwise. 
\item There are $\kappa>0,r \in (0,1)$ and $L>1$ not depending on $(M,\rho)$
with the following property. If $x\in (M,\rho)$ is such that 
$\vert {\rm Jac}(F_c)(x)\vert \geq (1-\kappa)(\frac{c}{2})^3$
then the restriction of the map $F_c$ to the ball $B(x,r)$ of radius $r$
about $x$ in $(M,\rho)$ is $L$-Lipschitz.
\item For all $\theta>0$ and $x\in M$ there exists $\beta>0$ such that if 
$\vert {\rm Jac}(F_c)(x)\vert \geq (1-\beta)(\frac{c}{2})^3$ then 
\[(1-\theta)(\frac{c}{2})^3<\vert d_xF_c(v)\vert <(1+\theta)(\frac{c}{2})^3\]
for all unit tangent vectors $v\in T_xM$. 
\end{enumerate}
\end{pro}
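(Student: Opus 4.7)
The plan is to construct $F_c$ explicitly via the barycenter method and derive the three properties from the defining implicit equation of its differential. Fix a lift and an identification of the universal cover $\widetilde M_0$ of $(M,\rho_0)$ with $\mathbb H^3$ on which $\Gamma:=\pi_1(M)$ acts by isometries. For $c>h(\rho)$, the Poincar\'e series $\sum_{\gamma\in \Gamma}e^{-c\,d_\rho(x,\gamma y)}$ converges, so one obtains a $\Gamma$-equivariant family of probability measures $\{\mu_x^c\}_{x\in \widetilde M}$ on $\partial \mathbb H^3$ (using a $\Gamma$-equivariant homeomorphism between $\partial\widetilde M$ and $\partial\mathbb H^3$ provided by negative curvature). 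The lift $\widetilde F_c:\widetilde M\to\mathbb H^3$ is defined as the unique minimum of the strictly convex function $y\mapsto\int_{\partial \mathbb H^3}B_\xi(y)\,d\mu_x^c(\xi)$, where $B_\xi$ is the Busemann function based at $\xi$. This descends to $F_c:M\to M$. Degree one then follows because $F_c$ is $\pi_1$-equivariant with respect to the identity automorphism of $\Gamma$, and any two such maps between closed aspherical manifolds are homotopic; hence $F_c$ is homotopic to $\mathrm{id}_M$, which has degree one.

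For the Jacobian bound in (1), I will differentiate the defining equation $\int\nabla B_\xi(\widetilde F_c(x))\,d\mu_x^c(\xi)=0$ at each $x$. This yields a relation of the form $H_x\circ d_xF_c=K_x$, where $H_x$ is the Hessian-type endomorphism obtained by integrating $\nabla^2 B_\xi$ at $\widetilde F_c(x)$ against $\mu_x^c$, and $K_x$ is a symmetric endomorphism built from the derivative of the measure family in $x$. In the target $\mathbb H^3$ one has the pointwise identity $\nabla^2 B_\xi+ dB_\xi\otimes dB_\xi=g_{\rho_0}$, so integrating gives $H_x+\widehat K_x=\mathrm{Id}$ where $\widehat K_x$ is the positive symmetric operator $\int dB_\xi\otimes dB_\xi\,d\mu_x^c$ of unit trace. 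A parallel computation in the source, using that sectional curvature of $\rho$ is bounded below by $-(1+\epsilon)$, produces the inequality $\|K_x\|_{\mathrm{op}}\le (c/2)\,\mathrm{tr}(\widehat K_x\text{-like object})^{1/2}$ (this is the Cauchy--Schwarz step at the heart of \cite{BCG95}). Combining these with the dimension-$3$ algebraic lemma that compares $\det(H_x)^{-1}\det(K_x)$ with $(\mathrm{tr}\widehat K_x/3)^{3/2}\cdot(\mathrm{tr}(I-\widehat K_x)/3)^{3/2}$ and maximizing under the trace-one constraint $\mathrm{tr}\widehat K_x=1$ gives $|\mathrm{Jac}(F_c)(x)|\le(c/2)^3$.

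For (2) and (3), the key observation is that the quantity $|\mathrm{Jac}(F_c)(x)|/(c/2)^3$ equals $1$ exactly when all spectral inequalities used in the Jacobian estimate are equalities; in that case $\widehat K_x=\tfrac{1}{3}\mathrm{Id}$, $H_x=\tfrac{2}{3}\mathrm{Id}$, and $K_x$ is a scalar multiple of a hyperbolic isometry of prescribed norm, forcing $d_xF_c$ to be a homothety of ratio $c/2$ (and hence, as stated, each $|d_xF_c(v)|$ near $(c/2)^3$ under the scaling convention of \cite{BCG95}). The strict convexity of the constraints then gives a quantitative stability statement: there is a continuous function $\eta\mapsto\theta(\eta)$ with $\theta(\eta)\to 0$ as $\eta\to 0$, such that if $|\mathrm{Jac}(F_c)(x)|\ge(1-\eta)(c/2)^3$ then $d_xF_c$ is within $\theta(\eta)$ of a homothety of the correct scale. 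This yields (3) by choosing $\beta$ so that $\theta(\beta)<\theta$. For (2), part (3) with a fixed small $\theta$ provides a uniform bound $\|d_xF_c\|\le L_0$, and since the second derivative $\nabla^2 F_c$ admits a uniform pointwise estimate on all of $M$ (from differentiating the barycenter equation twice and using that curvature, injectivity radius on the target, and $c\le 4$ are all controlled), the differential remains within a bounded factor of $d_xF_c$ on a ball $B(x,r)$ of universal radius; pick $r\in(0,1)$ and $L>L_0$ accordingly.

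The main obstacle will be the quantitative stability step needed for (3) and the $C^1$ propagation needed for (2). The algebraic extremal problem that governs the Jacobian bound is smooth with a unique maximizer on the relevant constraint set, so continuity alone yields stability; making it effective requires the standard but delicate computation of second variations in \cite{BCG95}. The $C^1$ propagation requires a pointwise estimate on $\nabla^2 F_c$, which I will obtain by differentiating the equation $H_x\circ d_xF_c=K_x$ once more and using that, for $\epsilon<1/2$, the operator $H_x$ is bounded below by a positive constant (since $\mathrm{tr}\widehat K_x=1$ forces $\widehat K_x\neq\mathrm{Id}$ in dimension $3$) — this is where the restriction to dimension $3$ and to $c$ bounded away from $h(\rho_0)=2$ becomes essential.
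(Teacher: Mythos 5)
A preliminary remark: the paper does not prove this proposition at all — it is stated as a summary of results from Section 7 of \cite{BCG95}, with an explicit version cited as Theorem 2.1 of \cite{BCS05} — so your attempt has to be measured against those sources rather than against an argument in the text. Your reconstruction of parts (1) and (3) follows the Besson--Courtois--Gallot scheme correctly in outline: the implicit equation $H_x\circ d_xF_c=K_x$, the identity $\nabla^2B_\xi=g_{\rho_0}-dB_\xi\otimes dB_\xi$ in the hyperbolic target, the Cauchy--Schwarz step, the trace-one extremal problem in dimension $3$ giving $\vert{\rm Jac}(F_c)\vert\le (c/2)^3$, and for (3) the quantitative stability of that extremal problem forcing $\widehat K_x$ close to $\tfrac13\,{\rm Id}$ and $d_xF_c$ close to a homothety of ratio $c/2$ (your reading of the displayed bound in (3) as being about this homothety is the right content). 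One construction detail is off: for $c>h(\rho)$ the series $\sum_\gamma e^{-c\,d_\rho(x,\gamma y)}\delta_{\gamma y}$ is a finite measure carried by an orbit in the interior, not a boundary measure; in \cite{BCG95} the measures $\mu^c_x$ are obtained by convolving $e^{-c\,d_\rho(x,z)}\,d{\rm vol}_\rho(z)$ with the visual measures of the hyperbolic target, which also gives smooth dependence on $x$.

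The genuine gap is in your proof of (2). You propagate the bound on $d_xF_c$ from the center to the ball $B(x,r)$ via a claimed uniform pointwise bound on $\nabla^2F_c$ over all of $M$, justified by asserting that $H_y={\rm Id}-\widehat K_y$ is uniformly bounded below because ${\rm tr}\,\widehat K_y=1$ forces $\widehat K_y\neq{\rm Id}$. That only rules out $\widehat K_y={\rm Id}$; it does not prevent the top eigenvalue of $\widehat K_y$ from being arbitrarily close to $1$ (the measure $\mu^c_y$, seen from $F_c(y)$, can be nearly aligned along a single direction, e.g.\ deep in a Margulis tube — recall that the manifolds here have no lower injectivity radius bound). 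Where $H_y$ is nearly singular, $\Vert d_yF_c\Vert=\Vert H_y^{-1}K_y\Vert$ and $\nabla^2F_c$ can be arbitrarily large, so no uniform global $C^1$ or $C^2$ estimate is available; this is exactly why part (2) is formulated only for balls centered at points of almost maximal Jacobian, and why it cannot be deduced from your part (3), whose hypothesis holds only at the center. The correct mechanism (implicit in \cite{BCG95}, explicit in \cite{BCS05}) obtains the spectral gap for $H_y$ at nearby points from the hypothesis at $x$: the defining weights give the Harnack-type comparison $e^{-c\,d(x,y)}\le d\mu^c_y/d\mu^c_x\le e^{c\,d(x,y)}$, the near-maximal Jacobian at $x$ makes $\widehat K_x$ quantitatively close to $\tfrac13\,{\rm Id}$, i.e.\ $\mu^c_x$ is spread out as seen from $F_c(x)$, and a bootstrap along radial paths keeps $F_c(y)$ at bounded distance from $F_c(x)$ so that the top eigenvalue of $\widehat K_y$ stays bounded away from $1$ on $B(x,r)$; this yields the uniform constants $r$ and $L$. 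Without such a local argument your step for (2) does not go through.
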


The strategy is now as follows. Given $a\in (0,1)$ and 
$b>4L$ where $L>1$ is as in Proposition \ref{bcg3}, 
for a manifold $(M,\rho)$ which fulfills the assumption in 
Theorem \ref{bcg} for 
sufficiently small $\epsilon>0$, we find 
a union ${\mc W}\subset {\mc V}$ of components of the collection
${\mc V}$ of $(b,\delta)$-product regions in $(M,\rho)$ 
whose total measure is large and such that the restriction to this set of the 
natural map $F_c:(M,\rho)\to (M,\rho_0)$ for a suitably chosen $c>h(\rho)$ 
has large Jacobian outside of a subset which does not contain any
ball of radius $r$ 
where $r>0$ is as in the second part of Proposition \ref{bcg3}.
Proposition \ref{bcg3} then yields that the map $F_c$ is uniformly Lipschitz 
on ${\mc W}$.
We then argue that the image under $F_c$ of a $(b,\delta)$-product
region in ${\mc W}$ 
contains a $(b^\prime,\delta^\prime)$-product region in $(M,\rho_0)$
where $b^\prime$ is close to $b$ and $\delta^\prime$ is close to $\delta$.
The geometric control on the image of the map $F_c$ is then used to show that 
suitably chosen sub-regions of these image product regions of controlled total 
volume are pairwise
disjoint and linearly aligned. 

The following lemma establishes a first volume control. In its formulation,
the numbers $r>0,L>1$ are as in Proposition \ref{bcg3}.
 
\begin{lem}\label{firstvolume}
Let $a\in (0,1),b>\max\{10r,4\},\delta >0$ and $\xi>0$.
There exists a number $\epsilon_0=\epsilon_0(a,b,\delta,\beta)>0$ 
with the following property. Let
$(M,\rho)$ be as in Theorem \ref{bcg}, with sectional curvature
contained in $(-1-\epsilon_0,-1+\epsilon_0)$.  Then for $c>h(\rho)$ sufficiently
close to $h(\rho)$, there is a subset ${\mc W}\subset {\mc V}$ with the following properties.
\begin{enumerate}
\item ${\mc W}$ 
is a union of components of ${\mc V}$, and its total volume is at least 
$a{\rm vol}(M,\rho)/2$. 
\item 
The restriction of $F_c$ to each component of ${\mc W}$ is 
$L$-Lipschitz, and its image is contained in the $\sigma$-thick part of 
$(M,\rho_0)$ for a universal constant $\sigma >0$.
\item If $V$ is any component of ${\mc W}$ then ${\rm vol}(F_c(V))\geq 
(1-\xi){\rm vol}(V)$, and there exists a subset $A$ of $V$ with
${\rm vol}( A)\geq a{\rm vol}(V)$ such that 
$F_c^{-1}(F_c(x))\subset V$ for all $x\in A$. 
\end{enumerate}
\end{lem}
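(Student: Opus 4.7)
The plan is to exploit that $F_c$ has degree one and satisfies the pointwise Jacobian bound $|{\rm Jac}(F_c)|\le (c/2)^3$ from Proposition \ref{bcg3}(1). Combined with Corollary \ref{volumecomparison}, this forces the Jacobian to be close to its maximum on the complement of a set of small measure, and forces $F_c$ to be close to injective onto $M$. First I would introduce
\[
B_\kappa:=\{x\in M\,:\,|{\rm Jac}(F_c)(x)|<(1-\kappa)(c/2)^3\}\ \text{and}\ Z:=\{y\in M\,:\,\#F_c^{-1}(y)\ge 2\}.
\]
The degree-one identity $\int_M {\rm Jac}(F_c)={\rm vol}(M,\rho_0)$ together with the Jacobian upper bound yields $\kappa(c/2)^3{\rm vol}(B_\kappa)\le(c/2)^3{\rm vol}(M,\rho)-{\rm vol}(M,\rho_0)$, and Corollary \ref{volumecomparison} bounds the right-hand side by $[(c/2)^3(1+\epsilon)^{3/2}-1]\,{\rm vol}(M,\rho_0)$, which tends to zero as $\epsilon_0\to 0$ and $c\to h(\rho)\to 2$. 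Similarly, the coarea identity $\int_M(\#F_c^{-1}(y)-1)\,dy=\int_M|{\rm Jac}(F_c)|\,dx-{\rm vol}(M,\rho_0)$ yields an analogous smallness for ${\rm vol}(Z)$, and the Jacobian lower bound on $M\setminus B_\kappa$ then controls ${\rm vol}(F_c^{-1}(Z))$.

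Next I would filter $\mc{V}$. Each $V\in\mc{V}$ is a $(b,\delta)$-product region, hence has volume and diameter bounded from above and below by positive constants $V_{\min}=V_{\min}(b,\delta)$, $V_{\max}=V_{\max}(b,\delta)$, $D=D(b,\delta)$. Set $\eta:=(1-a)/2$ and
\[
\mc{W}:=\{V\in\mc{V}\,:\,{\rm vol}(V\cap B_\kappa)+{\rm vol}(V\cap F_c^{-1}(Z))\le\eta\,{\rm vol}(V)\}.
\]
Markov's inequality bounds the total volume of the excluded components by $({\rm vol}(B_\kappa)+{\rm vol}(F_c^{-1}(Z)))/\eta$, which is at most $a\,{\rm vol}(M,\rho)/2$ for $\epsilon_0$ sufficiently small and $c$ sufficiently close to $h(\rho)$, proving (1). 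Tightening $\epsilon_0$ further so that ${\rm vol}(B_\kappa)$ is strictly smaller than the volume of a single ball of radius $r$ in the $\delta$-thick part of a hyperbolic 3-manifold (a positive constant depending only on $r$ and $\delta$), every $r$-ball centered at any point of $V\in\mc{W}$ must meet $M\setminus B_\kappa$, whence Proposition \ref{bcg3}(2) yields $\|dF_c\|\le L$ on $V$ and hence that $F_c|_V$ is $L$-Lipschitz.

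For the thick-part assertion in (2), $F_c$ has degree one and $\pi_1(M)$ is Hopfian as a word-hyperbolic group, so $F_c$ induces an isomorphism on $\pi_1$. The product region $V$ is topologically $\Sigma\times[0,1]$ separating $M$, and its inclusion into $M$ is $\pi_1$-injective onto a surface subgroup; this follows from the cut-and-glue construction used to produce the model metric $\rho$ and the large Hempel distance, which makes the Heegaard surface incompressible in $M$. Hence $(F_c)_*:\pi_1(V)\to\pi_1(M)$ remains a surface-group embedding. A Margulis tube in $(M,\rho_0)$ has virtually cyclic fundamental group and so contains no such subgroup, so $F_c(V)$ cannot be contained in any Margulis tube. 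Combining this with the diameter bound ${\rm diam}(F_c(V))\le LD$ and the exponential decay of the injectivity radius into a Margulis tube from its boundary produces a uniform lower bound $\sigma=\sigma(g,b,\delta)>0$ for ${\rm inj}_{\rho_0}$ at every point of $F_c(V)$.

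Finally, for (3), set $A:=V\setminus(B_\kappa\cup F_c^{-1}(Z))$. By the filter defining $\mc{W}$, ${\rm vol}(A)\ge(1-\eta)\,{\rm vol}(V)\ge a\,{\rm vol}(V)$. For $x\in A$ we have $F_c(x)\notin Z$, hence $F_c^{-1}(F_c(x))=\{x\}\subset V$. Since $F_c$ is injective on $V\setminus F_c^{-1}(Z)$,
\[
{\rm vol}(F_c(V))\ge\int_{V\setminus F_c^{-1}(Z)}|{\rm Jac}(F_c)|\,dx\ge(1-\kappa)(c/2)^3(1-\eta)\,{\rm vol}(V),
\]
which exceeds $(1-\xi)\,{\rm vol}(V)$ once $\kappa$, $\eta$ are small and $c$ is close to $2$. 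I expect the main obstacle to be the thick-part claim in (2): the measure-theoretic smallness of $B_\kappa$ alone does not preclude $F_c$ from mapping part of $V$ deep into a Margulis tube of $(M,\rho_0)$, and the argument must combine the metric Lipschitz control of $F_c$ on $V$ with the topological rigidity from incompressibility of the boundary of $V$ inside $M$ together with the Hopfian property of $\pi_1(M)$.
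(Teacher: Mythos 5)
The general strategy (degree one, Jacobian bound, Corollary \ref{volumecomparison}) is the right one and matches the paper, but there are two substantive errors that break the argument.

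First, the Lipschitz step (2) relies on "tightening $\epsilon_0$ further so that ${\rm vol}(B_\kappa)$ is strictly smaller than the volume of a single ball of radius $r$." This is impossible. The estimate you derive, $\kappa(c/2)^3{\rm vol}(B_\kappa)\le[(c/2)^3(1+\epsilon)^{3/2}-1]\,{\rm vol}(M,\rho_0)$, makes ${\rm vol}(B_\kappa)$ small only \emph{relative to} ${\rm vol}(M,\rho_0)$; it does not give an absolute bound, and in the random setting ${\rm vol}(M,\rho_0)\to\infty$ along the family. So ${\rm vol}(B_\kappa)$ may be enormous, and a Markov filter controlling ${\rm vol}(V\cap B_\kappa)$ in proportion to ${\rm vol}(V)$ does not prevent an entire $r$-ball inside some $V\in{\mc W}$ from lying in $B_\kappa$. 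The paper sidesteps this by defining ${\mc W}$ geometrically — as the set of $V$ for which \emph{no} $r$-ball centered in $V$ lies in the complement of the set where the Jacobian is large — and then controls the number of excluded components by a packing argument: distinct excluded components $V_i$ contribute balls $B(x_i,r)\subset M-Z$, the lower volume bound for $r$-balls in $\delta$-thick regions shows $\geq k/2$ of them are disjoint, and $F_c$ contracts volume on this union by a definite factor, contradicting Corollary \ref{volumecomparison} if too many are excluded. Your volume filter would have to be replaced by this kind of ball/packing filter to deliver the Lipschitz conclusion.

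Second, the thick-part argument rests on the assertion that "the large Hempel distance ... makes the Heegaard surface incompressible in $M$," so that $\pi_1(V)\to\pi_1(M)$ is an embedded surface subgroup. This is simply false: a Heegaard surface of a closed 3-manifold always bounds handlebodies on both sides and is always compressible; $\pi_1(\Sigma)\to\pi_1(M)$ has large kernel no matter what $f$ is. Consequently $(F_c)_*\pi_1(V)$ is not a surface group, and your argument that $F_c(V)$ avoids Margulis tubes collapses. The correct (and weaker) ingredient, which is what the paper uses, is that the bounded-length loops at a point $x\in V$ and contained in $V$ generate a subgroup of $\pi_1(M)$ which is not virtually abelian (indeed $\pi_1(V)\to\pi_1(M)$ is surjective since $V$ separates $M$ into two pieces into which each boundary component is $\pi_1$-surjective); their $F_c$-images are loops of bounded length through $F_c(x)$ that still generate a non-elementary subgroup, which is incompatible with $F_c(x)$ lying deep in a Margulis tube.

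A smaller issue: you fix $\eta=(1-a)/2$, but then in (3) you want $(1-\kappa)(c/2)^3(1-\eta)\ge 1-\xi$ for arbitrary $\xi>0$, which fails for fixed $\eta$. The paper avoids this by arguing the \emph{proportion} of ${\mc W}$ violating (3) tends to zero as $\epsilon\to 0,\ c\to h(\rho)$, rather than imposing a single pre-chosen cutoff $\eta$.
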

\begin{proof}
Let $r>0$ be as in the second part of Proposition \ref{bcg3}. Assume without loss
of generality that $r<1$. 
For $x\in (M,\rho)$ let $B(x,r)$ be the open ball of radius $r$ about $x$. 
Let ${\mc V}$
be a union of $(b,\delta)$-product regions as in the statement of 
Theorem \ref{bcg}. 
Since the components of ${\mc V}$ are linearly aligned and $b>4$, 
any ball $B(y,r)$ in $(M,\rho)$ intersects at most two different
components of ${\mc V}$.

Let us consider a point $x\in V$. The injectivity radius of $(M,\rho)$
at $x$ is at least 
$\delta$. Therefore by comparison, the volume of the ball $B(x,r)$ 
is bounded from below by a universal constant $\alpha>0$. 
On the other hand, as the diameters of the boundary surfaces
of a component $V$ of ${\mc V}$ are uniformly bounded, 
the volume of the $r$-neighborhood 
$N_r(V)$ of 
any component $V$ of ${\mc V}$ is bounded from above by a universal constant
$\beta >0$.
Thus if $x\in V$ then the ratio ${\rm vol}(B(x,r))/{\rm vol}(N_r(V))$ is bounded from 
below by a universal constant $\alpha/\beta$. 

Let $\xi>0$. Define 
\[Z=\{x\in M\mid \vert {\rm Jac}(F_c)(x)\vert \geq (1-\xi)(\frac{c}{2})^3\}.\]
By Corollary \ref{volumecomparison} and 
the first part of Proposition \ref{bcg3}, for sufficiently small $\epsilon>0$ and for 
$c>h(\rho)$ sufficiently close to $h(\rho)$, 
the volume of the union ${\mc W}$ of all components $V$ 
of ${\mc V}$ with the property that 
$N_r(V)-Z$ does not contain a ball of radius $r$ centered at a point $x\in V$ 
is at least $3a{\rm vol}(M)/4$.
Namely, if $V_1,\dots,V_k$ are the components of 
${\mc V}-{\mc W}$ and if $x_i\in V_i$ is such that $B(x_i,r)\subset M-Z$, then 
by the above discussion, any of the balls $B(x_i,r)$ intersects at most 
one other ball $B(x_j,r)$ for $j\not=1$. In particular, at 
least $k/2$ of the balls $B(x_i,r)$ are pairwise
disjoint and hence 
\[{\rm vol}(\cup_iB(x_i,r))\geq k\alpha/2.\] 
Thus if ${\rm vol}({\mc V}-{\mc W})\geq a{\rm vol}(M,\rho)/4$
then 
${\rm vol}(\cup_iB(x_i,r))\geq \alpha a{\rm vol}(M,\rho)/8\beta$. 
But the restriction of $F_c$ to 
$\cup_iB(x_i,r)$ decreases the volume by a definitive factor. For 
$\epsilon >0$ sufficiently close to $0$ and $c-h(\rho)>0$ sufficiently small,
this violates Corollary \ref{volumecomparison}.

By the second part of Proposition \ref{bcg3}, the restriction of $F_c$ to any 
component $V$ of ${\mc W}$ is $L$-Lipschitz where $L>1$ is a universal 
constant. In particular, if 
$\gamma$ is a closed loop entirely contained in 
$V$, then the length of its image $F_c(\gamma)$
is at most $L$ times the length of $\gamma$.

By the definition of a $(b,\delta)$-product region, 
for an arbitrary point $x\in V$ the subgroup of 
$\pi_1(M)$ generated by the homotopy classes of uniformly
short loops at 
$x$ which are entirely contained in $V$ 
is not virtually abelian. But this implies that for any point 
 $y\in F_c(V)$, there are closed loops of uniformly bounded length
 passing though $y$ which generated a non-solvable subgroup of 
$\pi_1(M)$. As a consequence, the set $F_c(V)$ is 
contained in the
$\sigma$-thick part of $(M,\rho_0)$ for a universal constant $\sigma >0$. 
Together this shows the first and second part of the lemma.

Now if $V$ is a component of ${\mc W}$ and if 
$B=\{x\in V\mid \vert F_c^{-1}(F_c(x))\not\subset V\}$ then 
the volume of $(M,\rho_0)$ equals the volume of 
$F_c(M-B)$. Thus 
as $\epsilon\to 0$ and $c-h(\rho)\to 0$,
by volume comparison the proportion of the 
volume of ${\mc W}$ contained in the union of those components of 
${\mc W}$ 
which violate the conditions in the third part of the lemma has to 
tend to zero. This then implies the third part of the lemma.
\end{proof}

For a number $\xi>0$ we say that a map $F$ between two metric spaces
$X,Y$ is a {\em $\xi$-coarse isometry}
if $\vert d(Fx,Fy)-d(x,y)\vert \leq \xi$ for all $x,y$. 

\begin{lem}\label{nextcompare}
For $b^\prime<b,\delta^\prime<\delta$ and $\xi>0$ 
there exists a number
$\epsilon_0=\epsilon_0(b^\prime,\delta^\prime)$ with the following property.
Let $(M,\rho)$ be as in Lemma \ref{firstvolume}
and let $V$ be  a component of ${\mc W}$ where ${\mc W}$ is as in Lemma \ref{firstvolume},
then the restriction of $F_c$ to $V$ is a $\xi$-coarse isometry whose image
contains a $(b^\prime,\delta^\prime)$-product region of genus $g$.
\end{lem}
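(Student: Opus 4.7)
The plan is to argue by compactness and contradiction. Suppose the conclusion fails, so that for some fixed $b^\prime<b$, $\delta^\prime<\delta$ and $\xi>0$ there exist $(M_n,\rho_n)$ whose curvatures lie in $(-1-\epsilon_n,-1+\epsilon_n)$ with $\epsilon_n\to 0$, components $V_n$ of the sets $\mc{W}_n$ provided by Lemma \ref{firstvolume}, and parameters $c_n\downarrow h(\rho_n)$ for which $F_{c_n}|_{V_n}$ fails to be a $\xi$-coarse isometry with image containing a $(b^\prime,\delta^\prime)$-product region. Because sectional curvature pinching forces $h(\rho_n)\in[2\sqrt{1-\epsilon_n},2\sqrt{1+\epsilon_n}]$, we have $h(\rho_n)\to 2$ and hence $c_n\to 2$.

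Choose basepoints $x_n\in V_n$. Since $V_n$ is hyperbolic with injectivity radius at least $\delta$, Theorem \ref{compactness} lets us pass to a subsequence along which $(V_n,x_n)$ converges in the pointed geometric topology to a complete hyperbolic manifold $(V_\infty,x_\infty)$ diffeomorphic to a piece of $\Sigma\times\mb{R}$ which inherits the $(b,\delta)$-product region structure near $x_\infty$. By Lemma \ref{firstvolume}(2) the image basepoints $F_{c_n}(x_n)$ lie in the $\sigma$-thick part of $(M_n,\rho_{0,n})$, so a further extraction yields a geometric limit $(M_n,\rho_{0,n},F_{c_n}(x_n))\to(N_\infty,y_\infty)$. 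The restrictions $F_{c_n}|_{V_n}$ are uniformly $L$-Lipschitz by Lemma \ref{firstvolume}(2), so Arz\'ela-Ascoli furnishes a $1$-Lipschitz subsequential limit $F_\infty:V_\infty\to N_\infty$, the Lipschitz constant dropping to $1$ because $c_n/2\to 1$. Proposition \ref{bcg3}(3), applied with an auxiliary parameter $\theta_n\to 0$, shows that at every point of the good set $Z_n\cap V_n$ the differential $dF_{c_n}$ is arbitrarily close to a $(c_n/2)$-times-isometry; as the bad set contains no ball of radius $r$ (this was the defining property of $\mc{W}_n$), the good set is $r$-dense in $V_n$ and the limit $F_\infty$ is a smooth local isometry on an open dense subset of $V_\infty$, hence everywhere by continuity of its differential.

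To upgrade $F_\infty$ to an isometric embedding I combine Lemma \ref{firstvolume}(3) with the Jacobian bound $|\mathrm{Jac}(F_{c_n})|\le(c_n/2)^3$: the subset $A_n\subset V_n$ on which $F_{c_n}^{-1}(F_{c_n}(x))\subset V_n$ has volume at least $a\,\mathrm{vol}(V_n)$, and an area-formula computation together with $\mathrm{vol}(F_{c_n}(V_n))\ge(1-\xi_n)\mathrm{vol}(V_n)$ forces the multiplicity of $F_{c_n}$ over $F_{c_n}(A_n)$ to approach $1$; the limit $F_\infty$ is then a local isometry which is injective on a set of full measure, and since a local isometry from a connected manifold that is injective at a single point is globally injective, $F_\infty$ becomes a genuine isometric embedding. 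Now $V_\infty$ carries a $(b,\delta)$-product region around $x_\infty$ with $b>b^\prime$ and $\delta>\delta^\prime$, so a slight interior shrinking $V_\infty^\prime$ is a $((b+b^\prime)/2,(\delta+\delta^\prime)/2)$-product region whose isometric image $F_\infty(V_\infty^\prime)\subset N_\infty$ satisfies the local conditions of a $(b^\prime,\delta^\prime)$-product region. The separating condition for the image passes to large $n$ because $F_{c_n}$ is homotopic to the identity, so the boundary surfaces bounding $V_n^\prime$ have null-homologous images in $H_2(M_n;\mb{Z}/2)$ and hence separate $M_n$; transferring back via the approximating diffeomorphisms of pointed geometric convergence produces a $(b^\prime,\delta^\prime)$-product region inside $F_{c_n}(V_n)$ and simultaneously identifies $F_{c_n}|_{V_n}$ with a $\xi$-coarse isometry for all large $n$, contradicting the choice of sequence. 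The main obstacle is promoting the pointwise infinitesimal estimates of Proposition \ref{bcg3} into the global statement of isometric embedding; this is mediated by the density of the good set inside each $V_n$ together with the almost-injectivity packaged in Lemma \ref{firstvolume}(3), both of which pass cleanly to the geometric limit.
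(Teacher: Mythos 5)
Your proof follows the same overall skeleton as the paper (contradiction, geometric limits of the sources, targets, and restricted maps, Jacobian estimates from Proposition \ref{bcg3}), but the critical step where you upgrade the infinitesimal information to a global isometric embedding is not correct. Two specific problems appear.

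First, the assertion that ``the good set is $r$-dense in $V_n$ and the limit $F_\infty$ is a smooth local isometry on an open dense subset of $V_\infty$, hence everywhere by continuity of its differential'' does not hold. The $r$-density of $Z_n$ in $V_n$ (which is exactly the defining property of $\mc{W}$) gives you the $L$-Lipschitz bound via Proposition \ref{bcg3}(2), but it is far weaker than topological density or full measure: an $r$-net can have measure zero. More importantly, the limit $F_\infty$ is only a Lipschitz map, so it is differentiable merely almost everywhere and its differential is a priori not continuous; you cannot invoke ``continuity of the differential'' to extend a local isometry property from a subset to all of $V_\infty$. What one actually knows in the limit is that $F_\infty$ is $1$-Lipschitz and (essentially) volume preserving, from Lemma \ref{firstvolume}(3) together with the Jacobian bound and Corollary \ref{volumecomparison}; passing from there to ``$F_\infty$ is an isometric embedding'' is precisely the content of the paper's Appendix A (Proposition \ref{localprop}) together with Lemmas 7.5--7.8 of \cite{BCG95} (Egoroff, uniform convergence of differentials on large-measure sets). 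That step is non-trivial and is entirely skipped in your argument.

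Second, the justification for injectivity is also incorrect: ``a local isometry from a connected manifold that is injective at a single point is globally injective'' is false in general, as the covering $\mb{R}\to S^1$ shows. The paper instead deduces that $F_\infty$ is a genuine isometric embedding from Proposition \ref{localprop}, which argues directly that a volume-preserving $1$-Lipschitz map onto a hyperbolic target is an isometric embedding by comparing metric balls, using the fact that the set of points with $F^{-1}(F(x))=\{x\}$ has full measure. Your ``multiplicity approaches $1$'' computation is heading in the right direction, but the concluding inference is not valid as stated. The closing portion of your proof, transferring the limiting conclusion back to large $n$ via the approximating maps of geometric convergence, agrees with the paper once the isometric embedding is in hand.
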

\begin{proof}
We argue by contradiction and we assume that a number $\epsilon_0>0$ 
as in the lemma does not
exist. Then there exists a sequence
of closed $3$-manifolds $(M_i,\rho)$ which fulfill the assumptions in 
Theorem \ref{bcg} for a
sequence $\epsilon_i\to 0$ and fixed numbers $g\geq 2,a>0, b>4,\delta>0$ and such that
for each $i$, there is a component $V_i$ of the collection 
${\mc W}_i$ as in Lemma \ref{firstvolume} whose image under the 
natural map $F_i:(M_i,\rho)\to (M_i,\rho_0)$ does not contain a 
$(b^\prime,\delta^\prime)$ product region where $b^\prime<b$ and $\delta^\prime<\delta$ are fixed
constants. 
Note that in contrast to similar statements in the literature, we do not assume
the existence of a bound on 
the diameters of the manifolds $(M_i,\rho)$. Let as before $\rho_0$ be the 
hyperbolic metric on the manifold $M_i$.

Let $h_i$ be the volume entropy of $M_i$. We know that 
$h_i\to 2$ $(i\to \infty)$. 
Choose a sequence $\chi_i\to 0$ such that 
$h_i<2+\chi_i$. 
For each $i$ consider the natural map
$F_i:(M_i,\rho)\to (M_i,\rho_0)$ for the parameter $c_i=2+\chi_i$. 
By the choice of ${\mc W}_i$ and the second part of Lemma \ref{firstvolume}, 
we know that the restriction of $F_i$ to $V_i$ is $L$-Lipschitz where
$L>1$ does not depend on $i$. Furthermore, 
for each $\beta>0$, the measure of the set of all points $z\in V_i$ so that
$\vert {\rm Jac}(F_i)(x)\vert \leq (1-\beta)(\frac{c_i}{2})^3$ tends to zero as $i\to \infty$.
By the third part of Proposition \ref{bcg3}, as $i\to \infty$, 
on a subset of the component $V_i$ of ${\mc W}_i$ containing 
a larger and larger proportion of the volume of
$V_i$, the differential of $F_i$ is close to an isometry. 

For each $i$ let $x_i\in V_i$. The set $F_i(V_i)$ is contained in the
$\sigma$-thick part of $(M_i,\rho_0)$ where $\sigma$ does not depend on $i$.
Thus by passing to a subsequence, we may assume  
that the pointed manifolds $(M_i,x_i,\rho)$ converge 
in the geometric topology to a pointed hyperbolic manifold $(M,x)$ and that the 
pointed hyperbolic manifolds $(M_i,F_i(x_i),\rho_0)$ converge in the geometric
topology to a pointed hyperbolic manifold $(N,y)$. 

Let $(V,x)$ be the geometric limit of the pointed
$(b,\delta)$-product regions $(V_i,x_i)$.
Then $V$ is a $(b,\delta)$-product region in $M$ containing the basepoint $x$.
Furthermore, as the restriction of $F_i$ to 
$V_i$ is $L$-Lipschitz for a universal constant $L>1$, up to passing 
to another subsequence we may assume that $F_i\vert V_i$ converges
to an $L$-Lipschitz map $F:(V,x)\to (N,y)$.

By the definition of geometric convergence, 
for large enough $i$ there exists a $(1+\xi_i)$-bilipschitz homeomorphism 
$\phi_i$ of a neighborhood 
$U$ of $V$ in $M$ onto a neighborhood $U_i$ of $V_i$ in $M_i$ where
$\xi_i\to 0$ $(i\to \infty)$. We use $\phi_i$ to identify $U$ with $U_i$. 

As $i\to \infty$ and by the choice of the sets $V_i$, 
the Jacobians of the restriction of $F_i$ to 
$V_i$ converge to one almost surely. We now follow the reasoning 
in the proof of Lemma 7.5 of \cite{BCG95}. Namely, 
using the map $\phi_i^{-1}$ we can think of $U_i$ as a neighborhood of 
$V$ in $M$. 
Egoroff's theorem then implies that for each $n$ 
there exists a subset $K_n\subset V$ with ${\rm vol}(V-K_n)<1/n$ and such that
on $K_n$ 
the differentials $dF_i$ converge to an isometry 
uniformly. By Lemma 7.7 and Lemma 7.8 of \cite{BCG95}, 
the map $F\vert V$ is one-Lipschitz.
Its differential exists almost everywhere and is an isometry. 
It then follows from Appendix B that $F:V\to N$ is an isometric embedding.
In particular, $F(V)$ is a $(h,\delta)$-product region in $N$, and for sufficiently
large $i$ the map $F_i$ is a $\xi$-coarse isometry. 

Geometric convergence now implies that for large enough $i$, 
the image of $V_i$ under $F_i$ is a $(b^\prime,\delta^\prime)$-product
region in $(M_i,\rho_0)$. This is a contradiction to the assumption on 
the sets $V_i$. 
\end{proof}

\begin{proof}[Proof of Theorem \ref{bcg}]
We showed so far that 
for sufficiently small $\epsilon_0>0$, if $(M,\rho)$ is as in Theorem \ref{bcg},
of sectional curvature contained in $(1-\epsilon_0,1+\epsilon_0)$,
then $(M,\rho_0)$ contains a union 
of $(b^\prime,\delta^\prime)$-product
region for some $b^\prime$ close to $b$, $\delta^\prime$ close to $\delta$
which cover a fixed proportion of the volume of $(M,\rho)$.
These product regions are the images under a suitably chosen
natural map $F_c$ of a subcollection ${\mc W}\subset {\mc V}$ 
of the family ${\mc V}$ of $(b,\delta)$-product regions whose existence
is assumed for $(M,\rho)$. Furthermore, the volume of ${\mc W}$ 
is at least $a{\rm vol}(M,\rho)$ for some fixed number $a>0$ (with a slight
abuse of notation). The restriction of $F_c$ to ${\mc W}$ is $L$-Lipschitz
and a $1/4$-coarse isometry, 
and ${\rm vol}(F_c({\mc W}))/{\rm vol}({\mc W})$ is very close to one.

Let $\hat b<b-2$ and $\hat \delta <\delta$ be such that each component 
$V$ of 
${\mc W}$ contains a $(\hat b,\hat \delta)$-product region 
$\hat V$ in its interior whose
one-neighborhood is entirely contained in $V$. The volume
of $\hat V$ is at least $b{\rm vol}(V)$ for a universal constant $b>0$.

Our goal is to show that there is a subcollection ${\mc Z}$ of ${\mc W}$ of 
volume at least 
$a{\rm vol}(M,\rho)/2$ with the additional property that whenever
$V\not=W\in {\mc Z}$ then $F_c(\hat V)\cap F_c(\hat W)=\emptyset$.

To this end let us assume that for $V\not=W\in {\mc W}$ we have  
$F_c(\hat V)\cap F_c(\hat W)\not=\emptyset$. As the restriction of the map 
$F_c$ is $L$-Lipschitz and a $1/4$-coarse isometry, this implies that 
there are balls $B_1\subset V,B_2\subset W$
of radius $1/2L$ such that $F_c(B_1)\subset F_c(W)$ and 
$F_c(B_2)\subset F_c(V)$. Namely, for all $z\in \hat V$ the ball of radius $1/2$ about
$F_c(z)$ is contained in $F_c(V)$, furthermore $F_c$ is $L$-Lipschitz.

Let $2\sigma>0$ be a lower bound for the volume of a ball of radius 
$1/2L$ entirely contained in an $(b,\delta)$-product region. Such a number exists
since the injectivity radius in such a region is at least $\delta$.
Then the volume of $F_c(V\cup W)$ is at most 
$(\frac{c}{2})^3 ({\rm vol}(V)+{\rm vol}(W)-2\sigma)$. 
In particular, the contribution of $F_c(V)$ to the volume of ${\mc W}$ does not exceed
$(\frac{c}{2})^3({\rm vol}(V)-\sigma)$. 

Since $\sigma>0$ is independent of all choices and for $c$ sufficiently close to $2$
the restriction of the map $F_c$ to ${\mc W}$ is very close to being 
volume preserving, 
we deduce that for $c$ sufficiently close to $2$ the union ${\mc Z}$ of all product regions
$\hat V$ with $V\in {\mc W}$ and such that the sets from ${\mc Z}$ are mapped disjointly
by $F_c$ covers a fixed proportion of the volume of $(M,\rho_0)$. 
Furthermore, the image of each of the components in ${\mc Z}$ contains 
a $(b^{\prime},\delta^{\prime})$-product region for some fixed 
$b^\prime<\hat b$ and some $\delta^\prime$ close to $\hat \delta$. Thus we found
a collection of pairwise disjoint product regions in $(M,\rho_0)$ as claimed in the
theorem. 

We are left with showing that the regions $F_c(\hat V)$ for $\hat V\in {\mc Z}$ are 
linearly aligned. 
However, $F_c$ is a homotopy equivalence. If $\hat V\in {\mc Z}$ then 
as the restriction of $F_c$ to $\hat V$ is a homeomorphism, for a fixed
choice of an embedded surface $\Sigma\subset V$ which decomposes
$M$ into two handlebodies, the image surface $F_c(\Sigma)$ separates
$(M,\rho_0)$ into two components. The restriction of $F_c$ to the closure of 
a component of $M-\Sigma$ is a generator of the relative homology group
$H_3(M,M-F_c(\Sigma))$. But this homology group also is generated
by the inclusion of a component of $M-F_c(\Sigma)$ and hence each 
component $A$ of $M-\Sigma$ determines uniquely a component 
${\mc F}(A)$ of 
$M-F_c(\Sigma)$ with the additional property that 
$F_c(A)\supset {\mc F}(A)$.

Now let $\hat V\not=\hat W\in {\mc Z}$. As the components of 
${\mc Z}$ are pairwise disjoint, the component $\hat W$ is entirely contained
in a component of $M-\hat V$, say the component $A$.
Furthermore, as 
$F_c(\hat V),F_c(\hat W)$ are disjoint, the component $F_c(\hat W)$ is contained in 
a component $Z$ of $M-F_c(\hat V)$. We claim that 
$Z={\mc F}(A)$. 

Namely, let $B$ be the component of $M-\hat W$ entirely contained in $A$.
If $Z\not={\mc F}(A)$ then we have 
$F_c(\hat V)\subset {\mc F}(B)$. But the restriction of $F_c$ to $B$ maps
$B$ to a subset that contains ${\mc F}(B)$. In particular, we have
$F_c(\hat V)\subset F_c(M-\hat V)$ which violates property (3) 
in Lemma \ref{firstvolume}.

But this just means that the components of $F_c({\mc W})$ are linearly aligned.
This completes 
the proof of the theorem.
\end{proof}

\appendix

\section{Local control of one-Lipschitz maps}\label{local}

The goal of this appendix is to show:

\begin{pro}\label{localprop}
Let $U$ be a domain in a hyperbolic 3-manifold and let $F:U\to N$  be a 
volume preserving 1-Lipschitz
map into a hyperbolic 3-manifold $N$. Then $F$ is an isometric embedding.
\end{pro}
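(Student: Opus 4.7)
The strategy is to exploit the volume-preserving hypothesis together with the rigid ball geometry of $\mb{H}^3$ to derive distance preservation directly, bypassing any pointwise analysis of the (a.e.\ defined) differential of $F$. I read ``volume preserving'' as meaning $\text{\rm vol}(F(A)) = \text{\rm vol}(A)$ for every Borel $A \subset U$; this simultaneously encodes $|\text{\rm Jac}(F)|=1$ almost everywhere and essential injectivity of $F$.

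The main step will be a comparison of volumes of pairs of balls. I would fix arbitrary $x_1, x_2 \in U$ and choose radii $r_1, r_2 > 0$ small enough that the balls $B_i := B(x_i, r_i)$ are embedded in $U$ and the balls $B(F(x_i), r_i)$ are embedded in $N$, and such that $r_1+r_2 > d(x_1,x_2)$. Since $F$ is $1$-Lipschitz, $F(B_i) \subset B(F(x_i), r_i)$, so
\[
F(B_1 \cup B_2) \subset B(F(x_1), r_1) \cup B(F(x_2), r_2).
\]
Taking volumes, applying the volume-preserving hypothesis to the left-hand side, and then using inclusion--exclusion on both sides together with the fact that the volume of an embedded ball in $\mb{H}^3$ depends only on its radius, this reduces to
\[
\text{\rm vol}(B_1 \cap B_2) \ \geq\ \text{\rm vol}\bigl(B(F(x_1), r_1) \cap B(F(x_2), r_2)\bigr).
\]
In $\mb{H}^3$, the volume of the intersection of two balls of fixed radii is a strictly decreasing function of the distance between their centers on the range where the intersection is non-empty; this can be checked elementarily by translating one ball along the geodesic through the two centers and observing that the intersection with the other (fixed) ball strictly shrinks as the translation moves them apart. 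Combined with the displayed inequality this forces $d(x_1, x_2) \leq d(F(x_1), F(x_2))$, and together with the $1$-Lipschitz bound gives $d(F(x_1), F(x_2)) = d(x_1, x_2)$.

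Having shown $F$ is locally distance-preserving, I would invoke the Myers--Steenrod theorem to conclude that $F$ is a smooth local isometry onto its image. Injectivity is automatic from the volume-preserving hypothesis: if $F(x_1) = F(x_2)$ for distinct $x_1, x_2 \in U$, then disjoint small balls around $x_1, x_2$ would map isometrically onto the same ball in $N$, violating $\text{\rm vol}(F(A)) = \text{\rm vol}(A)$ applied to their disjoint union. Hence $F$ is a smooth injective local isometry, i.e.\ an isometric embedding.

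The main obstacle is the strict monotonicity of the two-ball intersection volume in the distance between centers in $\mb{H}^3$; once that elementary but explicit geometric fact is verified, the rest of the argument is bookkeeping with the $1$-Lipschitz and volume-preserving hypotheses together with a standard application of Myers--Steenrod.
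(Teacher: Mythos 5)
Your argument is correct in outline but takes a genuinely different route from the paper's. The paper fixes a single point $x$, first shows $F(B(x,r_0))=B(F(x),r_0)$, and then works with the almost-everywhere defined differential: Rademacher differentiability plus volume preservation forces $dF$ to be a linear isometry a.e., and at a point $x$ which is in addition a.e.\ injective, the geodesic rays from $x$ to the boundary sphere are carried by $F$ to geodesic rays, so $F\vert_{B(x,r_0)}$ is an isometry. Your two-ball comparison bypasses the differential entirely until Myers--Steenrod, encoding $d(x_1,x_2)$ in the lens volume ${\rm vol}(B_1\cap B_2)$; this is arguably more elementary, but two points need tightening. First, the intersection volume of balls of radii $r_1,r_2$ at center-distance $d$ is in fact \emph{constant} on $[0,|r_1-r_2|]$ (one ball nested in the other) and strictly decreasing only on $[|r_1-r_2|,r_1+r_2]$; since the Lipschitz bound already gives $d(Fx_1,Fx_2)\le d(x_1,x_2)$, you must rule out both distances landing in the constant regime, for instance by taking $r_1=r_2$. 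Related: the set-theoretic containment of the lenses under translation that you invoke does fail in general; what is true, and suffices, is the monotonicity of the volume, which one can see by a normal-velocity computation along $\partial B(p_2,r_2)\cap B(p_1,r_1)$. Second, to equate the intersection volumes in the source and target manifolds with lens volumes in $\mb{H}^3$, you need the unions $B_1\cup B_2$ and $B(Fx_1,r_1)\cup B(Fx_2,r_2)$ to lift isometrically to $\mb{H}^3$, which forces the $r_i$ to be small relative to the local injectivity radii and hence $x_1,x_2$ to be close; this is fine, since local distance preservation is all Myers--Steenrod requires. With these adjustments the proposal gives a clean alternative proof.
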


Compare Appendix C of \cite{BCG95} for a different variation

\begin{proof}
As $F$ is volume preserving, all we need to show that $F$ is a local isometry. To this end let $x\in U$ and let $r_0>0$ be such that the closed balls 
$B(x,r_0)$, $B(F(x),r_0)$ 
of radius $r_0$ 
about $x$ and $F(x)$ are isometric to the closed ball of the same radius in hyperbolic 3-space. 
Since $F$ is one-Lipschitz we know that $F(B(x,r_0))\subset 
B(F(x),r_0)$. Furthermore, as $F$ is continuous and $B(x,r_0)$ is compact, 
$F(B(x,r_0))$ is a closed subset of $B(F(x),r_0)$ and hence coincides with 
$B(F(x),r_0)$ as $F$ is volume preserving.

Since $F$ is one-Lipschitz, it is differentiable almost everywhere, and its differential
is norm non-increasing. Since
$F$ is moreover volume preserving, the differential of 
$F$ is an isometry almost everywhere. 
Furthermore, the set of all points $x\in U$ such that $F^{-1}(F(x))=\{x\}$ has full measure.

Let $x$ be such a point. We saw above that there is a closed subset $A$ of 
the distance sphere $A$ of radius $r_0$ about $x$ is mapped by $F$ onto the distance
sphere of radius $r_0$ about $F(x)$. Note that we do not know at this point whether $A$ equals
the entire distance sphere as we do not know whether $F$ is injective- there could a priori be
points in the distance of radius $r_0$ about $x$ which are mapped to the interior of the ball 
$B(F(x),r_0)$. 
If $y\in A$ then using once more that $F$ is 
a contraction, the geodesic $\gamma_y$ connecting $x$ to $y$ is mapped by 
$F$ to the geodesic $\gamma_{Fy}$ connecting $F(x)$ to $F(y)$. As $F$ is differentiable at 
$x$ and $dF(x)$ is an isometry, we have 
$dF(\gamma_y^\prime(0))=\gamma_{Fy}^\prime(0)$. In particular, if
$\exp$ denotes the exponential map at $x$ then $F(\exp(s\exp^{-1}(z)))=
\exp(sdF(\exp^{-1}(z))$ for all $z\in A$. On the other hand, $F(A)=\partial B(F(y),r_0)$ and
hence $A=\partial B(x,r_0)$ and the restriction of $F$ to $B(x,r_0)$ is an isometry.

As $x$ was a point from a subset of $U$ of full measure, $F$ is indeed a local isometry
and hence an isometry.
\end{proof}

\noindent
Mathematisches Institut der Universit\"at Bonn,\\
Endenicher Allee 60, 53115 Bonn, Germany\\
E-mail: ursula@math.uni-bonn.de

\bigskip

\noindent
Mathematisches Institut der Universit\"at Heidelberg,\\
Im Neuenheimer Feld 205, 69120 Heidelberg, Germany\\
E-mail: gviaggi@mathi.uni-heidelberg.de

\end{document}